\theoremstyle{plain}
\newtheorem{theorem}{Theorem}
\newtheorem{proposition}[theorem]{Proposition}
\newtheorem{lemma}[theorem]{Lemma}
\theoremstyle{definition}
\newtheorem{definition}[theorem]{Definition}
\newtheorem{remark}[theorem]{Remark}
\newtheorem{claim}{Claim}
\numberwithin{claim}{theorem}
\newenvironment{pfclaim}[1][Proof]{\begin{trivlist}
\item[\hskip \labelsep {\textit{{#1}.}}]} {{\footnotesize $\blacksquare$}\end{trivlist}}
\numberwithin{figure}{section}
\numberwithin{table}{section}
\newcommand{\veewedge}{%
  \mathord{{\vee}\mkern-7.35mu{\wedge}}%
}
\newcommand{\IN}{\ensuremath{\mathbb{N}}}
\newcommand{\ZZ}{\ensuremath{\mathbb{Z}}}
\newcommand{\nset}[1]{\ensuremath{[{#1}]}}
\newcommand{\card}[1]{\ensuremath{\lvert{#1}\rvert}}
\newcommand{\gen}[2][\gendefault]{\ensuremath{\langle{#2}\rangle_{#1}}}
\newcommand{\clonegen}[1]{\gen[]{#1}}
\newcommand{\vect}[1]{\ensuremath{\mathbf{#1}}}
\newcommand{\lhs}{\hspace{2em}&\hspace{-2em}}
\newcommand{\clIntVal}[3]{\ensuremath{#1_{\ifthenelse{\equal{#2}{}}{\mathord{*}}{#2}\ifthenelse{\equal{#3}{}}{\mathord{*}}{#3}}}}
\newcommand{\clIntEq}[1]{\ensuremath{#1_{=}}}
\newcommand{\clIntNeq}[1]{\ensuremath{#1_{\neq}}}
\newcommand{\clIntLeq}[1]{\ensuremath{#1_{\leq}}}
\newcommand{\clIntGeq}[1]{\ensuremath{#1_{\geq}}}
\newcommand{\clIntNeqOO}[1]{\ensuremath{#1_{{\neq},00}}}
\newcommand{\clIntNeqII}[1]{\ensuremath{#1_{{\neq},11}}}
\newcommand{\clAll}{\ensuremath{\mathsf{\Omega}}}
\newcommand{\clEioo}{\ensuremath{\clIntNeqII{\clAll}}}
\newcommand{\clEioi}{\ensuremath{\clIntGeq{\clAll}}}
\newcommand{\clEiio}{\ensuremath{\clIntLeq{\clAll}}}
\newcommand{\clEiii}{\ensuremath{\clIntNeqOO{\clAll}}}
\newcommand{\clEq}{\ensuremath{\clIntEq{\clAll}}}
\newcommand{\clNeq}{\ensuremath{\clIntNeq{\clAll}}}
\newcommand{\clOX}{\ensuremath{{\clIntVal{\clAll}{0}{}}}}
\newcommand{\clIX}{\ensuremath{{\clIntVal{\clAll}{1}{}}}}
\newcommand{\clXO}{\ensuremath{{\clIntVal{\clAll}{}{0}}}}
\newcommand{\clXI}{\ensuremath{{\clIntVal{\clAll}{}{1}}}}
\newcommand{\clOXCI}{\ensuremath{\clOX \cup \clVaki}}
\newcommand{\clIXCO}{\ensuremath{\clIX \cup \clVako}}
\newcommand{\clXOCI}{\ensuremath{\clXO \cup \clVaki}}
\newcommand{\clXICO}{\ensuremath{\clXI \cup \clVako}}
\newcommand{\clOO}{\ensuremath{\clIntVal{\clAll}{0}{0}}}
\newcommand{\clII}{\ensuremath{\clIntVal{\clAll}{1}{1}}}
\newcommand{\clOI}{\ensuremath{\clIntVal{\clAll}{0}{1}}}
\newcommand{\clIO}{\ensuremath{\clIntVal{\clAll}{1}{0}}}
\newcommand{\clOOCI}{\ensuremath{\clOO \cup \clVaki}}
\newcommand{\clIICO}{\ensuremath{\clII \cup \clVako}}
\newcommand{\clOICO}{\ensuremath{\clOI \cup \clVako}}
\newcommand{\clOICI}{\ensuremath{\clOI \cup \clVaki}}
\newcommand{\clOIC}{\ensuremath{\clOI \cup \clVak}}
\newcommand{\clIOCO}{\ensuremath{\clIO \cup \clVako}}
\newcommand{\clIOCI}{\ensuremath{\clIO \cup \clVaki}}
\newcommand{\clIOC}{\ensuremath{\clIO \cup \clVak}}
\newcommand{\clS}{\ensuremath{\mathsf{S}}}
\newcommand{\clSc}{\ensuremath{\clIntVal{\clS}{0}{1}}}
\newcommand{\clSM}{\ensuremath{\mathsf{SM}}}
\newcommand{\clSmin}{\ensuremath{\clS^{-}}}
\newcommand{\clSmaj}{\ensuremath{\clS^{+}}}
\newcommand{\clSminOX}{\ensuremath{\clIntVal{\clSmin}{0}{}}}
\newcommand{\clSminXO}{\ensuremath{\clIntVal{\clSmin}{}{0}}}
\newcommand{\clSminOO}{\ensuremath{\clIntVal{\clSmin}{0}{0}}}
\newcommand{\clSminOI}{\ensuremath{\clIntVal{\clSmin}{0}{1}}}
\newcommand{\clSminIO}{\ensuremath{\clIntVal{\clSmin}{1}{0}}}
\newcommand{\clSminOICO}{\ensuremath{\clSminOI \cup \clVako}}
\newcommand{\clSminIOCO}{\ensuremath{\clSminIO \cup \clVako}}
\newcommand{\clSmajIX}{\ensuremath{\clIntVal{\clSmaj}{1}{}}}
\newcommand{\clSmajXI}{\ensuremath{\clIntVal{\clSmaj}{}{1}}}
\newcommand{\clSmajOI}{\ensuremath{\clIntVal{\clSmaj}{0}{1}}}
\newcommand{\clSmajIO}{\ensuremath{\clIntVal{\clSmaj}{1}{0}}}
\newcommand{\clSmajII}{\ensuremath{\clIntVal{\clSmaj}{1}{1}}}
\newcommand{\clSmajOICI}{\ensuremath{\clSmajOI \cup \clVaki}}
\newcommand{\clSmajIOCI}{\ensuremath{\clSmajIO \cup \clVaki}}
\newcommand{\clM}{\ensuremath{\mathsf{M}}}
\newcommand{\clMo}{\ensuremath{\clIntVal{\clM}{0}{}}}
\newcommand{\clMi}{\ensuremath{\clIntVal{\clM}{}{1}}}
\newcommand{\clMc}{\ensuremath{\clIntVal{\clM}{0}{1}}}
\newcommand{\clMneg}{\ensuremath{{\overline{\clM}}}}
\newcommand{\clMoneg}{\ensuremath{\clIntVal{\clMneg}{1}{}}}
\newcommand{\clMineg}{\ensuremath{\clIntVal{\clMneg}{}{0}}}
\newcommand{\clMcneg}{\ensuremath{\clIntVal{\clMneg}{1}{0}}}
\newcommand{\clUk}[1]{\ensuremath{\mathsf{U}^{#1}}}
\newcommand{\clTcUk}[1]{\ensuremath{\clIntVal{\clUk{#1}}{0}{1}}}
\newcommand{\clMUk}[1]{\ensuremath{\mathsf{M}\clUk{#1}}}
\newcommand{\clMcUk}[1]{\ensuremath{\clIntVal{\clMUk{#1}}{0}{1}}}
\newcommand{\clWk}[1]{\ensuremath{\mathsf{W}^{#1}}}
\newcommand{\clTcWk}[1]{\ensuremath{\clIntVal{\clWk{#1}}{0}{1}}}
\newcommand{\clMWk}[1]{\ensuremath{\mathsf{M}\clWk{#1}}}
\newcommand{\clMcWk}[1]{\ensuremath{\clIntVal{\clMWk{#1}}{0}{1}}}
\newcommand{\clRefl}{\ensuremath{\mathsf{R}}}  
\newcommand{\clReflOO}{\ensuremath{\clIntVal{\clRefl}{0}{0}}}
\newcommand{\clReflII}{\ensuremath{\clIntVal{\clRefl}{1}{1}}}
\newcommand{\clReflOOC}{\ensuremath{\clReflOO \cup \clVak}}
\newcommand{\clReflIIC}{\ensuremath{\clReflII \cup \clVak}}
\newcommand{\clVak}{\ensuremath{\mathsf{C}}}
\newcommand{\clVaka}[1]{\ensuremath{\clVak_{#1}}}
\newcommand{\clVakaAB}[2]{\ensuremath{\clVak^{#2}_{#1}}}
\newcommand{\clVako}{\ensuremath{\clVaka{0}}}
\newcommand{\clVaki}{\ensuremath{\clVaka{1}}}
\newcommand{\clEmpty}{\ensuremath{\mathsf{\emptyset}}}
\newcommand{\clL}{\ensuremath{\mathsf{L}}}
\newcommand{\clLo}{\ensuremath{\mathsf{L}_0}}
\newcommand{\clLi}{\ensuremath{\mathsf{L}_1}}
\newcommand{\clLc}{\ensuremath{\mathsf{L}_\mathrm{c}}}
\newcommand{\clLS}{\ensuremath{\mathsf{LS}}}
\newcommand{\clLambda}{\ensuremath{\mathsf{\Lambda}}}
\newcommand{\clV}{\ensuremath{\mathsf{V}}}
\newcommand{\clLambdac}{\ensuremath{\clIntVal{\clLambda}{0}{1}}}
\newcommand{\clVc}{\ensuremath{\clIntVal{\clV}{0}{1}}}
\newcommand{\clLambdao}{\ensuremath{\clIntVal{\clLambda}{0}{}}}
\newcommand{\clVo}{\ensuremath{\clIntVal{\clV}{0}{}}}
\newcommand{\clLambdai}{\ensuremath{\clIntVal{\clLambda}{}{1}}}
\newcommand{\clVi}{\ensuremath{\clIntVal{\clV}{}{1}}}
\newcommand{\clOmegaOne}{\ensuremath{\clAll(1)}}
\newcommand{\clIstar}{\ensuremath{\mathsf{I}^{*}}}
\newcommand{\clI}{\ensuremath{\mathsf{I}}}
\newcommand{\clIo}{\ensuremath{\mathsf{I}_0}}
\newcommand{\clIi}{\ensuremath{\mathsf{I}_1}}
\newcommand{\clIc}{\ensuremath{\mathsf{J}}}
\newcommand{\id}{\ensuremath{\mathrm{id}}}
\newcommand{\threshold}[2]{\ensuremath{\mathrm{th}^{#1}_{#2}}}
\DeclareMathOperator{\pr}{pr}
\newcommand{\Alt}{\ensuremath{\mathrm{Alt}}}
\newcommand{\clAlt}[1]{\ensuremath{\mathsf{A}^{#1}}}
\newcommand{\closys}[1]{\ensuremath{\mathcal{L}_{#1}}}
\newcommand{\clProj}[1]{\ensuremath{\mathsf{J}_{#1}}}
\DeclareMathOperator{\range}{Im}
\newcommand{\displaybump}{\hbox to \@totalleftmargin{\hfil}}
\tikzstyle{every node}=[circle, draw, fill=black, inner sep=0pt, minimum size=6pt, scale=1, font=\normalsize]
\tikzstyle{uusi}=[fill=black]
\tikzstyle{vanha}=[fill=none]
\begin{document}
\title[Clonoids with a monotone or discriminator source clone]{Clonoids of Boolean functions with a monotone or discriminator source clone}

\author{Erkko Lehtonen}

\address{%
    Department of Mathematics \\
    Khalifa University of Science and Technology \\
    P.O. Box 127788 \\
    Abu Dhabi \\
    United Arab Emirates
}

\date{June 28, 2024}

\begin{abstract}
Extending Sparks's theorem, we determine the cardinality of the lattice of $(C_1,C_2)$\hyp{}clonoids of Boolean functions in the cases where the target clone $C_2$ is the clone of projections. Moreover, we explicitly describe the $(C_1,C_2)$\hyp{}clonoids of Boolean functions in the cases where the source clone $C_1$ is one of the four clones of monotone functions or contains the discriminator function.
\end{abstract}

\maketitle


\section{Introduction}
\label{sec:introduction}
\numberwithin{theorem}{section}

Clones are a fundamental concept in universal algebra.
A \emph{clone} is a set of operations on a set $A$ that contains all projections and is closed under composition.
Clonoids are a generalization of clones.
For fixed clones $C_1$ (the \emph{source}) and $C_2$ (the \emph{target}) on sets $A$ and $B$, respectively, a \emph{$(C_1,C_2)$\hyp{}clonoid} is a set of functions of several arguments from $A$ to $B$ that is stable under right composition with $C_1$ and stable under left composition with $C_2$.

Given a function $f \colon A^n \to B$, the functions obtained by composing $f$ from the right by projections on $B$ are called \emph{minors} of $f$.
In other words, the minors of $f$ are those functions that are obtained from $f$ by permutation of arguments, introduction or deletion of fictitious arguments, and identification of arguments.
Classes of functions that are closed under formation of minors are called \emph{minor\hyp{}closed classes} or simply \emph{minions}.
Clones and clonoids are examples of minions.
Minions arise naturally in universal algebra as sets of operations induced by terms of height 1 on an algebra.
In the context of constraint satisfaction problems (CSPs), clones and minions have played a significant role in computational complexity analysis and classification (see the survey article by Barto et al.\ \cite{BarBulKroOpr}).

Clones, clonoids, minors, and minions have been studied from various perspectives in the past decades.
Although the terminology we use here is quite modern, these concepts have been present in the literature much earlier.
To the best of the author's knowledge, the term ``clone'' first appeared in the 1965 monograph of Cohn~\cite{Cohn}, who attributed it to Philip Hall, the term ``clonoid'' was introduced in the 2016 paper by Aichinger and Mayr \cite{AicMay}, and ``minion'' was coined by Opr\v{s}al around the year 2018 (see \cite[Definition~2.20]{BarBulKroOpr}, \cite{BulKroOpr}).
As examples of earlier work in which these concepts appear under different names, we would like to highlight Post's classification of clones on a two\hyp{}element set \cite{Post}, the paper of Rosenberg and Szendrei \cite{RosSze} where minors are called ``polymers'', and descriptions of minions and $(C_1,C_2)$\hyp{}clonoids as sets of polymorphisms of relation pairs, due to Pippenger \cite{Pippenger} and Couceiro and Foldes \cite{CouFol-2005,CouFol-2009}.
More recent studies on clonoids include the papers by Fioravanti \cite{Fioravanti-AU,Fioravanti-IJAC}, Kreinecker \cite{Kreinecker}, and Mayr and Wynne \cite{MayWyn}.

Our specific focus regarding clonoids is on systematically counting and enumerating all $(C_1,C_2)$\hyp{}clonoids.
An opportune starting point for our investigation was provided by the following remarkable result by Sparks about the cardinality of the lattice $\closys{(C_1,C_2)}$ of $(C_1,C_2)$\hyp{}clonoids, in the cases where $C_1$ is the clone of projections on a finite set $A$, denoted by $\clProj{A}$, and $C_2$ is an arbitrary clone on $\{0,1\}$.

\begin{theorem}[{Sparks~\cite[Theorem~1.3]{Sparks-2019}}]
\label{thm:Sparks}
Let $A$ be a finite set with $\card{A} > 1$, and let $B = \{0,1\}$.
Let $C$ be a clone on $B$.
Then the following statements hold.
\begin{enumerate}[label={\upshape(\roman*)}]
\item\label{thm:Sparks:finite} $\closys{(\clProj{A},C)}$ is finite if and only if $C$ contains a near\hyp{}unanimity operation.
\item\label{thm:Sparks:countable} $\closys{(\clProj{A},C)}$ is countably infinite if and only if $C$ contains a Mal'cev operation but no majority operation.
\item\label{thm:Sparks:uncountable} $\closys{(\clProj{A},C)}$ has the cardinality of the continuum if and only if $C$ contains neither a near\hyp{}unanimity operation nor a Mal'cev operation.\footnote{Recall that an operation $f \colon A^n \to A$ is a \emph{near\hyp{}unanimity operation} if it satisfies the identities $f(x, \dots, x, y, x, \dots, x) \approx x$, where the single $y$ on the left side may be at any argument position.
A ternary near\hyp{}unanimity operation is called a \emph{majority operation}.
A \emph{Mal'cev operation} is a ternary operation $f$ satisfying $f(x,x,y) \approx f(y,x,x) \approx y$.}
\end{enumerate}
\end{theorem}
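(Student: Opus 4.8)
The plan is to recast the counting problem in relational terms. Identifying each function $f\colon A^n \to \{0,1\}$ with the subset $f^{-1}(1) \subseteq A^n$, a $(\clProj{A},C)$\hyp{}clonoid becomes a family $\mathcal{K}$ of subsets of finite powers of $A$ closed under two kinds of operations: the \emph{minor} operations coming from right composition with projections of $A$ (which act on a relation $S \subseteq A^n$ by the coordinate substitutions induced by maps $[n]\to[m]$), and the set operations obtained by applying a member $c$ of $C$, columnwise, to a tuple of relations of the same arity. Thus conjunction and disjunction in $C$ contribute closure under intersection and union, negation contributes complementation, a majority operation contributes closure under the set\hyp{}theoretic median $(S_1\cap S_2)\cup(S_2\cap S_3)\cup(S_1\cap S_3)$, and the Boolean Mal'cev operation $x\oplus y\oplus z$ contributes closure under the ternary symmetric difference. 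I would then record, from Post's lattice, that for Boolean clones the presence of a near\hyp{}unanimity operation is equivalent to the presence of a majority operation, that the majority\hyp{}containing clones are exactly those above $\clSM=\langle\mathrm{maj}\rangle$, and that the Mal'cev\hyp{}containing clones are exactly those above $\langle x\oplus y\oplus z\rangle$. The three hypotheses then partition all Boolean clones, and since ``finite'', ``countably infinite'' and ``continuum'' are mutually exclusive cardinalities, it suffices to prove the three forward implications separately.

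For the finite case I would invoke a Baker--Pixley type argument: an $(m+1)$\hyp{}ary near\hyp{}unanimity operation in $C$ yields, through the columnwise median closure, a decomposition property to the effect that membership of a relation in $\mathcal{K}$ depends only on its projections to $m$\hyp{}element sets of coordinates. Consequently $\mathcal{K}$ is determined by its members of arity at most $m$, and since $A$ is finite there are only finitely many subsets of $A^{\le m}$, so there can be only finitely many admissible families $\mathcal{K}$, whence $\closys{(\clProj{A},C)}$ is finite. The point needing care is to formulate this decomposition for the clonoid as a whole---rather than for a single invariant relation, as in the classical Baker--Pixley setting---and to deduce that the entire clonoid is reconstructible from its bounded\hyp{}arity part.

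For the Mal'cev case, closure of each arity\hyp{}$n$ layer $\mathcal{K}_n \subseteq \mathcal{P}(A^n)$ under the columnwise ternary symmetric difference means, after identifying $\mathcal{P}(A^n)$ with the $\mathbb{F}_2$\hyp{}vector space $\mathbb{F}_2^{A^n}$, that every nonempty $\mathcal{K}_n$ is an affine subspace, since it is closed under the affine combination $a+b+c$; moreover the minor maps act linearly between layers. The absence of a majority operation is what prevents this affine data from collapsing to finitely many possibilities, so I would first exhibit a strictly increasing chain of such families to see that the lattice is infinite, and then bound it from above by coding each $\mathcal{K}$ by the countable sequence of its finite\hyp{}dimensional layers subject to the linear minor constraints, which leaves only countably many admissible codes. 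Matching these two bounds---simultaneously ruling out finiteness and the continuum---is the delicate part of this case.

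The continuum case is where I expect the main obstacle. The absence of both a near\hyp{}unanimity and a Mal'cev operation removes precisely the closure properties---median\hyp{}type decomposition and affine collapse---that bounded the first two cases, and the task is to produce $2^{\aleph_0}$ distinct clonoids. Since the set of all functions from finite powers of $A$ to $\{0,1\}$ is countable, there are at most continuum many clonoids, so only the lower bound is at issue. The route I would take is to construct a sequence $(R_i)_{i\in\IN}$ of relations over $A$ such that each $R_i$ lies outside the clonoid generated by $\{R_j : j\ne i\}$; the assignment $I \mapsto \langle R_i : i\in I\rangle$ is then injective on subsets $I\subseteq\IN$, since for $i\in I\setminus I'$ the left clonoid contains $R_i$ while the right one cannot, and this yields continuum many clonoids. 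The hard work is to design the $R_i$ so that no operation of $C$, applied columnwise to minors of the remaining relations, can reconstruct $R_i$; this is exactly the place where one must use quantitatively that $C$ lies strictly below both the near\hyp{}unanimity and the Mal'cev thresholds of Post's lattice.
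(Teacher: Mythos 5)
The paper does not prove this theorem at all: it is quoted verbatim from Sparks~\cite{Sparks-2019} as a known result, so there is no in\hyp{}paper proof to compare against and your proposal has to stand on its own. On its own terms it has several genuine gaps. The most concrete one is that your Post\hyp{}lattice bookkeeping rests on a false statement: on $\{0,1\}$, containing a near\hyp{}unanimity operation is \emph{not} equivalent to containing a majority operation. The unique majority operation on $\{0,1\}$ is $\mu$, and the clones containing it are exactly those above $\clSM$; but, for example, $\clWk{3} = \clonegen{\mathord{\rightarrow}, \threshold{4}{2}}$ contains the quaternary near\hyp{}unanimity operation $\threshold{4}{2}$ while $\mu \notin \clWk{3}$ (the subset $\{001,010,100\}$ of $\mu^{-1}(0)$ is not $0$\hyp{}separating). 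The paper's introduction explicitly separates the clones above $\clSM$ from the remaining clones containing a near\hyp{}unanimity operation for precisely this reason. The trichotomy is still a partition of the Boolean clones --- no linear clone contains a near\hyp{}unanimity term, so Mal'cev together with near\hyp{}unanimity does force a majority --- but your justification of exclusivity is wrong, and case~(i) must genuinely handle near\hyp{}unanimity operations of every arity. Relatedly, your Baker--Pixley step is misformulated: the near\hyp{}unanimity operation sits in the \emph{target} clone and acts on values, so the interpolation property it yields says that membership of $S \subseteq A^n$ in $\mathcal{K}_n$ is determined by the restrictions of $S$ to $m$\hyp{}element subsets of the \emph{domain} $A^n$, not by projections onto $m$\hyp{}element sets of coordinate positions; converting this into a bounded\hyp{}arity statement requires the minor operations and yields the arity bound $\card{A}^m$ rather than $m$. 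That conversion is exactly the step you defer.

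In case~(ii) the upper bound does not follow from what you wrote. Even granting that each layer $\mathcal{K}_n$ is one of finitely many affine subspaces of $\mathbb{F}_2^{A^n}$ and that the minor maps are linear, the set of sequences $(\mathcal{K}_n)_{n \in \IN}$ compatible with those maps could a priori still have the cardinality of the continuum --- an infinite product of finite sets of size at least $2$ does --- so ``only countably many admissible codes'' is an assertion, not a consequence of the affine structure. What actually has to be proved is that every such clonoid is finitely generated (equivalently, determined by finitely many of its layers), and that is the substantive content of statement~(ii). Finally, in case~(iii) you correctly observe that only the lower bound is at issue and reduce it to constructing an infinite family $(R_i)_{i \in \IN}$ with each $R_i$ outside the clonoid generated by the others; but that construction, which must be carried out for every clone $C$ lying below both thresholds, is the entire difficulty of the statement and is left open. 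As it stands the proposal is a plausible road map rather than a proof.
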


In a series of earlier papers of the author's \cite{CouLeh-Lcstability,Lehtonen-SM,Lehtonen-nu}, statements \ref{thm:Sparks:finite} and \ref{thm:Sparks:countable} of Theorem~\ref{thm:Sparks} were sharpened and the $(C_1,C_2)$\hyp{}clonoids of Boolean functions were described in the cases where the clonoid lattice $\closys{(C_1,C_2)}$ is finite or countably infinite.
The $(C_1,C_2)$\hyp{}clonoids of Boolean functions were explicitly described in the cases where $C_2$ contains either a Mal'cev operation \cite{CouLeh-Lcstability} or a majority operation \cite{Lehtonen-SM}, i.e., $C_2 \supseteq \clLc$ or $C_2 \supseteq \clSM$, where $\clLc$ is the clone of idempotent linear functions, generated by the Mal'cev operation $x_1 + x_2 + x_3$, and $\clSM$ is the clone of self\hyp{}dual monotone functions, generated by the majority operation.
As for the remaining clones $C_1$ containing a near\hyp{}unanimity operations, a description was obtained in terms of ideals of the minorant\hyp{}minor poset \cite{Lehtonen-nu}.
However, this description is not as explicit as those mentioned above, because the minorant\hyp{}minor poset is not yet very well understood.

As for statement \ref{thm:Sparks:uncountable} of Theorem~\ref{thm:Sparks}, such an explicit description may be unattainable for uncountable clonoid lattices.
Even so, the theorem could still be refined in such cases by classifying arbitrary pairs $(C_1,C_2)$ of clones according to whether the clonoid lattice $\closys{(C_1,C_2)}$ is finite, countably infinite, or uncountable; note that in Theorem~\ref{thm:Sparks}, the source clone is always the clone of projections.
In this paper, we are going to take a few modest first steps towards this goal.

A starting point is provided by the following theorem that can be derived from results from a series earlier papers of the author's \cite{Lehtonen-ULM,Lehtonen-k-posets,LehNes-clique,LehSze-discriminator,LehSze-quasilinear}; see Sections~\ref{sec:review}--\ref{sec:discriminator} for details.
Note that this is somewhat orthogonal to Sparks's Theorem~\ref{thm:Sparks}; while in Theorem~\ref{thm:Sparks} the source clone $C_1$ is always the clone of projections and the target clone $C_2$ is arbitrary, in the following theorem it is the target clone $C_2$ that is fixed as the clone of projections and the source clone $C_1$ is arbitrary.

\begin{theorem}
\label{thm:C-minors}
Let $C$ be a clone on $\{0,1\}$, and let $\clIc$ be the clone of projections on $\{0,1\}$.
Then the following statements hold.
\begin{enumerate}[label={\upshape(\roman*)}]
\item\label{thm:C-minors:F}
$\closys{(C,\clIc)}$ is finite if and only if $C$ contains the discriminator function.\footnote{The \emph{discriminator function} on $A$ is the ternary operation $t$ defined by the rule
$t(x,y,z) = z$ if $x = y$, and $t(x,y,z) = x$ otherwise.}
\item\label{thm:C-minors:C}
$\closys{(C,\clIc)}$ is countably infinite if and only if $\clonegen{\mathord{\wedge}, \mathord{\vee}} \subseteq C \subseteq \clonegen{\mathord{\wedge}, \mathord{\vee}, 0, 1}$.
\item\label{thm:C-minors:U}
$\closys{(C,\clIc)}$ has the cardinality of the continuum otherwise.
\end{enumerate}
\end{theorem}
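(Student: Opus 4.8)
The plan is to identify $\closys{(C,\clIc)}$ with a lattice of order ideals and then read its cardinality off from the structure of the underlying poset. Since the target clone is the clone of projections, stability under left composition with $\clIc$ is automatic: for a projection $p$ one has $p(f_1,\dots,f_k)=f_i$. Hence a $(C,\clIc)$-clonoid is precisely a class of Boolean functions closed under taking $C$-minors, i.e.\ under right composition with $C$. Writing $\leq_C$ for the $C$-minor quasiorder on $\clAll$ (so $f\leq_C g$ iff $f=g(h_1,\dots,h_n)$ for some $h_1,\dots,h_n\in C$) and $\equiv_C$ for the induced equivalence, the $(C,\clIc)$-clonoids are exactly the downward closed subsets of $\leq_C$. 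Thus $\closys{(C,\clIc)}\cong\Ideals(\clAll/{\equiv_C})$, and since $\clAll$ is countable it remains to determine the cardinality of the ideal lattice of this countable quotient poset.

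The second ingredient is a cardinality dichotomy for ideal lattices of well-founded posets. The orders $\leq_C$ relevant here are well-founded, because a proper $C$-minor step strictly decreases a suitable complexity measure (algebraic degree when $C=\clL$, essential arity when $C=\clIc$) while there are only finitely many $\equiv_C$-classes at each level. For a countable well-founded poset $P$, the lattice $\Ideals(P)$ is finite iff $P$ is finite; it is countably infinite iff $P$ is an infinite well-quasi-order, i.e.\ has no infinite antichain; and it has the cardinality of the continuum iff $P$ contains an infinite antichain, since then each of the continuum-many subsets of the antichain generates a distinct ideal. So the theorem reduces to a trichotomy for $\clAll/{\equiv_C}$: finite, infinite well-quasi-order, or containing an infinite antichain. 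This is streamlined by monotonicity: $C\subseteq C'$ implies $\leq_C\subseteq\leq_{C'}$, hence $\equiv_C\subseteq\equiv_{C'}$ and $\clAll/{\equiv_{C'}}$ is a monotone quotient of $\clAll/{\equiv_C}$, giving $\card{\Ideals(\clAll/{\equiv_{C'}})}\le\card{\Ideals(\clAll/{\equiv_C})}$. Thus finiteness propagates to larger clones and continuum to smaller clones, reducing each regime to boundary clones in Post's lattice.

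With these tools I would verify three implications that cover a partition of all clones on $\{0,1\}$. First, if $C$ contains the discriminator $t$, then $C$ lies in the principal filter generated by $\clonegen{t}=\clS\cap\clTc$, the clone of idempotent self-dual functions (the term operations of the quasiprimal algebra $(\{0,1\};t)$); this filter is finite, and by \cite{LehSze-discriminator} the poset $\clAll/{\equiv_{\clS\cap\clTc}}$ is finite, so monotonicity yields finitely many clonoids for every such $C$. Second, the interval $\clonegen{\mathord{\wedge},\mathord{\vee}}\subseteq C\subseteq\clonegen{\mathord{\wedge},\mathord{\vee},0,1}$ is exactly the set of four monotone clones $\clMc$, $\clMo$, $\clMi$, $\clM$; for these one shows that $\clAll/{\equiv_C}$ is an infinite well-quasi-order, whence the ideal lattice is countably infinite. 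Third, for every remaining clone one exhibits an infinite antichain in $\leq_C$; by monotonicity it suffices to do this at the maximal clones of the remaining region, among them the affine clone $\clL$ and the members of the infinite families $\clUk{k}$ and $\clWk{k}$, using the antichain constructions of \cite{Lehtonen-k-posets,LehNes-clique,LehSze-quasilinear}. Since the four monotone clones do not contain $t$, these three conditions are mutually exclusive and jointly exhaustive, so the stated characterisations of finite, countable, and continuum cardinality all follow.

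The main obstacle is the boundary between the countable and the continuum regimes: proving that the well-quasi-order property singles out exactly the four monotone clones among those not containing the discriminator. Concretely, one must show that every minimal enlargement of a monotone clone, and every clone not in the discriminator filter that is incomparable with the monotone clones, already carries an infinite antichain, while the four monotone clones admit none. This is delicate precisely because Post's lattice contains the infinite families $\clUk{k}$ and $\clWk{k}$ accumulating near the monotone and near-unanimity clones, so the antichain constructions must be made to persist uniformly along these families without ever applying to $\clM$; locating this dividing line exactly is where the fine structural analysis recalled from the cited papers is indispensable.
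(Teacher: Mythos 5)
Your proposal follows essentially the same route as the paper's: reduce $(C,\clIc)$\hyp{}clonoids to downsets of the $C$\hyp{}minor quasi\hyp{}order (Lemma~\ref{lem:clon-C-min}), use monotonicity of the clonoid lattice in the source clone (Lemma~\ref{lem:clonoid-inclusion}) to push each regime to boundary clones of Post's lattice, and invoke the same earlier structural results on $C$\hyp{}minor posets for the three cases: a finite poset for discriminator clones, a well\hyp{}founded width\hyp{}two poset (alternating $2$\hyp{}chains) for the four monotone clones, and infinite antichains for everything else. Two caveats, neither fatal. First, your blanket assertion that the relevant orders $\leq_C$ are well\hyp{}founded is false in general: for clones in the interval from $\clSM$ up to $\clUk{2}$ and $\clWk{2}$ the $C$\hyp{}minor poset is universal for \emph{all} countable posets, hence contains infinite strictly descending chains. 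This is harmless only because well\hyp{}foundedness is load\hyp{}bearing solely where you invoke the wqo property, i.e.\ for the four monotone clones, whose minor posets really are well\hyp{}founded of width two; the continuum case needs only the antichain. Second, the maximal clones of the ``otherwise'' region are $\clLambda$, $\clV$, $\clL$, $\clUk{2}$, and $\clWk{2}$; your prose omits $\clLambda$ and $\clV$, which are not subclones of any of the others, although the quasilinear\hyp{}clones result you cite is precisely the one that supplies the antichains for them.
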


\begin{proof}
This puts together Theorems \ref{thm:clique}, \ref{thm:mon-clonoids}, and \ref{thm:disc-clonoids}.	
\end{proof}

In this paper, we are going to sharpen statements \ref{thm:C-minors:F} and \ref{thm:C-minors:C} of Theorem~\ref{thm:C-minors} and find explicit descriptions of the $(C_1,C_2)$\hyp{}clonoids when $C_1$ contains the discriminator function or $\clonegen{\mathord{\wedge}, \mathord{\vee}} \subseteq C \subseteq \clonegen{\mathord{\wedge}, \mathord{\vee}, 0, 1}$ and $C_2$ is an arbitrary clone on $\{0,1\}$.

The paper is organized as follows.

\begin{itemize}
\item
Section~\ref{sec:prel}:
We recall basic notions and some known results from the theory of minors, minions, clones, and clonoids.

\item
Section~\ref{sec:Bf}:
We define properties of Boolean functions and introduce notation for certain sets of Boolean functions that we need to present our results.

\item
Section~\ref{sec:review}:
We review the author's earlier results on a notion called ``$C$\hyp{}minor'', and we explain how such results can be translated to results about $(C_1,C_2)$\hyp{}clonoids in cases where the target clone $C_2$ is the clone $\clIc$ of projections.

\item
Section~\ref{sec:uncountable}:
Making use of those earlier results on $C$\hyp{}minors, we first consider $(C_1,C_2)$\hyp{}clonoids in cases where the source clone $C_1$ does not contain the discriminator function and is not a clone of monotone functions.
In such cases we have an uncountable infinitude of $(C_1,\clIc)$\hyp{}clonoids, and it might not be possible to explicitly describe them all.
It remains out of the scope of the current paper to delve more deeply into this situation.

\item
Section~\ref{sec:monotone}:
We describe the $(C_1,C_2)$\hyp{}clonoids in cases where the source clone $C_1$ is a clone of monotone functions.
Our key tool is the homomorphism order of $k$\hyp{}posets.

\item
Section~\ref{sec:discriminator}:
We describe the $(C_1,C_2)$\hyp{}clonoids in cases where the source clone $C_1$ contains the discriminator function.

\item
Section~\ref{sec:remarks}:
We conclude the paper with a few remarks on directions for further research.
\end{itemize}


\section{Preliminaries}
\label{sec:prel}
\subsection{General}
\label{subs:prel:general}

The set of natural numbers is $\IN = \{0, 1, 2, \ldots{}\}$, and the set of integers is $\ZZ = \{\ldots{}, -2, -1, 0, 1, 2, \ldots{}\}$.
For $a, b \in \ZZ$, the \emph{interval} from $a$ to $b$ is $[a,b] := \{ \, n \in \ZZ \mid a \leq n \leq b \, \}$.
Note that if $a > b$, then $[a,b] = \emptyset$.
We also make use of the abbreviated notation $\nset{n} := [1,n] = \{ 1, 2, \dots, n \}$.

We denote $n$\hyp{}tuples by boldface letters and their components with the corresponding italic letters.
For example, we may write $\vect{a} = (a_1, \dots, a_n)$.

\subsection{Quasi\hyp{}orders, partial orders, lattices, and closure systems}

A reflexive and transitive binary relation $\leq$ on a set $Q$ is called a \emph{quasi\hyp{}order},
and the structure $(Q,{\leq})$ is called a \emph{quasi\hyp{}ordered set} or, briefly, a \emph{qoset}.
When there is no risk of confusion, we may refer to the structure $(Q,{\leq})$ simply as $Q$.
An antisymmetric quasi\hyp{}order is called a \emph{partial order}.
A structure $(P,{\leq})$, where $\leq$ is a partial order on $P$, is called a \emph{partially ordered set} or, briefly, a \emph{poset.}

Two elements $a$ and $b$ of a quasi\hyp{}ordered set $Q$ are \emph{equivalent} if $a \leq b$ and $b \leq a$, and we write $a \equiv b$ to denote this fact.
The relation $\equiv$ is indeed an equivalence relation.
The quasi\hyp{}order $\leq$ induces a partial order, also denoted by $\leq$, on the set $Q / {\equiv}$ of $\equiv$\hyp{}equivalence classes: $a / {\equiv} \leq b / {\equiv}$ if and only if $a \leq b$.

Let $(Q,{\leq})$ be a quasi\hyp{}ordered set.
A subset $D$ of $Q$ is called a \emph{downset}, or an \emph{initial segment}, if for all $d \in D$ and $q \in Q$, $q \leq d$ implies $q \in D$.
For any subset $S$, the downset generated by $S$, denoted by ${\downarrow} S$, is the least downset that includes $S$, i.e., ${\downarrow} S = \{ \, q \in Q \mid \exists s \in S \,\, q \leq s \, \}$.
When $S$ is a singleton $\{a\}$, we write simply ${\downarrow} a$ for ${\downarrow} \{a\}$.
The notion of \emph{upset} (or \emph{final segment}) is defined dually, and the upset generated by $S$ is denoted by ${\uparrow} S$.

For subsets $A, B \subseteq Q$, the \emph{interval} between $A$ and $B$ is $[A,B] := {\uparrow} A \cap {\downarrow} B = \{ \, x \in Q \mid \exists a \in A \, \exists b \in B \, ( a \leq x \leq b ) \, \}$.
Again, if $A$ or $B$ is a singleton, we may simplify the notation and write, for example $[a,b]$ for $[\{a\},\{b\}]$.
Note that this is consistent with the notation for integer intervals introduced in Subsection~\ref{subs:prel:general}; in that case we consider intervals in the poset of integers with their natural order.

An element $a$ of a poset $(P,{\leq})$ is the \emph{least element} of $P$ if $a \leq x$ for all $x \in P$, and, dually, $a$ is the \emph{greatest element} of $P$ if $x \leq a$ for all $x \in P$.
An element $b \in P$ is an \emph{upper bound} of a subset $S \subseteq P$ if $s \leq b$ for all $s \in S$, and $b$ is a \emph{lower bound} of $S$ if $b \leq s$ for all $s \in S$.
A poset $P$ is called a \emph{lattice} if every two\hyp{}element subset $\{a, b\}$ of $P$ has a least upper bound and a greatest lower bound (i.e., the set of upper bounds of $\{a,b\}$ has a least element and the set of lower bounds of $\{a,b\}$ has a greatest element).
A lattice is \emph{complete} if every one of its subsets has a least upper bound and a greatest lower bound.

A collection $\mathcal{S}$ of subsets of a set $A$ is called a \emph{closure system} on $A$ if $A \in \mathcal{S}$ and for all $\mathcal{T} \subseteq \mathcal{S}$, $\bigcap \mathcal{T} \in \mathcal{S}$.
The subsets of $A$ belonging to a closure system $\mathcal{S}$ are called the \emph{closed sets} of $\mathcal{S}$.
A function $c \colon \mathcal{P}(A) \to \mathcal{P}(A)$ is called a \emph{closure operator} on $A$ if it is \emph{extensive} ($X \subseteq c(X)$), \emph{monotone} ($X \subseteq Y$ implies $c(X) \subseteq c(Y)$) and \emph{idempotent} ($c(c(X)) = c(X)$).
The sets of the form $c(X)$ for some $X \subseteq A$ are called the \emph{closed sets} of $c$.
We say that $c(X)$ is the \emph{closure} of $X$ and that $X$ \emph{generates} $c(X)$ or $X$ is a \emph{generating set} of $c(X)$.
The closed sets of $c$ form a closure system on $A$.
Conversely, given a closure system $\mathcal{S}$ on $A$, the mapping $c_\mathcal{S} \colon \mathcal{P}(A) \to \mathcal{P}(A)$, $c_\mathcal{S}(X) = \bigcap \{ \, Y \in \mathcal{S} \mid X \subseteq Y \, \}$ is a closure operator whose closed sets are precisely the members of $\mathcal{S}$.
A closure system, ordered by subset inclusion, constitutes a complete lattice.

\subsection{Function class composition, minors, and minions}

Let $A$ and $B$ be sets.
A mapping of the form $f \colon A^n \to B$ for some $n \in \IN_{+}$ is called a \emph{function of several arguments} from $A$ to $B$, or, briefly, a \emph{function} from $A$ to $B$.
The number $n$ is called the \emph{arity} of $f$.
When $A = B$, we call such a function an \emph{operation} on $A$.
We denote by $\mathcal{F}_{AB}^{(n)}$ the set of all $n$\hyp{}ary functions from $A$ to $B$, i.e., $\mathcal{F}_{AB} = B^{A^n}$, and we let $\mathcal{F}_{AB} := \bigcup_{n \in \IN_{+}} \mathcal{F}_{AB}^{(n)}$ be the set of all finitary functions from $A$ to $B$.
Similarly, we denote by $\mathcal{O}_A^{(n)}$ the set of all $n$\hyp{}ary operations on $A$, and we let $\mathcal{O}_A := \bigcup_{n \in \IN_{+}} \mathcal{O}_A^{(n)}$.
For any set $K \subseteq \mathcal{F}_{AB}$ and $n \in \IN_{+}$, the \emph{$n$\hyp{}ary part} of $K$ is $K^{(n)} := K \cap \mathcal{F}_{AB}^{(n)}$.

The $i$\hyp{}th $n$\hyp{}ary \emph{projection} on $A$ is the operation $\pr_i^{(n)} \colon A^n \to A$ defined by $\pr_i^{(n)}(a_1, \dots, a_n) = a_i$.
We denote by $\clProj{A}$ the set of all projections on $A$.

Let $f \in \mathcal{F}_{AB}^{(n)}$.
The $i$\hyp{}th argument is \emph{essential} in $f$ if there exist $a_1, \dots, a_n, a'_i \in A$ such that $f(a_1, \dots, a_{i-1}, a_i, a_{i+1}, \dots, a_n) \neq f(a_1, \dots, a_{i-1}, a'_i, a_{i+1}, \dots, a_n)$.
An argument that is not essential is \emph{fictitious.}
The number of essential arguments in $f$ is called the \emph{essential arity} of $f$.
We say that a set $F \subseteq \mathcal{F}_{AB}$ of functions is \emph{essentially at most unary} if every member of $F$ is essentially at most unary.

For $f \in \mathcal{F}_{BC}^{(n)}$ (the \emph{outer function}) and $g_1, \dots, g_n \in \mathcal{F}_{AB}^{(m)}$ (the \emph{inner functions}), the composition of $f$ with $g_1, \dots, g_n$ is the function $f(g_1, \dots, g_n) \in \mathcal{F}_{AC}^{(m)}$ defined by the rule
$f(g_1, \dots, g_n)(\vect{a}) := f(g_1(\vect{a}), \dots, g_n(\vect{a}))$ for all $\vect{a} \in A^n$.

The concept of functional composition can be extended to sets of functions as follows.
If $I \subseteq \mathcal{F}_{BC}$ and $J \subseteq \mathcal{F}_{AB}$, then the \emph{composition} of $I$ with $J$ is
\[
I J := \{ \, f(g_1, \dots, g_n) \mid m \in \IN_{+}, \, n \in \IN_{+} \, f \in I^{(n)}, \, g_1, \dots, g_n \in J^{(m)} \, \}.
\]

Function class composition allows us to define many useful concepts in a compact way.
The first such definition is that of minors and minions.
For $f \in \mathcal{F}_{AB}^{(m)}$ and $g \in \mathcal{F}_{AB}^{(n)}$, we say that $f$ is a \emph{minor} of $g$ if $f \in \{g\} \clProj{A}$, or, equivalently, if there exists a map $\sigma \colon \nset{n} \to \nset{m}$ such that $f = g(\pr_{\sigma(1)}^{(m)}, \dots, \pr_{\sigma(n)}^{(m)})$.
In other words, the minors of $g$ are those functions that can be obtained from $g$ by permutation of arguments, introduction or deletion of fictitious arguments, and identification of arguments.
We say that a class $K \subseteq \mathcal{F}_{AB}$ is \emph{minor\hyp{}closed} or that it is a \emph{minion} if $K \, \clProj{A} \subseteq K$.

Let us review some basic properties of function class composition.
Function class composition is monotone, that is,
if $I, I' \subseteq \mathcal{F}_{BC}$ and $J, J' \subseteq \mathcal{F}_{AB}$ satisfy $I \subseteq I'$ and $J \subseteq J'$, then $I J \subseteq I' J'$.
Function class composition is not associative.
Associativity nevertheless holds for triples satisfying special conditions.

\begin{lemma}[{Couceiro, Foldes~\cite[Associativity Lemma]{CouFol-2007,CouFol-2009}}]
\label{lem:CF-associativity}
Let $A$, $B$, $C$, and $D$ be arbitrary nonempty sets, and let $I \subseteq \mathcal{F}_{CD}$, $J \subseteq \mathcal{F}_{BC}$, $K \subseteq \mathcal{F}_{AB}$.
Then the following statements hold.
\begin{enumerate}[label=\upshape{(\roman*)}]
\item $(IJ)K \subseteq I(JK)$.
\item If $J$ is minor\hyp{}closed, then $(IJ)K = I(JK)$.
\end{enumerate}
\end{lemma}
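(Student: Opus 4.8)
The plan is to unwind the definition of function class composition and reduce everything to the elementary associativity of the composition of individual functions, with the only real subtlety concentrated in the second statement.

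For statement (i), I would take an arbitrary element of $(IJ)K$. By definition it has the form $h(k_1, \dots, k_m)$ with $h \in (IJ)^{(m)}$ and $k_1, \dots, k_m \in K^{(q)}$; and, again by definition, $h = f(g_1, \dots, g_n)$ for some $f \in I^{(n)}$ and $g_1, \dots, g_n \in J^{(m)}$ (the arity of $h$ must equal $m$, the common arity of the $g_i$). Setting $g_i' := g_i(k_1, \dots, k_m)$, a direct evaluation at an arbitrary $\vect{a}$ shows $h(k_1, \dots, k_m)(\vect{a}) = f(g_1'(\vect{a}), \dots, g_n'(\vect{a}))$, that is, $h(k_1, \dots, k_m) = f(g_1', \dots, g_n')$. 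Since each $g_i' \in (JK)^{(q)}$ and $f \in I^{(n)}$, this element lies in $I(JK)$, proving the inclusion.

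For statement (ii), the nontrivial direction is $I(JK) \subseteq (IJ)K$, and this is where minor\hyp{}closedness of $J$ enters. An arbitrary element of $I(JK)$ has the form $f(h_1, \dots, h_n)$ with $f \in I^{(n)}$ and $h_1, \dots, h_n \in (JK)^{(r)}$. Writing each $h_i = g_i(k_{i,1}, \dots, k_{i,m_i})$ with $g_i \in J^{(m_i)}$ and $k_{i,j} \in K^{(r)}$ (all inner functions share the arity $r$ because the $h_i$ do), the obstacle is that the tuples of $K$\hyp{}functions, and even their lengths $m_i$, differ from one $i$ to the next, so I cannot immediately present $f(h_1, \dots, h_n)$ as a single composition of a member of $IJ$ with functions of $K$. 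The remedy is to concatenate all of these inner functions into one list $K_1, \dots, K_M$ of common arity $r$, where $M := m_1 + \dots + m_n$, and to replace each $g_i$ by the minor $\tilde{g}_i \in \mathcal{F}_{BC}^{(M)}$ obtained by adjoining fictitious arguments so that $\tilde{g}_i$ reads off exactly the block of coordinates corresponding to $g_i$; then $\tilde{g}_i(K_1, \dots, K_M) = h_i$. Because $\tilde{g}_i$ is a minor of $g_i$ and $J$ is minor\hyp{}closed, we have $\tilde{g}_i \in J^{(M)}$, so $F := f(\tilde{g}_1, \dots, \tilde{g}_n) \in IJ$. Applying the evaluation of statement (i) once more yields $F(K_1, \dots, K_M) = f(h_1, \dots, h_n)$, and since $K_1, \dots, K_M \in K^{(r)}$ this exhibits the element in $(IJ)K$.

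I expect the bookkeeping of the concatenation to be the only genuine obstacle: one has to track the starting index of each block, define the minors $\tilde{g}_i$ precisely, and record that minor\hyp{}closedness of $J$ is exactly what guarantees $\tilde{g}_i \in J$. Everything else is a mechanical evaluation of both sides at a generic argument tuple $\vect{a}$.
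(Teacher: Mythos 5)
Your proof is correct: part (i) is the mechanical evaluation you describe, and in part (ii) the padding of each $g_i$ to a common-arity minor $\tilde{g}_i$ (using fictitious arguments indexed by the concatenated blocks) is exactly the point where minor-closedness of $J$ is needed, and the bookkeeping you outline goes through without obstruction. Note that the paper itself states this lemma without proof, citing Couceiro and Foldes, so there is no in-paper argument to compare against; your argument is the standard one from the cited source.
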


\begin{lemma}[Unary Associativity Lemma]
\label{lem:UAL}
Let $A$, $B$, $C$, and $D$ be arbitrary nonempty sets, and let $I \subseteq \mathcal{F}_{CD}$, $J \subseteq \mathcal{F}_{BC}$, $K \subseteq \mathcal{F}_{AB}$.
If $I$ is essentially at most unary and contains the unary minors of its members, then $(IJ)K = I(JK)$.
\end{lemma}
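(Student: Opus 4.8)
The plan is to get the inclusion $(IJ)K \subseteq I(JK)$ for free from part~(i) of Lemma~\ref{lem:CF-associativity}, which holds with no hypotheses whatsoever, and then to concentrate all the work on the reverse inclusion $I(JK) \subseteq (IJ)K$, which is where the hypothesis on $I$ must be spent. Combining the two inclusions yields the claimed equality.

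For the reverse inclusion I would take a generic element $f(h_1, \dots, h_n) \in I(JK)$, where $f \in I^{(n)}$ and $h_1, \dots, h_n \in (JK)^{(p)}$, and exploit the fact that $f$, being essentially at most unary, factors through a single unary function. Concretely, let $\phi \colon C \to D$ be the unary minor of $f$ obtained by identifying all arguments, that is, $\phi(y) = f(y, \dots, y)$. If $f$ has a unique essential argument $j$, then $f(c_1, \dots, c_n)$ depends only on $c_j$, so $f(c_1, \dots, c_n) = \phi(c_j)$ for all arguments; if $f$ is essentially nullary, the same equation holds for an arbitrary choice of $j$, since $\phi$ is then constant. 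Evaluating pointwise gives $f(h_1, \dots, h_n) = \phi(h_j)$. The assumption that $I$ contains the unary minors of its members is exactly what guarantees $\phi \in I^{(1)}$.

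Having collapsed the $n$\hyp{}fold composition to the single composition $\phi(h_j)$ with a \emph{unary} outer function, I would unfold $h_j = g(k_1, \dots, k_q)$ with $g \in J^{(q)}$ and $k_1, \dots, k_q \in K^{(p)}$, and then invoke the ordinary associativity of composition for individual functions: since $\phi$ is unary, a routine pointwise check gives $\phi\bigl(g(k_1, \dots, k_q)\bigr) = \bigl(\phi(g)\bigr)(k_1, \dots, k_q)$. Now $\phi(g) \in I^{(1)} J^{(q)} \subseteq IJ$, and $\phi(g)$ has arity $q$, so $\bigl(\phi(g)\bigr)(k_1, \dots, k_q) \in (IJ)K$. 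Thus $f(h_1, \dots, h_n) \in (IJ)K$, as required.

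The difficulties here are bookkeeping rather than conceptual. The point that genuinely needs the hypothesis is the factorization $f(h_1, \dots, h_n) = \phi(h_j)$: it is precisely the at\hyp{}most\hyp{}unary condition that lets the single unary minor $\phi$ absorb the composition, and the closure of $I$ under unary minors that keeps $\phi$ inside $I$. I would also take care to treat the essentially nullary case uniformly (nonemptiness of $(JK)^{(p)}$, which holds because $n \geq 1$, ensures a suitable $g$ and $k_l$ are available) and to keep the arities and the domains and codomains across $\mathcal{F}_{CD}$, $\mathcal{F}_{BC}$, $\mathcal{F}_{AB}$ consistent throughout. No genuinely hard step arises, because the unary outer function removes the mismatch of inner arities that obstructs associativity in the general setting of Lemma~\ref{lem:CF-associativity}.
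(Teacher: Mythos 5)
Your proposal is correct and follows essentially the same route as the paper's proof: both derive $(IJ)K \subseteq I(JK)$ from Lemma~\ref{lem:CF-associativity}, and for the reverse inclusion both collapse the outer function to its unary minor $\varphi' = \varphi(\pr^{(1)}_1,\dots,\pr^{(1)}_1)$ acting on the single essential inner function, then regroup $\varphi' \circ (\lambda(\kappa_1,\dots,\kappa_m)) = (\varphi' \circ \lambda)(\kappa_1,\dots,\kappa_m)$. Your explicit treatment of the essentially nullary case is a small extra care the paper leaves implicit, but the argument is the same.
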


\begin{proof}
By Lemma~\ref{lem:CF-associativity}, $(IJ)K \subseteq I(JK)$, so it remains to show that $I(JK) \subseteq (IJ)K$.
Let $f \in I(JK)$.
Then $f = \varphi ( \gamma_1, \dots, \gamma_n )$ for some $\varphi \in I$ and $\gamma_1, \dots, \gamma_n \in JK$.
Because $\varphi$ is essentially at most unary, $\varphi = \varphi' \circ \pr^{(n)}_i$ for some $i \in \nset{n}$ and for the unary minor $\varphi' := \varphi( \pr^{(1)}_1, \dots, \pr^{(1)}_1)$ of $\varphi$; note that $\varphi' \in I$ because $I$ contains the unary minors of its members.
Thus $f = (\varphi' \circ \pr^{(n)}_i) (\gamma_1, \dots, \gamma_n) = \varphi' \circ (\pr^{(n)}_i (\gamma_1, \dots, \gamma_n)) = \varphi' \circ \gamma_i$.
Because $\gamma_i \in JK$, we have $\gamma_i = \lambda ( \kappa_1, \dots, \kappa_m )$ for some $\lambda \in J$ and $\kappa_1, \dots, \kappa_m \in K$, so
\[
f
= \varphi' ( \lambda ( \kappa_1, \dots, \kappa_m ) )
= ( \varphi' \circ \lambda  ) ( \kappa_1, \dots, \kappa_m )
\in (IJ)K.
\qedhere
\]
\end{proof}

Function class composition is right\hyp{}distributive over arbitrary unions.
Left\hyp{}distributivity does not hold in general (not even for finite unions), but we would like to highlight a particular case where left\hyp{}distributivity still holds.

\begin{lemma}
\label{lem:CompUnion}
Let $A$, $B$, and $C$ be arbitrary nonempty sets.
\begin{enumerate}[label=\upshape{(\roman*)}]
\item\label{lem:CompUnion:L}
Let $F_i \subseteq \mathcal{F}_{BC}$ \textup{(}$i \in I$\textup{)} and $G \subseteq \mathcal{F}_{AB}$.
Then $(\bigcup_{i \in I} F_i) G = \bigcup_{i \in I} (F_i G)$.
\item\label{lem:CompUnion:R}
Let $F \subseteq \mathcal{F}_{BC}$ and $G_j \subseteq \mathcal{F}_{AB}$ \textup{(}$j \in J$\textup{)}.
If $F$ is essentially at most unary, then $F ( \bigcup_{j \in J} G_j ) = \bigcup_{j \in J} (F G_j)$.
\end{enumerate}
\end{lemma}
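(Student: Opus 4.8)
The plan is to treat the two parts separately, observing that each amounts to a pair of inclusions and that in both parts one of the two inclusions is an immediate consequence of the monotonicity of function class composition recorded just before the lemma. For part~\ref{lem:CompUnion:L}, both inclusions are elementary and require no hypothesis on $F$; for part~\ref{lem:CompUnion:R}, the inclusion $\supseteq$ is again pure monotonicity, while the reverse inclusion $\subseteq$ is where the assumption that $F$ is essentially at most unary does the real work. I expect this last inclusion to be the only genuine obstacle.

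For part~\ref{lem:CompUnion:L}, I would first note that forming $n$\hyp{}ary parts commutes with unions, i.e.\ $(\bigcup_{i \in I} F_i)^{(n)} = \bigcup_{i \in I} F_i^{(n)}$ for every $n \in \IN_{+}$, since intersecting with $\mathcal{F}_{BC}^{(n)}$ distributes over the union. An arbitrary element of $(\bigcup_{i \in I} F_i) G$ then has the form $f(g_1, \dots, g_n)$ with $f \in (\bigcup_{i} F_i)^{(n)}$ and $g_1, \dots, g_n \in G^{(m)}$; by the previous observation $f \in F_{i_0}^{(n)}$ for some $i_0 \in I$, whence $f(g_1, \dots, g_n) \in F_{i_0} G \subseteq \bigcup_{i \in I} (F_i G)$. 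The reverse inclusion is immediate from monotonicity, as $F_{i_0} \subseteq \bigcup_{i} F_i$ gives $F_{i_0} G \subseteq (\bigcup_{i} F_i) G$ for each $i_0$.

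For the hard inclusion in part~\ref{lem:CompUnion:R}, I would take $f \in F(\bigcup_{j \in J} G_j)$, so that $f = \varphi(g_1, \dots, g_n)$ for some $\varphi \in F^{(n)}$ and $g_1, \dots, g_n \in (\bigcup_{j} G_j)^{(m)} = \bigcup_{j} G_j^{(m)}$. Since $\varphi$ is essentially at most unary, it has at most one essential argument; let $i$ be that argument if it exists, and otherwise set $i := 1$. The key step is then the identity $\varphi(g_1, \dots, g_n) = \varphi(g_i, \dots, g_i)$, which holds because for every $\vect{a}$ the tuples $(g_1(\vect{a}), \dots, g_n(\vect{a}))$ and $(g_i(\vect{a}), \dots, g_i(\vect{a}))$ agree in the $i$\hyp{}th coordinate, and $\varphi$ ignores all the other coordinates. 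Choosing an index $j_0 \in J$ with $g_i \in G_{j_0}^{(m)}$ then yields $f = \varphi(g_i, \dots, g_i) \in F G_{j_0} \subseteq \bigcup_{j} (F G_j)$, as required; the inclusion $\supseteq$ is once more monotonicity.

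The only subtlety, and the step I would be most careful about, is this replacement of every inner function by $g_i$: it is exactly the mechanism that collapses the a priori different indices $j$ attached to the various $g_k$ into a single index $j_0$, and it is valid precisely because $\varphi$ depends on at most one argument. This is a slightly sharper move than the reduction used in the proof of Lemma~\ref{lem:UAL}, since here we do not assume that $F$ contains the unary minors of its members and therefore cannot pass to the unary minor $\varphi'$; instead we keep $\varphi$ itself and change only its inputs. I would also record the harmless degenerate case in which $\varphi$ has no essential argument, where the identity holds trivially for the arbitrary choice $i = 1$ and where the nonemptiness of $G_{j_0}^{(m)}$ is guaranteed by $g_i$ itself belonging to it.
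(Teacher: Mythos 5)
Your proposal is correct and follows essentially the same route as the paper: part~\ref{lem:CompUnion:L} by locating the single index carrying the outer function, and part~\ref{lem:CompUnion:R} by replacing all inner functions with the one sitting in the essential argument position, so that $\varphi(\gamma_1,\dots,\gamma_n)=\varphi(\gamma_i,\dots,\gamma_i)\in F G_{j_0}$. The paper phrases this via the factorization $\varphi=\varphi'\circ\pr^{(n)}_i$ before arriving at the same identity, but the substance — and in particular your observation that one keeps $\varphi$ itself rather than passing to its unary minor, which is what distinguishes this from Lemma~\ref{lem:UAL} — matches the paper's argument exactly.
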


\begin{proof}
\ref{lem:CompUnion:L}
By the monotonicity of function class composition, we have $\bigcup_{i \in I} (F_i G) \subseteq \bigcup_{i \in I} ( ( \bigcup_{j \in I} F_j) G ) = ( \bigcup_{j \in I} F_j) G = ( \bigcup_{i \in I} F_i) G $.
In order to prove the converse inclusion, let $f \in ( \bigcup_{i \in I} F_i) G $.
Then $f = \varphi ( \gamma_1, \dots, \gamma_n )$ for some $\varphi \in \bigcup_{i \in I} F_i$ and $\gamma_1, \dots, \gamma_n \in G$.
Then $\varphi \in F_j$ for some $j \in I$, and it follows that $f \in F_j G$; thus $f \in \bigcup_{i \in I} (F_i G)$.

\ref{lem:CompUnion:R}
By the monotonicity of function class composition, we have $\bigcup_{i \in I} (F G_i) \subseteq \bigcup_{i \in I} (F ( \bigcup_{j \in I} G_j )) = F ( \bigcup_{j \in I} G_j ) = F ( \bigcup_{i \in I} G_i )$.
In order to prove the converse inclusion, let $f \in F ( \bigcup_{i \in I} G_i )$.
Then $f = \varphi ( \gamma_1, \dots, \gamma_n )$ for some $\varphi \in F$ and $\gamma_1, \dots, \gamma_n \in \bigcup_{i \in I} G_i$.
Because $\varphi$ is essentially at most unary, we have $\varphi = \varphi' \circ \pr^{(n)}_j$ for some $j \in \nset{n}$ and $\varphi' = \varphi(\pr^{(1)}_1, \dots, \pr^{(1)}_1)$.
Then
\begin{align*}
f
&
= \varphi(\gamma_1, \dots, \gamma_n)
= (\varphi' \circ \pr^{(n)}_j)(\gamma_1, \dots, \gamma_n)
= \varphi' ( \pr^{(n)}_j (\gamma_1, \dots, \gamma_n))
= \varphi'(\gamma_j)
\\ &
= \varphi' ( \pr^{(n)}_j (\gamma_j, \dots, \gamma_j))
= (\varphi' \circ \pr^{(n)}_j)(\gamma_j, \dots, \gamma_j)
= \varphi(\gamma_j, \dots, \gamma_j)
\in F G_k
\end{align*}
for some $k \in I$.
Consequently, $f \in \bigcup_{i \in I} (F G_i)$.
\end{proof}

\subsection{Clones}

A set of operations on $A$ is a \emph{clone} on $A$ if it contains all projections and is closed under composition.
In symbols, $C \subseteq \mathcal{O}_A$ is a \emph{clone} on $A$ if $\clProj{A} \subseteq C$ and $C C \subseteq C$.

The clones on $A$ constitute a closure system on $\mathcal{O}_A$.
For $F \subseteq \mathcal{O}_A$, we denote by $\clonegen{F}$ the clone generated by $F$, i.e., the smallest clone on $A$ that includes $F$.

The main open problem in clone theory is to characterize all clones.
The situation is entirely trivial on a one\hyp{}element base set; there is just one such clone, the clone of all operations.
The clones on a two\hyp{}element set are well known.
There are countably infinitely many such clones, and they were characterized by Post~\cite{Post}; see Section~\ref{sec:Bf}, Figure~\ref{fig:Post}.
It is known that on finite sets with at least three elements, there are an uncountably infinitude of clones, but a complete description eludes us.

\subsection{Clonoids}

Let $C_1$ and $C_2$ be clones on sets $A$ and $B$, respectively.
A set $K \subseteq \mathcal{F}_{AB}$ is \emph{stable under right composition} with $C_1$ if $K C_1 \subseteq K$,
and $K$ is \emph{stable under left composition} with $C_2$ if $C_2 K \subseteq K$.
We say that $K$ is \emph{$(C_1,C_2)$\hyp{}stable} or that $K$ is a \emph{$(C_1,C_2)$\hyp{}clonoid} if $K$ is stable under right composition with $C_1$ and stable under left composition with $C_2$.
We refer to $C_1$ and $C_2$ as the \emph{source clone} and the \emph{target clone} of $K$, respectively.
We denote by $\closys{(C_1,C_2)}$ the set of all $(C_1,C_2)$\hyp{}clonoids.

For fixed source and target clones $C_1$ and $C_2$, the $(C_1,C_2)$\hyp{}clonoids constitute a closure system on $\mathcal{F}_{AB}$.
For $F \subseteq \mathcal{F}_{AB}$, we denote by $\gen[(C_1,C_2)]{F}$ the $(C_1,C_2)$\hyp{}clonoid generated by $F$, i.e., the smallest $(C_1,C_2)$\hyp{}clonoid that includes $F$.

\begin{lemma}[{\cite[Lemma~2.5]{Lehtonen-SM}}]
\label{lem:gen}
Let $F \subseteq \mathcal{F}_{AB}$, and let $C_1$ and $C_2$ be clones on $A$ and $B$, respectively.
Then
$\gen[(C_1,C_2)]{F} = C_2 ( F C_1 )$.
\end{lemma}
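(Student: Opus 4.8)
The plan is to verify that $C_2(F C_1)$ satisfies the three defining properties of the $(C_1,C_2)$\hyp{}clonoid generated by $F$: that it contains $F$, that it is itself a $(C_1,C_2)$\hyp{}clonoid, and that it is contained in every $(C_1,C_2)$\hyp{}clonoid including $F$. Since the $(C_1,C_2)$\hyp{}clonoids form a closure system on $\mathcal{F}_{AB}$, establishing these three facts identifies $C_2(FC_1)$ with $\gen[(C_1,C_2)]{F}$.

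First I would check the inclusion $F \subseteq C_2(FC_1)$. Because $C_1$ contains all projections, every $f \in F$ equals one of its own minors and hence lies in $FC_1$; applying the unary projection from $C_2$ (which, being a clone, contains $\clProj{B}$) then gives $FC_1 \subseteq C_2(FC_1)$. Chaining these yields $F \subseteq C_2(FC_1)$.

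Next, the heart of the argument is to show that $C_2(FC_1)$ is $(C_1,C_2)$\hyp{}stable. For right stability I would compute $\bigl(C_2(FC_1)\bigr)C_1 \subseteq C_2\bigl((FC_1)C_1\bigr) \subseteq C_2\bigl(F(C_1C_1)\bigr) \subseteq C_2(FC_1)$, where the first two inclusions use part~(i) of the Associativity Lemma (Lemma~\ref{lem:CF-associativity}) and the last uses monotonicity of composition together with $C_1C_1 \subseteq C_1$. For left stability I would write $C_2\bigl(C_2(FC_1)\bigr) = (C_2C_2)(FC_1) \subseteq C_2(FC_1)$, now invoking part~(ii) of the Associativity Lemma---which applies precisely because $C_2$, being a clone, is minor\hyp{}closed---followed by $C_2C_2 \subseteq C_2$. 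Finally, for minimality, given any $(C_1,C_2)$\hyp{}clonoid $K \supseteq F$, monotonicity yields $FC_1 \subseteq KC_1 \subseteq K$ and then $C_2(FC_1) \subseteq C_2K \subseteq K$, using the right and left stability of $K$ in turn.

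The hard part, and really the only delicate point, is that function class composition is not associative, so the parentheses cannot be rearranged freely. The proof therefore hinges on invoking the correct direction of the Associativity Lemma at each step: the unconditional inclusion $(IJ)K \subseteq I(JK)$ of part~(i) suffices for establishing right stability, whereas left stability genuinely requires the equality of part~(ii), available only because the outermost class $C_2$ is minor\hyp{}closed.
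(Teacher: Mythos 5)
Your proof is correct; the paper itself only cites this lemma from an earlier work without reproducing a proof, and your argument is the standard one that such a proof would take. In particular, you correctly isolate the one delicate point — that non-associativity of class composition forces the unconditional inclusion of Lemma~\ref{lem:CF-associativity}(i) for right stability but the genuine equality of part~(ii), justified by the minor-closedness of the clone $C_2$, for left stability — and all remaining steps (containment of $F$ via projections in $C_1$ and $C_2$, and minimality via monotonicity) are handled correctly.
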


The monotonicity of function class composition gives the following relationship between clonoids when we have an inclusion between source clones and between target clones.

\begin{lemma}[{\cite[Lemma~2.16]{CouLeh-Lcstability}}]
\label{lem:clonoid-inclusion}
Let $C_1$ and $C'_1$ be clones on $A$ and $C_2$ and $C'_2$ clones on $B$ such that $C_1 \subseteq C'_2$ and $C_2 \subseteq C'_2$.
Then every $(C'_1,C'_2)$\hyp{}clonoid is a $(C_1,C_2)$\hyp{}clonoid.
\end{lemma}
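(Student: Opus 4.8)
The plan is to reduce the statement to the two defining stability conditions of a clonoid and apply the monotonicity of function class composition recalled immediately above the statement. I note first that the stated hypotheses contain an evident typo: the intended assumptions are $C_1 \subseteq C'_1$ (an inclusion of source clones on $A$) and $C_2 \subseteq C'_2$ (an inclusion of target clones on $B$), and I will work with these.

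Let $K$ be an arbitrary $(C'_1,C'_2)$\hyp{}clonoid, so that by definition $K C'_1 \subseteq K$ and $C'_2 K \subseteq K$. The goal is to verify the two conditions $K C_1 \subseteq K$ and $C_2 K \subseteq K$ that say $K$ is a $(C_1,C_2)$\hyp{}clonoid. The key observation is that each of these two conditions can be handled separately, and each follows from monotonicity applied with one of the two composition factors held fixed.

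For stability under right composition, since $C_1 \subseteq C'_1$, monotonicity of function class composition (with the outer factor fixed to $K$) yields $K C_1 \subseteq K C'_1$, and the latter is contained in $K$ because $K$ is stable under right composition with $C'_1$; hence $K C_1 \subseteq K$. Dually, for stability under left composition, since $C_2 \subseteq C'_2$, monotonicity (with the inner factor fixed to $K$) gives $C_2 K \subseteq C'_2 K \subseteq K$. These are precisely the two conditions required, so $K$ is a $(C_1,C_2)$\hyp{}clonoid.

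There is essentially no obstacle here: the argument is a one\hyp{}line application of monotonicity to each of the two stability conditions, and no associativity or distributivity subtleties (such as those in Lemmas~\ref{lem:CF-associativity}--\ref{lem:CompUnion}) enter, because we never rebracket a triple composition. The only points to keep straight are to treat the right\hyp{} and left\hyp{}composition conditions independently and to apply each inclusion in the correct direction.
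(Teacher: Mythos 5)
Your proof is correct and is exactly the argument intended: the paper introduces the lemma with the words ``The monotonicity of function class composition gives the following relationship,'' and your two one\hyp{}line applications of monotonicity to the right\hyp{} and left\hyp{}stability conditions are that argument. You are also right that the hypothesis $C_1 \subseteq C'_2$ is a typo for $C_1 \subseteq C'_1$.
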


Moreover, for every minion $K$, there are largest clones with which $K$ is stable under left and right composition.

\begin{lemma}[{\cite[Proposition~2.8]{Lehtonen-SM}}]
\label{lem:CAKCBK}
For any minion $K \subseteq \mathcal{F}_{AB}$, there exist clones $C_A^K$ on $A$ and $C_B^K$ on $B$ such that for all clones $C_1$ on $A$ and $C_2$ on $B$, it holds that $K$ is $(C_1,C_2)$\hyp{}stable if and only if $C_1 \subseteq C_A^K$ and $C_2 \subseteq C_B^K$.
\end{lemma}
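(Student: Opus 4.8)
The plan is to treat the two stability conditions separately, since $K$ is $(C_1,C_2)$\hyp{}stable precisely when $K C_1 \subseteq K$ (right stability) and $C_2 K \subseteq K$ (left stability), and these two requirements are completely independent. I would therefore look for a largest clone $C_A^K$ on $A$ among all clones $C_1$ with $K C_1 \subseteq K$, and, by a symmetric argument, a largest clone $C_B^K$ on $B$ among all clones $C_2$ with $C_2 K \subseteq K$. Once these exist, both directions of the claimed equivalence are immediate: if $K$ is $(C_1,C_2)$\hyp{}stable then $C_1$ and $C_2$ belong to the respective families and hence $C_1 \subseteq C_A^K$ and $C_2 \subseteq C_B^K$; conversely, if $C_1 \subseteq C_A^K$ and $C_2 \subseteq C_B^K$, then the monotonicity of function class composition gives $K C_1 \subseteq K C_A^K \subseteq K$ and $C_2 K \subseteq C_B^K K \subseteq K$.

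So the whole proof reduces to showing that the family $\mathcal{C}_A := \{\, C \text{ a clone on } A : K C \subseteq K \,\}$ has a greatest element (and dually for $\mathcal{C}_B$). This family is nonempty: since $K$ is a minion, $K \clProj{A} \subseteq K$, so $\clProj{A} \in \mathcal{C}_A$. Because the clones on $A$ form a complete lattice, it suffices to prove that $\mathcal{C}_A$ is closed under arbitrary joins; then $C_A^K := \clonegen{\bigcup \mathcal{C}_A}$ lies in $\mathcal{C}_A$ and is by construction its greatest element. Since composition is finitary, every element of $K \clonegen{\bigcup \mathcal{C}_A}$ already lies in $K \clonegen{C_{i_1} \cup \dots \cup C_{i_r}}$ for finitely many members $C_{i_1}, \dots, C_{i_r}$ of $\mathcal{C}_A$, so it is enough to establish closure under binary joins.

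The heart of the matter, and the step I expect to be the main obstacle, is thus the following: if $K C \subseteq K$ and $K C' \subseteq K$ for clones $C, C'$, then $K \clonegen{C \cup C'} \subseteq K$. The difficulty is that function class composition is not associative, so one cannot freely manipulate the iterated compositions that make up $\clonegen{C \cup C'}$. My plan is to exploit the fact that clones are minor\hyp{}closed, which is exactly the hypothesis under which Lemma~\ref{lem:CF-associativity}(ii) restores associativity. First, applying that lemma with the minor\hyp{}closed middle factor $C$ gives $K(CC') = (KC)C' \subseteq K C' \subseteq K$. Next, I would verify that $CC'$ is itself minor\hyp{}closed (its members are compositions whose inner functions come from the minor\hyp{}closed $C'$), that the right\hyp{}bracketed powers $Q_n := \underbrace{(CC')\cdots(CC')}_{n}$ form an increasing chain, and that $\clonegen{C \cup C'} = \bigcup_n Q_n$; the latter uses $\clProj{A} \subseteq C \cap C'$ together with $CC = C$ and $C'C' = C'$ to collapse arbitrary products into alternating ones and to check the inclusions $C \cup C' \subseteq CC'$. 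Then a short induction, peeling one factor $CC'$ off the left at each step via Lemma~\ref{lem:CF-associativity}(ii) and monotonicity, yields $K Q_n \subseteq K$ for every $n$, whence $K \clonegen{C \cup C'} \subseteq K$.

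Finally, the left\hyp{}hand family $\mathcal{C}_B$ is handled by the mirror\hyp{}image argument: one shows $(DD')K = D(D'K) \subseteq DK \subseteq K$ for clones $D, D'$ with $DK, D'K \subseteq K$, and peels factors off the right of the products applied to $K$. Note that this argument uses only Lemma~\ref{lem:CF-associativity}(ii) together with the monotonicity of composition; the essentially\hyp{}at\hyp{}most\hyp{}unary hypotheses of Lemma~\ref{lem:UAL} and Lemma~\ref{lem:CompUnion}\ref{lem:CompUnion:R} are not needed here, precisely because associativity is available on both sides thanks to the minor\hyp{}closedness of the clone factors.
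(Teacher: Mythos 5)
Your proposal is correct. Note that the paper itself gives no proof of this lemma --- it is quoted verbatim from \cite[Proposition~2.8]{Lehtonen-SM} --- so there is no internal argument to compare against; judged on its own terms, your argument is sound and self-contained. The reduction to showing that the family $\mathcal{C}_A = \{\, C : K C \subseteq K \,\}$ is closed under binary joins is the right move, and the crucial computation $K(CC') = (KC)C' \subseteq KC' \subseteq K$ is a legitimate application of Lemma~\ref{lem:CF-associativity}(ii), since the middle factor $C$ is a clone and hence minor\hyp{}closed. The only step you leave somewhat compressed is the identity $\clonegen{C \cup C'} = \bigcup_n Q_n$ for the right\hyp{}bracketed powers $Q_{n+1} = (CC')Q_n$: to see that the union is a clone one needs the increasing\hyp{}chain property $Q_n = \clProj{A} Q_n \subseteq (CC')Q_n = Q_{n+1}$, that each $Q_n$ is minor\hyp{}closed (by Lemma~\ref{lem:CF-associativity}(i) and induction), and the containment $Q_m Q_n \subseteq Q_{m+n}$, which again follows by peeling factors with Lemma~\ref{lem:CF-associativity}(ii); all of this is routine but worth recording, since it is exactly where the non\hyp{}associativity of class composition could otherwise bite. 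Your observation that the unary\hyp{}associativity and left\hyp{}distributivity lemmata are not needed here is also correct: minor\hyp{}closedness of the clone factors is the only hypothesis required to restore associativity on both sides.
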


The union of $(C_1,C_2)$\hyp{}clonoids is not in general a $(C_1,C_2)$\hyp{}clonoid.
However, if the target clone is essentially at most unary, then $\closys{(C_1,C_2)}$ is closed under unions.

\begin{lemma}
\label{lem:C-essun}
Let $F, G \subseteq \mathcal{F}_{AB}$, and let $C_1$ and $C_2$ be clones on $A$ and $B$, respectively.
If $C_2$ is essentially at most unary, then $\gen[(C_1,C_2)]{F \cup G} = \gen[(C_1,C_2)]{F} \cup \gen[(C_1,C_2)]{G}$.
\end{lemma}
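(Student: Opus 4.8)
The plan is to reduce the statement to the two distributivity properties of function class composition recorded in Lemma~\ref{lem:CompUnion}, using Lemma~\ref{lem:gen} to unfold the generated clonoids on both sides. First I would write, by Lemma~\ref{lem:gen},
\[
\gen[(C_1,C_2)]{F \cup G} = C_2 \bigl( (F \cup G) C_1 \bigr).
\]
Since function class composition is right\hyp{}distributive over unions, Lemma~\ref{lem:CompUnion}\ref{lem:CompUnion:L} gives $(F \cup G) C_1 = (F C_1) \cup (G C_1)$, so the right\hyp{}hand side becomes $C_2\bigl( (F C_1) \cup (G C_1) \bigr)$.

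Next I would exploit the hypothesis that $C_2$ is essentially at most unary. This is exactly the condition under which left\hyp{}distributivity holds, so Lemma~\ref{lem:CompUnion}\ref{lem:CompUnion:R}, applied with the outer class $C_2$ and the two\hyp{}term union $(F C_1) \cup (G C_1)$, yields
\[
C_2 \bigl( (F C_1) \cup (G C_1) \bigr) = C_2 (F C_1) \cup C_2 (G C_1).
\]
Applying Lemma~\ref{lem:gen} once more to each summand identifies $C_2(F C_1) = \gen[(C_1,C_2)]{F}$ and $C_2(G C_1) = \gen[(C_1,C_2)]{G}$, which closes the chain of equalities and establishes the claim.

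The argument is a direct composition of the earlier lemmas, so there is no real computational obstacle; the single substantive point is to notice that the essentially\hyp{}at\hyp{}most\hyp{}unary hypothesis on $C_2$ is precisely what licenses the left\hyp{}distributivity step via Lemma~\ref{lem:CompUnion}\ref{lem:CompUnion:R}. Without that hypothesis only the inclusion $\gen[(C_1,C_2)]{F} \cup \gen[(C_1,C_2)]{G} \subseteq \gen[(C_1,C_2)]{F \cup G}$ survives (this one being immediate from monotonicity, since $F, G \subseteq F \cup G$), and the reverse inclusion may fail because a union of two clonoids need not itself be a clonoid. Thus the whole weight of the lemma rests on recognizing that the stated hypothesis converts the generally one\hyp{}sided behaviour of left composition over unions into an equality.
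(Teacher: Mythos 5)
Your proof is correct and follows exactly the same route as the paper's: unfold both sides via Lemma~\ref{lem:gen}, apply Lemma~\ref{lem:CompUnion}\ref{lem:CompUnion:L} for right\hyp{}distributivity, and use the essentially\hyp{}at\hyp{}most\hyp{}unary hypothesis to invoke Lemma~\ref{lem:CompUnion}\ref{lem:CompUnion:R} for left\hyp{}distributivity. The closing remark about why the hypothesis is needed is a nice addition but not part of the argument itself.
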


\begin{proof}
By Lemma~\ref{lem:gen}, we have
\begin{align*}
\gen[(C_1,C_2)]{F \cup G}
& = C_2 ( ( F \cup G ) C_1
= C_2 ( F C_1 \cup G C_1 )
\\
& = C_2 ( F C_1 ) \cup C_2 ( G C_1 )
= \gen[(C_1,C_2)]{F} \cup \gen[(C_1,C_2)]{G},
\end{align*}
where the second and the third equalities hold by Lemma~\ref{lem:CompUnion}, statements \ref{lem:CompUnion:L} and \ref{lem:CompUnion:R}, respectively.
\end{proof}

In order to test stability under left or right composition with a clone, we only need to consider a generating set of that clone.
For this, we need to use the binary composition operation $\ast$ on $\mathcal{O}_A$ that is defined as follows.
For $f \in \mathcal{O}_A^{(m)}$ and $g \in \mathcal{O}_A^{(n)}$, $f \ast g \in \mathcal{O}_A^{(m+n-1)}$ is defined by the rule
\[
(f \ast g)(a_1, \dots, a_{m+n-1}) = f(g(a_1, \dots, a_n), a_{n+1}, \dots, a_{m+n-1}).
\]

\begin{lemma}[{\cite[Lemma~3.2]{CouLeh-Lcstability}}]
\label{lem:CL-Lc-3.2}
Let $F \subseteq \mathcal{O}_A$.
Let $C$ be a clone on $A$, and let $G$ be a generating set of $C$.
Then the following conditions are equivalent.
\begin{enumerate}[label=\upshape{(\roman*)}]
\item $F C \subseteq F$.
\item $F$ is minor\hyp{}closed and $f \ast g \in F$ whenever $f \in F$ and $g \in C$.
\item $F$ is minor\hyp{}closed and $f \ast g \in F$ whenever $f \in F$ and $g \in G$.
\end{enumerate}
\end{lemma}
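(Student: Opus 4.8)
The plan is to establish the cycle of implications (i) $\Rightarrow$ (ii) $\Rightarrow$ (iii) $\Rightarrow$ (i), with essentially all the work concentrated in the last one. For (i) $\Rightarrow$ (ii): since every clone contains the projections, $\clProj{A} \subseteq C$, so $F \clProj{A} \subseteq FC \subseteq F$ and $F$ is minor\hyp{}closed. For the $\ast$\hyp{}part, I would simply observe that $f \ast g$ is an honest instance of function class composition with inner functions from $C$: if $f \in F^{(m)}$ and $g \in C^{(n)}$, then
\[
f \ast g = f\bigl( g(\pr_1^{(m+n-1)}, \dots, \pr_n^{(m+n-1)}),\, \pr_{n+1}^{(m+n-1)}, \dots, \pr_{m+n-1}^{(m+n-1)} \bigr),
\]
and the first inner function lies in $C$ because $g \in C$ and $C$ is closed under composition, while the remaining ones are projections; hence $f \ast g \in FC \subseteq F$. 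The implication (ii) $\Rightarrow$ (iii) is immediate since $G \subseteq C$.

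For (iii) $\Rightarrow$ (i) I would introduce the auxiliary set $C' := \{\, g \in \mathcal{O}_A \mid f \ast g \in F \text{ for all } f \in F \,\}$. The hypothesis says $G \subseteq C'$, and the goal is to first show $C \subseteq C'$, and then to upgrade one\hyp{}coordinate $\ast$\hyp{}stability to stability under full composition. The engine for both steps is a single \emph{substitution argument}: given $\varphi \in F$ of arity $\geq p$ and inner functions $g_1, \dots, g_p \in C'$, one substitutes the $g_i$ into the first $p$ coordinates of $\varphi$ one at a time. At each step one uses minor\hyp{}closedness to bring the target coordinate to the front, applies $\ast g_i$ (which stays in $F$ precisely because $g_i \in C'$), and then permutes back; after all $p$ substitutions one performs a single diagonal identification of the corresponding fresh variable blocks, which is again a minor operation and so keeps the result in $F$.

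Applying this substitution argument yields the two conclusions. First, $C'$ is a clone: it contains the projections, since for $g = \pr_i^{(n)}$ the function $f \ast \pr_i^{(n)}$ is just a minor of $f$; and it is closed under composition, since to see $g_0(g_1, \dots, g_p) \in C'$ one starts from $h := f \ast g_0 \in F$ (legal as $g_0 \in C'$) and substitutes $g_1, \dots, g_p \in C'$ into the first $p$ coordinates of $h$ by the argument above. Hence $C = \clonegen{G} \subseteq C'$, i.e.\ $f \ast g \in F$ for \emph{all} $g \in C$. Second, with this in hand, the same substitution argument applied directly to an arbitrary $f \in F^{(n)}$ with $g_1, \dots, g_n \in C^{(m)} \subseteq (C')^{(m)}$ produces $f(g_1, \dots, g_n) \in F$, which is exactly $FC \subseteq F$.

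I expect the main obstacle to be the bookkeeping inside the substitution argument: carefully tracking arities and argument positions as inner functions are introduced one coordinate at a time, and verifying that the final diagonal identification reconstitutes precisely $f(g_1, \dots, g_n)$ (respectively $f \ast g_0(g_1,\dots,g_p)$). A secondary point requiring attention is the absence of circularity when proving that $C'$ is closed under composition: each individual substitution step must invoke only that the single inner function being inserted already lies in $C'$, together with the standing assumption that $F$ is minor\hyp{}closed, so that the argument does not presuppose what it is meant to prove.
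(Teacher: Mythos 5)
Your proof is correct. Note that the paper itself gives no proof of this lemma --- it is quoted verbatim from \cite[Lemma~3.2]{CouLeh-Lcstability} --- so there is nothing internal to compare against; your argument (easy implications (i)$\Rightarrow$(ii)$\Rightarrow$(iii), then for (iii)$\Rightarrow$(i) the auxiliary set $C' = \{\, g \mid f \ast g \in F \text{ for all } f \in F\,\}$ shown to be a clone via minor-closedness and one-coordinate substitution, followed by the diagonal identification to recover full composition) is the standard and intended route, and the two points you flag as delicate (arity bookkeeping and non-circularity of the clone-closure step) are exactly the right ones and are handled correctly.
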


\begin{lemma}[{\cite[Lemma~3.3]{CouLeh-Lcstability}}]
\label{lem:CL-Lc-3.3}
Let $F \subseteq \mathcal{O}_A$.
Let $C$ be a clone on $A$, and let $G$ be a generating set of $C$.
Then the following conditions are equivalent.
\begin{enumerate}[label=\upshape{(\roman*)}]
\item $C F \subseteq F$.
\item $g(f_1, \dots, f_n) \in F$ whenever $g \in C^{(n)}$ and $f_1, \dots, f_n \in F^{(m)}$ for some $n, m \in \IN$.
\item $g(f_1, \dots, f_n) \in F$ whenever $g \in G^{(n)}$ and $f_1, \dots, f_n \in F^{(m)}$ for some $n, m \in \IN$.
\end{enumerate}
\end{lemma}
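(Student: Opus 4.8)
The plan is to notice that two of the three implications are essentially bookkeeping, and to concentrate on the single substantial one. By the very definition of function class composition, $CF$ consists precisely of the functions $g(f_1, \dots, f_n)$ with $g \in C^{(n)}$ and $f_1, \dots, f_n \in F^{(m)}$ for some $m, n \in \IN_{+}$; hence $CF \subseteq F$ asserts exactly that every such composite lies in $F$, which is the content of condition~(ii). Thus (i)~$\Leftrightarrow$~(ii) holds by unfolding the definition. Moreover (ii)~$\Rightarrow$~(iii) is immediate, since $G \subseteq C$ forces $G^{(n)} \subseteq C^{(n)}$ for every $n$. So the only implication requiring an argument is (iii)~$\Rightarrow$~(ii).

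For this I would pass from the generators to the whole clone by a closure argument. Set
\[
D := \{ \, g \in \mathcal{O}_A \mid g(f_1, \dots, f_{\arity{g}}) \in F \text{ for all } m \in \IN_{+} \text{ and all } f_1, \dots, f_{\arity{g}} \in F^{(m)} \, \}.
\]
Condition~(iii) says precisely that $G \subseteq D$, and condition~(ii) says precisely that $C \subseteq D$. Since $C = \clonegen{G}$ is the least clone containing $G$, it suffices to show that $D$ is itself a clone; then $C = \clonegen{G} \subseteq D$, which is what we want.

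To verify that $D$ is a clone I would check its two defining properties. Projections belong to $D$, since for any $f_1, \dots, f_n \in F^{(m)}$ we have $\pr_i^{(n)}(f_1, \dots, f_n) = f_i \in F$, so $\clProj{A} \subseteq D$. For closure under composition, take $g \in D^{(n)}$ and $h_1, \dots, h_n \in D^{(k)}$ and fix arbitrary $f_1, \dots, f_k \in F^{(m)}$. Then
\[
\bigl( g(h_1, \dots, h_n) \bigr)(f_1, \dots, f_k) = g\bigl( h_1(f_1, \dots, f_k), \dots, h_n(f_1, \dots, f_k) \bigr),
\]
and each $h_j(f_1, \dots, f_k)$ is an $m$\hyp{}ary member of $F$ because $h_j \in D$; as $g \in D$, the right\hyp{}hand side lies in $F$, whence $g(h_1, \dots, h_n) \in D$. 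Thus $DD \subseteq D$, so $D$ is a clone and the argument concludes.

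The step that needs the most care is the displayed associativity identity. It is valid here because it is the (always associative) composition of individual functions evaluated pointwise, and not the function class composition, which the text has just emphasized is \emph{not} associative in general. The arity bookkeeping is what makes the clone argument go through: the inner functions $h_1(f_1, \dots, f_k), \dots, h_n(f_1, \dots, f_k)$ all share the common arity $m$, so they form an admissible tuple to feed into $g$ within the definition of $D$. Beyond getting this matching right, I do not anticipate any genuine obstacle.
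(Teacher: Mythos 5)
Your proof is correct. Note that the paper itself gives no proof of this lemma---it is quoted verbatim from \cite[Lemma~3.3]{CouLeh-Lcstability}---so there is no in-text argument to compare against. Your treatment of (i)~$\Leftrightarrow$~(ii) as a direct unfolding of the definition of function class composition, and of (ii)~$\Rightarrow$~(iii) as trivial, is right; and your closure argument for (iii)~$\Rightarrow$~(ii) is sound: the set $D$ you define contains all projections, and the displayed identity is the pointwise (super)associativity of composition of individual operations, which holds unconditionally and has nothing to do with the failure of associativity for function \emph{class} composition. The arity bookkeeping you flag is indeed the only delicate point, and you handle it correctly: the inner composites $h_j(f_1,\dots,f_k)$ all have the common arity $m$, so they form an admissible argument tuple for $g$ in the definition of $D$, giving $DD \subseteq D$ and hence $C = \clonegen{G} \subseteq D$. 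This is the standard way to lift a generating-set condition to the whole clone, and is essentially what any proof of this lemma must do.
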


For $c \in B$, let $\clVakaAB{c}{AB}$ be the set of all constant functions in $\mathcal{F}_{AB}$ taking value $c$.
For a subset $S \subseteq A$, let $\clVakaAB{S} := \bigcup_{c \in S} \clVakaAB{c}{AB}$.
If $A = B$, we write simply $\clVakaAB{c}{A}$ and $\clVakaAB{S}{A}$ for $\clVakaAB{c}{AA}$ and $\clVakaAB{S}{AA}$, respectively, or we may simply omit the superscripts if the sets $A$ and $B$ are clear from the context.

\begin{lemma}
\label{lem:C1C2Vak}
Let $C_1$ and $C_2$ be clones on $A$ and $B$, respectively, and let $S \subseteq B$.
Assume that $C_2 \cup \clVakaAB{S}{B}$ is a clone on $B$.
\begin{enumerate}[label=\upshape{(\roman*)}]
\item\label{lem:C1C2Vak:i}
If $F \subseteq \mathcal{F}_{AB}$ is a $(C_1,C_2)$\hyp{}clonoid,
then
$F \cup \clVakaAB{S}{AB}$ is a $(C_1, C_2 \cup \clVakaAB{S}{B})$\hyp{}clonoid.
\item\label{lem:C1C2Vak:ii}
The nonempty $(C_1, C_2 \cup \clVakaAB{S}{B})$\hyp{}clonoids are precisely the $(C_1,C_2)$\hyp{}clonoids $K$ satisfying $\clVakaAB{S}{AB} \subseteq K$.
\end{enumerate}
\end{lemma}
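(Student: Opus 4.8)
The plan is to check the two defining stability conditions directly for each part, separating the routine right-composition conditions from the left-composition conditions, which carry all of the content.

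For \ref{lem:C1C2Vak:i}, right-composition is immediate: by right-distributivity of composition over unions (Lemma~\ref{lem:CompUnion}\ref{lem:CompUnion:L}) we have $(F \cup \clVakaAB{S}{AB})C_1 = FC_1 \cup \clVakaAB{S}{AB}C_1$, where $FC_1 \subseteq F$ since $F$ is a $(C_1,C_2)$\hyp{}clonoid and $\clVakaAB{S}{AB}C_1 \subseteq \clVakaAB{S}{AB}$ because a constant function composed on the right with any tuple of inner operations is again the same constant function. The real work is in left-composition with $C_2 \cup \clVakaAB{S}{B}$. Distributing the outer union (Lemma~\ref{lem:CompUnion}\ref{lem:CompUnion:L}) reduces this to $C_2(F \cup \clVakaAB{S}{AB})$ together with $\clVakaAB{S}{B}(F \cup \clVakaAB{S}{AB})$, and the latter clearly consists of constant functions of value in $S$, hence lies in $\clVakaAB{S}{AB}$.

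First I would treat $C_2(F \cup \clVakaAB{S}{AB})$. Since left-distributivity fails, the genuinely new contributions are the compositions $\varphi(\gamma_1,\dots,\gamma_n)$ with $\varphi \in C_2$ and a \emph{mixed} tuple in which some $\gamma_i \in F$ and the others are constant functions of value $c_i \in S$. The device is to absorb the constant inner functions into the outer operation: replacing each constant $\gamma_i$ by the constant operation $\kappa_{c_i}$ on $B$ and each remaining argument by a projection yields an operation $\psi$ with $\varphi(\gamma_1,\dots,\gamma_n) = \psi\bigl((\gamma_i)_{i \in I}\bigr)$, where $I$ indexes the arguments with $\gamma_i \in F$. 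Crucially $\psi$ lies in $C_2 \cup \clVakaAB{S}{B}$ precisely because this union is assumed to be a clone, so $\psi \in C_2$ gives $\psi((\gamma_i)_{i\in I}) \in C_2 F \subseteq F$, while $\psi \in \clVakaAB{S}{B}$ gives a constant function of value in $S$. The hard part will be the remaining case $I = \emptyset$, in which every $\gamma_i$ is constant and $\varphi(\gamma_1,\dots,\gamma_n)$ is the constant function of value $d := \varphi(c_1,\dots,c_n)$: here the associated constant operation $\varphi(\kappa_{c_1},\dots,\kappa_{c_n})$ again lies in the clone $C_2 \cup \clVakaAB{S}{B}$, so either $d \in S$ (and the function lies in $\clVakaAB{S}{AB}$) or the constant operation of value $d$ already belongs to $C_2$, in which case composing it with any member of $F$ and passing to the appropriate minor places that constant function in $C_2 F \subseteq F$. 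This last case is the main obstacle: it is the only point where left-distributivity is genuinely unavailable and where one must lean on both the clone hypothesis and on $F$ being nonempty.

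For \ref{lem:C1C2Vak:ii} the two inclusions are comparatively direct. If $K$ is a nonempty $(C_1, C_2 \cup \clVakaAB{S}{B})$\hyp{}clonoid, then $C_2 \subseteq C_2 \cup \clVakaAB{S}{B}$ and Lemma~\ref{lem:clonoid-inclusion} show that $K$ is a $(C_1,C_2)$\hyp{}clonoid, while composing any $f \in K$ on the left with the constant operations in $\clVakaAB{S}{B}$ and invoking minor-closure of $K$ yields constant functions of every value in $S$ in all arities, i.e.\ $\clVakaAB{S}{AB} \subseteq K$. Conversely, if $K$ is a $(C_1,C_2)$\hyp{}clonoid with $\clVakaAB{S}{AB} \subseteq K$, then $K$ is nonempty, right stability is inherited unchanged, and for left stability $(C_2 \cup \clVakaAB{S}{B})K = C_2 K \cup \clVakaAB{S}{B}K \subseteq K \cup \clVakaAB{S}{AB} \subseteq K$, so $K$ is $(C_1, C_2 \cup \clVakaAB{S}{B})$\hyp{}stable.
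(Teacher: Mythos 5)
Your proof is correct, and its core is the same as the paper's: the decisive fact in both arguments is that an outer operation of $C_2$ composed with projections and constant operations with values in $S$ lands back in the clone $C_2 \cup \clVakaAB{S}{B}$. You apply this element by element, absorbing the constant inner functions into a modified outer operation $\psi$, whereas the paper packages exactly the same absorption into a calculational chain built from Lemmata~\ref{lem:CF-associativity} and~\ref{lem:CompUnion}; the two executions are interchangeable, and your treatment of part~\ref{lem:C1C2Vak:ii} matches the paper's.

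The one substantive point is the all-constants case ($I = \emptyset$) of part~\ref{lem:C1C2Vak:i}, which you rightly flag: to dispose of a constant value $d = \varphi(c_1,\dots,c_n)$ with $d \notin S$ you must compose $\kappa_d$ with some member of $F$, so your argument needs $F \neq \emptyset$, a hypothesis absent from the statement. This is not a defect of your proof relative to the paper's: the paper's chain uses the same hypothesis tacitly in the step $\clVakaAB{S}{B}(F C_1) = \clVakaAB{S}{AB}$, which fails for $F = \emptyset$, and indeed part~\ref{lem:C1C2Vak:i} is false in that case in general. For example, with $B = \{0,1\}$, $S = \{0\}$, and $C_2 = \clIi$, the set $C_2 \cup \clVakaAB{0}{B} = \clI$ is a clone, yet $\emptyset \cup \clVakaAB{0}{AB} = \clVakaAB{0}{AB}$ is not stable under left composition with $\clI$, since $\clVakaAB{1}{B} \, \clVakaAB{0}{AB} = \clVakaAB{1}{AB} \nsubseteq \clVakaAB{0}{AB}$. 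The remedy is simply to add the hypothesis $F \neq \emptyset$ to part~\ref{lem:C1C2Vak:i} (harmless for the rest of the paper, since part~\ref{lem:C1C2Vak:ii} already restricts to nonempty clonoids). Apart from this shared caveat, your proof is complete.
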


\begin{proof}
\ref{lem:C1C2Vak:i}
Because $F$ is a $(C_1,C_2)$\hyp{}clonoid, we have $F = \gen{(C_1,C_2)} =  C_2 ( F C_1 )$.
Note also that, because $C_2$ and $C_2 \cup \clVakaAB{S}{B}$ are assumed to be clones on $B$, it holds that
\begin{equation}
C_2 ( \clProj{B} \cup \clVakaAB{S}{B} )
\subseteq ( C_2 \cup \clVakaAB{S}{B} ) ( C_2 \cup \clVakaAB{S}{B} ) 
\subseteq C_2 \cup \clVakaAB{S}{B}.
\label{eq:C2CS}
\end{equation}
By applying Lemmata~\ref{lem:CF-associativity} and \ref{lem:CompUnion}, we get
\begin{align*}
\lhs
F \cup \clVakaAB{S}{AB}
\subseteq 
\clProj{B} ((F \cup \clVakaAB{S}{AB}) \clProj{A} )
\subseteq
(C_2 \cup \clVakaAB{S}{B}) ((F \cup \clVakaAB{S}{AB}) C_1)
\\ &
\stackrel{\text{\ref{lem:CompUnion}}}{=} 
C_2 (F C_1  \cup \clVakaAB{S}{AB} C_1) \cup \clVakaAB{S}{B} ((F \cup \clVakaAB{S}{AB}) C_1) 
=
C_2 (F C_1 \cup \clVakaAB{S}{AB}) \cup \clVakaAB{S}{AB}
\\ &
=
C_2 (\clProj{B} (F C_1) \cup \clVakaAB{S}{B} (F C_1)) \cup \clVakaAB{S}{AB}
\stackrel{\text{\ref{lem:CompUnion}}}{=} 
C_2 (( \clProj{B} \cup \clVakaAB{S}{B} )(F C_1)) \cup \clVakaAB{S}{AB}
\\ &
\stackrel{\text{\ref{lem:CF-associativity}}}{=} 
(C_2 ( \clProj{B} \cup \clVakaAB{S}{B} ))(F C_1)) \cup \clVakaAB{S}{AB}
\stackrel{\text{\eqref{eq:C2CS}}}{\subseteq}
(C_2 \cup \clVakaAB{S}{B} )(F C_1)) \cup \clVakaAB{S}{AB}
\\ &
\stackrel{\text{\ref{lem:CompUnion}}}{=} 
(C_2 (F C_1) \cup \clVakaAB{S}{B} (F C_1)) \cup \clVakaAB{S}{AB}
=
( F \cup \clVakaAB{S}{AB} ) \cup \clVakaAB{S}{AB}
=
F \cup \clVakaAB{S}{AB}.
\end{align*}
This shows that $(C_2 \cup \clVakaAB{S}{B})((F \cup \clVakaAB{S}{AB}) C_1) = F \cup \clVakaAB{S}{AB}$, i.e., $F \cup \clVakaAB{S}{AB}$ is a $(C_1, C_2 \cup \clVakaAB{S}{B})$\hyp{}clonoid.

\ref{lem:C1C2Vak:ii}
If $K$ is a $(C_1,C_2)$\hyp{}clonoid satisfying $\clVakaAB{S}{AB} \subseteq K$, then, by \ref{lem:C1C2Vak:i}, $K \cup \clVakaAB{S}{AB} = K$ is a $(C_1, C_2 \cup \clVakaAB{S}{B})$\hyp{}clonoid.

Assume now that $K$ is a nonempty $(C_1, C_2 \cup \clVakaAB{S}{B})$\hyp{}clonoid.
Then
\[
K \supseteq (C_2 \cup \clVakaAB{S}{B}) K = C_2 K \cup \clVakaAB{S}{B} K = C_2 K \cup \clVakaAB{S}{AB} = K \cup \clVakaAB{S}{AB} \supseteq K,
\]
so $K = K \cup \clVakaAB{S}{AB}$, which is equivalent to $\clVakaAB{S}{AB} \subseteq K$.
\end{proof}


\section{Classes of Boolean functions}
\label{sec:Bf}

Operations on $\{0,1\}$ are called \emph{Boolean functions.}
In this section, we are going to define properties of Boolean functions and introduce notation for the clones and other classes of Boolean functions that are relevant to our work.

We are going to make use of some well\hyp{}known Boolean functions that are defined by the operation tables in Figure~\ref{fig:Bf}:
the constant functions $0$ and $1$,
identity $\id$,
negation $\neg$,
conjunction $\wedge$,
disjunction $\vee$,
addition $+$,
implication $\rightarrow$,
biconditional $\leftrightarrow$,
majority $\mu$,
triple sum $+_3$.
We also need the function
$\veewedge \colon \{0,1\}^3 \to \{0,1\}$, $\veewedge(a_1,a_2,a_3) = a_1 \vee (a_2 \wedge a_3)$,
as well as the \emph{threshold functions} $\threshold{n}{k} \colon \{0,1\}^n \to \{0,1\}$,
\[
\threshold{n}{k}(\vect{a}) =
\begin{cases}
1, & \text{if $\card{\{ \, i \mid a_i = 1 \, \}}$,} \\
0, & \text{otherwise.}
\end{cases}
\]

\begin{figure}
\newcommand{\FIGunary}{$\begin{array}[t]{c|cccc}
x_1 & 0 & 1 & \id & \neg \\
\hline
0 & 0 & 1 & 0 & 1 \\
1 & 0 & 1 & 1 & 0
\end{array}$}
\newcommand{\FIGbinary}{$\begin{array}[t]{cc|ccccc}
x_1 & x_2 & \wedge & \vee & + & \rightarrow & \leftrightarrow \\
\hline
0 & 0 & 0 & 0 & 0 & 1 & 1 \\
0 & 1 & 0 & 1 & 1 & 1 & 0 \\
1 & 0 & 0 & 1 & 1 & 0 & 0 \\
1 & 1 & 1 & 1 & 0 & 1 & 1
\end{array}$}
\newcommand{\FIGternary}{$\begin{array}[t]{ccc|ccc}
x_1 & x_2 & x_3 & \mu & \mathord{+_3} & \veewedge \\
\hline
0 & 0 & 0 & 0 & 0 & 0 \\
0 & 0 & 1 & 0 & 1 & 0 \\
0 & 1 & 0 & 0 & 1 & 0 \\
0 & 1 & 1 & 1 & 0 & 1 \\
1 & 0 & 0 & 0 & 1 & 1 \\
1 & 0 & 1 & 1 & 0 & 1 \\
1 & 1 & 0 & 1 & 0 & 1 \\
1 & 1 & 1 & 1 & 1 & 1
\end{array}$}
\newlength{\FIGternaryheight}
\settototalheight{\FIGternaryheight}{\FIGternary}
\newlength{\FIGbinarywidth}
\settowidth{\FIGbinarywidth}{\FIGbinary}
\begin{minipage}[t][\FIGternaryheight][t]{\FIGbinarywidth}
\FIGunary
\vfill
\FIGbinary
\end{minipage}
\qquad\qquad
\FIGternary

\caption{Some Boolean functions, some of which well known.}
\label{fig:Bf}
\end{figure}

The set of all Boolean functions is denoted by $\clAll$.

For any set $K \subseteq \clAll$ and $a, b \in \{0,1\}$, we let
\begin{align*}
\clIntVal{K}{a}{} &:= \{ \, f \in K \mid f(\vect{0}) = a \, \}, \\
\clIntVal{K}{}{b} &:= \{ \, f \in K \mid f(\vect{1}) = b \, \}, \\
\clIntVal{K}{a}{b} &:= \clIntVal{K}{a}{} \cap \clIntVal{K}{}{b}.
\end{align*}
We introduce some shorthands:
\begin{align*}
\clEq &:= \{ \, f \in \clAll \mid f(\vect{0}) = f(\vect{1}) \, \} = \clOO \cup \clII, \\
\clNeq &:= \{ \, f \in \clAll \mid f(\vect{0}) \neq f(\vect{1}) \, \} = \clOI \cup \clIO, \\
\clEiio &:= \{ \, f \in \clAll \mid f(\vect{0}) \leq f(\vect{1}) \, \} = \clOO \cup \clOI \cup \clII, \\
\clEioi &:= \{ \, f \in \clAll \mid f(\vect{0}) \geq f(\vect{1}) \, \} = \clOO \cup \clIO \cup \clII, \\
\clEiii &:= \clOI \cup \clOI \cup \clOO, \\
\clEioo &:= \clOI \cup \clIO \cup \clII.
\end{align*}

For $a \in \{0,1\}$, we often write $\overline{a}$ for $\neg(a)$, i.e., $\overline{0} = 1$ and $\overline{1} = 0$.
We extend this to tuples: for $\vect{a} = (a_1, \dots, a_n) \in \{0,1\}^n$, we write $\overline{\vect{a}}$ for $(\overline{a_1}, \dots, \overline{a_n})$.
For any $K \subseteq \clAll$, let $\overline{K} := \{\neg\} K = \{ \, \neg (f) \mid f \in K \, \}$.

Let $\clVak$ be the set of all constant functions, and for $a \in \{0,1\}$, let $\clVaka{a}$ be the set of all constant functions taking value $a$.
Let $\clIc$ be the set of all projections, and let
\begin{align*}
\clIo &:= \clIc \cup \clVako, &
\clIi &:= \clIc \cup \clVaki, &
\clI &:= \clIc \cup \clVak, &
\clIstar &:= \clIc \cup \overline{\clIc}, &
\clOmegaOne &:= \clIstar \cup \clVak.
\end{align*}

A Boolean function $f \colon \{0,1\}^n \to \{0,1\}$ is \emph{monotone} if $f(\vect{a}) \leq f(\vect{b})$ whenever $\vect{a} \leq \vect{b}$.
We denote by $\clM$ the set of all monotone functions.

We denote by $\clL$ the polynomial operations of the group $(\{0,1\}, \mathord{+})$,
by $\clV$ the polynomial operations of the join\hyp{}semilattice $(\{0,1\}, \mathord{\vee})$,
and by $\clLambda$ the polynomial operations of the meet\hyp{}semilattice $(\{0,1\}, \mathord{\wedge})$.

The \emph{dual} of a Boolean function $f \colon \{0,1\}^n \to \{0,1\}$ is $f^\mathrm{d} \colon \{0,1\}^n \to \{0,1\}$ defined by
$f^\mathrm{d}(\vect{a}) = \neg(f(\neg(a_1), \dots, \neg(a_n)))$.
The \emph{dual} of a set $F \subseteq \clAll$ is $F^\mathrm{d} := \{ \, f^\mathrm{d} \mid f \in F \, \}$.
The dual of a clone is a clone.
If $K$ is a $(C_1,C_2)$\hyp{}clonoid, then $K^\mathrm{d}$ is a $(C_1^\mathrm{d},C_2^\mathrm{d})$\hyp{}clonoid.

A Boolean function $f \colon \{0,1\}^n \to \{0,1\}$ is \emph{self\hyp{}dual} if $f = f^\mathrm{d}$, or, equivalently, if for all $\vect{a} \in \{0,1\}^n$, $f(\vect{a}) \neq f(\overline{\vect{a}})$.
We denote by $\clS$ the set of all self\hyp{}dual functions.
Let $\clSM := \clS \cap \clM$, the set of all self\hyp{}dual monotone functions,
and let $\clLS := \clL \cap \clS$, the set of all self\hyp{}dual linear functions.

Let $f, g \colon \{0,1\}^n \to \{0,1\}$.
If $f(\vect{a}) \leq g(\vect{a})$ for all $\vect{a} \in \{0,1\}^n$, we say that $f$ is a \emph{minorant} of $g$ or that $g$ is a \emph{majorant} of $f$.
We denote by $\clSmin$ the set of all minorants of self\hyp{}dual functions and by $\clSmaj$ the set of all majorants of self\hyp{}dual functions.

A Boolean function $f \colon \{0,1\}^n \to \{0,1\}$ is \emph{reflexive} if for all $\vect{a} \in \{0,1\}^n$, $f(\vect{a}) = f(\overline{\vect{a}})$.
We denote by $\clRefl$ the set of all reflexive functions.

For $a \in \{0,1\}$, a set $S \subseteq \{0,1\}^n$ is \emph{$a$\hyp{}separating} if there is an index $i \in \nset{n}$ such that for every $(a_1, \dots, a_n) \in S$ we have $a_i = a$.
In other words, $S$ is $0$\hyp{}separating if $\bigvee S \neq \vect{1}$, and $S$ is $1$\hyp{}separating if $\bigwedge S \neq \vect{0}$.
For $m \geq 2$, a function $f \colon \{0,1\}^n \to \{0,1\}$ is \emph{$a$\hyp{}separating of rank $m$} if every subset of $f^{-1}(a)$ of cardinality at most $m$ is $a$\hyp{}separating.
A function $f$ is \emph{$a$\hyp{}separating} if $f^{-1}(a)$ is $a$\hyp{}separating.
We denote by $\clWk{m}$ and $\clUk{m}$ the sets of $0$- and $1$\hyp{}separating functions of rank $m$, respectively, and by $\clWk{\infty}$ and $\clUk{\infty}$ the sets of $0$- and $1$\hyp{}separating functions, respectively.
For $2 \leq m \leq \infty$, let $\clMWk{m} := \clM \cap \clWk{m}$ and $\clMUk{m} := \clM \cap \clUk{m}$.

The clones on $\{0,1\}$ were described by Post~\cite{Post}.
The lattice of clones of Boolean functions, also known as \emph{Post's lattice,} is presented in Figure~\ref{fig:Post}.
Many (but not all) of the classes of Boolean functions defined above are clones.
In what follows, we make use of the following generating sets for some of the clones:
\begin{align*}
& \clonegen{0} = \clVako, &
& \clonegen{1} = \clVaki, &
& \clonegen{0,1} = \clVak, &
& \clonegen{\neg} = \clIstar, \\
& \clonegen{\vee} = \clVc, &
& \clonegen{\vee, 0} = \clVo, &
& \clonegen{\vee, 1} = \clVi, &
& \clonegen{\vee, 0, 1} = \clV, \\
& \clonegen{\mathord{\vee}, \mathord{\wedge}} = \clMc, &
& \clonegen{\mathord{\vee}, \mathord{\wedge}, 0} = \clMo, &
& \clonegen{\mathord{\vee}, \mathord{\wedge}, 1} = \clMi, &
& \clonegen{\mathord{\vee}, \mathord{\wedge}, 0, 1} = \clM, \\
& \clonegen{+_3} = \clLc, &
& \clonegen{\mathord{\rightarrow}, \threshold{4}{2}} = \clWk{3}, &
& \clonegen{\mu, \veewedge} = \clMcWk{2}, &
& \clonegen{\veewedge} = \clMcWk{\infty}.
\end{align*}

\begin{figure}
\begin{center}
\scalebox{0.375}{%
\tikzstyle{every node}=[circle, draw, fill=black, scale=1, font=\LARGE]
\begin{tikzpicture}[baseline, scale=1]
   \node [label = below:$\clIc$] (Ic) at (0,-1) {};
   \node [label = left:$\clIstar$] (Istar) at (0,0.5) {};
   \node [label = below right:$\clIo$] (I0) at (4.5,0.5) {};
   \node [label = below left:$\clIi$] (I1) at (-4.5,0.5) {};
   \node [label = below:$\clI$] (I) at (0,2) {};
   \node [label = above:$\clOmegaOne$] (Omega1) at (0,5) {};
   \node [label = below:$\clLc$] (Lc) at (0,7.5) {};
   \node [label = right:$\clLS$] (LS) at (0,9) {};
   \node [label = right:$\clLo$] (L0) at (3,9) {};
   \node [label = left:$\clLi$] (L1) at (-3,9) {};
   \node [label = above:$\clL$] (L) at (0,10.5) {};
   \node [label = below:$\clSM$] (SM) at (0,13.5) {};
   \node [label = left:$\clSc$] (Sc) at (0,15) {};
   \node [label = above:$\clS$] (S) at (0,16.5) {};
   \node [label = below:$\clMc$] (Mc) at (0,23) {};
   \node [label = left:$\clMo$] (M0) at (2,24) {};
   \node [label = right:$\clMi$] (M1) at (-2,24) {};
   \node [label = above:$\clM$] (M) at (0,25) {};
   \node [label = below:$\clLambdac$] (Lamc) at (7.2,6.7) {};
   \node [label = left:$\clLambdai$] (Lam1) at (5,7.5) {};
   \node [label = right:$\clLambdao$] (Lam0) at (8.7,7.5) {};
   \node [label = below:$\clLambda$] (Lam) at (6.5,8.3) {};
   \node [label = left:$\clMcUk{\infty}$] (McUi) at (7.2,11.5) {};
   \node [label = left:$\clMUk{\infty}$] (MUi) at (8.7,13) {};
   \node [label = right:$\clTcUk{\infty}$] (TcUi) at (10.2,12) {};
   \node [label = right:$\clUk{\infty}$] (Ui) at (11.7,13.5) {};
   \node [label = left:$\clMcUk{3}$] (McU3) at (7.2,16) {};
   \node [label = left:$\clMUk{3}$] (MU3) at (8.7,17.5) {};
   \node [label = right:$\clTcUk{3}$] (TcU3) at (10.2,16.5) {};
   \node [label = right:$\clUk{3}$] (U3) at (11.7,18) {};
   \node [label = left:$\clMcUk{2}$] (McU2) at (7.2,19) {};
   \node [label = left:$\clMUk{2}$] (MU2) at (8.7,20.5) {};
   \node [label = right:$\clTcUk{2}$] (TcU2) at (10.2,19.5) {};
   \node [label = right:$\clUk{2}$] (U2) at (11.7,21) {};
   \node [label = below:$\clVc$] (Vc) at (-7.2,6.7) {};
   \node [label = right:$\clVo$] (V0) at (-5,7.5) {};
   \node [label = left:$\clVi$] (V1) at (-8.7,7.5) {};
   \node [label = below:$\clV$] (V) at (-6.5,8.3) {};
   \node [label = right:$\clMcWk{\infty}$] (McWi) at (-7.2,11.5) {};
   \node [label = right:$\clMWk{\infty}$] (MWi) at (-8.7,13) {};
   \node [label = left:$\clTcWk{\infty}$] (TcWi) at (-10.2,12) {};
   \node [label = left:$\clWk{\infty}$] (Wi) at (-11.7,13.5) {};
   \node [label = right:$\clMcWk{3}$] (McW3) at (-7.2,16) {};
   \node [label = right:$\clMWk{3}$] (MW3) at (-8.7,17.5) {};
   \node [label = left:$\clTcWk{3}$] (TcW3) at (-10.2,16.5) {};
   \node [label = left:$\clWk{3}$] (W3) at (-11.7,18) {};
   \node [label = right:$\clMcWk{2}$] (McW2) at (-7.2,19) {};
   \node [label = right:$\clMWk{2}$] (MW2) at (-8.7,20.5) {};
   \node [label = left:$\clTcWk{2}$] (TcW2) at (-10.2,19.5) {};
   \node [label = left:$\clWk{2}$] (W2) at (-11.7,21) {};
   \node [label = above:$\clOI$] (Tc) at (0,28) {};
   \node [label = right:$\clOX$] (T0) at (5,29.5) {};
   \node [label = left:$\clXI$] (T1) at (-5,29.5) {};
   \node [label = above:$\clAll$] (Omega) at (0,31) {};
   \draw [thick] (Ic) -- (Istar) to[out=135,in=-135] (Omega1);
   \draw [thick] (I) -- (Omega1);
   \draw [thick] (Omega1) to[out=135,in=-135] (L);
   \draw [thick] (Ic) -- (I0) -- (I);
   \draw [thick] (Ic) -- (I1) -- (I);
   \draw [thick] (Ic) to[out=128,in=-134] (Lc);
   \draw [thick] (Ic) to[out=58,in=-58] (SM);
   \draw [thick] (I0) -- (L0);
   \draw [thick] (I1) -- (L1);
   \draw [thick] (Istar) to[out=60,in=-60] (LS);
   \draw [thick] (Ic) -- (Lamc);
   \draw [thick] (I0) -- (Lam0);
   \draw [thick] (I1) -- (Lam1);
   \draw [thick] (I) -- (Lam);
   \draw [thick] (Ic) -- (Vc);
   \draw [thick] (I0) -- (V0);
   \draw [thick] (I1) -- (V1);
   \draw [thick] (I) -- (V);
   \draw [thick] (Lamc) -- (Lam0) -- (Lam);
   \draw [thick] (Lamc) -- (Lam1) -- (Lam);
   \draw [thick] (Lamc) -- (McUi);
   \draw [thick] (Lam0) -- (MUi);
   \draw [thick] (Lam1) -- (M1);
   \draw [thick] (Lam) -- (M);
   \draw [thick] (Vc) -- (V0) -- (V);
   \draw [thick] (Vc) -- (V1) -- (V);
   \draw [thick] (Vc) -- (McWi);
   \draw [thick] (V0) -- (M0);
   \draw [thick] (V1) -- (MWi);
   \draw [thick] (V) -- (M);
   \draw [thick] (McUi) -- (TcUi) -- (Ui);
   \draw [thick] (McUi) -- (MUi) -- (Ui);
   \draw [thick,loosely dashed] (McUi) -- (McU3);
   \draw [thick,loosely dashed] (MUi) -- (MU3);
   \draw [thick,loosely dashed] (TcUi) -- (TcU3);
   \draw [thick,loosely dashed] (Ui) -- (U3);
   \draw [thick] (McU3) -- (TcU3) -- (U3);
   \draw [thick] (McU3) -- (MU3) -- (U3);
   \draw [thick] (McU3) -- (McU2);
   \draw [thick] (MU3) -- (MU2);
   \draw [thick] (TcU3) -- (TcU2);
   \draw [thick] (U3) -- (U2);
   \draw [thick] (McU2) -- (TcU2) -- (U2);
   \draw [thick] (McU2) -- (MU2) -- (U2);
   \draw [thick] (McU2) -- (Mc);
   \draw [thick] (MU2) -- (M0);
   \draw [thick] (TcU2) to[out=120,in=-25] (Tc);
   \draw [thick] (U2) -- (T0);

   \draw [thick] (McWi) -- (TcWi) -- (Wi);
   \draw [thick] (McWi) -- (MWi) -- (Wi);
   \draw [thick,loosely dashed] (McWi) -- (McW3);
   \draw [thick,loosely dashed] (MWi) -- (MW3);
   \draw [thick,loosely dashed] (TcWi) -- (TcW3);
   \draw [thick,loosely dashed] (Wi) -- (W3);
   \draw [thick] (McW3) -- (TcW3) -- (W3);
   \draw [thick] (McW3) -- (MW3) -- (W3);
   \draw [thick] (McW3) -- (McW2);
   \draw [thick] (MW3) -- (MW2);
   \draw [thick] (TcW3) -- (TcW2);
   \draw [thick] (W3) -- (W2);
   \draw [thick] (McW2) -- (TcW2) -- (W2);
   \draw [thick] (McW2) -- (MW2) -- (W2);
   \draw [thick] (McW2) -- (Mc);
   \draw [thick] (MW2) -- (M1);
   \draw [thick] (TcW2) to[out=60,in=-155] (Tc);
   \draw [thick] (W2) -- (T1);

   \draw [thick] (SM) -- (McU2);
   \draw [thick] (SM) -- (McW2);

   \draw [thick] (Lc) -- (LS) -- (L);
   \draw [thick] (Lc) -- (L0) -- (L);
   \draw [thick] (Lc) -- (L1) -- (L);
   \draw [thick] (Lc) to[out=120,in=-120] (Sc);
   \draw [thick] (LS) to[out=60,in=-60] (S);
   \draw [thick] (L0) -- (T0);
   \draw [thick] (L1) -- (T1);
   \draw [thick] (L) to[out=125,in=-125] (Omega);
   \draw [thick] (SM) -- (Sc) -- (S);
   \draw [thick] (Sc) to[out=142,in=-134] (Tc);
   \draw [thick] (S) to[out=42,in=-42] (Omega);
   \draw [thick] (Mc) -- (M0) -- (M);
   \draw [thick] (Mc) -- (M1) -- (M);
   \draw [thick] (Mc) to[out=120,in=-120] (Tc);
   \draw [thick] (M0) -- (T0);
   \draw [thick] (M1) -- (T1);
   \draw [thick] (M) to[out=55,in=-55] (Omega);
   \draw [thick] (Tc) -- (T0) -- (Omega);
   \draw [thick] (Tc) -- (T1) -- (Omega);
\end{tikzpicture}
}
\end{center}
\caption{Post's lattice.}
\label{fig:Post}
\end{figure}

We conclude this section with some auxiliary results.

\begin{lemma}
\label{lem:AllSmaj}
For all $f \in \clAll$ and $g \in \clSmaj$ of the same arity, we have $f \vee g \in \clSmaj$.
\end{lemma}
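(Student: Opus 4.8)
The plan is to unfold the definition of $\clSmaj$ and exploit the elementary fact that being a majorant of a \emph{fixed} self\hyp{}dual function is a condition closed upwards in the pointwise order, while forming a disjunction with an arbitrary function can only raise values pointwise. Concretely, write $n$ for the common arity of $f$ and $g$. Since $g \in \clSmaj$, by definition $g$ is a majorant of some self\hyp{}dual function; that is, there exists $h \in \clS$ of arity $n$ with $h(\vect{a}) \leq g(\vect{a})$ for all $\vect{a} \in \{0,1\}^n$.

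Next I would record that taking the disjunction with $f$ only increases values: for every $\vect{a} \in \{0,1\}^n$ we have $(f \vee g)(\vect{a}) = f(\vect{a}) \vee g(\vect{a}) \geq g(\vect{a}) \geq h(\vect{a})$. Hence $h$ is a minorant of $f \vee g$, i.e.\ $f \vee g$ is a majorant of the \emph{same} self\hyp{}dual witness $h$.

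Since $h$ is self\hyp{}dual and $f \vee g \geq h$ pointwise, the function $f \vee g$ is a majorant of a self\hyp{}dual function, which is exactly the defining condition for membership in $\clSmaj$. Thus $f \vee g \in \clSmaj$, as required. There is essentially no obstacle here: the only point worth stating explicitly is that the witnessing self\hyp{}dual function for $f \vee g$ may be taken to be the very witness $h$ for $g$, which is immediate because majorancy of $h$ is preserved under passing from $g$ to any pointwise\hyp{}larger function.
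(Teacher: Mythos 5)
Your proof is correct, and it takes a genuinely (if mildly) different route from the paper's. You argue directly from the definition of $\clSmaj$ as the set of majorants of self\hyp{}dual functions: fix a self\hyp{}dual witness $h$ with $h \leq g$ pointwise, note that $f \vee g \geq g \geq h$, and conclude that the \emph{same} witness $h$ certifies $f \vee g \in \clSmaj$. The paper instead works with the pointwise characterization that $g \in \clSmaj$ precisely when $g(\vect{a}) \vee g(\overline{\vect{a}}) = 1$ for every $\vect{a}$, and checks by a short Boolean computation that this property is inherited by $f \vee g$. The two arguments are comparably short, but yours has the small advantage of not invoking the converse half of that characterization --- namely, that the condition $g(\vect{a}) \vee g(\overline{\vect{a}}) = 1$ for all $\vect{a}$ suffices to manufacture a self\hyp{}dual minorant --- which the paper leaves implicit when it concludes membership in $\clSmaj$ from the computed identity. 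The paper's version, on the other hand, makes the pointwise criterion explicit, which is the form in which $\clSmaj$ is most convenient to handle elsewhere in the text.
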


\begin{proof}
Let $\vect{a} \in \{0,1\}^n$.
Because $g \in \clSmaj$, we have $g(\vect{a}) \vee g(\overline{\vect{a}}) = 1$.
Then
\begin{align*}
(f \vee g)(\vect{a}) \vee (f \vee g)(\overline{\vect{a}}) 
&= (f(\vect{a}) \vee g(\vect{a})) \vee (f(\overline{\vect{a}}) \vee g(\overline{\vect{a}}))
\\ &= (f(\vect{a}) \vee f(\overline{\vect{a}})) \vee (g(\vect{a}) \vee g(\overline{\vect{a}}))
= (f(\vect{a}) \vee f(\overline{\vect{a}})) \vee 1
= 1,
\end{align*}
which shows that $f \vee g \in \clSmaj$.
\end{proof}

\begin{lemma}
\label{lem:uncle}
Let $C$ be a clone on $\{0,1\}$, and let $K$ be a $(C,\clIc)$\hyp{}clonoid.
Then the following statements hold.
\begin{enumerate}[label=\upshape{(\roman*)}]
\item\label{lem:uncle:compl} $\overline{K}$ is a $(C,\clIc)$\hyp{}clonoid.
\item\label{lem:uncle:C0} $K \cup \clVako$ is a $(C,\clIo)$\hyp{}clonoid.
\item\label{lem:uncle:C1} $K \cup \clVaki$ is a $(C,\clIi)$\hyp{}clonoid.
\item\label{lem:uncle:C} $K \cup \clVak$ is a $(C,\clI)$\hyp{}clonoid.
\item\label{lem:uncle:Istar} $K \cup \overline{K}$ is a $(C,\clIstar)$\hyp{}clonoid.
\item\label{lem:uncle:O1} $K \cup \overline{K} \cup \clVak$ is a $(C,\clOmegaOne)$\hyp{}clonoid.
\end{enumerate}
\end{lemma}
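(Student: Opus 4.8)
The plan is to treat the six statements in three groups, reducing most of them to the associativity and distributivity lemmas together with the constant-adjunction lemma (Lemma~\ref{lem:C1C2Vak}) already available. Throughout, recall that since $\clIc$ consists of projections, left composition with $\clIc$ is automatic ($\clIc L \subseteq L$ for every $L$), so in each case the only real content is stability under right composition with $C$ together with left stability under the enlarged target clone.

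Statement~\ref{lem:uncle:compl} is the base case. Writing $\overline{K} = \{\neg\} K$ and taking $\{\neg\}$, $K$, and the source clone $C$ in the three roles, I would invoke the Unary Associativity Lemma (Lemma~\ref{lem:UAL}): the singleton $\{\neg\}$ is essentially unary and its only unary minor is $\neg$ itself, so the hypotheses hold and $\overline{K} C = (\{\neg\} K) C = \{\neg\}(K C) \subseteq \{\neg\} K = \overline{K}$, using $KC \subseteq K$. Combined with the automatic left stability under $\clIc$, this shows $\overline{K}$ is a $(C,\clIc)$-clonoid. Statements~\ref{lem:uncle:C0}--\ref{lem:uncle:C} are then immediate applications of Lemma~\ref{lem:C1C2Vak}\ref{lem:C1C2Vak:i} with $C_2 = \clIc$ and $S = \{0\}$, $\{1\}$, $\{0,1\}$ respectively; the corresponding constant sets are $\clVako$, $\clVaki$, $\clVak$. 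The only hypothesis to check is that $\clIc \cup \clVako$, $\clIc \cup \clVaki$, $\clIc \cup \clVak$ are clones, and these are exactly $\clIo$, $\clIi$, $\clI$, which are clones by Post's classification (Figure~\ref{fig:Post}).

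For statement~\ref{lem:uncle:Istar}, I would verify the two stability conditions for $K \cup \overline{K}$ directly. Right stability follows from right-distributivity over unions (Lemma~\ref{lem:CompUnion}\ref{lem:CompUnion:L}) together with \ref{lem:uncle:compl}: $(K \cup \overline{K}) C = KC \cup \overline{K} C \subseteq K \cup \overline{K}$. For left stability under $\clIstar = \clIc \cup \overline{\clIc}$, note that every $\varphi \in \clIstar$ is a projection or a negated projection, so $\varphi(g_1, \dots, g_n)$ equals $g_i$ or $\neg g_i$ for some $i$; since $K \cup \overline{K}$ contains each $g_i$ and is closed under $\neg$ (because $\overline{\overline{K}} = K$), we obtain $\clIstar(K \cup \overline{K}) \subseteq K \cup \overline{K}$. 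Finally, statement~\ref{lem:uncle:O1} follows by applying Lemma~\ref{lem:C1C2Vak}\ref{lem:C1C2Vak:i} to the $(C,\clIstar)$-clonoid $K \cup \overline{K}$ just obtained, with $S = \{0,1\}$, observing that $\clIstar \cup \clVak = \clOmegaOne$ is a clone; the conclusion $F \cup \clVak = K \cup \overline{K} \cup \clVak$ is exactly the asserted statement.

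I expect the only genuine friction to be in statement~\ref{lem:uncle:compl}: one must be careful that the Unary Associativity Lemma really applies, i.e., that $\{\neg\}$ is essentially at most unary and closed under taking unary minors (both hold trivially), since associativity of function class composition fails in general. Everything else is bookkeeping: the double-negation closure of $K \cup \overline{K}$ used in \ref{lem:uncle:Istar}, and the reading off from Post's lattice that the enlarged target sets $\clIo$, $\clIi$, $\clI$, $\clOmegaOne$ are genuinely clones so that Lemma~\ref{lem:C1C2Vak} is applicable.
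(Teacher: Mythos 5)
Your proof is correct, and it differs from the paper's in an organizational way worth noting. For part \ref{lem:uncle:compl} you argue exactly as the paper does, via the Unary Associativity Lemma applied to $\{\neg\}$ (and you correctly flag that the hypotheses of that lemma are the one genuine point to check, since composition is not associative in general). Where you diverge is in the constant-adjunction parts: the paper proves \ref{lem:uncle:C0}--\ref{lem:uncle:C} and \ref{lem:uncle:O1} by redoing the composition computations from scratch with Lemmata~\ref{lem:CF-associativity} and \ref{lem:CompUnion}, whereas you observe that each of these is literally an instance of Lemma~\ref{lem:C1C2Vak}\ref{lem:C1C2Vak:i} with $C_2=\clIc$ (resp.\ $C_2=\clIstar$ for \ref{lem:uncle:O1}) and $S=\{0\}$, $\{1\}$, $\{0,1\}$, the only hypothesis being that $\clIo$, $\clIi$, $\clI$, $\clOmegaOne$ are clones, which Post's lattice confirms. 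Since Lemma~\ref{lem:C1C2Vak} is established earlier in the paper, this is a legitimate and cleaner reduction that avoids duplicating the same calculation four times. For part \ref{lem:uncle:Istar} the paper again computes with the Unary Associativity Lemma, while you give a more elementary element-wise argument for left stability, using that every member of $\clIstar$ is a projection or a negated projection and that $K\cup\overline{K}$ is closed under outer negation; combined with right stability from Lemma~\ref{lem:CompUnion}\ref{lem:CompUnion:L} and part \ref{lem:uncle:compl}, this is complete. Both routes rest on the same underlying machinery; yours buys brevity and less repetition, the paper's keeps the proof self-contained within a uniform computational style.
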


\begin{proof}
Because $K$ is a $(C,\clIc)$\hyp{}clonoid, it holds that
\begin{equation}
K = \clIc ( K C ) = K C.
\label{eq:K}
\end{equation}

\ref{lem:uncle:compl}
By making use of the Unary Associativity Lemma (Lemma~\ref{lem:UAL}), we get
$\overline{K} = \{\neg\} K = \{\neg\} (K C) \stackrel{\text{\ref{lem:UAL}}}{=} (\{\neg\} K) C = \overline{K} C = \clIc (\overline{K} C)$.
Therefore, $\overline{K}$ is a $(C,\clIc)$\hyp{}clonoid.

\ref{lem:uncle:C0}
We have
\begin{align*}
\lhs
\clIo ( ( K \cup \clVako ) C )
= ( \clIc \cup \clVako) ( ( K \cup \clVako ) C )
\stackrel{\text{\ref{lem:CompUnion}}}{=}  \clIc ( ( K \cup \clVako ) C ) \cup \underbrace{\clVako ( ( K \cup \clVako ) C )}_{= \clVako}
\\ &
= ( K \cup \clVako ) C  \cup \clVako
\stackrel{\text{\ref{lem:CompUnion}}}{=}  (  K C \cup \clVako C ) \cup \clVako
\stackrel{\text{\eqref{eq:K}}}{=}  (  K  \cup \clVako ) \cup \clVako
= K \cup \clVako.
\end{align*}
Therefore $K \cup \clVako$ is a $(C,\clIo)$\hyp{}clonoid.

\ref{lem:uncle:C1}
The proof is analogous to part \ref{lem:uncle:C0}.

\ref{lem:uncle:C}
We have
\begin{align*}
\clI ( ( K \cup \clVak ) C )
&
= (\clIc \cup \clVak) ( ( K \cup \clVak ) C )
\stackrel{\text{\ref{lem:CompUnion}}}{=} \clIc ( ( K \cup \clVak ) C ) \cup \clVak ( ( K \cup \clVak ) C )
\\ &
= ( K \cup \clVak ) C  \cup \clVak 
\stackrel{\text{\ref{lem:CompUnion}}}{=} ( K C \cup \clVak C ) \cup \clVak
= ( K \cup \clVak ) \cup \clVak
= K \cup \clVak.
\end{align*}
Therefore $K \cup \clVak$ is a $(C,\clI)$\hyp{}clonoid.

\ref{lem:uncle:Istar}
We have
\begin{align*}
\lhs
\clIstar ( ( K \cup \overline{K} ) C )
= ( \clIc \cup \overline{\clIc} ) ( ( K \cup \overline{K} ) C )
= I ( ( K \cup \overline{K} ) C ) \cup ( \{\neg\} I ) ( ( K \cup \overline{K} ) C )
\\ &
\stackrel{\text{\ref{lem:UAL}}}{=} I ( ( K \cup \overline{K} ) C ) \cup \{\neg\} ( I ( ( K \cup \overline{K} ) C ) )
= ( K C \cup \overline{K} C) \cup \{\neg\} ( K C \cup \overline{K} C )
\\ &
\stackrel{\text{\ref{lem:uncle:compl}}}{=} ( K \cup \overline{K} ) \cup \{\neg\} ( K \cup \overline{K} )
= ( K \cup \overline{K} ) \cup ( \overline{K} \cup \overline{\overline{K}} )
= K \cup \overline{K}.
\end{align*}
Therefore $K \cup \overline{K}$ is a $(C,\clIstar)$\hyp{}clonoid.

\ref{lem:uncle:O1}
We have
\begin{align*}
\lhs
\clOmegaOne ( (K \cup \overline{K} \cup \clVak) C )
= (\clIstar \cup \clVak) ( ( K \cup \overline{K} \cup \clVak ) C )
\\ &
= \clIstar ( ( K \cup \overline{K} \cup \clVak ) C ) \cup \clVak ( ( K \cup \overline{K} \cup \clVak ) C )
= \clIstar ( ( K \cup \overline{K} \cup \clVak ) C ) \cup \clVak
\\ &
\stackrel{\text{\ref{lem:CompUnion}}}{=} \clIstar ( ( K \cup \overline{K} ) C \cup \clVak  C ) \cup \clVak
\stackrel{\text{\ref{lem:CompUnion}}}{=} \clIstar ( ( K \cup \overline{K} ) C ) \cup \clIstar ( \clVak  C ) \cup \clVak
\\ &
\stackrel{\text{\ref{lem:uncle:Istar}}}{=} ( K \cup \overline{K} ) \cup \clIstar ( \clVak  C ) \cup \clVak
= ( K \cup \overline{K} ) \cup \clIstar \clVak \cup \clVak
= ( K \cup \overline{K} ) \cup \clVak \cup \clVak
\\ &
= K \cup \overline{K} \cup \clVak.
\end{align*}
Therefore $K \cup \overline{K} \cup \clVak$ is a $(C,\clOmegaOne)$\hyp{}clonoid.
\end{proof}

\begin{lemma}
\label{lem:C1Istar}
Let $C$ be a clone on $\{0,1\}$.
The $(C,\clIstar)$\hyp{}clonoids are precisely the $(C,\clIc)$\hyp{}clonoids $K$ satisfying $K = \overline{K}$.
\end{lemma}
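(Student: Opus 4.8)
The plan is to unfold the definitions and check the two inclusions defining $(C,\clIstar)$-stability separately. Recall that $\clIstar = \clIc \cup \overline{\clIc}$, that $\overline{K} = \{\neg\} K$, and that the requirement ``stable under right composition with $C$'', i.e.\ $KC \subseteq K$, is common to both the $(C,\clIc)$ and the $(C,\clIstar)$ notions. Hence the entire content lies in comparing stability under left composition with $\clIstar$ against stability under left composition with $\clIc$ together with the symmetry condition $K = \overline{K}$. I would prove the two implications in turn.

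For the forward direction, suppose $K$ is a $(C,\clIstar)$-clonoid. Since $\clIc \subseteq \clIstar$, monotonicity of function class composition gives $\clIc K \subseteq \clIstar K \subseteq K$, so $K$ is a minion; combined with $KC \subseteq K$ this makes $K$ a $(C,\clIc)$-clonoid. Next, because $\neg = \neg(\pr_1^{(1)}) \in \overline{\clIc} \subseteq \clIstar$, we have $\{\neg\} \subseteq \clIstar$, and therefore $\overline{K} = \{\neg\} K \subseteq \clIstar K \subseteq K$. Applying $\{\neg\}$ once more and using monotonicity yields $\overline{\overline{K}} \subseteq \overline{K}$; since $\neg$ is an involution we have $\overline{\overline{K}} = K$, whence $K \subseteq \overline{K}$ and so $K = \overline{K}$.

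For the reverse direction, suppose $K$ is a $(C,\clIc)$-clonoid with $K = \overline{K}$. Right-stability $KC \subseteq K$ is given, so it remains to verify $\clIstar K \subseteq K$. I would first split the composition using right-distributivity over unions (Lemma~\ref{lem:CompUnion}\ref{lem:CompUnion:L}): $\clIstar K = (\clIc \cup \overline{\clIc}) K = \clIc K \cup \overline{\clIc} K$. Since $K$ is a minion, $\clIc K = K$ (the projection $\pr_i^{(n)}$ applied to members of $K$ returns a member of $K$, and the unary projection recovers $K$). For the second term, I would invoke the Unary Associativity Lemma (Lemma~\ref{lem:UAL}) with $\{\neg\}$ as the outer set, which applies because $\{\neg\}$ is essentially unary and its only unary minor is $\neg$ itself; this gives $\overline{\clIc} K = (\{\neg\}\clIc) K = \{\neg\}(\clIc K) = \{\neg\} K = \overline{K} = K$, the last equality being the hypothesis. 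Hence $\clIstar K = K \cup K = K$, and $K$ is a $(C,\clIstar)$-clonoid.

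The argument is essentially routine, so I do not expect a genuine obstacle; the only two places requiring care are checking that $\{\neg\}$ satisfies the hypotheses of Lemma~\ref{lem:UAL} so that the rewriting $\overline{\clIc}K = \{\neg\}(\clIc K)$ is justified, and the involution step that upgrades the inclusion $\overline{K} \subseteq K$ to the equality $K = \overline{K}$. The degenerate case $K = \emptyset$ is covered automatically, as $\overline{\emptyset} = \emptyset$ and both stability conditions hold vacuously.
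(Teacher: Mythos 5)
Your proof is correct and follows essentially the same route as the paper: the forward direction is the same involution argument ($\overline{K}\subseteq\clIstar K\subseteq K$, then negate), and your direct verification of $\clIstar K = \clIc K\cup\overline{\clIc}K = K\cup\{\neg\}(\clIc K)=K$ via the Unary Associativity Lemma is exactly the computation the paper performs inside Lemma~\ref{lem:uncle}\ref{lem:uncle:Istar}, which it then cites for the reverse direction with $K\cup\overline{K}=K$. (Only a cosmetic quibble: $\clIc K\subseteq K$ is left-stability under projections, which holds for any $K$, rather than the minion condition $K\clProj{A}\subseteq K$; this does not affect the argument.)
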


\begin{proof}
If $K$ is a $(C,\clIc)$\hyp{}clonoid satisfying $K = \overline{K}$, then, by Lemma~\ref{lem:uncle}\ref{lem:uncle:Istar}, $K \cup \overline{K} = K$ is a $(C,\clIstar)$\hyp{}clonoid.

Assume now that $K$ is a $(C,\clIstar)$\hyp{}clonoid and hence a $(C,\clIc)$\hyp{}clonoid.
Then $K \supseteq \clIstar K = K \cup \overline{K}$, from which it follows that $\overline{K} \subseteq K$.
Taking outer negations on both side of this inclusion, we get $K = \overline{\overline{K}} \subseteq \overline{K}$.
We conclude that $K = \overline{K}$.
\end{proof}


\section{Review of earlier results on $C$-minors}
\label{sec:review}

In a series of earlier papers of the author's
\cite{Lehtonen-threshold,Lehtonen-ULM,Lehtonen-semilattices,LehNes-clique,LehSze-discriminator,LehSze-submaximal,LehSze-finite,LehSze-quasilinear},
a notion called ``$C$\hyp{}minor'' was introduced and investigated.
In our current terminology, this concept is defined as follows.

\begin{definition}
Let $f, g \in \mathcal{F}_{AB}$, and let $C$ be a clone on $A$.
We say that $f$ is a \emph{$C$\hyp{}minor} of $g$, and we write $f \leq_C g$, if $f = g(h_1, \dots, h_m)$ for some $h_1, \dots, h_m \in C$, or, equivalently,  if $f \in \{g\} C$.
This condition is, in fact, equivalent to $f \in \gen[(C,\clProj{B})]{g}$, because $\{g\} C = \clProj{B} ( \{g\} C ) = \gen[(C,\clProj{B})]{g}$.
\end{definition}

For a fixed clone $C$, the $C$\hyp{}minor relation $\leq_C$ is a quasi\hyp{}order (a reflexive and transitive relation) on $\mathcal{F}_{AB}$.
As for all quasi\hyp{}orders, it induces an equivalence relation ($f$ and $g$ are \emph{$C$\hyp{}equivalent,} denoted $f \equiv_C g$, if $f \leq_C g$ and $g \leq_C f$) and a partial order on $\mathcal{F}_{AB} / \mathord{\equiv_C}$ ($f / \mathord{\equiv_C} \leq_C g / \mathord{\equiv_C}$ if and only if $f \leq_C g$).

The main focus of the earlier work was on describing  the $C$\hyp{}equivalence classes and the structure of the $C$\hyp{}minor poset $(\mathcal{F}_{AB} / \mathord{\equiv_C}, \mathord{\leq_C})$.
What is relevant to the purposes of our current paper is the fact that such results translate immediately into results about $(C,\clProj{B})$\hyp{}clonoids.

\begin{lemma}
\label{lem:clon-C-min}
Let $C$ be a clone on $A$, and let $F \subseteq \mathcal{F}_{AB}$.
\begin{enumerate}[label=\upshape{(\roman*)}]
\item\label{lem:clon-C-min:1}
$F$ is a downset of the $C$\hyp{}minor quasi\hyp{}order if and only if $F C = F$.
\item\label{lem:clon-C-min:2}
$F$ is a $(C,\clProj{B})$\hyp{}clonoid if and only if $F$ is a downset of the $C$\hyp{}minor quasi\hyp{}order $(\mathcal{F}_{AB}, \mathord{\leq_C})$.
\item\label{lem:clon-C-min:3}
$F$ is a $(C,\clProj{B})$\hyp{}clonoid if and only if $F = \bigcup D$ for some downset $D$ of the $C$\hyp{}minor partial order $(\mathcal{F}_{AB} / \mathord{\equiv_C}, \mathord{\leq_C})$.
\end{enumerate}
\end{lemma}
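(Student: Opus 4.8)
The plan is to reduce all three statements to the single identity ${\downarrow} F = FC$, where ${\downarrow}$ denotes the downset operator of the $C$\hyp{}minor quasi\hyp{}order $(\mathcal{F}_{AB}, \leq_C)$. First I would establish \ref{lem:clon-C-min:1}. By the definition of the $C$\hyp{}minor relation, the principal downset of a single function is ${\downarrow} g = \{\, q \mid q \leq_C g \,\} = \{g\} C$. Taking the union over $g \in F$ and invoking the right\hyp{}distributivity of function class composition over unions (Lemma~\ref{lem:CompUnion}\ref{lem:CompUnion:L}) yields ${\downarrow} F = \bigcup_{g \in F} \{g\} C = \bigl( \bigcup_{g \in F} \{g\} \bigr) C = FC$. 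Since $\leq_C$ is reflexive, we always have $F \subseteq {\downarrow} F = FC$; hence $F$ is a downset, i.e.\ ${\downarrow} F = F$, if and only if $FC \subseteq F$, and this one\hyp{}sided inclusion is already equivalent to the equality $FC = F$.

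For \ref{lem:clon-C-min:2}, I would note that left composition with the projection clone is trivial: $\clProj{B} K = K$ for every $K \subseteq \mathcal{F}_{AB}$, because $\pr_i^{(n)}(g_1, \dots, g_n) = g_i$. Consequently, by Lemma~\ref{lem:gen}, the generated clonoid is $\gen[(C,\clProj{B})]{F} = \clProj{B}(FC) = FC$, so $F$ is a $(C,\clProj{B})$\hyp{}clonoid precisely when $FC = F$. Combining this characterization with \ref{lem:clon-C-min:1} gives the equivalence.

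Finally, \ref{lem:clon-C-min:3} is the standard correspondence between downsets of a quasi\hyp{}order and downsets of its induced partial order, applied to \ref{lem:clon-C-min:2}. The crucial preliminary remark is that any downset $F$ of $(\mathcal{F}_{AB}, \leq_C)$ is a union of $\equiv_C$\hyp{}classes, since $f \in F$ and $g \equiv_C f$ force $g \leq_C f$ and hence $g \in F$. Given a $(C,\clProj{B})$\hyp{}clonoid $F$, I would set $D := \{\, f / {\equiv_C} \mid f \in F \,\}$; then $\bigcup D = F$, and $D$ is a downset of $(\mathcal{F}_{AB}/{\equiv_C}, \leq_C)$, because $g/{\equiv_C} \leq_C f/{\equiv_C}$ with $f \in F$ yields $g \leq_C f$ and thus $g \in F$. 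Conversely, if $D$ is a downset of the partial order, then $F := \bigcup D$ is a downset of the quasi\hyp{}order, hence a $(C,\clProj{B})$\hyp{}clonoid by \ref{lem:clon-C-min:2}. None of the three parts poses a genuine difficulty; the only place demanding care is the bookkeeping in \ref{lem:clon-C-min:1}, namely recognizing that ${\downarrow} F$ equals the composition $FC$ and that reflexivity supplies $F \subseteq FC$ for free, so that the one\hyp{}sided stability inclusion $FC \subseteq F$ automatically strengthens to $FC = F$.
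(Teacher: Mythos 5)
Your proof is correct and follows essentially the same route as the paper: part \ref{lem:clon-C-min:1} rests on the two observations that $F \subseteq FC$ holds automatically (via the projections in $C$, equivalently reflexivity of $\leq_C$) and that $FC \subseteq F$ is exactly the downset condition, part \ref{lem:clon-C-min:2} uses $\gen[(C,\clProj{B})]{F} = \clProj{B}(FC) = FC$, and part \ref{lem:clon-C-min:3} is the standard quasi-order/partial-order correspondence. Your packaging of \ref{lem:clon-C-min:1} as the identity ${\downarrow}F = FC$ is only a cosmetic variant of the paper's element-chasing argument, and your extra detail in \ref{lem:clon-C-min:3} (which the paper leaves as "immediate") is accurate.
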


\begin{proof}
\ref{lem:clon-C-min:1}
Assume $F$ is a downset of $(\mathcal{F}_{AB}, \mathord{\leq_C})$.
The inclusion $F \subseteq F C$ is clear because $C$ contains all projections and therefore for every $f \in F$ we have $f = f(\pr_1^{(n)}, \dots, \pr_n^{(n)}) \in F C$.
In order to show $F C \subseteq F$, let $\varphi \in F C$.
Then $\varphi = f(g_1, \dots, g_n)$ for some $f \in F$ and $g_1, \dots, g_n \in C$, i.e., $\varphi \leq_C f$.
Because $f \in F$ and $F$ is a downset of $\leq_C$, it follows that $\varphi \in F$.

Assume now that $F C = F$.
Let $f \in F$, and assume $\varphi \leq_C f$.
Then $\varphi \in \{f\} C \subseteq F C = F$,
which shows that $F$ is a downset of $\leq_C$.

\ref{lem:clon-C-min:2}
The fact that
$F$ is a $(C,\clProj{B})$\hyp{}clonoid is equivalent to $F = \gen[(C,\clProj{B})]{F} = \clProj{B} (F C) = F C$.
By part \ref{lem:clon-C-min:1}, this is equivalent to the fact that $F$ is a downset of $(\mathcal{F}_{AB}, \mathord{\leq_C})$.

\ref{lem:clon-C-min:3}
This follows immediately from \ref{lem:clon-C-min:2} and from the way how the $C$\hyp{}minor quasi\hyp{}order $\leq_C$ induces a partial order on $\mathcal{F}_{AB} / \mathord{\equiv_C}$.
\end{proof}

In the subsequent sections, we are going to review the earlier results on $C$\hyp{}minors of Boolean functions, and we translate them to results about $(C,\clIc)$\hyp{}clonoids.
Furthermore, for those source clones $C$ for which the number of $(C,\clIc)$\hyp{}clonoids is finite or countably infinite, we are also going to explicitly describe the $(C,D)$\hyp{}clonoids, for each target clone $D$.


\section{Source clones giving rise to an uncountable infinitude of clonoids}
\label{sec:uncountable}

Let $\mathcal{Q}$ be a class of partial orders.
A poset $P$ is \emph{universal} for $\mathcal{Q}$ if $P \in \mathcal{Q}$ and for every $Q \in \mathcal{Q}$, $Q$ embeds into $P$.

\begin{theorem}[{\cite[Theorems~3.1, 6.1]{LehSze-quasilinear}}]
\label{thm:LambdaVL}
If $C$ is a subclone of $\clLambda$, $\clV$, or $\clL$, then the $C$\hyp{}minor partial order is universal for the class of countable posets with finite initial segments.
\end{theorem}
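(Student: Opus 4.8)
The statement packages two claims: that the $C$\hyp{}minor poset $(\mathcal{F}_{AB}/\mathord{\equiv_C},\mathord{\leq_C})$, with $A = B = \{0,1\}$, lies in the class $\mathcal{Q}$ of countable posets with finite initial segments, and that it is universal for $\mathcal{Q}$. I would prove these separately, and for universality I would first replace ``embeds every member of $\mathcal{Q}$'' by ``embeds one fixed universal member.'' The poset $(\mathcal{P}_{\mathrm{fin}}(\IN),\subseteq)$ of finite subsets of $\IN$ under inclusion is such a member: for any $Q \in \mathcal{Q}$ the down\hyp{}set map $q \mapsto {\downarrow} q$ is an order\hyp{}embedding into $(\mathcal{P}_{\mathrm{fin}}(Q),\subseteq)$ (since ${\downarrow} p \subseteq {\downarrow} q \iff p \leq q$, using finiteness of initial segments and antisymmetry), and $(\mathcal{P}_{\mathrm{fin}}(Q),\subseteq)$ embeds into $(\mathcal{P}_{\mathrm{fin}}(\IN),\subseteq)$ as $Q$ is countable. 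Hence it suffices to order\hyp{}embed $(\mathcal{P}_{\mathrm{fin}}(\IN),\subseteq)$ into the $C$\hyp{}minor poset.

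For \emph{membership}, countability is immediate: there are countably many arities and finitely many Boolean functions of each, so $\mathcal{F}_{AB}$ and a fortiori $\mathcal{F}_{AB}/\mathord{\equiv_C}$ are countable. For finite initial segments, by Lemma~\ref{lem:clon-C-min} the principal ideal of $g$ is the set $\{g\}C$ of $C$\hyp{}minors of $g$, and I would bound the number of its $\equiv_C$\hyp{}classes in terms of $\arity{g}$. The point is the restricted shape of the inner functions: every member of $\clLambda$ is a conjunction of some of its variables or a constant, every member of $\clV$ dually a disjunction or a constant, and every member of $\clL$ an affine form over $\mathbb{F}_2$. Substituting such functions into the fixed $g$ can collapse or restrict arguments but, modulo $\equiv_C$, produces only finitely many functions; a case analysis of the substitution patterns yields the bound. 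The clone $\clV = \clLambda^{\mathrm{d}}$ is handled from $\clLambda$ via the duality $f \leq_C g \iff f^{\mathrm{d}} \leq_{C^{\mathrm{d}}} g^{\mathrm{d}}$, which is an isomorphism of the two minor posets.

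For the embedding I would construct an infinite family of \emph{rigid gadget} functions $a_0, a_1, a_2, \dots$, pairwise incomparable in the $C$\hyp{}minor order and individually rigid, and assign to each finite $S \subseteq \IN$ a function $f_S$ obtained by combining the gadgets $\{a_i : i \in S\}$ by a fixed operation on disjoint blocks of variables — a meet for $C \subseteq \clLambda$, a join for $C \subseteq \clV$, a sum over $\mathbb{F}_2$ for $C \subseteq \clL$; finiteness of $S$ makes $f_S$ a genuine function of finite arity. The easy half of $f_S \leq_C f_T \iff S \subseteq T$ is the forward implication: when $S \subseteq T$ one deletes the surplus gadgets indexed by $T \setminus S$ by an explicit substitution of inner functions from $C$ that trivializes their variable blocks.

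The main obstacle is the reverse implication, \emph{rigidity}: $f_S \leq_C f_T$ must force $S \subseteq T$. This demands that the gadgets and the combining operation be \emph{faithful}, meaning that the set of gadgets recoverable as $C$\hyp{}minors of $f_T$ is exactly $\{a_i : i \in T\}$ and that forming a $C$\hyp{}minor can never manufacture a gadget that was absent. I would establish this through $C$\hyp{}minor invariants that read off the gadget content: $\mathbb{F}_2$\hyp{}linear\hyp{}algebraic data (ranks and affine spans of the set of true points) for $C \subseteq \clL$, and data extracted from minimal true points and maximal false points, exploiting monotonicity of the inner functions, for $C \subseteq \clLambda$ and, dually, $\clV$. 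The real delicacy is uniformity across the subclone lattice below each of $\clLambda$, $\clV$, $\clL$: coarser orders (larger $C$) threaten to identify distinct gadgets, so rigidity is hardest there, whereas the smallest clones — down to $\clIc$, where only identification of variables is permitted and no constant may be substituted — make the deletion step of the forward implication most constrained. Designing the gadgets so that realizability survives the meager inner functions of small $C$ while faithfulness survives the rich inner functions of large $C$ is where the weight of the proof lies.
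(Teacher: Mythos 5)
The paper itself does not prove Theorem~\ref{thm:LambdaVL}; it is quoted from \cite{LehSze-quasilinear}. Your skeleton nevertheless matches the route taken in that source: membership of the $C$\hyp{}minor poset in the class (countability, plus finiteness of principal ideals coming from the restricted normal forms of functions in $\clLambda$, $\clV$, $\clL$), reduction of universality to embedding the single poset $(\mathcal{P}_{\mathrm{fin}}(\IN),\subseteq)$ via $q \mapsto {\downarrow} q$, and an embedding of that poset built from an antichain of gadgets combined blockwise by $\wedge$, $\vee$, or $+$. All of this is sound, and you correctly diagnose the two\hyp{}sided uniformity constraint: the forward implication must be realizable already over $\clIc$, while rigidity must withstand the full clones $\clLambda$, $\clV$, $\clL$.

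The gap is that everything carrying the mathematical weight --- the gadget family, the invariants, and the verification of $f_S \leq_C f_T \Leftrightarrow S \subseteq T$ --- is left as a declaration of intent. Two places where this is not merely bookkeeping. (1) Forward direction at $C = \clIc$: only variable identifications are available, so you cannot substitute the neutral constant into a surplus block; the gadgets must instead be arranged to collapse to that constant under identification alone (e.g.\ $a_j(\vect{0}) = a_j(\vect{1}) = 1$ for the $\wedge$\hyp{}combination, $= 0$ for the $+$\hyp{}combination). Your sketch never states this condition, and it constrains the same gadgets on which you later want to hang rigidity. (2) Rigidity: ``invariants that read off the gadget content'' is essentially a restatement of what must be proved; until at least one quantity is exhibited that is monotone under $\leq_{\clL}$ (say, the degree structure of the multilinear polynomial) or under $\leq_{\clLambda}$ (say, data extracted from minimal true points), and is shown to separate $f_S$ from $f_T$ whenever $S \nsubseteq T$, the argument has not begun. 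So: right architecture, correctly identified obstacles, but the explicit construction that constitutes the proof is missing.
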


\begin{theorem}[{\cite[Theorems~12, 14]{LehNes-clique}}]
\label{thm:clique}
If $C$ is a clone belonging to the interval $[ \{ \clSM, \clMcUk{\infty}, \clMcWk{\infty} \}, \{ \clUk{2}, \clWk{2} \} ]$, then the $C$\hyp{}minor partial order is universal in the class of countable posets.
\end{theorem}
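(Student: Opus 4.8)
The plan is to realize an arbitrary countable poset inside the $C$-minor order by encoding a fixed universal structure as Boolean functions, and to make one encoding serve the whole interval by a sandwiching argument. First I would reduce the statement to a single embedding problem. Since $A=B=\{0,1\}$, the underlying set $\clAll$ is countable, so the $C$-minor poset $(\clAll/\mathord{\equiv_C},\mathord{\leq_C})$ is itself a countable poset; thus it belongs to the class in question, and to prove universality it suffices to order-embed into it some poset already known to be universal for the countable posets. For this I would take the homomorphism order on finite directed graphs, which by classical universality results for homomorphism orders of finite relational structures is universal for the class of countable posets. The entire task then becomes: construct an assignment $G \mapsto f_G$ of a Boolean function to each finite digraph such that, for every clone $C$ in the interval, $f_G \leq_C f_H$ holds if and only if there is a homomorphism $G \to H$. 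Passing to $\equiv_C$-classes turns this equivalence into the required order embedding.

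Second, I would obtain uniformity over the interval by sandwiching. Because $f \leq_C g$ means $f \in \{g\}C$, the relation $\leq_C$ grows monotonically with $C$: if $C \subseteq C'$ then $\leq_C \,\subseteq\, \leq_{C'}$ as relations. Every clone $C$ in the interval contains one of $\clSM$, $\clMcUk{\infty}$, $\clMcWk{\infty}$ and is contained in one of $\clUk{2}$, $\clWk{2}$. Hence it is enough to prove two one-sided statements: \emph{functoriality}, that $G \to H$ implies $f_G \leq_C f_H$ for $C \in \{\clSM,\clMcUk{\infty},\clMcWk{\infty}\}$, and \emph{rigidity}, that $f_G \leq_C f_H$ implies $G \to H$ for $C \in \{\clUk{2},\clWk{2}\}$. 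Combining these with monotonicity of $\leq_C$ in $C$ yields the full equivalence for every $C$ sandwiched between the two ends. The duality $K \mapsto \overline{K}$ interchanges the $\clUk{m}$- and $\clWk{m}$-worlds and fixes the self-dual clone $\clSM$, so I expect the $\clWk{2}$ and $\clMcWk{\infty}$ cases to follow formally from the $\clUk{2}$ and $\clMcUk{\infty}$ cases, leaving only one genuine argument on each side.

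Third comes the gadget itself. I would encode the vertices and edges of $G$ into the true points (equivalently, the false points) of $f_G$ so that substituting functions from $C$ corresponds to mapping vertices to vertices; concretely, I would build $f_G$ from the clique/separating functions $\mathsf{U}^{m}_{k}$ suggested by the title, whose $1$-separating structure records adjacency. Functoriality is the routine direction: a homomorphism $G \to H$ directly supplies inner functions realizing $f_G$ as a $C$-minor of $f_H$. The hard part, and the main obstacle, is rigidity: one must show that \emph{every} way of obtaining $f_G$ as a $\clUk{2}$- (resp. $\clWk{2}$-) minor of $f_H$ is forced to decode into an honest graph homomorphism. This requires a careful analysis of which identifications and substitutions rank-$2$ separating functions permit, and a gadget rigid enough that no degenerate substitution can manufacture a relation $f_G \leq_C f_H$ when $G \not\to H$. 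I would therefore isolate the combinatorial invariants of the $C$-minor relation that survive substitution — the separation and intersection pattern of the preimages of the encoded vertices — verify that these invariants encode exactly the homomorphism constraints, and show that a vertex map can be read off from any minor witness, thereby closing the rigidity direction and completing the embedding.
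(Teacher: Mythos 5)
Your high-level architecture is sound and, in fact, matches the actual source of this result: in the paper the theorem carries no proof at all --- it is imported verbatim from Lehtonen--Ne\v{s}et\v{r}il \cite{LehNes-clique}, and that paper does exactly what you outline, namely encode (hyper)graphs as Boolean functions built from the separating/clique functions, reduce to the known universality of a homomorphism order, and exploit the monotonicity of $\leq_C$ in $C$ together with the duality exchanging $\clUk{m}$ with $\clWk{m}$ (and fixing $\clSM$) to cover the whole interval. Your sandwich argument is also stated correctly: for $C_1 \subseteq C \subseteq C_2$ one has ${\leq_{C_1}} \subseteq {\leq_C} \subseteq {\leq_{C_2}}$, so functoriality at the bottom of the interval plus rigidity at the top yields the biconditional for every clone in between, and the reduction of universality to embedding one universal poset is legitimate since the $C$-minor poset is countable.

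The genuine gap is that the entire mathematical content of the theorem lives in the step you defer: no gadget $G \mapsto f_G$ is actually defined, and the rigidity direction is described only as a goal (``isolate the combinatorial invariants \dots\ show that a vertex map can be read off''). This is not a routine verification. The difficulty is that an inner substitution witnessing $f_G \leq_{\clUk{2}} f_H$ may use \emph{arbitrary} functions of $\clUk{2}$ --- of any arity, not monotone, not idempotent --- so one must prove that the separation pattern of $f_H^{-1}(1)$ forces every such tuple of inner functions to act, up to controlled degeneracies, as a vertex map of $H$ preserving the encoded edges. Until a concrete $f_G$ is exhibited and that forcing argument is carried out (and its dual for $\clWk{2}$), the proof is a plan rather than a proof. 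Two smaller points to make explicit if you complete it: (i) the three lower bounds $\clSM$, $\clMcUk{\infty}$, $\clMcWk{\infty}$ are pairwise incomparable, so a single gadget must satisfy functoriality for all three simultaneously (or you must split into the sub-intervals $[\clSM,\clUk{2}]$, $[\clSM,\clWk{2}]$, $[\clMcUk{\infty},\clUk{2}]$, $[\clMcWk{\infty},\clWk{2}]$ and allow different gadgets); and (ii) you should cite or prove the universality of the chosen homomorphism order for countable posets, since that is itself a nontrivial input.
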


\begin{theorem}
If $C$ is a subclone of $\clLambda$, $\clV$, $\clL$, $\clUk{2}$, or $\clWk{2}$,
then there are an uncountable infinitude of $(C,\clIc)$\hyp{}clonoids.
\end{theorem}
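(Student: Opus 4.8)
The plan is to reduce the statement to a structural property of the $C$\hyp{}minor poset and then invoke the two universality results just established. By Lemma~\ref{lem:clon-C-min}\ref{lem:clon-C-min:2}, the $(C,\clIc)$\hyp{}clonoids are precisely the downsets of the $C$\hyp{}minor quasi\hyp{}order $(\mathcal{F}_{AB}, \mathord{\leq_C})$ (equivalently, by part~\ref{lem:clon-C-min:3}, the unions of downsets of the induced $C$\hyp{}minor poset). Hence it suffices to show that for each of the listed clones $C$ this poset has uncountably many downsets, and the cleanest sufficient condition for this is the existence of an \emph{infinite antichain}.

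First I would record the elementary fact that an infinite antichain forces a continuum of downsets. If $f_0, f_1, f_2, \dots$ are pairwise $\leq_C$\hyp{}incomparable, then for each $S \subseteq \IN$ the downset ${\downarrow} \{ \, f_i \mid i \in S \, \}$ contains $f_i$ if and only if $i \in S$, since no $f_i$ lies below a distinct $f_j$ (indeed $f_i \leq_C f_i$, while $f_i \not\leq_C f_j$ for $j \neq i$). Thus $S \mapsto {\downarrow} \{ \, f_i \mid i \in S \, \}$ is injective, yielding $2^{\aleph_0}$ distinct downsets and hence uncountably many $(C,\clIc)$\hyp{}clonoids.

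Next I would produce the infinite antichain in each case. For $C$ a subclone of $\clLambda$, $\clV$, or $\clL$, Theorem~\ref{thm:LambdaVL} states that the $C$\hyp{}minor poset is universal for the class of countable posets with finite initial segments; an infinite antichain is itself such a poset (it is countable and each of its initial segments is a singleton), so it embeds into the $C$\hyp{}minor poset, supplying the required antichain. For $C$ a subclone of $\clUk{2}$ or $\clWk{2}$ I would argue by monotonicity in the source clone: whenever $C \subseteq C'$ one has $\{g\} C \subseteq \{g\} C'$ for every $g$, hence $\mathord{\leq_C} \subseteq \mathord{\leq_{C'}}$, and so every $\leq_{C'}$\hyp{}antichain is a fortiori a $\leq_C$\hyp{}antichain. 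It therefore suffices to exhibit an infinite antichain in the $\clUk{2}$\hyp{}minor and $\clWk{2}$\hyp{}minor posets. Both $\clUk{2}$ and $\clWk{2}$ lie in the interval $[ \{ \clSM, \clMcUk{\infty}, \clMcWk{\infty} \}, \{ \clUk{2}, \clWk{2} \} ]$ of Theorem~\ref{thm:clique}: they are its top elements, and one checks from Post's lattice (Figure~\ref{fig:Post}) that $\clMcUk{\infty} \subseteq \clUk{2}$ and $\clMcWk{\infty} \subseteq \clWk{2}$. By that theorem their minor posets are universal for \emph{all} countable posets, and in particular each contains an infinite antichain.

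The only steps needing care are the two verifications flagged above: that $\clUk{2}$ and $\clWk{2}$ genuinely sit in the interval of Theorem~\ref{thm:clique}, and that the inclusion $\mathord{\leq_C} \subseteq \mathord{\leq_{C'}}$ transfers antichains \emph{downward} (from the larger clone to its subclones) rather than upward. I expect no substantial obstacle, since all the combinatorial difficulty is already packaged into Theorems~\ref{thm:LambdaVL} and \ref{thm:clique}; the present theorem is essentially the observation that universality for a class containing infinite antichains already forces a continuum of downsets, and hence of $(C,\clIc)$\hyp{}clonoids.
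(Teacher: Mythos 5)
Your proposal is correct and follows essentially the same route as the paper: reduce to counting downsets of the $C$\hyp{}minor quasi\hyp{}order via Lemma~\ref{lem:clon-C-min}, extract a countably infinite antichain from the universality statements of Theorems~\ref{thm:LambdaVL} and \ref{thm:clique}, and note that distinct subsets of the antichain generate distinct downsets, hence uncountably many $(C,\clIc)$\hyp{}clonoids. The only point of divergence is the treatment of subclones of $\clUk{2}$ and $\clWk{2}$: the paper applies Theorem~\ref{thm:clique} to $C$ itself (which tacitly requires checking that every such subclone either lies in the interval of that theorem or is a subclone of $\clLambda$, $\clV$, or $\clL$), whereas you apply it only to the top clones $\clUk{2}$ and $\clWk{2}$ and transfer the antichain downward via the monotonicity $\mathord{\leq_C} \subseteq \mathord{\leq_{C'}}$ for $C \subseteq C'$ --- a sound and slightly more self\hyp{}contained way to cover all subclones uniformly.
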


\begin{proof}
If $C$ is a subclone of $\clLambda$, $\clV$, $\clL$, $\clUk{2}$, or $\clWk{2}$, then,
by Theorems~\ref{thm:LambdaVL} and \ref{thm:clique}, the $C$\hyp{}minor partial order is universal for the class of countable posets with finite initial segments or for the class of all countable posets.
Both classes of posets contain, in particular, a countably infinite antichain; therefore, the $C$\hyp{}minor poset contains a countably infinite antichain $A$.
Distinct subsets of $A$ generate distinct downsets of $(\clAll / \mathord{\equiv_C}, \mathord{\leq_C})$.
Consequently, $(\clAll / \mathord{\equiv_C}, \mathord{\leq_C})$ has uncountably many downsets, and hence, by Lemma~\ref{lem:clon-C-min}, there are an uncountable infinitude of $(C,\clIc)$\hyp{}clonoids.
\end{proof}

It is out of the scope of the current paper to investigate in greater detail the $(C_1,C_2)$\hyp{}clonoids in the cases where $C_1$ is a subclone of $\clLambda$, $\clV$, $\clL$, $\clUk{2}$, or $\clWk{2}$.


\section{Clonoids with a monotone source clone}
\label{sec:monotone}

For the four clones $C$ in the interval $[\clMc, \clM]$ (the clones of monotone functions), the $C$\hyp{}minor poset was described in earlier papers of the author's.
The proof was, however, scattered across several papers (\cite[Section~6]{Lehtonen-ULM}, \cite[Section~7]{LehNes-clique}, and Kosub and Wagner \cite[Proposition~38]{KosWag-IC}) and was, admittedly, somewhat implicit.
For this reason, we are going to present a complete proof here.
Our main tool is the homomorphism order of $k$\hyp{}posets.

For $k \in \IN$, a \emph{partially ordered set labeled with $k$ colours} (in short, a \emph{$k$\hyp{}poset}) is a structure $\mathbf{P} = (P, {\leq}, c)$, where $(P, {\leq})$ is a poset (the \emph{underlying poset}) and $c \colon P \to [0, k-1]$ is a \emph{labeling}.
If $(P, {\leq})$ is a lattice or a chain, then we may speak of a \emph{$k$\hyp{}lattice} or a \emph{$k$\hyp{}chain}, respectively.

Let $\mathbf{P} = (P, {\leq}, c)$ and $\mathbf{P}' = (P', {\leq}', c')$ be $k$\hyp{}posets.
A mapping $h \colon P \to P'$ is a \emph{homomorphism} of $\mathbf{P}$ to $\mathbf{P}'$, if $h$ preserves both the order and the labels, i.e.,
$h(x) \leq' h(y)$ whenever $x \leq y$, and $c = c' \circ h$.
We write $h \colon \mathbf{P} \to \mathbf{P}'$ to denote that $h$ is a homomorphism of $\mathbf{P}$ to $\mathbf{P}'$,
and we write $\mathbf{P} \to \mathbf{P}'$ to denote that there exists a homomorphism of $\mathbf{P}$ to $\mathbf{P}'$.
If $\mathbf{P} \to \mathbf{P}'$ and $\mathbf{P}' \to \mathbf{P}$, we say that $\mathbf{P}$ and $\mathbf{P}'$ are \emph{homomorphically equivalent.}
The existence of homomorphism relation $\to$ is a quasi\hyp{}order on the class of all $k$\hyp{}posets, and it induces a partial order on the homomorphical equivalence classes.
In general, the homomorphism order of $k$\hyp{}posets has a very rich structure; see \cite{Lehtonen-k-posets} for further details.
However, as we will see, the restriction of the homomorphism order to $2$\hyp{}lattices is very easy to describe.

If $\mathbf{P}$ and $\mathbf{P}'$ have least elements $\bot$ and $\bot'$, respectively, then a homomorphism $h \colon \mathbf{P} \to \mathbf{P}'$ is \emph{$\bot$\hyp{}preserving} if $h(\bot) = \bot'$.
We define \emph{$\top$\hyp{}preserving} homomorphisms analogously when $\mathbf{P}$ and $\mathbf{P}'$ have greatest elements $\top$ and $\top'$.
We call $\bot$\hyp{}preserving, $\top$\hyp{}preserving, and both $\bot$- and $\top$\hyp{}preserving homomorphisms also \emph{$\bot$\hyp{}homomorphisms}, \emph{$\top$\hyp{}homomorphisms}, and \emph{$\bot\top$\hyp{}homomorphisms}, respectively.
The composition of ($\bot$\hyp{}preserving, $\top$\hyp{}preserving) homomorphisms is again a homomorphism of the same kind.

Let $(A,{\leq})$ be a poset.
An operation $f \colon A^n \to A$ is \emph{monotone} (\emph{order\hyp{}preserving}) with respect to $\leq$ if $f(\vect{a}) \leq f(\vect{b})$ whenever $\vect{a} \leq' \vect{b}$ in $(A,{\leq})^n = (A^n,{\leq'})$,
where $\leq'$ is the component\hyp{}wise partial order induced by $\leq$, given by the rule $(a_1, \dots, a_n) \leq' (b_1, \dots, b_n)$ if and only if $a_i \leq b_i$ for all $i \in \nset{n}$.
Denote by $M$ the set of all monotone functions with respect to $\leq$ (the partial order is implicit in the notation and is understood from the context).
If $(A,{\leq})$ has a least element $\bot$, then denote by $M_\bot$ the set of all monotone functions $f$ that are $\bot$\hyp{}preserving, i.e., $f(\bot,\dots,\bot) = \bot$.
Similarly, if $(A,{\leq})$ has a greatest element $\top$, then denote by $M_\top$ the set of all $\top$\hyp{}preserving monotone functions.
If $(A,{\leq})$ has both a least element $\bot$ and a greatest element $\top$, let $M_{\bot\top} := M_\bot \cap M_\top$.
The sets $M$, $M_\bot$, $M_\top$, and $M_{\bot \top}$ are clones on $A$.

If $A = [0, k-1]$, we associate with any $f \colon A^n \to A$ the $k$\hyp{}poset $P(f,{\leq}) := ((A,{\leq})^n,f) = ((A^n,{\leq'}),f)$.

\begin{proposition}[{\cite[Proposition~6.1]{Lehtonen-ULM}}]
\label{prop:mon}
Let $(A,{\leq})$ be a poset.
\begin{enumerate}[label=\upshape{(\roman*)}]
\item\label{prop:mon:mon}
$f \leq_M g$ if and only if there is a homomorphism of $P(f,{\leq})$ to $P(g,{\leq})$.
\item\label{prop:mon:0}
If $(A,{\leq})$ has a least element $\bot$, then
$f \leq_{M_\bot} g$ if and only if there is a $\bot$\hyp{}homomorphism of $P(f,{\leq})$ to $P(g,{\leq})$.
\item\label{prop:mon:1}
If $(A,{\leq})$ has a greatest element $\top$, then
$f \leq_{M_\top} g$ if and only if there is a $\top$\hyp{}homomorphism of $P(f,{\leq})$ to $P(g,{\leq})$.
\item\label{prop:mon:01}
If $(A,{\leq})$ has a least element $\bot$ and a greatest element $\bot$, then
$f \leq_{M_{\bot\top}} g$ if and only if there is a $\bot\top$\hyp{}homomorphism of $P(f,{\leq})$ to $P(g,{\leq})$.
\end{enumerate}
\end{proposition}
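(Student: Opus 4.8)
The plan is to prove all four statements at once by exhibiting a single bijective dictionary between tuples of operations and maps of product posets, and then reading off each case by specializing the clone. Throughout, let $f$ have arity $m$ and $g$ have arity $n$, so that $f \leq_C g$ means $f = g(h_1, \dots, h_n)$ for some $h_1, \dots, h_n \in C$ of arity $m$, and recall that $P(f,{\leq})$ has underlying poset $(A^m, {\leq'})$ while $P(g,{\leq})$ has underlying poset $(A^n, {\leq'})$, both labeled by the respective functions. The key observation is that an $n$-tuple $(h_1, \dots, h_n)$ of $m$-ary operations on $A$ is the same datum as a single map $h \colon A^m \to A^n$, via $h(\vect{a}) = (h_1(\vect{a}), \dots, h_n(\vect{a}))$, with inverse $h_i = \pr_i^{(n)} \circ h$.

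First I would note that, under this correspondence, the composition identity is literally the label-preservation condition: for every $\vect{a} \in A^m$ we have $g(h(\vect{a})) = g(h_1(\vect{a}), \dots, h_n(\vect{a}))$, so $f = g(h_1, \dots, h_n)$ holds if and only if $f = g \circ h$ as labelings of $P(f,{\leq})$ and $P(g,{\leq})$. Next I would match the order condition. Since $(A^m, {\leq'})$ and $(A^n, {\leq'})$ carry the componentwise order, the relation $\vect{a} \leq' \vect{b}$ in $A^m$ forces $h(\vect{a}) \leq' h(\vect{b})$ in $A^n$ exactly when every coordinate inequality $h_i(\vect{a}) \leq h_i(\vect{b})$ holds; thus $h$ is order-preserving if and only if each component $h_i$ is monotone, i.e.\ each $h_i \in M$. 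Combining the two paragraphs already yields statement \ref{prop:mon:mon}: a witness tuple of monotone functions for $f \leq_M g$ is precisely an order- and label-preserving map, that is, a homomorphism $P(f,{\leq}) \to P(g,{\leq})$, and conversely every such homomorphism decomposes into its components.

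For statements \ref{prop:mon:0}--\ref{prop:mon:01} I would invoke the fact that when $(A,{\leq})$ has a least element $\bot$ (resp.\ a greatest element $\top$), the product poset $(A^n, {\leq'})$ has least element $(\bot, \dots, \bot)$ (resp.\ greatest element $(\top, \dots, \top)$), and likewise for $(A^m, {\leq'})$. Consequently $h$ is $\bot$-preserving, i.e.\ $h(\bot, \dots, \bot) = (\bot, \dots, \bot)$, if and only if $h_i(\bot, \dots, \bot) = \bot$ for every $i$, which is exactly the condition that each $h_i \in M_\bot$; the analogous equivalence holds for $\top$, and taking both conditions simultaneously gives $M_{\bot\top}$. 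Feeding the clone $M_\bot$, $M_\top$, or $M_{\bot\top}$ into the same correspondence then produces \ref{prop:mon:0}, \ref{prop:mon:1}, and \ref{prop:mon:01} respectively, with $\bot$-, $\top$-, and $\bot\top$-homomorphisms on the geometric side.

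I do not expect a genuine obstacle here; the whole content is the dictionary between ``tuple of monotone $m$-ary operations'' and ``monotone label-preserving map $A^m \to A^n$''. The only points demanding a little care are the bookkeeping of arities (the number of inner functions equals the arity $n$ of $g$, and their common arity $m$ is the arity of $f$, so $h$ runs from the underlying poset $A^m$ of $P(f,{\leq})$ to the underlying poset $A^n$ of $P(g,{\leq})$) and the correct identification of the bottom and top of the product posets as the constant tuples, which is precisely what converts the $\bot$- and $\top$-preservation of $h$ into the corresponding preservation by each component $h_i$.
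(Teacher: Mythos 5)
Your proposal is correct and follows essentially the same route as the paper's proof: identifying a witness tuple $(h_1,\dots,h_n)$ of operations in the clone with a single map $h\colon A^m\to A^n$, observing that $f=g(h_1,\dots,h_n)$ is exactly label preservation, that componentwise monotonicity is exactly order preservation of $h$, and that $\bot$-/$\top$-preservation of $h$ corresponds to each $h_i$ lying in $M_\bot$/$M_\top$. No gaps; the arity bookkeeping you flag is handled correctly.
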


\begin{proof}
Let $C$ be one of the clones $M$, $M_\bot$, $M_\top$, $M_{\bot\top}$.
Assume first that $f \leq_C g$.
Then $f = g(h_1, \dots, h_m)$ for some $h_1, \dots, h_m \in C$.
Clearly, $h = (h_1, \dots, h_m)$ is an order\hyp{}preserving map from $(A,{\leq})^n$ to $(A,{\leq})^m$ and $f = g \circ h$, so $h$ is a homomorphism of $P(f,{\leq})$ to $P(g,{\leq})$.
Moreover, if $C \subseteq M_\bot$, then $h(\bot,\dots,\bot) = (\bot,\dots,\bot)$, i.e., $h$ is $\bot$\hyp{}preserving; and if $C \subseteq M_\top$, then $h(\top,\dots,\top) = (\top,\dots,\top)$, i.e., $h$ is $\top$\hyp{}preserving.

Conversely, assume that there exists a homomorphism $h$ of $P(f,{\leq})$ to $P(g,{\leq})$.
Then $f = g \circ h$, and in $h = (h_1, \dots, h_m)$, each component function $h_i$ is in $M$.
Moreover, if $h$ is $\bot$\hyp{}preserving, then each $h_i$ is in $M_\bot$; and if $h$ is $\top$\hyp{}preserving, then each $h_i$ is in $M_\top$.
Therefore $f \leq_C g$.
\end{proof}

A $k$\hyp{}chain $(A, {\leq}, c)$ with $A = \{a_0, a_1, a_2, \dots, a_d\}$, $a_0 < a_1 < a_2 < \dots < a_d$, is \emph{alternating} if $c(a_i) \neq c(a_{i+1})$ for all $i \in [0,d-1]$.
The number $d \in \IN$ is the \emph{length} of this chain.

Note that an alternating $2$\hyp{}chain is uniquely determined, up to isomorphism, by its length and the label $c(a_0)$ of its least element; we have $c(a_i) = c(a_0)$ if and only if $i \equiv 0 \pmod{2}$.
Denote by $C^k_a$ ($k \in \IN$, $a \in \{0,1\}$) the alternating $2$\hyp{}chain of length $k$ in which the label of the least element is $a$.

The homomorphism order of alternating $2$\hyp{}chains is very easy to describe, and so are the
restrictions of the homomorphism order in which we confine ourselves to $\bot$-, $\top$-, or $\bot\top$\hyp{}homomorphisms; see also Figure~\ref{fig:hom2chain}.

\begin{proposition}
\label{prop:bottophom}
\leavevmode
\begin{enumerate}[label=\upshape{(\roman*)}]
\item
There exists a homomorphism of $C^k_a$ to $C^\ell_b$ if and only if $k < \ell$ or $(k,a) = (\ell,b)$.
\item
There exists a $\bot$\hyp{}homomorphism of $C^k_a$ to $C^\ell_b$ if and only if $k \leq \ell$ and $a = b$.
\item
There exists a $\top$\hyp{}homomorphism of $C^k_a$ to $C^\ell_b$ if and only if $k \leq \ell$ and $a + k \equiv b + \ell \pmod{2}$.
\item
There exists a $\bot\top$\hyp{}homomorphism of $C^k_a$ to $C^\ell_b$ if and only if $k \leq \ell$, $a = b$, and $k \equiv \ell \pmod{2}$.
\end{enumerate}
\end{proposition}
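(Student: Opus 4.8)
The plan is to reduce all four statements to a single structural observation about homomorphisms between alternating $2$\hyp{}chains, and then read off each case by tracking lengths, parities, and endpoints. First I would fix coordinates: identify $C^k_a$ with the chain on $\{0, 1, \dots, k\}$ (naturally ordered, with $0 = \bot$ and $k = \top$) whose labeling is $i \mapsto a + i \pmod 2$, and likewise identify $C^\ell_b$ with $\{0, \dots, \ell\}$ labeled by $j \mapsto b + j \pmod 2$. Under this identification a homomorphism $h \colon C^k_a \to C^\ell_b$ is exactly a monotone map $h \colon \{0,\dots,k\} \to \{0,\dots,\ell\}$ satisfying the label condition $h(i) \equiv i + a + b \pmod 2$ for every $i$.

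The key lemma I would establish is that every such $h$ is \emph{strictly} increasing. Indeed, the consecutive elements $i$ and $i+1$ carry opposite labels, hence so must $h(i)$ and $h(i+1)$; in particular $h(i) \neq h(i+1)$, and together with monotonicity $h(i) \leq h(i+1)$ this forces $h(i) < h(i+1)$. Consequently a homomorphism can exist only if $k \leq \ell$, and the parities of the strictly increasing values $h(0) < h(1) < \dots < h(k)$ are completely prescribed by $h(i) \equiv i + a + b \pmod 2$. Conversely, since the prescribed parities alternate, a strictly increasing realization exists precisely when there is a valid starting value: the tightest realization is $h(i) = h(0) + i$, which requires an integer $h(0)$ with $0 \leq h(0) \leq \ell - k$ and $h(0) \equiv a+b \pmod 2$.

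With this in hand, each statement follows by specializing the constraints. For (i) such an $h(0)$ exists iff the integer interval $[0, \ell-k]$ meets the residue class $a+b \pmod 2$: when $k < \ell$ the interval has at least two integers and so meets both classes, whereas when $k = \ell$ the only strictly increasing map is the identity, which is label\hyp{}preserving iff $a = b$; this is exactly ``$k < \ell$ or $(k,a) = (\ell,b)$''. For (ii) a $\bot$\hyp{}homomorphism adds $h(0) = 0$, so the label condition at $0$ reads $a = b$, leaving only the requirement $k \leq \ell$. For (iii) a $\top$\hyp{}homomorphism adds $h(k) = \ell$; the label condition at $k$ gives $a + k \equiv b + \ell \pmod 2$, and the realization $h(i) = i + (\ell - k)$ shows that this, together with $k \leq \ell$, is sufficient. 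For (iv) imposing both endpoints combines $a = b$ (from the $\bot$\hyp{}condition) with $a + k \equiv b + \ell \pmod 2$ (from the $\top$\hyp{}condition), which under $a = b$ is equivalent to $k \equiv \ell \pmod 2$; the explicit map sending $i \mapsto i$ for $i < k$ and $k \mapsto \ell$ witnesses sufficiency, being strictly increasing since $k \leq \ell$ gives $k-1 < \ell$, and correctly labeled since $k \equiv \ell \pmod 2$.

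The only genuinely nontrivial step is the strict\hyp{}monotonicity lemma; once it is in place, the remainder is elementary parity bookkeeping over the interval $[0,\ell-k]$. The point to watch is the boundary case $k = \ell$, where I must confirm that the candidate witness maps are simultaneously strictly increasing and correctly labeled, and in part (i) that the failure of a parity match genuinely rules out all homomorphisms rather than merely the tightest one.
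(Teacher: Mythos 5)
Your proof is correct, and it is precisely the ``straightforward verification'' that the paper leaves to the reader: reduce to monotone maps $\{0,\dots,k\}\to\{0,\dots,\ell\}$ with the parity constraint $h(i)\equiv i+a+b \pmod 2$, observe that the alternating labels force strict monotonicity (hence $k\leq\ell$ and $0\leq h(0)\leq \ell-k$), and then read off each case from the admissible values of $h(0)$ and the endpoint constraints. The edge cases ($k=\ell$ in (i), and the witness maps in (iii)--(iv)) are all handled correctly.
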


\begin{proof}
Straightforward verification.
\end{proof}

\begin{figure}
\begin{center}
\scalebox{0.9}{%
\begin{tikzpicture}[baseline, scale=0.6]
   \node [label = left:$C^0_0$] (00) at (0,0) {};
   \node [label = right:$C^0_1$] (10) at (2,0) {};
   \node [label = left:$C^1_0$] (01) at (0,2) {};
   \node [label = right:$C^1_1$] (11) at (2,2) {};
   \node [label = left:$C^2_0$] (02) at (0,4) {};
   \node [label = right:$C^2_1$] (12) at (2,4) {};
   \node [label = left:$C^3_0$] (03) at (0,6) {};
   \node [label = right:$C^3_1$] (13) at (2,6) {};
   \coordinate (04) at (0,8);
   \coordinate (14) at (2,8);
   \draw (00) -- (01) -- (02) -- (03);
   \draw (10) -- (11) -- (12) -- (13);
   \draw (00) -- (11) -- (02) -- (13);
   \draw (10) -- (01) -- (12) -- (03);
   \draw[dashed] (03) -- (04);
   \draw[dashed] (13) -- (14);
   \draw[dashed] (03) -- (14);
   \draw[dashed] (13) -- (04);
   \node[fill=none,draw=none] at (1,-1.5) {(a)};
\end{tikzpicture}
}
\qquad
\scalebox{0.9}{%
\begin{tikzpicture}[baseline, scale=0.6]
   \node [label = left:$C^0_0$] (00) at (0,0) {};
   \node [label = right:$C^0_1$] (10) at (2,0) {};
   \node [label = left:$C^1_0$] (01) at (0,2) {};
   \node [label = right:$C^1_1$] (11) at (2,2) {};
   \node [label = left:$C^2_0$] (02) at (0,4) {};
   \node [label = right:$C^2_1$] (12) at (2,4) {};
   \node [label = left:$C^3_0$] (03) at (0,6) {};
   \node [label = right:$C^3_1$] (13) at (2,6) {};
   \coordinate (04) at (0,8);
   \coordinate (14) at (2,8);
   \draw (00) -- (01) -- (02) -- (03);
   \draw (10) -- (11) -- (12) -- (13);
   \draw[dashed] (03) -- (04);
   \draw[dashed] (13) -- (14);
   \node[fill=none,draw=none] at (1,-1.5) {(b)};
\end{tikzpicture}
}
\qquad
\scalebox{0.9}{%
\begin{tikzpicture}[baseline, scale=0.6]
   \node [label = left:$C^0_0$] (00) at (0,0) {};
   \node [label = right:$C^0_1$] (10) at (2,0) {};
   \node [label = left:$C^1_0$] (01) at (0,2) {};
   \node [label = right:$C^1_1$] (11) at (2,2) {};
   \node [label = left:$C^2_0$] (02) at (0,4) {};
   \node [label = right:$C^2_1$] (12) at (2,4) {};
   \node [label = left:$C^3_0$] (03) at (0,6) {};
   \node [label = right:$C^3_1$] (13) at (2,6) {};
   \coordinate (04) at (0,8);
   \coordinate (14) at (2,8);
   \draw (00) -- (11) -- (02) -- (13);
   \draw (10) -- (01) -- (12) -- (03);
   \draw[dashed] (03) -- (14);
   \draw[dashed] (13) -- (04);
   \node[fill=none,draw=none] at (1,-1.5) {(c)};
\end{tikzpicture}
}
\qquad
\scalebox{0.9}{%
\begin{tikzpicture}[baseline, scale=0.6]
   \node [label = left:$C^0_0$] (00) at (0,0) {};
   \node [label = right:$C^0_1$] (10) at (2,0) {};
   \node [label = left:$C^1_0\,\,$] (01) at (0,2) {};
   \node [label = right:$C^1_1$] (11) at (2,2) {};
   \node [label = left:$C^2_0$] (02) at (0,4) {};
   \node [label = right:$\,\,C^2_1$] (12) at (2,4) {};
   \node [label = left:$C^3_0\,\,$] (03) at (0,6) {};
   \node [label = right:$C^3_1$] (13) at (2,6) {};
   \coordinate (04) at (0,8);
   \coordinate (14) at (2,8);
   \coordinate (05) at (0,10);
   \coordinate (15) at (2,10);
   \draw (00) to[out=110,in=250] (02);
   \draw (01) to[out=70,in=290] (03);
   \draw (10) to[out=110,in=250] (12);
   \draw (11) to[out=70,in=290] (13);
   \draw[dashed] (02) to[out=110,in=250] (04);
   \draw[dashed] (12) to[out=110,in=250] (14);
   \begin{scope}
   \clip ($(03)+(-1,0)$) rectangle ($(14)+(1,0)$);
   \draw[dashed] (03) to[out=70,in=290] (05);
   \draw[dashed] (13) to[out=70,in=290] (15);
   \end{scope}
   \node[fill=none,draw=none] at (1,-1.5) {(d)};
\end{tikzpicture}
}
\end{center}
\caption{Homomorphism orders of $2$\hyp{}chains:
(a) unrestricted,
(b) $\bot$\hyp{}preserving,
(c) $\top$\hyp{}preserving,
and
(d) $\bot$- and $\top$\hyp{}preserving homomorphisms.}
\label{fig:hom2chain}
\end{figure}

Let $P$ be a $k$\hyp{}poset with a least element $\bot$.
The \emph{alternation depth} of an element $x \in P$, denoted by $d_P(x)$, is the length of the longest alternating $k$\hyp{}chain included in the interval from $\bot$ to $x$.
Note that if $a_0 < \dots < a_{d(x)}$ is such a longest alternating $k$\hyp{}chain included in $[\bot,x]$, then $c(\bot) = c(a_0)$ and $c(x) = c(a_{d(x)})$; for, otherwise we could extend the chain into a longer alternating $k$\hyp{}chain by adjoining $\bot$ or $x$.
Therefore, whenever we consider a longest alternating $k$\hyp{}chain included in $[\bot,x]$, we may assume, if necessary, that it contains both $\bot$ and $x$.

The following slightly generalizes an observation about $2$\hyp{}lattices made by Kosub and Wagner \cite{KosWag-LNCS}, \cite[Proposition~38]{KosWag-IC}.

\begin{proposition}
\label{prop:2-poset}
Every $2$\hyp{}poset with a least element \textup{(}or with a greatest element\textup{)} is homomorphically equivalent to its longest alternating $2$\hyp{}chain.
\end{proposition}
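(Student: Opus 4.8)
The plan is to establish the case of a least element $\bot$ directly and then obtain the greatest-element case by order-reversal duality. Write $\mathbf{P} = (P, {\leq}, c)$, set $a := c(\bot)$, and let $D := \max_{x \in P} d_P(x)$ be the largest alternation depth occurring in $\mathbf{P}$ (this maximum exists and is finite whenever $\mathbf{P}$ is finite, which is the relevant case). First I would identify the longest alternating $2$-chain of $\mathbf{P}$ as $C^{D}_{a}$: by the observation preceding the statement, a longest alternating chain in $\mathbf{P}$ may be assumed to begin at $\bot$, so its least element carries the label $a$ and its length equals $\max_{x} d_P(x) = D$; since an alternating $2$-chain is determined up to isomorphism by its length and its bottom label, this chain is $C^{D}_{a}$.

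For one direction of the homomorphic equivalence, $C^{D}_{a} \to \mathbf{P}$ is immediate. Choosing $x^{*}$ with $d_P(x^{*}) = D$, a longest alternating chain in $[\bot, x^{*}]$ is a sub-$2$-poset of $\mathbf{P}$ isomorphic to $C^{D}_{a}$, and the inclusion map preserves both order and labels, hence is a homomorphism.

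The substance is the converse direction $\mathbf{P} \to C^{D}_{a}$. Writing $C^{D}_{a} = \{e_0 < e_1 < \dots < e_D\}$ with $c(e_i) \equiv a + i \pmod{2}$, I would define $h(x) := e_{d_P(x)}$ and verify the three required properties. Well-definedness into $C^{D}_{a}$ is just $0 \leq d_P(x) \leq D$. Monotonicity holds because $x \leq y$ gives $[\bot, x] \subseteq [\bot, y]$, so every alternating chain witnessing $d_P(x)$ also lies in $[\bot, y]$, whence $d_P(x) \leq d_P(y)$ and $h(x) \leq h(y)$. Label preservation is the crux: I must show $c(x) = c(e_{d_P(x)})$, i.e.\ $c(x) \equiv a + d_P(x) \pmod{2}$. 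This is exactly the second half of the observation preceding the statement: a longest alternating chain $a_0 < \dots < a_{d_P(x)}$ in $[\bot, x]$ has $c(a_0) = c(\bot) = a$ and $c(a_{d_P(x)}) = c(x)$, and since it alternates, $c(a_i) \equiv a + i \pmod{2}$ for each $i$, giving $c(x) \equiv a + d_P(x) \pmod{2}$.

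Combining the two homomorphisms yields $\mathbf{P}$ and $C^{D}_{a}$ homomorphically equivalent. Finally, if $\mathbf{P}$ has a greatest element $\top$ rather than a least element, I would apply the result to the order-dual $2$-poset $\mathbf{P}^{\partial}$, which has least element $\top$ and the same labeling: homomorphisms (and hence homomorphic equivalence) are preserved under reversing the order on both source and target, and the order-dual of an alternating $2$-chain is again an alternating $2$-chain, so the conclusion transfers. The only step demanding care is label preservation, where the parity bookkeeping relies on the fact that the extremal alternating chain genuinely runs from $\bot$ (colour $a$) up to $x$ (colour $c(x)$); everything else is routine.
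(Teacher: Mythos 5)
Your proof is correct and follows essentially the same route as the paper: the same map $x \mapsto$ (element of the longest alternating chain at height $d_P(x)$), with monotonicity from the inclusion $[\bot,x] \subseteq [\bot,y]$ and label preservation from the parity of the alternation depth. The only cosmetic difference is that you name the target chain abstractly as $C^D_a$ and spell out the dual case explicitly, whereas the paper uses a longest alternating chain sitting inside $P$ itself and dismisses the dual case as similar.
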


\begin{proof}
Let $(P,{\leq},c)$ be a $2$\hyp{}poset with a least element $\bot$.
(The proof for a $2$\hyp{}poset with a greatest element is similar.)
It is clear that every $2$\hyp{}subposet of $P$ (in particular, any longest alternating $2$\hyp{}chain) maps homomorphically to $P$, so we only need to show that there exists a homomorphism from $P$ to a longest alternating chain in $P$.

Let $A = \{a_0, a_1, \dots, a_s\}$, $\bot = a_0 < a_1 < \dots < a_s$ be a longest alternating $2$\hyp{}chain in $P$.
Now, define the map $h \colon P \to A$ as $h(x) = a_{d(x)}$ for all $x \in P$.
We are going to show that $h$ is a homomorphism.
Firstly, if $x \leq y$ in $P$, then necessarily $d(x) \leq d(y)$.
(Every alternating $2$\hyp{}chain included in $[\bot,x]$ is also included in $[\bot,y]$.)
Therefore $h(x) = a_{d(x)} \leq a_{d(y)} = h(y)$.

Secondly,
observe that,
for all $x \in P$,
$c(x) = c(\bot)$ holds if and only if $d(x) \equiv 0 \pmod{2}$.
The latter in turn holds if and only if $c(a_{d(x)}) = c(\bot)$ (because $A$ is an alternating $2$\hyp{}chain).
Therefore $c(h(x)) = c(a_{d(x)}) = c(x)$, so $c = c \circ h$.
\end{proof}

We can now apply Propositions~\ref{prop:mon}, \ref{prop:bottophom}, and \ref{prop:2-poset} to describe the $C$\hyp{}minor posets of Boolean functions, for $\clMc \subseteq C \subseteq \clM$.
Their downsets give us the $(C, \clIc)$\hyp{}clonoids.

For a Boolean function $f \colon \{0,1\}^n \to \{0,1\}$, define the \emph{alternation depth} of $\vect{a} \in \{0,1\}^n$, and denote it by $d_f(\vect{a})$, as $d_{P(f,{\leq})}(\vect{a})$, i.e., the length of the longest alternating chain in the interval $[\vect{0}, \vect{a}]$ in $P(f,{\leq})$.
Furthermore, define the \emph{alternation number} of $f \colon \{0,1\}^n \to \{0,1\}$, denoted $\Alt(f)$, as the length of the longest alternating chain in $P(f,{\leq})$, i.e., $\Alt(f) = d_f(\vect{1})$.
Denote by $\clAlt{k}$ the set of all Boolean functions with alternation number $k$ and by $\clAlt{\leq k}$ the set of all Boolean functions with alternation number at most $k$.
We now obtain a description of the $C$\hyp{}minor quasi\hyp{}order for each clone $C$ in the interval $[\clMc, \clM]$.

\begin{proposition}
\label{prop:M-minors}
Let $f, g \in \clAll$, and let $k := \Alt(f)$, $\ell := \Alt(g)$, $a := f(\vect{0})$, $b := g(\vect{0})$.
\begin{enumerate}[label=\upshape{(\roman*)}]
\item
$f \leq_{\clM} g$ if and only if $k < \ell$ or $(k,a) = (\ell,b)$.
\item
$f \leq_{\clMo} g$ if and only if $k \leq \ell$ and $a = b$.
\item
$f \leq_{\clMi} g$ if and only if $k \leq \ell$ and $a + k \equiv b + \ell \pmod{2}$.
\item
$f \leq_{\clMc} g$ if and only if $k \leq \ell$, $a = b$, and $k \equiv \ell \pmod{2}$.
\end{enumerate}
\end{proposition}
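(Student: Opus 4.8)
The plan is to view each $P(f,{\leq})$ as a $2$\hyp{}poset over the Boolean cube and reduce it to its longest alternating chain, after which all four cases fall out by combining Propositions~\ref{prop:mon}, \ref{prop:bottophom}, and \ref{prop:2-poset}.

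First I would record the set-up. The poset $(\{0,1\}^n,{\leq})$ has least element $\vect{0}$ and greatest element $\vect{1}$, so $P(f,{\leq}) = ((\{0,1\}^n,{\leq}),f)$ is a $2$\hyp{}poset possessing both a bottom and a top, labelled $a = f(\vect{0})$ and $f(\vect{1})$, respectively. The four monotone clones are exactly the clones $M$, $M_\bot$, $M_\top$, $M_{\bot\top}$ associated with this poset: indeed $\clMo = \clIntVal{\clM}{0}{}$ consists of the monotone functions with $f(\vect{0}) = 0$, i.e.\ the $\bot$\hyp{}preserving ones, and similarly $\clMi = M_\top$ and $\clMc = M_{\bot\top}$. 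Hence, by Proposition~\ref{prop:mon}, the relations $f \leq_{\clM} g$, $f \leq_{\clMo} g$, $f \leq_{\clMi} g$, $f \leq_{\clMc} g$ are equivalent to the existence of an unrestricted, a $\bot$\hyp{}, a $\top$\hyp{}, and a $\bot\top$\hyp{}homomorphism of $P(f,{\leq})$ to $P(g,{\leq})$, respectively.

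Next I would reduce $P(f,{\leq})$ to an alternating chain. Since $\Alt(f) = d_f(\vect{1}) = k$, the longest alternating $2$\hyp{}chain in $P(f,{\leq})$ has length $k$, and by the remark preceding Proposition~\ref{prop:2-poset} it may be chosen to contain both $\vect{0}$ and $\vect{1}$; write it as $a_0 < \dots < a_k$ with $a_0 = \vect{0}$ and $a_k = \vect{1}$. Its least element carries label $a$, so as a labelled chain it is isomorphic to $C^k_a$, its top label being $f(\vect{1}) \equiv a + k \pmod 2$ as forced by alternation. The crucial point---and the one place where I would look inside the proof of Proposition~\ref{prop:2-poset} rather than merely quote it---is that the homomorphic equivalence between $P(f,{\leq})$ and $C^k_a$ can be taken $\bot\top$\hyp{}preserving. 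The inclusion of the chain into $P(f,{\leq})$ sends bottom to $\vect{0}$ and top to $\vect{1}$, so it is $\bot\top$\hyp{}preserving; and the retraction $h(x) = a_{d_f(x)}$ built in that proof sends $\vect{0} \mapsto a_{d_f(\vect{0})} = a_0 = \vect{0}$ and $\vect{1} \mapsto a_{d_f(\vect{1})} = a_k = \vect{1}$, hence is $\bot\top$\hyp{}preserving as well. Thus $P(f,{\leq})$ and $C^k_a$ are equivalent simultaneously via $\bot$\hyp{}, $\top$\hyp{}, and $\bot\top$\hyp{}homomorphisms, and likewise $P(g,{\leq})$ and $C^\ell_b$.

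Finally I would assemble the four equivalences. Because the composite of homomorphisms of a fixed kind ($\bot$\hyp{}, $\top$\hyp{}, or $\bot\top$\hyp{}preserving) is again of that kind, pre\hyp{} and post\hyp{}composing with the equivalences above shows that a homomorphism of the relevant kind from $P(f,{\leq})$ to $P(g,{\leq})$ exists if and only if one exists from $C^k_a$ to $C^\ell_b$. Applying Proposition~\ref{prop:bottophom} then reads off precisely the four stated conditions: $k < \ell$ or $(k,a) = (\ell,b)$ for $\leq_{\clM}$; $k \leq \ell$ and $a = b$ for $\leq_{\clMo}$; $k \leq \ell$ and $a + k \equiv b + \ell \pmod 2$ for $\leq_{\clMi}$; and $k \leq \ell$, $a = b$, $k \equiv \ell \pmod 2$ for $\leq_{\clMc}$. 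The only step requiring genuine care is the $\bot\top$\hyp{}preservation of the equivalence in the previous paragraph; everything else is a direct application of the cited results.
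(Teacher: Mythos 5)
Your proposal is correct and follows essentially the same route as the paper: reduce $P(f,{\leq})$ to the alternating $2$\hyp{}chain $C^k_a$ via Proposition~\ref{prop:2-poset}, check that the equivalence is $\bot\top$\hyp{}preserving, and then read off the four cases from Propositions~\ref{prop:mon} and \ref{prop:bottophom}. The one step you single out as needing care---that the retraction $h(x)=a_{d_f(x)}$ and the chain inclusion are both $\bot\top$\hyp{}preserving---is exactly the point the paper dismisses with ``it is clear that they are even $\bot\top$\hyp{}homomorphically equivalent,'' so your verification is a welcome elaboration rather than a deviation.
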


\begin{proof}
By Proposition~\ref{prop:2-poset}, for every $f \in \clIntVal{\clAlt{k}}{a}{}$ ($a \in \{0,1\}$), the $2$\hyp{}poset $P(f,{\leq})$ is homomorphically equivalent to $C^k_a$, and it is clear that they are even $\bot\top$\hyp{}homomorphically equivalent.
The statements then follow immediately from Propositions~\ref{prop:mon} and \ref{prop:bottophom}.
\end{proof}

As an immediate consequence of Proposition~\ref{prop:M-minors}, we get that for each clone $C$ in the interval $[\clMc, \clM]$, the $C$\hyp{}equivalence classes are precisely the sets of the form $\clIntVal{\clAlt{k}}{a}{}$, for $k \in \IN_{+}$ and $a \in \{0,1\}$.
Note that $\clIntVal{\clAlt{k}}{a}{} = \clIntVal{\clAlt{k}}{a}{b}$ for the unique $b \in \{0,1\}$ satisfying $a + k \equiv b \pmod{2}$.
Moreover, the mapping $\clIntVal{\clAlt{k}}{a}{} \mapsto C^k_a$ is an isomorphism between the $C$\hyp{}minor poset $(\clAll / \mathord{\equiv_C}, \mathord{\leq_C})$ and the corresponding homomorphism order (unrestricted, $\bot$\hyp{}preserving, $\top$\hyp{}preserving, $\bot\top$\hyp{}preserving) of alternating $2$\hyp{}chains (see Figure~\ref{fig:hom2chain}).

\begin{lemma}
\label{lem:AkM}
Let $k \in \IN$ and $a, b \in \{0,1\}$ such that $a + k \equiv b \pmod{2}$.
Then the following statements hold.
\begin{enumerate}[label=\upshape{(\roman*)}]
\item $\clIntVal{\clAlt{k}}{a}{b} \, \clM = \clIntVal{\clAlt{k}}{a}{b} \cup \clAlt{\leq k-1}$ for $k \geq 1$; $\clIntVal{\clAlt{0}}{a}{b} \, \clM = \clIntVal{\clAlt{0}}{a}{b}$.
\item $\clIntVal{\clAlt{k}}{a}{b} \, \clMo = \clIntVal{\clAlt{\leq k}}{a}{}$.
\item $\clIntVal{\clAlt{k}}{a}{b} \, \clMi = \clIntVal{\clAlt{\leq k}}{}{b}$.
\item $\clIntVal{\clAlt{k}}{a}{b} \, \clMc = \clIntVal{\clAlt{\leq k}}{a}{b}$.
\end{enumerate}
\end{lemma}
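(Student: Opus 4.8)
The plan is to read off each of the four identities directly from Proposition~\ref{prop:M-minors}, after reinterpreting the left\hyp{}hand side as a downset. First I would recall that, by the definition of the $C$\hyp{}minor relation, $\{g\} C = \{ \, f \mid f \leq_C g \, \}$, and that function class composition is right\hyp{}distributive over unions (Lemma~\ref{lem:CompUnion}\ref{lem:CompUnion:L}). Hence, writing $K = \clIntVal{\clAlt{k}}{a}{b}$, for any clone $C$ we have $K C = \bigcup_{g \in K} \{g\} C = \{ \, f \mid \exists g \in K,\ f \leq_C g \, \}$, i.e.\ the downset generated by $K$ in $\leq_C$. Since every member of $\clIntVal{\clAlt{k}}{a}{b}$ has alternation number $k$ and value $a$ at $\vect{0}$, the set $K$ is a single $C$\hyp{}equivalence class (nonempty, being one of the classes listed after Proposition~\ref{prop:M-minors}), so its generated downset equals the principal downset $\{ \, f \mid f \leq_C g \, \}$ of any fixed $g \in K$. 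The task thus reduces to describing, for each $C \in \{\clM, \clMo, \clMi, \clMc\}$, the set of $f$ satisfying the right\hyp{}hand condition of the corresponding clause of Proposition~\ref{prop:M-minors} with $\Alt(g) = k$ and $g(\vect{0}) = a$.

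The second ingredient is the parity identity $f(\vect{1}) \equiv f(\vect{0}) + \Alt(f) \pmod{2}$, valid for every $f \in \clAll$, which I would justify exactly as in the discussion preceding Proposition~\ref{prop:2-poset}: a longest alternating chain in $P(f,{\leq})$ may be taken to contain both the least element $\vect{0}$ and the greatest element $\vect{1}$, and because its labels alternate, the label at the top equals the label at the bottom precisely when its length is even. This identity is nothing but the standing hypothesis $a + k \equiv b \pmod{2}$ applied to the members of $K$, and it is what lets me trade parity conditions on $\Alt(f)$ for conditions on the value $f(\vect{1})$.

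With these two observations in hand the four parts become substitutions. For~(i), Proposition~\ref{prop:M-minors}\,(i) gives the downset $\{ \, f \mid \Alt(f) < k \, \} \cup \{ \, f \mid \Alt(f) = k,\ f(\vect{0}) = a \, \} = \clAlt{\leq k-1} \cup \clIntVal{\clAlt{k}}{a}{}$, and $\clIntVal{\clAlt{k}}{a}{} = \clIntVal{\clAlt{k}}{a}{b}$ by the parity identity; the case $k = 0$ is handled separately, since then no function has smaller alternation number and the first set is empty. For~(ii), Proposition~\ref{prop:M-minors}\,(ii) yields exactly $\{ \, f \mid \Alt(f) \leq k,\ f(\vect{0}) = a \, \} = \clIntVal{\clAlt{\leq k}}{a}{}$. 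For~(iii), the condition $f(\vect{0}) + \Alt(f) \equiv a + k \pmod{2}$ coming from Proposition~\ref{prop:M-minors}\,(iii) becomes $f(\vect{1}) = b$ via the parity identity, giving $\clIntVal{\clAlt{\leq k}}{}{b}$. For~(iv), combining $f(\vect{0}) = a$ with the parity condition $\Alt(f) \equiv k \pmod{2}$ forces $f(\vect{1}) = b$, and conversely, so the downset is $\clIntVal{\clAlt{\leq k}}{a}{b}$. I expect the only points requiring genuine care to be the parity\hyp{}to\hyp{}value bookkeeping in~(iii) and~(iv) and the separate treatment of $k = 0$ in~(i); everything else is direct substitution into Proposition~\ref{prop:M-minors}.
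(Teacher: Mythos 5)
Your proposal is correct and follows exactly the route the paper intends: the paper's proof is the one-line ``Follows immediately from Proposition~\ref{prop:M-minors}'', and your argument---identifying $K\,C$ with the downset of $K$ in the $C$\hyp{}minor quasi\hyp{}order and then substituting into the four clauses of Proposition~\ref{prop:M-minors}, using the parity identity $f(\vect{1}) \equiv f(\vect{0}) + \Alt(f) \pmod{2}$ (which the paper records just after that proposition)---is precisely the omitted verification.
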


\begin{proof}
Follows immediately from Proposition~\ref{prop:M-minors}.
\end{proof}

We are now ready to describe the $(C,\clIc)$\hyp{}clonoids for $C \in [\clMc,\clM]$.

\begin{theorem}[See Figure~\ref{fig:mon-clonoids}.]
\label{thm:mon-clonoids}
\leavevmode
\begin{enumerate}[label=\upshape{(\roman*)}]
\item\label{thm:mon-clonoids:M}
The $(\clM,\clIc)$\hyp{}clonoids are the sets $\clEmpty$, $\clAll$, $\clIntVal{\clAlt{0}}{0}{0} = \clVako$, $\clIntVal{\clAlt{0}}{1}{1} = \clVaki$, $\clAlt{\leq k}$, $\clIntVal{\clAlt{k+1}}{0}{} \cup \clAlt{\leq k}$, and $\clIntVal{\clAlt{k+1}}{1}{} \cup \clAlt{\leq k}$ for $k \in \IN$.

\item\label{thm:mon-clonoids:Mo}
The $(\clMo,\clIc)$\hyp{}clonoids are the sets of the form $A \cup B$, where
\begin{align*}
\displaybump
&
A \in \{ \clEmpty, \clOX \} \cup \{ \, \clIntVal{\clAlt{\leq k}}{0}{} \mid k \in \IN \, \},
&&
B \in \{ \clEmpty, \clIX \} \cup \{ \, \clIntVal{\clAlt{\leq k}}{1}{} \mid k \in \IN \, \}.
\end{align*}

\item\label{thm:mon-clonoids:Mi}
The $(\clMi,\clIc)$\hyp{}clonoids are the sets of the form $A \cup B$, where
\begin{align*}
\displaybump
&
A \in \{ \clEmpty, \clXO \} \cup \{ \, \clIntVal{\clAlt{\leq k}}{}{0} \mid k \in \IN \, \},
&&
B \in \{ \clEmpty, \clXI \} \cup \{ \, \clIntVal{\clAlt{\leq k}}{}{1} \mid k \in \IN \, \}.
\end{align*}

\item\label{thm:mon-clonoids:Mc}
The $(\clMc,\clIc)$\hyp{}clonoids are the sets of the form $A \cup B \cup C \cup D$, where
\begin{align*}
\displaybump
&
A \in \{ \clEmpty, \clOO \} \cup \{ \, \clIntVal{\clAlt{\leq k}}{0}{0} \mid k \in \IN \, \},
&&
B \in \{ \clEmpty, \clOI \} \cup \{ \, \clIntVal{\clAlt{\leq k}}{0}{1} \mid k \in \IN \, \},
\\ &
C \in \{ \clEmpty, \clII \} \cup \{ \, \clIntVal{\clAlt{\leq k}}{1}{1} \mid k \in \IN \, \},
&&
D \in \{ \clEmpty, \clIO \} \cup \{ \, \clIntVal{\clAlt{\leq k}}{1}{0} \mid k \in \IN \, \}.
\end{align*}
\end{enumerate}
\end{theorem}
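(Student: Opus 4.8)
The plan is to apply Lemma~\ref{lem:clon-C-min}: for each clone $C \in [\clMc,\clM]$, the $(C,\clIc)$-clonoids are precisely the sets $\bigcup D$ where $D$ ranges over the downsets of the $C$-minor partial order on $\clAll / {\equiv_C}$. By the remark following Proposition~\ref{prop:M-minors}, this poset is isomorphic, via $\clIntVal{\clAlt{k}}{a}{} \mapsto C^k_a$, to the homomorphism order of alternating $2$-chains in the appropriate flavour (unrestricted for $\clM$, $\bot$-preserving for $\clMo$, $\top$-preserving for $\clMi$, $\bot\top$-preserving for $\clMc$). So the whole task reduces to enumerating the downsets of the four posets depicted in Figure~\ref{fig:hom2chain} and reading the answer back through this isomorphism, using that $\clIntVal{\clAlt{k}}{a}{} = \clIntVal{\clAlt{k}}{a}{b}$ for the $b$ with $a+k \equiv b \pmod 2$, that $\clAlt{0}$ consists of the constant functions, and that $\bigcup_{k} \clIntVal{\clAlt{k}}{a}{} = \clIntVal{\clAll}{a}{}$, with the analogous identities for the top-value and both-value variants.

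For $\clMo$, $\clMi$, and $\clMc$, Proposition~\ref{prop:bottophom} shows that the homomorphism order is a disjoint union of chains of order type $\omega$: two chains (indexed by the common bottom label for $\clMo$, by the common top label for $\clMi$) and four chains (indexed by the pair of bottom and top labels for $\clMc$). A downset of a disjoint union is an independent choice of a downset in each component, and every downset of an $\omega$-chain is either empty, a finite initial segment, or the whole chain. Translating each of these three possibilities back yields, for the bottom-label-$a$ chain of $\clMo$, the sets $\clEmpty$, the segments $\clIntVal{\clAlt{\leq k}}{a}{}$, and the full class $\clIntVal{\clAll}{a}{}$; and analogously for $\clMi$ and $\clMc$. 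Forming the unions over the components then produces exactly the sets $A \cup B$ (resp. $A \cup B \cup C \cup D$) listed in parts \ref{thm:mon-clonoids:Mo}, \ref{thm:mon-clonoids:Mi}, and \ref{thm:mon-clonoids:Mc}.

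The case $\clM$ demands slightly more care, because there the homomorphism order is not a disjoint union of chains but the ordinal sum of two-element antichains: level $k$ is the incomparable pair $C^k_0, C^k_1$, and every element of a lower level lies below both elements of any higher level. A nonempty downset $D$ is thus determined by its largest occupied level $\ell^*$ together with which elements of level $\ell^*$ it contains, and when $\ell^* \geq 1$ it must contain all of levels $0, \dots, \ell^*-1$. This gives the empty downset, the ``full-level'' downsets $\clAlt{\leq k}$, the two ``half-level'' downsets $\clIntVal{\clAlt{k+1}}{0}{} \cup \clAlt{\leq k}$ and $\clIntVal{\clAlt{k+1}}{1}{} \cup \clAlt{\leq k}$, the two bottom-level singletons $\clVako = \clIntVal{\clAlt{0}}{0}{0}$ and $\clVaki = \clIntVal{\clAlt{0}}{1}{1}$, and finally the unique downset occupying infinitely many levels, $\bigcup_{k} \clAlt{\leq k} = \clAll$; this matches the list in part \ref{thm:mon-clonoids:M}.

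The only genuine obstacles are bookkeeping ones. First, one must take care not to overlook the ``limit'' downsets that occupy infinitely many levels: these produce $\clAll$ for $\clM$ and the full classes $\clOX, \clIX, \clXO, \clXI, \clOO, \clOI, \clII, \clIO$ for the other three clones. Second, one must handle the degenerate bottom level $k=0$, where an alternating $2$-chain of length $0$ is a single labelled point and the corresponding classes collapse to constants. I expect the $\clM$ case to be the step requiring the most attention, precisely because there the poset is an ordinal sum of antichains rather than a disjoint union of chains, so a maximal occupied level may be only partially filled.
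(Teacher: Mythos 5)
Your proposal is correct and follows essentially the same route as the paper: reduce to downsets of the $C$\hyp{}minor poset via Lemma~\ref{lem:clon-C-min}, transport through the isomorphism $\clIntVal{\clAlt{k}}{a}{} \mapsto C^k_a$ onto the homomorphism orders of Proposition~\ref{prop:bottophom}, and enumerate the downsets (disjoint unions of $\omega$\hyp{}chains for $\clMo$, $\clMi$, $\clMc$; an ordinal sum of two\hyp{}element antichains for $\clM$). The paper's proof is just a terser version of this same enumeration, read off from Figure~\ref{fig:hom2chain}.
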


\begin{proof}
For each clone $C$ in the interval $[\clMc, \clM]$, the $(C,\clIc)$\hyp{}clonoids are the downsets of the $C$\hyp{}minor quasi\hyp{}order.
The latter can be easily determined from the Hasse diagrams of Figure~\ref{fig:hom2chain}, using the isomorphism $\clIntVal{\clAlt{k}}{a}{} \mapsto C^k_a$ between the $C$\hyp{}minor poset and the appropriate homomorphism order of alternating $2$\hyp{}chains.
\end{proof}

\begin{figure}
\subfloat[$(\protect\clM,\protect\clIc)$-clonoids]{%
\begin{minipage}[b]{0.46\textwidth}
\begin{center}
\scalebox{0.7}{%
\begin{tikzpicture}[baseline, scale=0.6]
   \node [label = {[label distance=1.4]270:$\clEmpty$}] (E) at (0,0) {};
   \node [label = left:{$\clIntVal{\clAlt{0}}{0}{0} = \clVako$}] (00) at (-1,1) {};
   \node [label = right:{$\clIntVal{\clAlt{0}}{1}{1} = \clVaki$}] (01) at (1,1) {};
   \node [label = {[label distance=1.6]0:$\clAlt{0} = \clVak$}] (0) at (0,2) {};
   \node [label = left:{$\clIntVal{\clAlt{1}}{0}{1} \cup \clAlt{0}$}] (10) at (-1,3) {};
   \node [label = right:{$\clIntVal{\clAlt{1}}{1}{0} \cup \clAlt{0}$}] (11) at (1,3) {};
   \node [label = {[label distance=1.6]0:$\clAlt{\leq 1}$}] (1) at (0,4) {};
   \node [label = left:{$\clIntVal{\clAlt{2}}{0}{0} \cup \clAlt{\leq 1}$}] (20) at (-1,5) {};
   \node [label = right:{$\clIntVal{\clAlt{2}}{1}{1} \cup \clAlt{\leq 1}$}] (21) at (1,5) {};
   \node [label = {[label distance=1.6]0:$\clAlt{\leq 2}$}] (2) at (0,6) {};
   \node [label = {[label distance=1.4]90:$\clAll$}] (All) at (0,9) {};
   \draw (E) -- (00) -- (0) -- (10) -- (1) -- (20) -- (2);
   \draw (E) -- (01) -- (0) -- (11) -- (1) -- (21) -- (2);
   \draw[dashed] (2) to[out=135,in=225] (All);
   \draw[dashed] (2) to[out=45,in=315] (All);
\end{tikzpicture}
}
\end{center}
\end{minipage}
}
\hfill
\subfloat[$(\protect\clMc,\protect\clIc)$-clonoids]{%
\begin{minipage}[b]{0.46\textwidth}
\begin{center}
\scalebox{0.7}{%
\begin{tikzpicture}[baseline, scale=0.6]
   \node [label = {[label distance=1.4]270:$\clEmpty$}] (00) at (0,0) {};
   \node [label = left:{$\clIntVal{\clAlt{\leq 0}}{0}{0}$}] (01) at (0,1) {};
   \node [label = left:{$\clIntVal{\clAlt{\leq 2}}{0}{0}$}] (02) at (0,2) {};
   \node [label = left:{$\clIntVal{\clAlt{\leq 4}}{0}{0}$}] (03) at (0,3) {};
   \node [label = above:{$\clOO$}] (04) at (0,5) {};
   \draw (00) -- (01) -- (02) -- (03);
   \draw[dashed] (03) -- (04);
   \node [label = {[label distance=1.4]270:$\clEmpty$}] (10) at (3,0) {};
   \node [label = left:{$\clIntVal{\clAlt{\leq 1}}{0}{1}$}] (11) at (3,1) {};
   \node [label = left:{$\clIntVal{\clAlt{\leq 3}}{0}{1}$}] (12) at (3,2) {};
   \node [label = left:{$\clIntVal{\clAlt{\leq 5}}{0}{1}$}] (13) at (3,3) {};
   \node [label = above:{$\clOI$}] (14) at (3,5) {};
   \draw (10) -- (11) -- (12) -- (13);
   \draw[dashed] (13) -- (14);
   \node [label = {[label distance=1.4]270:$\clEmpty$}] (20) at (6,0) {};
   \node [label = left:{$\clIntVal{\clAlt{\leq 0}}{1}{1}$}] (21) at (6,1) {};
   \node [label = left:{$\clIntVal{\clAlt{\leq 2}}{1}{1}$}] (22) at (6,2) {};
   \node [label = left:{$\clIntVal{\clAlt{\leq 4}}{1}{1}$}] (23) at (6,3) {};
   \node [label = above:{$\clII$}] (24) at (6,5) {};
   \draw (20) -- (21) -- (22) -- (23);
   \draw[dashed] (23) -- (24);
   \node [label = {[label distance=1.4]270:$\clEmpty$}] (30) at (9,0) {};
   \node [label = left:{$\clIntVal{\clAlt{\leq 1}}{1}{0}$}] (31) at (9,1) {};
   \node [label = left:{$\clIntVal{\clAlt{\leq 3}}{1}{0}$}] (32) at (9,2) {};
   \node [label = left:{$\clIntVal{\clAlt{\leq 5}}{1}{0}$}] (33) at (9,3) {};
   \node [label = above:{$\clIO$}] (34) at (9,5) {};
   \draw (30) -- (31) -- (32) -- (33);
   \draw[dashed] (33) -- (34);
   \node [fill = none, draw = none] at (1,2) {$\times$};
   \node [fill = none, draw = none] at (4,2) {$\times$};
   \node [fill = none, draw = none] at (7,2) {$\times$};
\end{tikzpicture}
}
\end{center}
\end{minipage}
}

\subfloat[$(\protect\clMo,\protect\clIc)$-clonoids]{%
\begin{minipage}[b]{0.46\textwidth}
\begin{center}
\scalebox{0.7}{%
\begin{tikzpicture}[baseline, scale=0.6]
   \node [label = {[label distance=1.4]270:$\clEmpty$}] (00) at (0,0) {};
   \node [label = left:{$\clIntVal{\clAlt{0}}{0}{} = \clVako$}] (10) at (-1,1) {};
   \node [label = {[label distance=1.2]0:$\clIntVal{\clAlt{0}}{1}{} = \clVaki$}] (01) at (1,1) {};
   \node [label = {[label distance=1.2]180:$\clIntVal{\clAlt{\leq 1}}{0}{}$}] (20) at (-2,2) {};
   \node (11) at (0,2) {};
   \node [label = {[label distance=1.2]0:$\clIntVal{\clAlt{\leq 1}}{1}{}$}] (02) at (2,2) {};
   \node [label = {[label distance=1.2]180:$\clIntVal{\clAlt{\leq 2}}{0}{}$}] (30) at (-3,3) {};
   \node (21) at (-1,3) {};
   \node (12) at (1,3) {};
   \node [label = {[label distance=1.2]0:$\clIntVal{\clAlt{\leq 2}}{1}{}$}] (03) at (3,3) {};
   \node [label = {[label distance=1.2]180:$\clIntVal{\clAlt{\leq 3}}{0}{}$}] (40) at (-4,4) {};
   \node (31) at (-2,4) {};
   \node (22) at (0,4) {};
   \node (13) at (2,4) {};
   \node [label = {[label distance=1.2]0:$\clIntVal{\clAlt{\leq 3}}{1}{}$}] (04) at (4,4) {};
   \node [label = left:{$\clOX$}] (50) at (-5.5,5.5) {};
   \node (41) at (-3,5) {};
   \node (32) at (-1,5) {};
   \node (23) at (1,5) {};
   \node (14) at (3,5) {};
   \node [label = {[label distance=1.2]0:$\clIX$}] (05) at (5.5,5.5) {};
   \node (51) at (-4.5,6.5) {};
   \node (42) at (-2,6) {};
   \node (33) at (-0,6) {};
   \node (24) at (2,6) {};
   \node (15) at (4.5,6.5) {};
   \node (52) at (-3.5,7.5) {};
   \node (25) at (3.5,7.5) {};
   \coordinate (43) at (-1,7);
   \coordinate (34) at (1,7);
   \node [label = {[label distance=1.4]90:$\clAll$}] (All) at (0,11) {};
   \draw (00) -- (10) -- (20) -- (30) -- (40);
   \draw[dashed] (40) -- (50);
   \draw (01) -- (11) -- (21) -- (31) -- (41);
   \draw[dashed] (41) -- (51);
   \draw (02) -- (12) -- (22) -- (32) -- (42);
   \draw[dashed] (42) -- (52);
   \draw (03) -- (13) -- (23) -- (33);
   \draw[dashed] (33) -- (43);
   \draw (04) -- (14) -- (24);
   \draw[dashed] (24) -- (34);
   \draw (00) -- (01) -- (02) -- (03) -- (04);
   \draw[dashed] (04) -- (05);
   \draw (10) -- (11) -- (12) -- (13) -- (14);
   \draw[dashed] (14) -- (15);
   \draw (20) -- (21) -- (22) -- (23) -- (24);
   \draw[dashed] (24) -- (25);
   \draw (30) -- (31) -- (32) -- (33);
   \draw[dashed] (33) -- (34);
   \draw (40) -- (41) -- (42);
   \draw[dashed] (42) -- (43);
   \draw (50) -- (51) -- (52);
   \draw[dashed] (52) -- (All);
   \draw (05) -- (15) -- (25);
   \draw[dashed] (25) -- (All);
\end{tikzpicture}
}
\end{center}
\end{minipage}
}
\hfill
\subfloat[$(\protect\clMi,\protect\clIc)$-clonoids]{%
\begin{minipage}[b]{0.46\textwidth}
\begin{center}
\scalebox{0.7}{%
\begin{tikzpicture}[baseline, scale=0.6]
   \node [label = {[label distance=1.4]270:$\clEmpty$}] (00) at (0,0) {};
   \node [label = left:{$\clIntVal{\clAlt{0}}{}{0} = \clVako$}] (10) at (-1,1) {};
   \node [label = {[label distance=1.2]0:$\clIntVal{\clAlt{0}}{}{1} = \clVaki$}] (01) at (1,1) {};
   \node [label = {[label distance=1.2]180:$\clIntVal{\clAlt{\leq 1}}{}{0}$}] (20) at (-2,2) {};
   \node (11) at (0,2) {};
   \node [label = {[label distance=1.2]0:$\clIntVal{\clAlt{\leq 1}}{}{1}$}] (02) at (2,2) {};
   \node [label = {[label distance=1.2]180:$\clIntVal{\clAlt{\leq 2}}{}{0}$}] (30) at (-3,3) {};
   \node (21) at (-1,3) {};
   \node (12) at (1,3) {};
   \node [label = {[label distance=1.2]0:$\clIntVal{\clAlt{\leq 2}}{}{1}$}] (03) at (3,3) {};
   \node [label = {[label distance=1.2]180:$\clIntVal{\clAlt{\leq 3}}{}{0}$}] (40) at (-4,4) {};
   \node (31) at (-2,4) {};
   \node (22) at (0,4) {};
   \node (13) at (2,4) {};
   \node [label = {[label distance=1.2]0:$\clIntVal{\clAlt{\leq 3}}{}{1}$}] (04) at (4,4) {};
   \node [label = left:{$\clXO$}] (50) at (-5.5,5.5) {};
   \node (41) at (-3,5) {};
   \node (32) at (-1,5) {};
   \node (23) at (1,5) {};
   \node (14) at (3,5) {};
   \node [label = {[label distance=1.2]0:$\clXI$}] (05) at (5.5,5.5) {};
   \node (51) at (-4.5,6.5) {};
   \node (42) at (-2,6) {};
   \node (33) at (-0,6) {};
   \node (24) at (2,6) {};
   \node (15) at (4.5,6.5) {};
   \node (52) at (-3.5,7.5) {};
   \node (25) at (3.5,7.5) {};
   \coordinate (43) at (-1,7);
   \coordinate (34) at (1,7);
   \node [label = {[label distance=1.4]90:$\clAll$}] (All) at (0,11) {};
   \draw (00) -- (10) -- (20) -- (30) -- (40);
   \draw[dashed] (40) -- (50);
   \draw (01) -- (11) -- (21) -- (31) -- (41);
   \draw[dashed] (41) -- (51);
   \draw (02) -- (12) -- (22) -- (32) -- (42);
   \draw[dashed] (42) -- (52);
   \draw (03) -- (13) -- (23) -- (33);
   \draw[dashed] (33) -- (43);
   \draw (04) -- (14) -- (24);
   \draw[dashed] (24) -- (34);
   \draw (00) -- (01) -- (02) -- (03) -- (04);
   \draw[dashed] (04) -- (05);
   \draw (10) -- (11) -- (12) -- (13) -- (14);
   \draw[dashed] (14) -- (15);
   \draw (20) -- (21) -- (22) -- (23) -- (24);
   \draw[dashed] (24) -- (25);
   \draw (30) -- (31) -- (32) -- (33);
   \draw[dashed] (33) -- (34);
   \draw (40) -- (41) -- (42);
   \draw[dashed] (42) -- (43);
   \draw (50) -- (51) -- (52);
   \draw[dashed] (52) -- (All);
   \draw (05) -- (15) -- (25);
   \draw[dashed] (25) -- (All);
\end{tikzpicture}
}
\end{center}
\end{minipage}
}
\caption{$(C,\protect\clIc)$-clonoids for $\protect\clMc \subseteq C \subseteq \protect\clM$.}
\label{fig:mon-clonoids}
\end{figure}

With the help of Theorem~\ref{thm:mon-clonoids}, it is now possible to determine the $(C_1,C_2)$\hyp{}clonoids for clones $C_1$ and $C_2$ such that $\clMc \subseteq C_1$ and $C_2$ is arbitrary.
By Lemma~\ref{lem:clonoid-inclusion}, such $(C_1,C_2)$\hyp{}clonoids are $(\clMc,\clIc)$\hyp{}clonoids; it is just a matter of identifying them among the $(\clMc,\clIc)$\hyp{}clonoids that were determined above.
We start with the cases where the target clone is essentially at most unary.

\begin{proposition}
\label{prop:mon-Istar}
\leavevmode
\begin{enumerate}[label=\upshape{(\roman*)}]
\item
For clones $C_1 \in \{\clM, \clMo, \clMi, \clMc\}$,
\begin{itemize}
\item the $(C_1, \clIo)$\hyp{}clonoids are $\clEmpty$ and those $(C_1, \clIc)$\hyp{}clonoids $K$ with $\clVako = \clIntVal{\clAlt{0}}{0}{0} \subseteq K$,
\item the $(C_1, \clIi)$\hyp{}clonoids are $\clEmpty$ and those $(C_1, \clIc)$\hyp{}clonoids $K$ with $\clVaki = \clIntVal{\clAlt{0}}{1}{1} \subseteq K$,
\item the $(C_1, \clI)$\hyp{}clonoids are $\clEmpty$ and those $(C_1, \clIc)$\hyp{}clonoids $K$ with $\clVak = \clAlt{0} \subseteq K$.
\end{itemize}

\item
For clones $C_1 \in \{\clM, \clMo, \clMi\}$ and $C_2 \in \{\clIstar, \clOmegaOne\}$,
the $(C_1,C_2)$\hyp{}clonoids are the sets $\clEmpty$, $\clAll$, $\clAlt{\leq k}$, for $k \in \IN$.

\item
The $(\clMc,\clIstar)$\hyp{}clonoids are the sets of the form $A \cup B$, where
\begin{align*}
\displaybump
&
A \in \{ \clEmpty, \clEq \} \cup \{ \, \clIntVal{\clAlt{\leq k}}{0}{0} \cup \clIntVal{\clAlt{\leq k}}{1}{1} \mid k \in \IN \, \},
\\ &
B \in \{ \clEmpty, \clNeq \} \cup \{ \, \clIntVal{\clAlt{\leq k}}{0}{1} \cup \clIntVal{\clAlt{\leq k}}{1}{0} \mid k \in \IN \, \}.
\end{align*}

\item
The $(\clMc,\clOmegaOne)$\hyp{}clonoids are $\clEmpty$ and the sets of the form $A \cup B$, where
\begin{align*}
\displaybump
&
A \in \{ \clEq \} \cup \{ \, \clIntVal{\clAlt{\leq k}}{0}{0} \cup \clIntVal{\clAlt{\leq k}}{1}{1} \mid k \in \IN \, \},
\\ &
B \in \{ \clEmpty, \clNeq \} \cup \{ \, \clIntVal{\clAlt{\leq k}}{0}{1} \cup \clIntVal{\clAlt{\leq k}}{1}{0} \mid k \in \IN \, \}.
\end{align*}

\item
For clones $C_1 \in \{\clM, \clMo, \clMi, \clMc\}$ and $C_2 \in \{\clIc, \clIo, \clIi, \clI, \clIstar, \clOmegaOne\}$, the
clonoid lattice $\closys{(C_1,C_2)}$ is countably infinite.
\end{enumerate}
\end{proposition}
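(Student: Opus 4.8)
The plan is to reduce every one of the six target clones back to the classification of $(C_1,\clIc)$\hyp{}clonoids already secured in Theorem~\ref{thm:mon-clonoids}, using the two general reduction principles at hand: Lemma~\ref{lem:C1C2Vak}, which governs the effect of adjoining constants to the target clone, and Lemma~\ref{lem:C1Istar}, which identifies the $(C_1,\clIstar)$\hyp{}clonoids with the negation\hyp{}symmetric $(C_1,\clIc)$\hyp{}clonoids. Throughout I would use the identifications $\clVako = \clIntVal{\clAlt{0}}{0}{0}$, $\clVaki = \clIntVal{\clAlt{0}}{1}{1}$, and $\clVak = \clAlt{0}$ recorded after Theorem~\ref{thm:mon-clonoids}, and the fact that $\clEmpty$ is a $(C_1,C_2)$\hyp{}clonoid for every source and target. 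Statement~(v) then becomes a counting remark once (i)--(iv) have pinned down the lattices.

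For part~(i) I would apply Lemma~\ref{lem:C1C2Vak}\ref{lem:C1C2Vak:ii} with $C_2 = \clIc$ and $S \in \{\{0\},\{1\},\{0,1\}\}$, noting that $\clIc \cup \clVako = \clIo$, $\clIc \cup \clVaki = \clIi$, and $\clIc \cup \clVak = \clI$ are all clones on $\{0,1\}$. The lemma then says the nonempty $(C_1,\clIo)$\hyp{}clonoids are exactly the $(C_1,\clIc)$\hyp{}clonoids containing $\clVako = \clIntVal{\clAlt{0}}{0}{0}$, and similarly for $\clIi$ and $\clI$; adjoining $\clEmpty$ gives the stated lists. Since the lemma imposes no condition on $C_1$ beyond its being a clone, this covers all four monotone source clones at once.

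For the $\clIstar$ cases (parts (ii) and (iii)) I would invoke Lemma~\ref{lem:C1Istar}: the $(C_1,\clIstar)$\hyp{}clonoids are precisely the $(C_1,\clIc)$\hyp{}clonoids $K$ with $K = \overline{K}$. The key computation is that outer negation preserves the alternation number, $\Alt(\neg f) = \Alt(f)$ (swapping the two labels of $P(f,{\leq})$ leaves every alternating chain alternating), while complementing $f(\vect{0})$ and $f(\vect{1})$. Thus negation fixes each $\clAlt{\leq k}$ but swaps $\clIntVal{\clAlt{\leq k}}{0}{} \leftrightarrow \clIntVal{\clAlt{\leq k}}{1}{}$ and, in the four\hyp{}column picture of $\clMc$, the diagonal pair $\clIntVal{\clAlt{\leq k}}{0}{0} \leftrightarrow \clIntVal{\clAlt{\leq k}}{1}{1}$ and the antidiagonal pair $\clIntVal{\clAlt{\leq k}}{0}{1} \leftrightarrow \clIntVal{\clAlt{\leq k}}{1}{0}$. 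Selecting from Theorem~\ref{thm:mon-clonoids} the downsets invariant under these swaps: for $C_1 \in \{\clM,\clMo,\clMi\}$ any clonoid distinguishing the two columns fails $K=\overline{K}$, leaving only $\clEmpty$, $\clAll$, and the $\clAlt{\leq k}$, which is part~(ii); for $\clMc$ the symmetric clonoids are exactly those in which the third and fourth columns are forced to equal the complements of the first and second, so writing $A$ for the merged diagonal part and $B$ for the merged antidiagonal part yields precisely the families of part~(iii), via $\clOO \cup \clII = \clEq$ and $\clOI \cup \clIO = \clNeq$.

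The $\clOmegaOne$ cases follow from a second use of Lemma~\ref{lem:C1C2Vak}\ref{lem:C1C2Vak:ii}, now with $C_2 = \clIstar$ and $S = \{0,1\}$, since $\clIstar \cup \clVak = \clOmegaOne$ is a clone: the nonempty $(C_1,\clOmegaOne)$\hyp{}clonoids are the $(C_1,\clIstar)$\hyp{}clonoids containing $\clVak = \clAlt{0}$. For $C_1 \in \{\clM,\clMo,\clMi\}$ every $\clAlt{\leq k}$ and $\clAll$ already contains $\clVak$, so the list is unchanged, confirming part~(ii); for $\clMc$ the constants lie entirely in the diagonal part, so $\clVak \subseteq K$ simply forbids $A = \clEmpty$, which is part~(iv). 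Finally, for part~(v) I would note that in each explicit description the clonoids are indexed by finitely many independent choices, each ranging over a countable set (a value $k \in \IN$ together with finitely many discrete options), so each lattice is countable, and each is infinite because it contains the strictly increasing chain $\clAlt{\leq 0} \subsetneq \clAlt{\leq 1} \subsetneq \cdots$. The only genuinely laborious step is the $\clMc$ bookkeeping in parts (iii) and (iv), where the action of negation and of the constant\hyp{}containment condition must be tracked across all four columns of Figure~\ref{fig:mon-clonoids}; everything else is a direct appeal to the two reduction lemmas.
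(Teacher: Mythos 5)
Your proposal is correct and follows essentially the same route as the paper, whose proof consists of exactly the same ingredients: Theorem~\ref{thm:mon-clonoids} combined with Lemma~\ref{lem:C1C2Vak} (for adjoining constants to $\clIc$ and to $\clIstar$) and Lemma~\ref{lem:C1Istar} (identifying $(C_1,\clIstar)$\hyp{}clonoids with the negation\hyp{}symmetric $(C_1,\clIc)$\hyp{}clonoids). The paper states this as immediate; your write\hyp{}up merely makes explicit the details it leaves to the reader, namely that negation preserves alternation number while swapping the endpoint values, and the resulting column bookkeeping.
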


\begin{proof}
This follows immediately from Theorem~\ref{thm:mon-clonoids} and Lemmata~\ref{lem:C1C2Vak} and \ref{lem:C1Istar}.
Clearly, the clonoid lattices $\closys{(C_1,C_2)}$ considered here are countably infinite.
\end{proof}

The cases where the target clone includes $\clLc$ or $\clSM$ have been described in earlier papers of the author's.
We quote here only the results for the cases where the target clone is $\clLc$ or $\clSM$.
The results for their superclones can be found in the earlier papers, and we simply refer the reader to those papers.
The Hasse diagrams of $\closys{(\clMc,\clLc)}$ and $\closys{(\clMc,\clSM)}$ are shown in Figures~\ref{fig:McLc} and \ref{fig:McSM}.

\begin{proposition}[{\cite[Theorem~7.1, Table~3]{CouLeh-Lcstability}}]
\label{prop:McLc}
There are precisely 15 $(\clMc,\clLc)$\hyp{}clonoids, and they are the following:
$\clAll$, $\clOX$, $\clIX$, $\clXO$, $\clXI$, $\clEq$, $\clNeq$, $\clOO$, $\clOI$, $\clIO$, $\clII$, $\clVak$, $\clVako$, $\clVaki$, $\clEmpty$.
\end{proposition}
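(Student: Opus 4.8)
The plan is to characterise the $(\clMc,\clLc)$\hyp{}clonoids as exactly those $(\clMc,\clIc)$\hyp{}clonoids that are, in addition, stable under left composition with $\clLc$, and then to cut down the list of Theorem~\ref{thm:mon-clonoids}\ref{thm:mon-clonoids:Mc} accordingly. Since every projection is an idempotent linear function, $\clIc \subseteq \clLc$, so Lemma~\ref{lem:clonoid-inclusion} shows that every $(\clMc,\clLc)$\hyp{}clonoid is a $(\clMc,\clIc)$\hyp{}clonoid. By Theorem~\ref{thm:mon-clonoids}\ref{thm:mon-clonoids:Mc} together with Proposition~\ref{prop:M-minors}, such a clonoid has the form $A \cup B \cup C \cup D$, where the four summands are chosen independently as down\hyp{}closed segments of four disjoint chains, one for each signature $(f(\vect{0}),f(\vect{1})) \in \{0,1\}^2$, each chain ordered by alternation number. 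Because $\clLc = \clonegen{+_3}$, Lemma~\ref{lem:CL-Lc-3.3} reduces left\hyp{}$\clLc$\hyp{}stability to a single extra condition: $f_1 + f_2 + f_3 \in K$ whenever $f_1,f_2,f_3 \in K$ have equal arity. As a first easy observation, $+_3$ acts coordinatewise at $\vect{0}$ and $\vect{1}$, so the set of signatures occurring in $K$ is closed under $(s_1,s_2,s_3)\mapsto s_1+s_2+s_3$ in $\ZZ_2^2$, i.e.\ it is an affine subspace.

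The heart of the argument, and the step I expect to be the main obstacle, is to prove that each of the four signature blocks of $K$ is empty, a single constant function (possible only for the signatures $(0,0)$ and $(1,1)$), or the full set of functions of that signature; equivalently, no block may sit at a finite positive level of its chain. For this I would use the linear functions $\ell_m := x_1 + \dots + x_m$ and their complements $\overline{\ell_m} := 1 + \ell_m$, observing that $\Alt(\ell_m) = \Alt(\overline{\ell_m}) = m$, while according to the parity of $m$ these functions realise all four signatures (e.g.\ $\ell_{2j}$ has signature $(0,0)$, $\ell_{2j+1}$ has $(0,1)$, $\overline{\ell_{2j}}$ has $(1,1)$, and $\overline{\ell_{2j+1}}$ has $(1,0)$). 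Suppose, towards a contradiction, that some block equals $\clIntVal{\clAlt{\leq k}}{a}{b}$ with $1 \le k < \infty$. Then the appropriate witness of alternation number exactly $k$, one of $\ell_k$ or $\overline{\ell_k}$, lies in $K$, and since $K$ is minor\hyp{}closed so do its variable\hyp{}disjoint copies on $3k$ variables. Summing three such copies and invoking $+_3$\hyp{}closure yields the analogous function on $3k$ variables, namely $\ell_{3k}$ (respectively $\overline{\ell_{3k}}$, using $1+1+1=1$ in $\ZZ_2$); this has the same signature but alternation number $3k > k$, contradicting the assumption that the block tops out at level $k$. Hence every block is empty, constant, or full.

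It then remains to enumerate the admissible combinations. If some block with signature $s_0$ is full, then for any other non\hyp{}empty block with signature $s'$ one writes $s' = s_0 + s_0 + s'$ and notes that the sums $f_1 + f_2 + f_3$, with $f_1$ ranging over the full block of signature $s_0$ and $f_2,f_3$ fixed, already exhaust all functions of signature $s'$; so that block must be full as well. Consequently $K$ is either a union of full blocks over an affine set of signatures, or a union of constant blocks over an affine subset of $\{(0,0),(1,1)\}$. Counting the affine subspaces of $\ZZ_2^2$ (the empty set, four singletons, six lines, and the whole space) gives in the first case the twelve sets $\clEmpty$, $\clOO$, $\clOI$, $\clIO$, $\clII$, $\clOX$, $\clIX$, $\clXO$, $\clXI$, $\clEq$, $\clNeq$, $\clAll$, while the second case contributes $\clEmpty$, $\clVako$, $\clVaki$, $\clVak$; the union of the two families is precisely the fifteen sets listed in the statement. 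Finally I would verify the (immediate) converse, that each of these fifteen sets is genuinely $+_3$\hyp{}closed, which the same coordinatewise computation on signatures settles at once.
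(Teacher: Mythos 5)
Your proof is correct, but it necessarily takes a different route from the paper, because the paper gives no proof of Proposition~\ref{prop:McLc} at all: the result is simply quoted from \cite{CouLeh-Lcstability}, where it is obtained as part of a general study of classes stable under left composition with clones of linear functions. What you do instead is derive it from the machinery of the present paper: you start from the classification of $(\clMc,\clIc)$\hyp{}clonoids in Theorem~\ref{thm:mon-clonoids}\ref{thm:mon-clonoids:Mc}, reduce left $\clLc$\hyp{}stability to $+_3$\hyp{}closure via Lemma~\ref{lem:CL-Lc-3.3} and $\clLc = \clonegen{+_3}$, and then rule out every block of the form $\clIntVal{\clAlt{\leq k}}{a}{b}$ with $k \geq 1$ finite by summing three variable\hyp{}disjoint copies of $\ell_k$ (or $\overline{\ell_k}$) to produce a function of the same signature with alternation number $3k > k$. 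All the steps check out: the disjoint copies are genuine minors and hence in $K$, $\Alt(\ell_m) = m$, evaluating at $\vect{0}$ and $\vect{1}$ turns $+_3$\hyp{}closure into closure of the signature set under $(s_1,s_2,s_3) \mapsto s_1+s_2+s_3$, the absorption argument $f_1 := g + h + f_2$ correctly shows that one full block forces every other nonempty block to be full, and the count $12 + 4 - 1 = 15$ matches the stated list. The only cosmetic quibble is that for the signatures $(0,0)$ and $(1,1)$ a witness of alternation number exactly $k$ exists only for even $k$, but since $\clIntVal{\clAlt{\leq k}}{a}{a} = \clIntVal{\clAlt{\leq k-1}}{a}{a}$ for odd $k$ this costs nothing. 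Structurally your argument is the exact analogue, for the target $\clLc$, of the paper's own proof of Proposition~\ref{prop:McVc} for the target $\clVc$ (your disjoint\hyp{}sum trick playing the role of Lemma~\ref{lem:Alt-apu}), so it buys a self\hyp{}contained and uniform treatment where the paper relies on an external citation.
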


\begin{figure}
\begin{center}
\scalebox{0.7}{%
\begin{tikzpicture}[baseline, scale=0.6]
   \node [label = {[label distance=1.4]270:$\clEmpty$}] (E) at (0,0) {};
   \node [label = left:{$\clVako$}] (C0) at (-1,2) {};
   \node [label = right:{$\clVaki$}] (C1) at (1,2) {};
   \node [label = {[label distance=2.2]270:$\clVak$}] (C) at (0,4) {};
   \node [label = left:{$\clOI$}] (01) at (-5,4) {};
   \node [label = {[shift={(-0.45,-0.55)}]$\clOO$}] (00) at (-2,4) {};
   \node [label = {[shift={(0.45,-0.55)}]$\clII$}] (11) at (2,4) {};
   \node [label = right:{$\clIO$}] (10) at (5,4) {};
   \node [label = left:{$\clOX$}] (0X) at (-5,6) {};
   \node [label = left:{$\clXO$}] (X0) at (-3,6) {};
   \node [label = {[shift={(-0.4,-0.3)}]$\clEq$}] (eq) at (-1,6) {};
   \node [label = {[shift={(0.4,-0.3)}]$\clNeq$}] (neq) at (1,6) {};
   \node [label = right:{$\clXI$}] (X1) at (3,6) {};
   \node [label = right:{$\clIX$}] (1X) at (5,6) {};
   \node [label = {[label distance=1.4]90:$\clAll$}] (all) at (0,9) {};
   \draw (E) -- (C0) -- (C);
   \draw (E) -- (C1) -- (C);
   \draw (E) -- (01);
   \draw (E) -- (10);
   \draw (C0) -- (00);
   \draw (C1) -- (11);
   \draw (01) -- (0X);
   \draw (01) -- (X1);
   \draw (01) -- (neq);
   \draw (00) -- (0X);
   \draw (00) -- (X0);
   \draw (00) -- (eq);
   \draw (11) -- (1X);
   \draw (11) -- (X1);
   \draw (11) -- (eq);
   \draw (10) -- (1X);
   \draw (10) -- (X0);
   \draw (10) -- (neq);
   \draw (C) -- (eq);
   \draw (0X) -- (all); \draw (X0) -- (all); \draw (eq) -- (all); \draw (neq) -- (all); \draw (1X) -- (all); \draw (X1) -- (all);
\end{tikzpicture}
}
\end{center}
\caption{$(\protect\clMc,\protect\clLc)$\hyp{}clonoids.}
\label{fig:McLc}
\end{figure}

\begin{proposition}[{\cite[Theorem~5.1, Table~1]{Lehtonen-SM}}]
\label{prop:McSM}
There are precisely 39 $(\clMc,\clSM)$\hyp{}clonoids, and they are the following:
$\clAll$,
$\clEiio$, $\clEioi$, $\clEiii$, $\clEioo$,
$\clOXCI$, $\clXOCI$, $\clIXCO$, $\clXICO$,
$\clOX$, $\clXO$, $\clIX$, $\clXI$, $\clEq$, $\clNeq$,
$\clOIC$, $\clIOC$,
$\clOICO$, $\clIOCO$,
$\clOICI$, $\clIOCI$,
$\clOOCI$, $\clIICO$,
$\clOO$, $\clII$, $\clOI$, $\clIO$,
$\clM$, $\clMo$, $\clMi$, $\clMc$,
$\clMneg$, $\clMineg$, $\clMoneg$, $\clMcneg$,
$\clVak$, $\clVako$, $\clVaki$,
$\clEmpty$.
\end{proposition}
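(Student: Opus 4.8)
The plan is to obtain the $(\clMc,\clSM)$-clonoids by filtering the $(\clMc,\clIc)$-clonoids already listed in Theorem~\ref{thm:mon-clonoids}\ref{thm:mon-clonoids:Mc}. Since $\clIc \subseteq \clSM$, Lemma~\ref{lem:clonoid-inclusion} shows that every $(\clMc,\clSM)$-clonoid is a $(\clMc,\clIc)$-clonoid, and conversely a $(\clMc,\clIc)$-clonoid $K$ is a $(\clMc,\clSM)$-clonoid exactly when $\clSM K \subseteq K$. As $\clSM$ is generated by the majority $\mu$, Lemma~\ref{lem:CL-Lc-3.3} collapses this last requirement to a single closure condition: $\mu(f_1,f_2,f_3) \in K$ whenever $f_1,f_2,f_3 \in K$ are of equal arity. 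So the task becomes to decide which of the sets $A \cup B \cup C \cup D$ of Theorem~\ref{thm:mon-clonoids}\ref{thm:mon-clonoids:Mc} are closed under $\mu$.

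\textbf{Action of $\mu$ on $\clMc$-classes.}
First I would compute how $\mu$ acts on the $\clMc$-equivalence classes $\clIntVal{\clAlt{k}}{a}{b}$, the atoms of the downset description, tracking two invariants. The quadrant of $\mu(f_1,f_2,f_3)$ is read off coordinatewise, since its values at $\vect0$ and $\vect1$ are the majorities of the corresponding values of the $f_i$. The alternation number is subtler. Extending any alternating chain for $g := \mu(f_1,f_2,f_3)$ to a maximal chain and charging each change of $g$ to a change of some $f_i$ at the same covering step yields $\Alt(g) \le \Alt(f_1)+\Alt(f_2)+\Alt(f_3)$. This sum bound is genuinely necessary, not a naive ``$\le\max$'': the majority of three functions whose values along a chain are $0,1,1,1$, then $0,0,0,1$, then $1,1,0,0$ equals $0,1,0,1$, so three functions of alternation number $1$ can produce alternation number $3$. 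I would then pin down, by matching constructions, exactly which classes $\mu$ reaches from a given triple, respecting the parity constraint $a+k \equiv b \pmod 2$ that links the quadrant to the parity of $k$ within each $\clMc$-class.

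\textbf{Case analysis and count.}
With this multiplication table, the remaining work runs through the family of Theorem~\ref{thm:mon-clonoids}\ref{thm:mon-clonoids:Mc} and tests $\mu$-closure, organized via the negation symmetry $K \mapsto \overline{K}$ (which preserves $(\clMc,\clSM)$-clonoids by Lemma~\ref{lem:uncle}\ref{lem:uncle:compl} together with the self-duality of $\clSM$) and the dualization symmetry $K \mapsto K^{\mathrm d}$; these generate a four-element symmetry group that reduces the casework to a few orbit representatives. The surviving sets fall into the expected families: the monotone clones $\clM,\clMo,\clMi,\clMc$ and their negations $\clMneg,\clMoneg,\clMineg,\clMcneg$ (closed because a majority of monotone functions is monotone), the full-quadrant sets and their unions with constant classes, and the low-alternation sets forced to collapse onto constants, altogether the asserted $39$. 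The main obstacle is the middle step: the example shows $\mu$ can strictly raise the alternation number, so reachability under $\mu$ is not monotone in the maximum and must be determined exactly, together with its parity behaviour, before the downset bookkeeping can be trusted to be complete and to rule out spurious candidates such as $\clAlt{\leq 1} = \clM \cup \clMneg$.
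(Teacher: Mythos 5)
First, note that the paper itself contains no proof of Proposition~\ref{prop:McSM}: it is imported wholesale from \cite[Theorem~5.1, Table~1]{Lehtonen-SM}, so the only internal benchmark for your strategy is the paper's treatment of the structurally parallel Proposition~\ref{prop:McVc}. Measured against that, your route is exactly the right one, and everything you actually argue checks out: the reduction via Lemmata~\ref{lem:clonoid-inclusion} and \ref{lem:CL-Lc-3.3} to testing $\mu$\hyp{}closure of the $(\clMc,\clIc)$\hyp{}clonoids of Theorem~\ref{thm:mon-clonoids}\ref{thm:mon-clonoids:Mc}; the quadrant computation at $\vect{0}$ and $\vect{1}$; the bound $\Alt(\mu(f_1,f_2,f_3))\leq\Alt(f_1)+\Alt(f_2)+\Alt(f_3)$ (refining an alternating chain to a maximal one and charging each sign change of the majority to a sign change of some $f_i$ does work, since the sign changes of $f_i$ along a chain yield an alternating chain for $f_i$); the example showing that three alternation\hyp{}$1$ functions can have a majority of alternation $3$, which correctly eliminates $\clAlt{\leq 1}=\clM\cup\clMneg$; and the use of the negation and duality symmetries.

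The weakness is that the part of the argument that actually produces the number $39$ is announced rather than carried out. Two families of facts are needed, and neither follows from what you prove. First, generation results that collapse each of the four coordinates of Theorem~\ref{thm:mon-clonoids}\ref{thm:mon-clonoids:Mc} from an infinite chain to three options: for instance, that a $\mu$\hyp{}closed $(\clMc,\clIc)$\hyp{}clonoid containing a nonconstant function of $\clOO$ must contain all of $\clOO$. This requires exhibiting every function of the quadrant as an iterated majority of low\hyp{}alternation functions (together with $\clMc$\hyp{}minors), and is not a consequence of the sum bound, which only limits growth. Second, the cross\hyp{}quadrant exclusions and the positive closure verifications for the survivors: e.g., ruling out $\clOO\cup\clOI\cup\clIO\cup\clVaki$ because $\mu(f,g,1)=f\vee g$ can be a nonconstant member of $\clII$ when $f\in\clOI$ and $g\in\clIO$, while checking that, say, $\clEiii$ and $\clNeq$ do survive. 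All of this is the exact analogue of Claim~\ref{clm:MV} and Table~\ref{table:McVc} in the proof of Proposition~\ref{prop:McVc}, and there is no reason to expect it to fail here; but as written your proposal is a correct and well\hyp{}chosen plan whose decisive enumeration step --- the entire content of the statement --- is missing.
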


\begin{figure}
\begin{center}
\scalebox{0.7}{%
\begin{tikzpicture}[baseline, scale=0.6]
   \node [label = {[shift={(0,-0.6)}]$\clEmpty$}] (E) at (0,0) {};
   \node [label = {[shift={(-0.35,-0.5)}]$\clVako$}] (C0) at (-2/2,2) {};
   \node [label = {[shift={(0.35,-0.5)}]$\clVaki$}] (C1) at (2/2,2) {};
   \node [label = {[shift={(0,-0.7)}]$\clVak$}] (C) at (0,4) {};
   \node [label = {[shift={(0.45,-0.55)}]$\clOO$}] (00) at (-4/2,4) {};
   \node [label = {[shift={(-0.45,-0.55)}]$\clII$}] (11) at (4/2,4) {};
   \node [label = {[shift={(-0.95,-0.25)},rotate=-21.8]$\clOOCI$}] (00C1) at (-2/2,6) {};
   \node [label = {[shift={(0.95,-0.25)},rotate=21.8]$\clIICO$}] (11C0) at (2/2,6) {};
   \node [label = above:{$\clEq$}] (Eq) at (0,8) {};

   \node [label = {[shift={(-0.3,-0.7)}]$\clMc$}] (Mc) at (-5,2) {};
   \node [label = {[shift={(-0.3,-0.7)}]$\clMo$}] (M0) at ($(Mc)+(-2/2,2)$) {};
   \node [label = {[shift={(0.3,-0.8)}]$\clMi$}] (M1) at ($(Mc)+(2/2,2)$) {};
   \node [label = {[shift={(-0.15,0)}]$\clM$}] (M) at ($(Mc)+(0,4)$) {};
   \node [label = {[shift={(-0.3,-0.7)}]$\clOI$}] (01) at ($(Mc)+(-5,2)$) {};
   \node [label = left:{$\clOICO$}] (01C0) at ($(01)+(-2/2,2)$) {};
   \node [label = {[shift={(0.6,-0.85)},rotate=-21.8]$\clOICI$}] (01C1) at ($(01)+(2/2,2)$) {};
   \node [label = {[shift={(0.35,-1.15)},rotate=-21.8]$\clOIC$}] (01C) at ($(01)+(0,4)$) {};
   \node [label = left:{$\clOX$}] (0X) at ($(01C0)+(-2/2,2)$) {};
   \node [label = left:{$\clOXCI$}] (0XC1) at ($(0X)+(2/2,2)$) {};
   \node [label = {[shift={(-0.4,-0.25)}]$\clXI$}] (X1) at ($(01C1)+(2/2,2)$) {};
   \node [label = {[shift={(0.8,-0.65)}]$\clXICO$}] (X1C0) at ($(X1)+(-2/2,2)$) {};
   \node [label = {[shift={(-0.4,-0.2)}]$\clEiio$}] (leq) at ($(X1C0)+(-2/2,2)$) {};

   \node [label = {[shift={(0.45,-0.75)}]$\clMcneg$}] (Mcneg) at (5,2) {};
   \node [label = {[shift={(0.45,-0.75)}]$\clMoneg$}] (M0neg) at ($(Mcneg)+(2/2,2)$) {};
   \node [label = {[shift={(-0.15,-0.8)},rotate=21.8]$\clMineg$}] (M1neg) at ($(Mcneg)+(-2/2,2)$) {};
   \node [label = {[shift={(0.15,-0.05)}]$\clMneg$}] (Mneg) at ($(Mcneg)+(0,4)$) {};
   \node [label = {[shift={(0.45,-0.75)}]$\clIO$}] (10) at ($(Mcneg)+(5,2)$) {};
   \node [label = right:{\,$\clIOCI$}] (10C1) at ($(10)+(2/2,2)$) {};
   \node [label = {[shift={(-0.6,-0.85)},rotate=21.8]$\clIOCO$}] (10C0) at ($(10)+(-2/2,2)$) {};
   \node [label = {[shift={(-0.35,-1.15)},rotate=21.8]$\clIOC$}] (10C) at ($(10)+(0,4)$) {};
   \node [label = right:{\,$\clIX$}] (1X) at ($(10C1)+(2/2,2)$) {};
   \node [label = right:{\,$\clIXCO$}] (1XC0) at ($(1X)+(-2/2,2)$) {};
   \node [label = {[shift={(0.45,-0.3)}]$\clXO$}] (X0) at ($(10C0)+(-2/2,2)$) {};
   \node [label = {[shift={(-0.8,-0.65)}]$\clXOCI$}] (X0C1) at ($(X0)+(2/2,2)$) {};
   \node [label = {[shift={(0.4,-0.2)}]$\clEioi$}] (geq) at ($(X0C1)+(2/2,2)$) {};

   \node [label = below:{$\clNeq$}] (Neq) at (0,11) {};
   \node [label = {[shift={(-0.5,-0.3)}]$\clEiii$}] (Neq00) at ($(Neq)+2.5*(-2/2,2)$) {};
   \node [label = {[shift={(0.5,-0.3)}]$\clEioo$}] (Neq11) at ($(Neq)+2.5*(2/2,2)$) {};

   \node [label = {[shift={(0,0.1)}]$\clAll$}] (all) at ($(Neq)+2.5*(0,4)$) {};

   \draw (E) -- (C0) -- (C) -- (00C1) -- (Eq);
   \draw (E) -- (C1) -- (C) -- (11C0) -- (Eq);
   \draw (C0) -- (00) -- (00C1);
   \draw (C1) -- (11) -- (11C0);
   \draw (E) -- (Mc) -- (M0) -- (M) -- (01C);
   \draw (Mc) -- (M1) -- (M);
   \draw (Mc) -- (01) -- (01C1) -- (01C);
   \draw (M1) -- (01C1);   \draw (M0) -- (01C0) -- (01C);   \draw (01) -- (01C0);   \draw (C0) -- (M0);   \draw (C1) -- (M1);   \draw (C) -- (M);

   \draw (E) -- (Mcneg) -- (M0neg) -- (Mneg) -- (10C);
   \draw (Mcneg) -- (M1neg) -- (Mneg);
   \draw (Mcneg) -- (10) -- (10C0) -- (10C);
   \draw (M1neg) -- (10C0);
   \draw (M0neg) -- (10C1) -- (10C);
   \draw (10) -- (10C1);   \draw (C1) -- (M0neg);   \draw (C0) -- (M1neg);   \draw (C) -- (Mneg);
   \draw (01) -- (Neq);   \draw (10) -- (Neq);   \draw (00) -- (0X);   \draw (11) -- (1X);   \draw (01C0) -- (0X);   \draw (10C1) -- (1X);
   \draw (00) -- (0X);   \draw (00) -- (X0);   \draw (11) -- (1X);   \draw (11) -- (X1);   \draw (01C0) -- (0X);   \draw (01C1) -- (X1);
   \draw (10C1) -- (1X);   \draw (10C0) -- (X0);   \draw (0X) -- (0XC1);   \draw (1X) -- (1XC0);   \draw (01C) -- (0XC1);   \draw (10C) -- (1XC0);
   \draw (00C1) -- (0XC1);   \draw (11C0) -- (1XC0);   \draw (00C1) -- (X0C1);   \draw (11C0) -- (X1C0);
   \draw (01C) -- (X1C0);   \draw (10C) -- (X0C1);   \draw (X1) -- (X1C0);   \draw (X0) -- (X0C1);
   \draw (Eq) -- (leq);   \draw (Eq) -- (geq);   \draw (0XC1) -- (leq);   \draw (1XC0) -- (geq);   \draw (X1C0) -- (leq);   \draw (X0C1) -- (geq);
   \draw (Neq) -- (Neq00) -- (all);   \draw (Neq) -- (Neq11) -- (all);
   \draw (0X) -- (Neq00);   \draw (1X) -- (Neq11);   \draw (X1) -- (Neq11);   \draw (X0) -- (Neq00);   \draw (leq) -- (all);   \draw (geq) -- (all);
\end{tikzpicture}
}
\end{center}
\caption{$(\protect\clMc,\protect\clSM)$\hyp{}clonoids.}
\label{fig:McSM}
\end{figure}

It remains to consider the cases where the target clone includes $\clVc$ or $\clLambdac$.

\begin{definition}
For $\vect{c} = c_0 c_1 c_2 \dots c_m \in \{0,1\}^{\{0, \dots, m\}}$, let $\lambda_\vect{c} \colon \{0,1\}^m \to \{0,1\}$ be the Boolean function defined by the rule $\lambda_\vect{c}(\vect{a}) = c_{w(\vect{a})}$, where $w(\vect{a})$ is the Hamming weight of $\vect{a}$.
It is clear that $\Alt(\lambda_{\vect{c}})$ equals the number of indices $i \in \{0, \dots, m-1\}$ such that $c_i \neq c_{i+1}$.
\end{definition}

The classes $\clIntVal{\clAlt{\leq n}}{a}{b}$ of Boolean functions with a bounded alternation number are not stable under left composition with $\clVc$, with some exceptions for small bounds $n$.

\begin{lemma}
\label{lem:Alt-apu}
For $n \geq 2$ and $a, b \in \{0,1\}$ with $a + n \equiv b \pmod{2}$, unless $n = 2$ and $a = b = 1$, there exists an $m > n$ such that $\clVc \, \clIntVal{\clAlt{n}}{a}{b} \cap \clVc \, \clIntVal{\clAlt{m}}{a}{b} \neq \emptyset$; in particular, $\clVc \, \clIntVal{\clAlt{n}}{a}{b} \nsubseteq \clAlt{\leq n}$.
\end{lemma}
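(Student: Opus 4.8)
The plan is to produce, for every admissible triple $(n,a,b)$ with $(n,a,b)\neq(2,1,1)$, a single function $h$ in the intersection, obtained as a disjunction of two weight functions of the kind $\lambda_{\vect c}$ introduced just before the lemma. I will use three facts recorded there: $\lambda_{\vect c}\vee\lambda_{\vect d}=\lambda_{\vect c\vee\vect d}$ (both sides read off the coordinate indexed by $w(\vect a)$), $\Alt(\lambda_{\vect e})$ equals the number of indices $i$ with $e_i\neq e_{i+1}$, and $\lambda_{\vect e}(\vect 0)=e_0$, $\lambda_{\vect e}(\vect 1)=e_\mu$ for $\vect e\in\{0,1\}^{\{0,\dots,\mu\}}$. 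Suppose I can build $\vect c,\vect d\in\{0,1\}^{\{0,\dots,\mu\}}$, each having exactly $n$ changes, with $c_0=d_0=a$ and $c_\mu=d_\mu=b$, and such that $\vect c\vee\vect d$ has $M>n$ changes. Set $h:=\lambda_{\vect c}\vee\lambda_{\vect d}=\lambda_{\vect c\vee\vect d}$. Then $h={\vee}(\lambda_{\vect c},\lambda_{\vect d})$ with $\lambda_{\vect c},\lambda_{\vect d}\in\clIntVal{\clAlt{n}}{a}{b}$ and ${\vee}\in\clVc$, so $h\in\clVc\,\clIntVal{\clAlt{n}}{a}{b}$; moreover $\Alt(h)=M$, $h(\vect 0)=a$, $h(\vect 1)=b$, so $h\in\clIntVal{\clAlt{M}}{a}{b}$, whence $h=\pr^{(1)}_1(h)\in\clVc\,\{h\}\subseteq\clVc\,\clIntVal{\clAlt{M}}{a}{b}$ because $\pr^{(1)}_1\in\clVc$. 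Taking $m:=M$ gives the nonempty intersection, and $\Alt(h)=M>n$ simultaneously witnesses $\clVc\,\clIntVal{\clAlt{n}}{a}{b}\nsubseteq\clAlt{\leq n}$.

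The construction of $\vect c,\vect d$ is uniform. Put $p:=[a=1]+[b=1]\in\{0,1,2\}$, and let $T$ be the fully alternating string with $T_0=a$, $T_\mu=b$ and exactly $n$ ones; concretely $T=0(10)^n$, $(01)^n$, $(10)^n$, or $1(01)^{n-1}$ according to $(a,b)=(0,0),(0,1),(1,0),(1,1)$. Of its $n$ ones, $p$ sit at endpoints of $T$ and the remaining $n-p$ are interior, each isolated (its neighbours in $T$ are $0$). Partition these $n-p$ interior ones into two sets $S_c,S_d$ of equal size $(n-p)/2$, define $\vect c$ to be $1$ exactly on $S_c$ together with the $1$-valued endpoints of $T$ (and $0$ elsewhere), and define $\vect d$ symmetrically from $S_d$. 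Then $\vect c\vee\vect d=T$, since every one of $T$ is covered, while the zeros of $T$ are zeros of both. For the change counts, each isolated interior one contributes $2$ changes and each $1$-valued endpoint contributes $1$, so $\vect c$ and $\vect d$ each have $2\cdot\frac{n-p}{2}+p=n$ changes, whereas $T=\vect c\vee\vect d$ has $2n-p$ changes.

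It remains to check the two numerical conditions. The hypothesis $a+n\equiv b\pmod 2$ gives $n\equiv a+b\equiv p\pmod 2$, so $(n-p)/2\in\IN$ and the required partition exists. Finally $M=2n-p>n$ is equivalent to $n>p$, and among admissible triples this fails only when $p=2$ and $n=2$, that is, exactly for $(n,a,b)=(2,1,1)$; in all other cases $M=2n-p>n$, completing the argument.

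The part to get right is the endpoint bookkeeping. The instinctive approach of placing the two functions on disjoint coordinate blocks so as to ``double'' the alternation works effortlessly whenever $a=0$ or $b=0$, but breaks for $a=b=1$, since left composition with $\vee$ can only enlarge the set on which the value is $1$. Aiming instead at a single fully alternating target $\vect c\vee\vect d$ and splitting its interior ones repairs this, and the number $p$ of $1$-valued endpoints then governs the whole estimate: a one at an extremity yields only one change rather than two, which is precisely what produces the $-p$ in $M=2n-p$ and thereby isolates $(2,1,1)$ as the single exceptional triple.
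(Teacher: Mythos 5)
Your proof is correct and takes essentially the same route as the paper's: the paper also realizes a fully alternating string as the join $\lambda_{\vect{c}}\vee\lambda_{\vect{d}}$ of two weight functions each with exactly $n$ alternations, obtained by splitting the interior ones of the alternating string into two halves (written out there as the explicit families $(0100)^k0$ and $(0001)^k0$, with $1$'s prepended or appended to cover the other $(a,b)$ cases). Your endpoint parameter $p$ merely packages those four cases into one uniform computation $M=2n-p$, which cleanly isolates $(n,a,b)=(2,1,1)$ as the sole exception.
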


\begin{proof}
Let $k \in \IN_{+}$.
Let $\vect{a} = (0100)^k 0$, $\vect{b} = (0001)^k 0$.
Then $\lambda_\vect{a}, \lambda_\vect{b} \in \clIntVal{\clAlt{2k}}{0}{0}$, but $\mathord{\vee}(\lambda_\vect{a},\lambda_\vect{b}) = \lambda_\vect{u}$, where $\vect{u} = (0101)^k 0$, and $\lambda_\vect{u} \in \clIntVal{\clAlt{4k}}{0}{0}$, so $\lambda_\vect{u} \notin \clAlt{\leq 2k}$.
Moreover,
$\lambda_{\vect{a} 1}, \lambda_{\vect{b} 1} \in \clIntVal{\clAlt{2k+1}}{0}{1}$, but $\mathord{\vee}( \lambda_{\vect{a} 1}, \lambda_{\vect{b} 1} ) = \lambda_{\vect{u} 1} \in \clIntVal{\clAlt{4k + 1}}{0}{1}$ and $\lambda_{\vect{u} 1} \notin \clAlt{\leq 2k + 1}$;
$\lambda_{1 \vect{a}}, \lambda_{1 \vect{b}} \in \clIntVal{\clAlt{2k+1}}{1}{0}$, but $\mathord{\vee}( \lambda_{1 \vect{a}}, \lambda_{1 \vect{b}} ) = \lambda_{1 \vect{u}} \in \clIntVal{\clAlt{4k+1}}{1}{0}$ and $\lambda_{1 \vect{u}} \notin \clAlt{\leq 2k + 2}$;
and
$\lambda_{1 \vect{a} 1}, \lambda_{1 \vect{b} 1} \in \clIntVal{\clAlt{2k+2}}{1}{1}$, but $\mathord{\vee}( \lambda_{1 \vect{a} 1}, \lambda_{1 \vect{b} 1} ) = \lambda_{1 \vect{u} 1} \in \clIntVal{\clAlt{4k+2}}{1}{1}$ and $\lambda_{1 \vect{u} 1} \notin \clAlt{\leq 2k + 2}$.
\end{proof}

\begin{proposition}
\label{prop:McVc}
There are precisely 56 $(\clMc,\clVc)$\hyp{}clonoids, and they are the following:
\begin{align*}
& \clAll,
\quad \clEioo \cup \clVako,
\quad \clEioo,
\quad \clEioi,
\quad \clEiio,
\quad \clEq,
\\
& \clOXCI,
\quad \clXOCI,
\quad \clIXCO,
\quad \clXICO,
\quad \clOIC,
\quad \clOICO,
\\
& \clOICI,
\quad \clIOC,
\quad \clIOCO,
\quad \clIOCI,
\quad \clOOCI,
\quad \clIICO,
\\
& \clOX,
\quad \clXO,
\quad \clIX,
\quad \clXI,
\quad \clOI,
\quad \clIO,
\quad \clOO,
\quad \clII,
\\
& \clM \cup \clMneg \cup \clII,
\quad \clM \cup \clMneg \cup \clIntVal{\clAlt{2}}{1}{1},
\quad \clMc \cup \clMcneg \cup \clII,
\quad \clMc \cup \clMcneg \cup \clIntVal{\clAlt{\leq 2}}{1}{1},
\\
& \clM \cup \clIntVal{\clAlt{2}}{1}{1},
\quad \clMo \cup \clIX,
\quad \clMo \cup \clII,
\quad \clMi \cup \clIntVal{\clAlt{2}}{1}{1},
\\
& \clMneg \cup \clIntVal{\clAlt{2}}{1}{1},
\quad \clMineg \cup \clXI,
\quad \clMineg \cup \clII,
\quad \clMoneg \cup \clIntVal{\clAlt{2}}{1}{1},
\\
& \clMc \cup \clIX,
\quad \clMc \cup \clII,
\quad \clMcneg \cup \clXI,
\quad \clMcneg \cup \clII,
\\
& \clM,
\quad \clMo,
\quad \clMi,
\quad \clMc,
\quad \clMneg,
\quad \clMineg,
\quad \clMoneg,
\quad \clMcneg,
\\
& \clIntVal{\clAlt{2}}{1}{1} \cup \clVak,
\quad \clIntVal{\clAlt{\leq 2}}{1}{1},
\quad \clVak,
\quad \clVako,
\quad \clVaki,
\quad \clEmpty.
\end{align*}
\end{proposition}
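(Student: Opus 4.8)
The plan is to reduce the problem to an enumeration over the $(\clMc,\clIc)$-clonoids already classified in Theorem~\ref{thm:mon-clonoids}\ref{thm:mon-clonoids:Mc}, and then to select those that are additionally stable under left composition with $\clVc$. First I would note that, since $\clIc \subseteq \clVc$, Lemma~\ref{lem:clonoid-inclusion} makes every $(\clMc,\clVc)$-clonoid a $(\clMc,\clIc)$-clonoid; the latter are exactly the sets $A \cup B \cup C \cup D$ with the four sector-parts chosen independently as in Theorem~\ref{thm:mon-clonoids}\ref{thm:mon-clonoids:Mc}. Because $\clonegen{\vee} = \clVc$, Lemma~\ref{lem:CL-Lc-3.3} shows that such a set $K$ is a $(\clMc,\clVc)$-clonoid if and only if it is closed under $\vee$, i.e.\ $f_1 \vee f_2 \in K$ whenever $f_1, f_2 \in K$ have equal arity. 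The whole task thus becomes: decide which sector-combinations are $\vee$-closed.

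Next I would carry out a per-sector collapse. Since the join of two functions from the same sector ($00$, $01$, $11$, or $10$, according to the pair $(f(\vect 0), f(\vect 1))$) stays in that sector, each sector-part of a $\vee$-closed clonoid is itself $\vee$-closed. Invoking Lemma~\ref{lem:Alt-apu}, a bounded sector-part $\clIntVal{\clAlt{\leq k}}{a}{b}$ with $k \geq 2$ cannot be $\vee$-closed (its maximal-level weight functions $\lambda_{\vect a}, \lambda_{\vect b}$ join to alternation $2k$), with the single exception $a=b=1$, $k=2$. This exception is genuine: $\clIntVal{\clAlt{\leq 2}}{1}{1}$ is precisely the set of functions with $f(\vect 0)=f(\vect 1)=1$ whose zero-set $f^{-1}(0)$ is order-convex, and since $(f \vee g)^{-1}(0) = f^{-1}(0) \cap g^{-1}(0)$ and an intersection of convex sets is convex, this class is $\vee$-closed. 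The surviving sector menus are therefore $A \in \{\clEmpty, \clVako, \clOO\}$, $B \in \{\clEmpty, \clMc, \clOI\}$, $D \in \{\clEmpty, \clMcneg, \clIO\}$ (with $\clMc = \clIntVal{\clAlt{\leq 1}}{0}{1}$ and $\clMcneg = \clIntVal{\clAlt{\leq 1}}{1}{0}$), and $C \in \{\clEmpty, \clVaki, \clIntVal{\clAlt{\leq 2}}{1}{1}, \clII\}$.

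The heart of the argument is the cross-sector analysis, and this is where I expect the main difficulty. Using the sector arithmetic of $\vee$ (so $00\vee01=01$, $00\vee10=10$, and every join involving $11$ or mixing $01$ with $10$ lands in $11$), together with the subadditivity estimate $\Alt(f\vee g) \leq \Alt(f)+\Alt(g)$ (which I would prove by counting sign changes along a single maximal chain) and the convex-zero-set description above, I would establish the compatibility constraints. For the ``stays bounded'' directions these tools suffice: e.g.\ a monotone function ($\Alt \leq 1$, zero-set a downset) joined with a member of $\clIntVal{\clAlt{\leq 2}}{1}{1}$ again has convex zero-set, so remains of alternation $\leq 2$; and $\clMc \vee \clMcneg$ lands in $\clIntVal{\clAlt{\leq 2}}{1}{1}$ (a downset intersected with an upset is convex). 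For the forbidden directions I would use a neutral-subcube construction: if a full sector-part (unbounded alternation) is joined with a function that is $0$ on a large convex subregion, the join coincides with the high-alternation function there and hence escapes any bounded target sector. This yields: $A = \clOO$ forces $B, D$ to be empty or full and $C \in \{\clEmpty, \clVaki, \clII\}$; whenever $B$ and $D$ are both nonempty one needs $C \supseteq \clIntVal{\clAlt{\leq 2}}{1}{1}$, with $C = \clII$ forced unless $B=\clMc$ and $D=\clMcneg$; and $B=\clOI$ or $D=\clIO$ forces $C \in \{\clEmpty, \clVaki, \clII\}$.

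Finally I would enumerate the admissible quadruples $(A,B,C,D)$ satisfying the menus and constraints, and match each to an entry of the stated list. Organising the count by the choice of $C$, the cases $A \in \{\clEmpty, \clVako\}$ each admit $5+5+4+9=23$ choices of $(B,C,D)$, while $A = \clOO$ (where the menus for $B$, $C$, $D$ shrink and $C = \clIntVal{\clAlt{\leq 2}}{1}{1}$ is excluded) admits $3+3+4=10$; this gives $2\cdot 23 + 10 = 56$, as claimed. The main obstacle throughout is the cross-sector bookkeeping of the third step: one must simultaneously verify sufficiency (each of the $56$ sets is genuinely $\vee$-closed, via convexity and subadditivity) and necessity (explicit alternation-blow-up constructions ruling out every other combination), and it is the convex-zero-set viewpoint that makes the delicate $11$-sector level-$2$ case tractable.
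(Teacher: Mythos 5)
Your proposal is correct and follows the same skeleton as the paper's proof: reduce via Lemma~\ref{lem:clonoid-inclusion} to the $(\clMc,\clIc)$\hyp{}classification of Theorem~\ref{thm:mon-clonoids}, prune each sector's menu using Lemma~\ref{lem:Alt-apu}, impose exactly the cross\hyp{}sector constraints of Claim~\ref{clm:MV} (both-parts-nonempty forces $\clIntVal{\clAlt{\leq 2}}{1}{1}$ into the $(1,1)$\hyp{}sector; any full sector joined with a nonempty non-$(0,0)$ part forces a full sector), and count the $2\cdot 23+10=56$ admissible quadruples, which matches Table~\ref{table:McVc}. The one place where you genuinely diverge is the sufficiency verification: the paper checks the twelve meet\hyp{}irreducible classes by direct case analysis (working out $\clMo \cup \clMineg \cup \clIntVal{\clAlt{\leq 2}}{1}{1}$ in detail via a contradiction on an alternating chain of length $3$, and leaving the other eleven to the reader), whereas you characterize $\clIntVal{\clAlt{\leq 2}}{1}{1}$ as the functions in $\clII$ whose zero\hyp{}set is order\hyp{}convex, so that closure under $\vee$ becomes closure of convex sets under intersection; since members of $\clMc$ and $\clMcneg$ have downset and upset zero\hyp{}sets respectively, all the delicate joins landing in the $(1,1)$\hyp{}sector are handled uniformly, and the remaining joins are settled by sector arithmetic. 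That convexity viewpoint is a cleaner and more economical way to discharge the verification the paper leaves largely to the reader, and it also yields the containment $\clMc \vee \clMcneg \subseteq \clIntVal{\clAlt{\leq 2}}{1}{1}$ needed for the constraint analysis; note only that for the necessity direction of Claim~\ref{clm:MV}\ref{MV-1} you still need (as you implicitly have, via a single witness such as $\lambda_{001}\vee\lambda_{100}=\lambda_{101}$ together with downset\hyp{}closure in the $\clMc$\hyp{}minor order) that the $(1,1)$\hyp{}part is actually forced to contain $\clIntVal{\clAlt{\leq 2}}{1}{1}$, not merely permitted to.
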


The Hasse diagram of $\closys{(\clMc,\clVc)}$ is shown in Figure~\ref{fig:McVc}.

\begin{figure}
\begin{center}
\makebox[0pt]{%
\scalebox{0.7}{%
\begin{tikzpicture}[baseline, scale=0.6]
   \tikzmath{\Ax = -1; \Ay = 2; \Bx = 1; \By = 2; \Cx = -7; \Cy = 2; \Dx = 7; \Dy = 2;}
   \node [vanha, label = {[shift={(0,-0.6)}]$\clEmpty$}] (0000) at (0,0) {};
   \node [vanha, label = {[shift={(-0.25,-0.6)}]$\clVako$}] (1000) at ($(0000)+(\Ax,\Ay)$) {};
   \node [vanha, label = {[shift={(-0.25,-0.65)}]$\clOO$}] (2000) at ($(1000)+(\Ax,\Ay)$) {};
   \node [vanha, label = {[shift={(0.25,-0.6)}]$\clVaki$}] (0100) at ($(0000)+(\Bx,\By)$) {};
   \node [vanha, label = {[shift={(0,-0.7)}]$\clVak$}] (1100) at ($(0100)+(\Ax,\Ay)$) {};
   \node [vanha, label = {[shift={(-0.9,-0.4)},rotate=-15.95]$\clOOCI$}] (2100) at ($(1100)+(\Ax,\Ay)$) {};
   \node [uusi, label = {[shift={(0.35,-0.7)}]$\clIntVal{\clAlt{\leq 2}}{1}{1}$}] (0200) at ($(0100)+(\Bx,\By)$) {};
   \node [uusi, label = {[shift={(-0.15,-1.05)}]$\clIntVal{\clAlt{2}}{1}{1} \cup \clVak$}] (1200) at ($(0200)+(\Ax,\Ay)$) {};
   \node [vanha, label = {[shift={(-0.4,-0.55)}]$\clII$}] (0300) at ($(0200)+(\Bx,\By)$) {};
   \node [vanha, label = {[shift={(0.8,-0.3)},rotate=15.95]$\clIICO$}] (1300) at ($(0300)+(\Ax,\Ay)$) {};
   \node [vanha, label = {[shift={(0.45,-0.55)}]$\clEq$}] (2300) at ($(1300)+(\Ax,\Ay)$) {};

   \node [vanha, label = {[shift={(0.3,-0.8)}]$\clMcneg$}] (0001) at ($(0000)+(\Dx,\Dy)$) {};
   \node [vanha, label = {[shift={(0.3,-0.8)}]$\clMoneg$}] (0101) at ($(0001)+(\Bx,\By)$) {};
   \node [uusi, label = {[shift={(1.2,-0.55)},rotate=15.95]$\clMoneg \cup \clIntVal{\clAlt{2}}{1}{1}$}] (0201) at ($(0101)+(\Bx,\By)$) {};
   \node [uusi, label = {[shift={(0.9,-0.4)},rotate=15.95]$\clMcneg \cup \clII$}] (0301) at ($(0201)+(\Bx,\By)$) {};
   \node [vanha, label = {[shift={(0.3,-0.8)}]$\clIO$}] (0002) at ($(0001)+(\Dx,\Dy)$) {};
   \node [vanha, label = right:{\,$\clIOCI$}] (0102) at ($(0002)+(\Bx,\By)$) {};
   \node [vanha, label = right:{\,$\clIX$}] (0302) at ($(0102)+(\Bx+\Bx,\By+\By)$) {};
   \node [vanha, label = {[shift={(-0.3,-0.7)}]$\clMc$}] (0010) at ($(0000)+(\Cx,\Cy)$) {};
   \node [vanha, label = {[shift={(-0.3,-0.7)}]$\clOI$}] (0020) at ($(0010)+(\Cx,\Cy)$) {};

   \node [vanha, label = {[shift={(-0.4,-0.25)}]$\clMineg$}] (1001) at ($(1000)+(\Dx,\Dy)$) {};
   \node [vanha, label = {[shift={(-0.25,-0.1)}]$\clMneg$}] (1101) at ($(1001)+(\Bx,\By)$) {};
   \node [uusi, label = {[shift={(-0.8,0.1)},rotate=-40.6]$\clMneg \cup \clIntVal{\clAlt{2}}{1}{1}$}] (1201) at ($(1101)+(\Bx,\By)$) {};
   \node [uusi, label = {[shift={(0.9,-0.4)},rotate=15.95]$\clMineg \cup \clII$}] (1301) at ($(1201)+(\Bx,\By)$) {};
   \node [vanha, label = {[shift={(-0.6,-0.75)},rotate=15.95]$\clIOCO$}] (1002) at ($(1001)+(\Dx,\Dy)$) {};
   \node [vanha, label = {[shift={(0.9,-0.45)},rotate=15.95]$\clIOC$}] (1102) at ($(1002)+(\Bx,\By)$) {};
   \node [vanha, label = right:{\,$\clIXCO$}] (1302) at ($(1102)+(\Bx+\Bx,\By+\By)$) {};
   \node [vanha, label = {[shift={(-0.2,-0.7)}]$\clMo$}] (1010) at ($(1000)+(\Cx,\Cy)$) {};
   \node [vanha, label = left:{$\clOICO$}] (1020) at ($(1010)+(\Cx,\Cy)$) {};

   \node [vanha, label = {[shift={(0.45,-0.35)}]$\clXO$}] (2002) at ($(2000)+2*(\Dx,\Dy)$) {};
   \node [vanha, label = {[shift={(-0.6,-0.75)},rotate=15.95]$\clXOCI$}] (2102) at ($(2002)+(\Bx,\By)$) {};
   \node [vanha, label = {[shift={(0.4,-0.2)}]$\clEioi$}] (2302) at ($(2102)+2*(\Bx,\By)$) {};

   \node [vanha, label = {[shift={(0.5,-0.35)}]$\clMi$}] (0110) at ($(0010)+(\Bx,\By)$) {};
   \node [vanha, label = {[shift={(0.65,-0.75)},rotate=-15.95]$\clOICI$}] (0120) at ($(0020)+(\Bx,\By)$) {};
   \node [uusi, label = {[shift={(0.85,-0.95)},rotate=-15.95]$\clMi \cup \clIntVal{\clAlt{2}}{1}{1}$}] (0210) at ($(0110)+(\Bx,\By)$) {};
   \node [uusi, label = {[shift={(1.05,-0.85)}]$\clMc \cup \clII$}] (0310) at ($(0210)+(\Bx,\By)$) {};

   \node [vanha, label = left:{$\clXI$}] (0320) at ($(0310)+(\Cx,\Cy)$) {};
   \node [vanha, label = {[shift={(-0.1,0)}]$\clM$}] (1110) at ($(1010)+(\Bx,\By)$) {};
   \node [vanha, label = {[shift={(0.9,-0.45)},rotate=15.95]$\clOIC$}] (1120) at ($(1020)+(\Bx,\By)$) {};
   \node [uusi, label = {[shift={(-0.9,-0.55)},rotate=-15.95]$\clM \cup \clIntVal{\clAlt{2}}{1}{1}$}] (1210) at ($(1110)+(\Bx,\By)$) {};
   \node [uusi, label = {[shift={(-0.85,-0.35)},rotate=-15.95]$\clMo \cup \clII$}] (1310) at ($(1210)+(\Bx,\By)$) {};
   \node [vanha, label = {[shift={(1,-0.75)}]$\clXICO$}] (1320) at ($(1120)+2*(\Bx,\By)$) {};
   \node [vanha, label = left:{$\clOX$}] (2020) at ($(1020)+(\Ax,\Ay)$) {};
   \node [vanha, label = left:{$\clOXCI$}] (2120) at ($(2020)+(\Bx,\By)$) {};
   \node [vanha, label = {[shift={(-0.3,-0.1)}]$\clEiio$}] (2320) at ($(2120)+2*(\Bx,\By)$) {};

   \node [uusi, label = {[shift={(-0.35,-1.8)},rotate=40.6]$\clMc \cup \clMcneg \cup \clIntVal{\clAlt{\leq 2}}{1}{1}$}] (0211) at ($(2300)+1*(\Bx,\By)$) {};
   \node [uusi, label = {[shift={(0.45,-1.85)},rotate=-40.6]$\clMc \cup \clMcneg \cup \clII$}] (0311) at ($(0211)+(\Bx,\By)$) {};
   \node [uusi, label = {[shift={(0.15,-1.5)},rotate=-33.69]$\clMc \cup \clIX$}] (0312) at ($(0311)+3*(\Bx,\By)-2*(\Ax,\Ay)$) {};
   \node [uusi, label = {[shift={(-0.9,-0.75)}]$\clMcneg \cup \clXI$}] (0321) at ($(0311)+3*(\Ax,\Ay)-2*(\Bx,\By)$) {};
   \node [vanha, label = {[shift={(-0.65,-0.5)}]$\clEioo$}] (0322) at ($(0302)!14/9!(0312)$) {};
   \node [uusi, label = {[shift={(-0.3,-1.4)},rotate=40.6]$\clM \cup \clMneg \cup \clIntVal{\clAlt{\leq 2}}{1}{1}$}] (1211) at ($(0211)+(\Ax,\Ay)$) {};
   \node [uusi, label = {[shift={(1.25,-1.15)}]$\clM \cup \clMneg \cup \clII$}] (1311) at ($(1211)+(\Bx,\By)$) {};
   \node [uusi, label = {[shift={(-0.15,-0.55)},rotate=-33.69]$\clMo \cup \clIX$}] (1312) at ($(0312)+(\Ax,\Ay)$) {};
   \node [uusi, label = {[shift={(-0.9,-0.75)}]$\clMineg \cup \clXI$}] (1321) at ($(0321)+(\Ax,\Ay)$) {};
   \node [uusi, label = {[shift={(-0.95,-0.8)}]$\clEioo \cup \clVako$}] (1322) at ($(0322)+(\Ax,\Ay)$) {};
   \node [vanha, label = {[shift={(0,0.1)}]$\clAll$}] (2322) at ($(1322)+(\Ax,\Ay)$) {};

    \draw (0000) -- (1000) -- (2000);
    \draw (0100) -- (1100) -- (2100);
    \draw (0200) -- (1200);
    \draw (0300) -- (1300) -- (2300);
    \draw (0000) -- (0100) -- (0200) -- (0300);
    \draw (1000) -- (1100) -- (1200) -- (1300);
    \draw (2000) -- (2100) -- (2300);

    \draw (0000) -- (0010) -- (0020);

    \draw (0000) -- (0001) -- (0002);

    \draw (0001) -- (0101) -- (0201) -- (0301);
    \draw (0002) -- (0102) -- (0302);
    \draw (0100) -- (0101) -- (0102);
    \draw (0200) -- (0201);
    \draw (0300) -- (0301) -- (0302);

    \draw (1001) -- (1101) -- (1201) -- (1301);
    \draw (1002) -- (1102) -- (1302);
    \draw (1000) -- (1001) -- (1002);
    \draw (1100) -- (1101) -- (1102);
    \draw (1200) -- (1201);
    \draw (2000) -- (2002);
    \draw (2100) -- (2102);
    \draw (2300) -- (2302);
    \draw (1300) -- (1301) -- (1302);
    \draw (2002) -- (2102) -- (2302);

    \draw (0001) -- (1001);
    \draw (0101) -- (1101);
    \draw (0201) -- (1201);
    \draw (0301) -- (1301);
    \draw (0002) -- (1002) -- (2002);
    \draw (0102) -- (1102) -- (2102);
    \draw (0302) -- (1302) -- (2302);

    \draw (1000) -- (1010) -- (1020);
    \draw (2000) -- (2020);
    \draw (0100) -- (0110) -- (0120);
    \draw (1100) -- (1110) -- (1120);
    \draw (2100) -- (2120);
    \draw (0200) -- (0210);
    \draw (1200) -- (1210);
    \draw (0300) -- (0310) -- (0320);
    \draw (1300) -- (1310) -- (1320);
    \draw (2300) -- (2320);

    \draw (0010) -- (0110) -- (0210) -- (0310);
    \draw (1010) -- (1110) -- (1210) -- (1310);
    \draw (0020) -- (0120) -- (0320);
    \draw (1020) -- (1120) -- (1320);
    \draw (2020) -- (2120) -- (2320);

    \draw (0010) -- (1010);
    \draw (0110) -- (1110);
    \draw (0210) -- (1210);
    \draw (0310) -- (1310);

    \draw (0020) -- (1020) -- (2020);
    \draw (0120) -- (1120) -- (2120);
    \draw (0320) -- (1320) -- (2320);

    \draw (0210) -- (0211);   \draw (0201) -- (0211);
    \draw (0310) -- (0311);   \draw (0301) -- (0311);
    \draw (0211) -- (0311);
    \draw (0311) -- (0312);
    \draw (0302) -- (0312);
    \draw (0311) -- (0321);
    \draw (0320) -- (0321);
    \draw (0312) -- (0322);
    \draw (0321) -- (0322);

    \draw (0211) -- (1211);
    \draw (1201) -- (1211);
    \draw (1210) -- (1211);

    \draw (0311) -- (1311);
    \draw (1211) -- (1311);
    \draw (1301) -- (1311);
    \draw (1310) -- (1311);

    \draw (0312) -- (1312);
    \draw (1302) -- (1312);
    \draw (1311) -- (1312);
    
    \draw (0321) -- (1321);
    \draw (1311) -- (1321);
    \draw (1320) -- (1321);

    \draw (0322) -- (1322);
    \draw (1312) -- (1322);
    \draw (1321) -- (1322);

    \draw (1322) -- (2322);
    \draw (2302) -- (2322);
    \draw (2320) -- (2322);
\end{tikzpicture}
	}}
\end{center}
\caption{$(\protect\clMc,\protect\clVc)$\hyp{}clonoids.}
\label{fig:McVc}
\end{figure}

\begin{proof}
By Theorem~\ref{thm:mon-clonoids}\ref{thm:mon-clonoids:Mc} and Lemma~\ref{lem:Alt-apu}, every $(\clMc,\clVc)$\hyp{}clonoid must be of the form $A \cup B \cup C \cup D$, where
\begin{align*}
&
A \in \{ \clEmpty, \clIntVal{\clAlt{0}}{0}{0}, \clOO \},
&&
B \in \{ \clEmpty, \clIntVal{\clAlt{0}}{1}{1}, \clIntVal{\clAlt{\leq 2}}{1}{1}, \clII \},
\\ &
C \in \{ \clEmpty, \clIntVal{\clAlt{1}}{0}{1}, \clOI \},
&&
D \in \{ \clEmpty, \clIntVal{\clAlt{1}}{1}{0}, \clIO \}.
\end{align*}
Not every set of this form is a $(\clMc,\clVc)$\hyp{}clonoid, though.
We can exclude several with the following observations.

\begin{claim}
\label{clm:MV}
Assume $K \subseteq \clAll$ is stable under left composition with $\clVc$.
\begin{enumerate}[label=\upshape{(\roman*)}]
\item\label{MV-1}
If $\clIntVal{\clAlt{1}}{0}{1} \cup \clIntVal{\clAlt{1}}{1}{0} \subseteq K$, then $\clIntVal{\clAlt{\leq 2}}{1}{1} \subseteq K$.

\item\label{MV-2}
If $\clIntVal{\clAll}{a}{b} \cup \clIntVal{\clAlt{\ell}}{c}{d} \subseteq K$ for some $a, b, c, d \in \{0,1\}$ and $\ell \in \{1,2\}$ with $(c,d) \neq (0,0)$ and $c + \ell \equiv d \pmod{2}$,
then $\clIntVal{\clAll}{r}{s} \subseteq K$, where $r = a \vee c$ and $s = b \vee d$.
\end{enumerate}
\end{claim}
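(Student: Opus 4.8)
The plan is to exploit the only structural hypothesis available, namely $\vee$\hyp{}stability. Since $\clVc = \clonegen{\vee}$, Lemma~\ref{lem:CL-Lc-3.3} applied with the generating set $G = \{\vee\}$ shows that $K$ being stable under left composition with $\clVc$ is equivalent to $K$ being closed under joins of equal\hyp{}arity members, that is, $f_1 \vee f_2 \in K$ whenever $f_1, f_2 \in K^{(m)}$. Consequently, in both parts it suffices to realize each target function as a join $f \vee g$ of two same\hyp{}arity functions already known to belong to $K$.

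For part~\ref{MV-1}, I would decompose an arbitrary $f \in \clIntVal{\clAlt{2}}{1}{1}$ of arity $n$ using its alternation depth $d_f$, which is order\hyp{}preserving and satisfies $\Alt(f) = d_f(\vect{1}) = 2$, so $d_f$ takes values in $\{0,1,2\}$. As in the proof of Proposition~\ref{prop:2-poset}, $f(\vect{a}) = f(\vect{0}) = 1$ exactly when $d_f(\vect{a})$ is even, so $f^{-1}(1) = d_f^{-1}(\{0,2\})$ and $f^{-1}(0) = d_f^{-1}(1)$. The set $D := d_f^{-1}(0)$ is a proper nonempty downset (it contains $\vect{0}$ but not $\vect{1}$) and $U := d_f^{-1}(2)$ is a proper nonempty upset; their indicator functions $\chi_D$ and $\chi_U$ are antitone and monotone, respectively, and non\hyp{}constant, hence of alternation number $1$, and their boundary values place $\chi_D \in \clIntVal{\clAlt{1}}{1}{0}$ and $\chi_U \in \clIntVal{\clAlt{1}}{0}{1}$. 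Since $f = \chi_{D \cup U} = \chi_D \vee \chi_U$, closure under joins yields $f \in K$. The only other members of $\clIntVal{\clAlt{\leq 2}}{1}{1}$ are the constants in $\clVaki$, and in each arity $n$ the constant $1$ equals $\pr_1^{(n)} \vee \neg \pr_1^{(n)}$ with $\pr_1^{(n)} \in \clIntVal{\clAlt{1}}{0}{1}$ and $\neg\pr_1^{(n)} \in \clIntVal{\clAlt{1}}{1}{0}$; hence $\clIntVal{\clAlt{\leq 2}}{1}{1} \subseteq K$.

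For part~\ref{MV-2}, the admissible triples $(c,d,\ell)$ with $(c,d) \neq (0,0)$, $\ell \in \{1,2\}$, and $c + \ell \equiv d \pmod{2}$ are precisely $(0,1,1)$, $(1,0,1)$, and $(1,1,2)$. Given $F \in \clIntVal{\clAll}{r}{s}$ of arity $n$ with $r = a \vee c$ and $s = b \vee d$, I would split $F = f \vee g$ as follows. Take $g$ supported only on the two extreme vertices: $g = \bigwedge_{i} \pr_i^{(n)}$ (value $1$ only at $\vect{1}$) in case $(0,1,1)$, $g = \bigwedge_{i} \neg \pr_i^{(n)}$ (value $1$ only at $\vect{0}$) in case $(1,0,1)$, and $g = \chi_{\{\vect{0}, \vect{1}\}}$ in case $(1,1,2)$; in every case $g(\vect{0}) = c$, $g(\vect{1}) = d$, $g$ vanishes off $\{\vect{0}, \vect{1}\}$, and $g \in \clIntVal{\clAlt{\ell}}{c}{d}$ (the alternation bound requiring $n \geq 2$ only in the case $(1,1,2)$, addressed below). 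Then let $f$ agree with $F$ off the endpoints and set $f(\vect{0}) = a$, $f(\vect{1}) = b$, so that $f \in \clIntVal{\clAll}{a}{b}$. Off the endpoints $f \vee g = F \vee 0 = F$, while at the endpoints $a \vee c = r = F(\vect{0})$ and $b \vee d = s = F(\vect{1})$; hence $F = f \vee g \in K$ and $\clIntVal{\clAll}{r}{s} \subseteq K$.

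The step I expect to require the most care is verifying that $g$ has alternation number exactly $\ell$. For $\ell = 1$ this is immediate, as $g$ is then monotone or antitone and non\hyp{}constant. The case $(1,1,2)$ is the crux: one must check that every chain $\vect{0} < \vect{x} < \vect{1}$ realizes the alternating pattern $1,0,1$, giving $\Alt(g) \geq 2$, while no alternating chain of length $3$ exists because the two interior vertices of any $4$\hyp{}element chain share the label $0$; this pins $\Alt(g)$ at $2$ but needs $n \geq 2$. The genuinely degenerate situation is therefore $(1,1,2)$ at arity $1$, where $\clIntVal{\clAlt{2}}{1}{1}$ is empty and the lone target (the unary constant $1$) admits no such splitting. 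This is harmless in the intended application, where $K$ is additionally minor\hyp{}closed: identifying the two arguments of any member of $\clIntVal{\clAlt{2}}{1}{1}$ (for instance the biconditional $\leftrightarrow$) produces the unary constant $1$, so the conclusion survives.
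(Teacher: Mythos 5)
Your proof is correct and follows essentially the same route as the paper's: part (i) decomposes $f$ as the join of the indicator functions of $d_f^{-1}(0)$ and $d_f^{-1}(2)$ (with constants handled separately), and part (ii) decomposes $F$ as the join of a copy of $F$ with modified endpoint values and a function supported on $\{\vect{0},\vect{1}\}$, exactly as in the paper. The one place you go beyond the paper is the arity-$1$ degeneracy in (ii) for $(c,d,\ell)=(1,1,2)$, where $\clIntVal{\clAlt{2}}{1}{1}$ has no unary members and the splitting breaks down; the paper's proof passes over this silently, whereas you correctly observe that the conclusion is recovered because in the intended application $K$ is a union of $\clMc$-equivalence classes and hence minor-closed, so the unary constant $1$ is obtained by identifying arguments of any member of $\clIntVal{\clAlt{2}}{1}{1}$.
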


\begin{pfclaim}
\ref{MV-1}
Let $f \in \clIntVal{\clAlt{\leq 2}}{1}{1}$.
If $f \in \clIntVal{\clAlt{0}}{1}{1} = \clVaki$, let $g$ and $h$ be the functions (of the same arity as $f$) given by the rules
$g(\vect{a}) = 0$ if and only if $\vect{a} = \vect{0}$, and $h(\vect{a}) = 0$ if and only if $\vect{a} = \vect{1}$.
If $f \in \clIntVal{\clAlt{2}}{1}{1}$, define $g$ and $h$ by the rules
\begin{align*}
&
g(\vect{a}) =
\begin{cases}
1, & \text{if $d_f(\vect{a}) = 2$,} \\
0, & \text{otherwise,}
\end{cases}
&&
h(\vect{a}) =
\begin{cases}
1, & \text{if $d_f(\vect{a}) = 0$,} \\
0, & \text{otherwise.}
\end{cases}
\end{align*}
In either case, $g \in \clIntVal{\clAlt{1}}{0}{1}$ and $h \in \clIntVal{\clAlt{1}}{1}{0}$, and $f = g \vee h \in \clVc (\clIntVal{\clAlt{1}}{0}{1} \cup \clIntVal{\clAlt{1}}{1}{0}) \subseteq \clVc K \subseteq K$.

\ref{MV-2}
Let $f \in \clIntVal{\clAll}{r}{s}$, and define functions $g$ and $h$ by the rules
\begin{align*}
&
g(\vect{a}) =
\begin{cases}
f(\vect{a}), & \text{if $\vect{a} \notin \{\vect{0}, \vect{1}\}$,} \\
a, & \text{if $\vect{a} = \vect{0}$,} \\
b, & \text{if $\vect{a} = \vect{1}$,}
\end{cases}
&&
h(\vect{a}) =
\begin{cases}
0, & \text{if $\vect{a} \notin \{\vect{0}, \vect{1}\}$,} \\
c, & \text{if $\vect{a} = \vect{0}$,} \\
d, & \text{if $\vect{a} = \vect{1}$.}
\end{cases}
\end{align*}
We have $g \in \clIntVal{\clAll}{a}{b}$ and $h \in \clIntVal{\clAlt{\ell}}{c}{d}$, and
$f = g \vee h \in \clVc (\clIntVal{\clAll}{a}{b} \cup \clIntVal{\clAlt{\ell}}{c}{d}) \subseteq \clVc K \subseteq K$.
\end{pfclaim}

We can exclude many quadruples $(A,B,C,D)$ with the help of Claim~\ref{clm:MV}.
Namely,
by Claim~\ref{clm:MV}\ref{MV-1}, we have that if $C \neq \clEmpty$ and $D \neq \clEmpty$, then $B \in \{ \clIntVal{\clAlt{\leq 2}}{1}{1}, \clII \}$.
By Claim~\ref{clm:MV}\ref{MV-2}, we have that
if $A = \clOO$ then $B \in \{ \clEmpty, \clIntVal{\clAlt{0}}{1}{1}, \clII \}$, $C \in \{ \clEmpty, \clOI \}$, and $D \in \{ \clEmpty, \clIO \}$;
moreover,
if $C = \clOI$ or $D = \clIO$, then $B \in \{ \clEmpty, \clIntVal{\clAlt{0}}{1}{1}, \clII \}$;
even futher, if ($C = \clOI$ and $D \neq \clEmpty$) or ($C \neq \clEmpty$ and $D = \clIO$), then $B = \clII$.

This leaves us with the 56 possible combinations for the quadruples $(A,B,C,D)$ and the corresponding classes $A \cup B \cup C \cup D$ that are presented in Table~\ref{table:McVc}.

\begin{table}
\begingroup
\small
\[
\begin{array}[t]{ccccc}
A & B & C & D & A \cup B \cup C \cup D \\
\hline
\clEmpty & \clEmpty & \clEmpty & \clEmpty & \clEmpty \\
\clEmpty & \clEmpty & \clEmpty & \clIntVal{\clAlt{1}}{1}{0} & \clMcneg \\
\clEmpty & \clEmpty & \clEmpty & \clIO & \clIO \\
\clEmpty & \clEmpty & \clIntVal{\clAlt{1}}{0}{1} & \clEmpty & \clMc \\
\clEmpty & \clEmpty & \clOI & \clEmpty & \clOI \\
\clEmpty & \clIntVal{\clAlt{0}}{1}{1} & \clEmpty & \clEmpty & \clVaki \\
\clEmpty & \clIntVal{\clAlt{0}}{1}{1} & \clEmpty & \clIntVal{\clAlt{1}}{1}{0} & \clMoneg \\
\clEmpty & \clIntVal{\clAlt{0}}{1}{1} & \clEmpty & \clIO & \clIOCI \\
\clEmpty & \clIntVal{\clAlt{0}}{1}{1} & \clIntVal{\clAlt{1}}{0}{1} & \clEmpty & \clMi \\
\clEmpty & \clIntVal{\clAlt{0}}{1}{1} & \clOI & \clEmpty & \clOICI \\
\clEmpty & \clIntVal{\clAlt{\leq 2}}{1}{1} & \clEmpty & \clEmpty & \clIntVal{\clAlt{\leq 2}}{1}{1} \\
\clEmpty & \clIntVal{\clAlt{\leq 2}}{1}{1} & \clEmpty & \clIntVal{\clAlt{1}}{1}{0} & \clMoneg \cup \clIntVal{\clAlt{2}}{1}{1} \\
\clEmpty & \clIntVal{\clAlt{\leq 2}}{1}{1} & \clIntVal{\clAlt{1}}{0}{1} & \clEmpty & \clMi \cup \clIntVal{\clAlt{2}}{1}{1} \\
\clEmpty & \clIntVal{\clAlt{\leq 2}}{1}{1} & \clIntVal{\clAlt{1}}{0}{1} & \clIntVal{\clAlt{1}}{1}{0} & \clMc \cup \clMcneg \cup \clIntVal{\clAlt{\leq 2}}{1}{1} \\
\clEmpty & \clII & \clEmpty & \clEmpty & \clII \\
\clEmpty & \clII & \clEmpty & \clIntVal{\clAlt{1}}{1}{0} & \clII \cup \clMcneg \\
\clEmpty & \clII & \clEmpty & \clIO & \clIX \\
\clEmpty & \clII & \clIntVal{\clAlt{1}}{0}{1} & \clEmpty & \clII \cup \clMc \\
\clEmpty & \clII & \clIntVal{\clAlt{1}}{0}{1} & \clIntVal{\clAlt{1}}{1}{0} & \clII \cup \clMc \cup \clMcneg \\
\clEmpty & \clII & \clIntVal{\clAlt{1}}{0}{1} & \clIO & \clIX \cup \clMc \\
\clEmpty & \clII & \clOI & \clEmpty & \clXI \\
\clEmpty & \clII & \clOI & \clIntVal{\clAlt{1}}{1}{0} & \clXI \cup \clMcneg \\
\clEmpty & \clII & \clOI & \clIO & \clEioo \\
\clIntVal{\clAlt{0}}{0}{0} & \clEmpty & \clEmpty & \clEmpty & \clVako \\
\clIntVal{\clAlt{0}}{0}{0} & \clEmpty & \clEmpty & \clIntVal{\clAlt{1}}{1}{0} & \clMineg \\
\clIntVal{\clAlt{0}}{0}{0} & \clEmpty & \clEmpty & \clIO & \clIOCO \\
\clIntVal{\clAlt{0}}{0}{0} & \clEmpty & \clIntVal{\clAlt{1}}{0}{1} & \clEmpty & \clMo \\
\clIntVal{\clAlt{0}}{0}{0} & \clEmpty & \clOI & \clEmpty & \clOICO \\
\end{array}
\,\,
\begin{array}[t]{ccccc}
A & B & C & D & A \cup B \cup C \cup D \\
\hline
\clIntVal{\clAlt{0}}{0}{0} & \clIntVal{\clAlt{0}}{1}{1} & \clEmpty & \clEmpty & \clVak \\
\clIntVal{\clAlt{0}}{0}{0} & \clIntVal{\clAlt{0}}{1}{1} & \clEmpty & \clIntVal{\clAlt{1}}{1}{0} & \clMneg \\
\clIntVal{\clAlt{0}}{0}{0} & \clIntVal{\clAlt{0}}{1}{1} & \clEmpty & \clIO & \clIOC \\
\clIntVal{\clAlt{0}}{0}{0} & \clIntVal{\clAlt{0}}{1}{1} & \clIntVal{\clAlt{1}}{0}{1} & \clEmpty & \clM \\
\clIntVal{\clAlt{0}}{0}{0} & \clIntVal{\clAlt{0}}{1}{1} & \clOI & \clEmpty & \clOIC \\
\clIntVal{\clAlt{0}}{0}{0} & \clIntVal{\clAlt{\leq 2}}{1}{1} & \clEmpty & \clEmpty & \clIntVal{\clAlt{\leq 2}}{1}{1} \cup \clVako \\
\clIntVal{\clAlt{0}}{0}{0} & \clIntVal{\clAlt{\leq 2}}{1}{1} & \clEmpty & \clIntVal{\clAlt{1}}{1}{0} & \clMneg \cup \clIntVal{\clAlt{2}}{1}{1} \\
\clIntVal{\clAlt{0}}{0}{0} & \clIntVal{\clAlt{\leq 2}}{1}{1} & \clIntVal{\clAlt{1}}{0}{1} & \clEmpty & \clM \cup \clIntVal{\clAlt{2}}{1}{1} \\
\clIntVal{\clAlt{0}}{0}{0} & \clIntVal{\clAlt{\leq 2}}{1}{1} & \clIntVal{\clAlt{1}}{0}{1} & \clIntVal{\clAlt{1}}{1}{0} & \clM \cup \clMneg \cup \clIntVal{\clAlt{\leq 2}}{1}{1} \\
\clIntVal{\clAlt{0}}{0}{0} & \clII & \clEmpty & \clEmpty & \clIICO \\
\clIntVal{\clAlt{0}}{0}{0} & \clII & \clEmpty & \clIntVal{\clAlt{1}}{1}{0} & \clII \cup \clMineg \\
\clIntVal{\clAlt{0}}{0}{0} & \clII & \clEmpty & \clIO & \clIXCO \\
\clIntVal{\clAlt{0}}{0}{0} & \clII & \clIntVal{\clAlt{1}}{0}{1} & \clEmpty & \clII \cup \clMo \\
\clIntVal{\clAlt{0}}{0}{0} & \clII & \clIntVal{\clAlt{1}}{0}{1} & \clIntVal{\clAlt{1}}{1}{0} & \clM \cup \clMneg \cup \clII \\
\clIntVal{\clAlt{0}}{0}{0} & \clII & \clIntVal{\clAlt{1}}{0}{1} & \clIO & \clIX \cup \clMo \\
\clIntVal{\clAlt{0}}{0}{0} & \clII & \clOI & \clEmpty & \clXICO \\
\clIntVal{\clAlt{0}}{0}{0} & \clII & \clOI & \clIntVal{\clAlt{1}}{1}{0} & \clXI \cup \clMineg \\
\clIntVal{\clAlt{0}}{0}{0} & \clII & \clOI & \clIO & \clEioo \cup \clVako \\
\clOO & \clEmpty & \clEmpty & \clEmpty & \clOO \\
\clOO & \clEmpty & \clEmpty & \clIO & \clXO \\
\clOO & \clEmpty & \clOI & \clEmpty & \clOX \\
\clOO & \clIntVal{\clAlt{0}}{1}{1} & \clEmpty & \clEmpty & \clOOCI \\
\clOO & \clIntVal{\clAlt{0}}{1}{1} & \clEmpty & \clIO & \clXOCI \\
\clOO & \clIntVal{\clAlt{0}}{1}{1} & \clOI & \clEmpty & \clOXCI \\
\clOO & \clII & \clEmpty & \clEmpty & \clEq \\
\clOO & \clII & \clEmpty & \clIO & \clEioi \\
\clOO & \clII & \clOI & \clEmpty & \clEiio \\
\clOO & \clII & \clOI & \clIO & \clAll \\
\end{array}
\]
\endgroup
\caption{The possible combinations of $A$, $B$, $C$, $D$ in the proof of Proposition~\ref{prop:McVc} and the 56 $(\protect\clMc,\protect\clVc)$\hyp{}clonoids.}
\label{table:McVc}
\end{table}

It remains to verify that these classes are indeed $(\clMc,\clVc)$\hyp{}clonoids.
For this, we only need to prove stability under left composition with $\clVc$.
Because intersections of $(\clMc,\clVc)$\hyp{}clonoids are $(\clMc,\clVc)$\hyp{}clonoids,
it suffices to verify this only for the 12 meet\hyp{}irreducible classes
$\clAll$,
$\clEiio$,
$\clEioi$,
$\clEioo \cup \clVako$,
$\clEioo$,
$\clOXCI$,
$\clXOCI$,
$\clOX$,
$\clXO$,
$\clXI \cup \clMineg$,
$\clIX \cup \clMo$,
$\clMo \cup \clMineg \cup \clIntVal{\clAlt{\leq 2}}{1}{1}$.
We give below the proof for $\clMo \cup \clMineg \cup \clIntVal{\clAlt{\leq 2}}{1}{1}$.
For the remaining classes, the proof is straightforward verification, and we leave it for the reader.

\begin{claim}
\label{clm:MMA}
$\clMo \cup \clMineg \cup \clIntVal{\clAlt{\leq 2}}{1}{1}$ is stable under left composition with $\clVc$.
\end{claim}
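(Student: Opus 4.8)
The plan is to reduce the problem to a single binary check and then control the alternation number of the resulting join. Since $\clVc = \clonegen{\vee}$ is generated by disjunction, Lemma~\ref{lem:CL-Lc-3.3} tells us that $\clVc K \subseteq K$, where $K := \clMo \cup \clMineg \cup \clIntVal{\clAlt{\leq 2}}{1}{1}$, holds as soon as $f_1 \vee f_2 \in K$ for all $f_1, f_2 \in K$ of the same arity. So I would fix such $f_1, f_2$, set $g := f_1 \vee f_2$, and show $g \in K$.

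First I would record the boundary structure of $K$. Recalling that monotone functions are exactly those of alternation number at most $1$, one checks that $\{\, f \in K \mid f(\vect{0}) = 0 \,\} = \clMo$ and, dually, $\{\, f \in K \mid f(\vect{1}) = 0 \,\} = \clMineg$. Both of these are closed under $\vee$: $\clMo$ because it is a clone, and $\clMineg$ because the outer function $\vee$ is monotone, so composing it with two antitone inner functions again yields an antitone function, and the value $0$ at $\vect{1}$ is preserved. Hence if $g(\vect{0}) = 0$, then $f_1(\vect{0}) = f_2(\vect{0}) = 0$, so $f_1, f_2 \in \clMo$ and $g \in \clMo \subseteq K$; symmetrically, if $g(\vect{1}) = 0$, then $g \in \clMineg \subseteq K$. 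This leaves only the case $g(\vect{0}) = g(\vect{1}) = 1$, where it suffices to prove $\Alt(g) \leq 2$, since then $g \in \clIntVal{\clAlt{\leq 2}}{1}{1} \subseteq K$.

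The heart of the argument is this alternation bound, which I would prove by contradiction. Assuming $\Alt(g) \geq 3$, there is an alternating chain in $P(g, {\leq})$ of length at least $3$, and any such chain contains three elements $u < v < w$ with $g(u) = g(w) = 0$ and $g(v) = 1$. From $g = f_1 \vee f_2$ we get $f_1(u) = f_2(u) = f_1(w) = f_2(w) = 0$, while $f_j(v) = 1$ for at least one $j \in \{1,2\}$. Now I distinguish according to which block of $K$ contains this $f_j$: if $f_j \in \clMo$ is monotone, then $v < w$ forces $f_j(w) = 1$, a contradiction; if $f_j \in \clMineg$ is antitone, then $u < v$ forces $f_j(u) = 1$, again a contradiction; and if $f_j \in \clIntVal{\clAlt{\leq 2}}{1}{1}$, then $f_j(\vect{0}) = f_j(\vect{1}) = 1$ together with $f_j(u) = 0$, $f_j(v) = 1$, $f_j(w) = 0$ exhibits the alternating chain $\vect{0} < u < v < w < \vect{1}$ (strict because $u, w \notin \{\vect{0}, \vect{1}\}$) of length $4$ for $f_j$, so $\Alt(f_j) \geq 4$, contradicting $f_j \in \clIntVal{\clAlt{\leq 2}}{1}{1}$. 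In every case we reach a contradiction, so $\Alt(g) \leq 2$, completing the proof.

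The routine parts are the two boundary reductions; the one genuinely delicate step is the last case of the alternation bound, where closure of $\clIntVal{\clAlt{\leq 2}}{1}{1}$ under $\vee$ must be established. This is exactly the exceptional case $n = 2$, $a = b = 1$ excluded in Lemma~\ref{lem:Alt-apu}, and the idea that makes it work is the ``rerouting'' of the sandwich $u < v < w$ through the global bottom and top $\vect{0}, \vect{1}$ to manufacture a length-$4$ alternating chain. I expect the main care to be needed in justifying that a sandwich with the pattern $0,1,0$ can always be extracted once $\Alt(g) \geq 3$, and in keeping the inequalities strict so that the rerouted chain is genuinely a chain.
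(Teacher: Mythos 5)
Your proof is correct and follows essentially the same route as the paper's: reduce to a single binary $\vee$-check via Lemma~\ref{lem:CL-Lc-3.3}, dispose of the boundary cases where the join takes value $0$ at $\vect{0}$ or $\vect{1}$, and then bound the alternation number of the join by contradiction, extracting a $0,1,0$ sandwich from one of the arguments and extending it through $\vect{0}$ and $\vect{1}$ to violate $\Alt \leq 2$. The only blemish is the non-load-bearing parenthetical that monotone functions are exactly those of alternation number at most $1$ (antitone functions such as $\neg$ also have alternation number $1$), but your identification of $\{\, f \in K \mid f(\vect{0}) = 0 \,\}$ with $\clMo$ remains correct.
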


\begin{pfclaim}[Proof of Claim~\ref{clm:MMA}]
By Lemma~\ref{lem:CL-Lc-3.3}, it is enough to show that
$\mathord{\vee}(f,g) \in \clMo \cup \clMineg \cup \clIntVal{\clAlt{\leq 2}}{1}{1}$ whenever $f, g \in \clMo \cup \clMineg \cup \clIntVal{\clAlt{\leq 2}}{1}{1}$.
So, let $f, g \in \clMo \cup \clMineg \cup \clIntVal{\clAlt{\leq 2}}{1}{1}$.
If $f = 0$ or $g = 0$, then $f \vee g$ equals $g$ or $f$ and hence clearly belongs to $\clMo \cup \clMineg \cup \clIntVal{\clAlt{\leq 2}}{1}{1}$.
We may now assume that neither $f$ nor $g$ is a constant $0$ function, that is, $f, g \in \clMc \cup \clMcneg \cup \clIntVal{\clAlt{\leq 2}}{1}{1}$.
Because $(f(\vect{0}),f(\vect{1})) \neq (0,0)$ and $(g(\vect{0}),g(\vect{1})) \neq (0,0)$, it follows that $((f \vee g)(\vect{0}),(f \vee g)(\vect{1})) \neq (0,0)$.

It suffices to show that $\Alt(f \vee g) \leq 2$.
Suppose, to the contrary, that $\Alt(f \vee g) \geq 3$.
Then there exist tuples $\vect{a}, \vect{b}, \vect{c}$ such that $\vect{0} < \vect{a} < \vect{b} < \vect{c}$ and
\[
(f \vee g)(\vect{0}) \neq 
(f \vee g)(\vect{a}) \neq
(f \vee g)(\vect{b}) \neq
(f \vee g)(\vect{c}).
\]
We consider two cases according to the value of $f \vee g$ at $\vect{0}$.

If $(f \vee g)(\vect{0}) = 0$ (and hence $(f \vee g)(\vect{a}) = (f \vee g)(\vect{c}) = 1$ and $(f \vee g)(\vect{b}) = 0$),
then $f(\vect{0}) = g(\vect{0}) = f(\vect{b}) = g(\vect{b}) = 0$
and $f(\vect{a}) = 1$ or $g(\vect{b}) = 1$; without loss of generality, assume $f(\vect{a}) = 1$.
Because $(f(\vect{0}),f(\vect{1})) \neq (0,0)$, it follows that $\Alt(f) \geq 3$, a contradiction.

If $(f \vee g)(\vect{0}) = 1$ (and hence $(f \vee g)(\vect{a}) = (f \vee g)(\vect{c}) = 0$ and $(f \vee g)(\vect{b}) = 1$),
then $f(\vect{a}) = g(\vect{a}) = f(\vect{c}) = g(\vect{c}) = 0$
and $f(\vect{b}) = 1$ or $g(\vect{b}) = 1$; without loss of generality, assume $f(\vect{b}) = 1$.
Because $(f(\vect{0}),f(\vect{1})) \neq (0,0)$, it follows that $\Alt(f) \geq 3$, a contradiction.
\end{pfclaim}
\vspace{-1.5\baselineskip}
\end{proof}

\begin{proposition}
\label{prop:McLambdac}
There are precisely 56 $(\clMc,\clLambdac)$\hyp{}clonoids, and they are precisely the duals of the $(\clMc,\clVc)$\hyp{}clonoids described in Proposition~\ref{prop:McVc}.
\end{proposition}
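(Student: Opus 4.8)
The plan is to obtain this proposition from Proposition~\ref{prop:McVc} purely by duality, without repeating the lengthy case analysis. The excerpt already establishes that the dual of a clone is a clone and that whenever $K$ is a $(C_1,C_2)$\hyp{}clonoid, its dual $K^\mathrm{d}$ is a $(C_1^\mathrm{d},C_2^\mathrm{d})$\hyp{}clonoid. The only two clone\hyp{}theoretic facts I need are that $\clMc$ is self\hyp{}dual and that $\clVc^\mathrm{d} = \clLambdac$.

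First I would check these facts from the generating sets listed in Section~\ref{sec:Bf}. Since duality interchanges $\vee$ and $\wedge$ (that is, $\vee^\mathrm{d} = \wedge$ and $\wedge^\mathrm{d} = \vee$) and since $\clMc = \clonegen{\mathord{\vee},\mathord{\wedge}}$, we get $\clMc^\mathrm{d} = \clonegen{\mathord{\wedge},\mathord{\vee}} = \clMc$, so $\clMc$ is self\hyp{}dual. Likewise, from $\clVc = \clonegen{\mathord{\vee}}$ we obtain $\clVc^\mathrm{d} = \clonegen{\mathord{\wedge}} = \clLambdac$.

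With these in place, the assignment $K \mapsto K^\mathrm{d}$ sends each $(\clMc,\clVc)$\hyp{}clonoid to a $(\clMc^\mathrm{d},\clVc^\mathrm{d})$\hyp{}clonoid, i.e., to a $(\clMc,\clLambdac)$\hyp{}clonoid. Because taking duals is an involution on subsets of $\clAll$, this map is a bijection, with inverse $L \mapsto L^\mathrm{d}$ carrying every $(\clMc,\clLambdac)$\hyp{}clonoid back to a $(\clMc,\clVc)$\hyp{}clonoid. Hence the $(\clMc,\clLambdac)$\hyp{}clonoids are exactly the duals of the $(\clMc,\clVc)$\hyp{}clonoids, and there are precisely as many of them as there are $(\clMc,\clVc)$\hyp{}clonoids, namely $56$ by Proposition~\ref{prop:McVc}.

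There is essentially no obstacle here: the entire content reduces to the self\hyp{}duality of $\clMc$ and the interchange of $\clVc$ and $\clLambdac$ under duality, both immediate from the generators. If one wanted a fully explicit list, one could optionally compute the dual of each of the $56$ classes in Table~\ref{table:McVc} using $f^\mathrm{d}(\vect{a}) = \neg f(\overline{\vect{a}})$; but this is not needed for the cardinality claim, and the ``precisely the duals'' phrasing already pins down the clonoids uniquely.
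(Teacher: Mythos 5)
Your proposal is correct and follows the same route as the paper, which simply invokes duality from Proposition~\ref{prop:McVc}; you have merely spelled out the underlying facts ($\clMc^\mathrm{d} = \clMc$, $\clVc^\mathrm{d} = \clLambdac$, and the involutivity of $K \mapsto K^\mathrm{d}$) that the paper leaves implicit, all of which are already supported by the statements in Section~\ref{sec:Bf}.
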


\begin{proof}
This follows by duality from Proposition~\ref{prop:McVc}.
\end{proof}

In order to complete the picture, we are still going describe all $(C_1,C_2)$\hyp{}clonoids for clones $C_1$ and $C_2$, where $\clMc \subseteq C_1$ and $\clVc \subseteq C_2$.
We achieve this by determining, for each $(\clMc,\clVc)$\hyp{}clonoid $K$, the unique clones $C_1^K$ and $C_2^K$ with the property that for all clones $C_1$ and $C_2$, $K C_1 \subseteq K$ if and only if $C_1 \subseteq C_1^K$, and $C_2 K \subseteq K$ if and only if $C_2 \subseteq C_2^K$ (see Lemma~\ref{lem:CAKCBK}).

\begin{theorem}
\label{thm:McVc-stab-gen}
For each $(\clMc,\clVc)$\hyp{}clonoid $K$, as determined in Proposition~\ref{prop:McVc}, the clones $C_1^K$ and $C_2^K$ prescribed in Table~\ref{table:McVc-stability} have the property that for every clone $C$, it holds that $KC \subseteq K$ if and only if $C \subseteq C_1^K$, and $CK \subseteq K$ if and only if $C \subseteq C_2^K$.
\end{theorem}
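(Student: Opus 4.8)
The plan is to combine Lemma~\ref{lem:CAKCBK} with the reduction to generators provided by Lemmata~\ref{lem:CL-Lc-3.2} and \ref{lem:CL-Lc-3.3}. By Lemma~\ref{lem:CAKCBK}, for each of the $56$ clonoids $K$ from Proposition~\ref{prop:McVc} there do exist unique largest clones $C_1^K$ and $C_2^K$ with the stated universal property, so the content of the theorem is only that these maxima are the clones listed in Table~\ref{table:McVc-stability}. Since every such $K$ is a $(\clMc,\clVc)$\hyp{}clonoid, we already have $K\clMc \subseteq K$ and $\clVc K \subseteq K$, hence $\clMc \subseteq C_1^K$ and $\clVc \subseteq C_2^K$. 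In particular $C_1^K$ lies in the interval $[\clMc,\clAll]$, which, by Post's lattice (Figure~\ref{fig:Post}), is the eight\hyp{}element cube consisting of $\clMc$, $\clMo$, $\clMi$, $\clM$, $\clOI$, $\clOX$, $\clXI$, $\clAll$, while $C_2^K$ lies in the filter of $\clVc$. Taking $C_2 = \clIc$ in Lemma~\ref{lem:CAKCBK} shows that $\{\,C : KC \subseteq K\,\}$ is exactly the principal ideal $\mathord{\downarrow}\,C_1^K$ in the clone lattice; so to pin down $C_1^K$ it suffices to (i) verify $KC_1^K \subseteq K$ and (ii) exhibit, for each clone covering $C_1^K$ in the cube, a witness of non\hyp{}stability. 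The dual treatment fixes $C_2^K$ in the filter of $\clVc$.

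For the positive direction I would use Lemma~\ref{lem:CL-Lc-3.2} on the source side and Lemma~\ref{lem:CL-Lc-3.3} on the target side, which let me test only a generating set. Relative to $\clMc = \clonegen{\mathord{\vee},\mathord{\wedge}}$, moving up the cube means adjoining the constants $0$ and/or $1$ (to reach $\clMo$, $\clMi$, $\clM$) or a non\hyp{}monotone operation fixing the relevant constants (to reach $\clOI$, $\clOX$, $\clXI$, $\clAll$), so only these new generators $g$ need be checked in $f \ast g$. The key computational inputs are a few \emph{primitive} facts about the building blocks $\clIntVal{\clAlt{\leq k}}{a}{b}$ and the constant classes $\clVaka{a}$: substituting a constant $0$ into an argument of $f$ preserves $f(\vect{0})$ and cannot increase $\Alt(f)$ (and dually for $1$ and $f(\vect{1})$), while Proposition~\ref{prop:M-minors} controls exactly when a restriction or identification leaves a block invariant. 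Reading the description $A \cup B \cup C \cup D$ of each clonoid from Table~\ref{table:McVc} against these facts yields $KC_1^K \subseteq K$ case by case, and similarly $C_2^K K \subseteq K$, where adjoining a constant $c$ to the target clone forces $\clVaka{c} \subseteq K$ (exactly as in Lemma~\ref{lem:C1C2Vak}) and adjoining $\mathord{\wedge}$ asks for left\hyp{}closure under meet.

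For the maximality direction I would, for each clonoid $K$ and each clone $C$ covering $C_1^K$ in the cube, produce an explicit $f \in K$ and generator $g \in C$ with $f \ast g \notin K$. Lemma~\ref{lem:Alt-apu} and the functions $\lambda_\vect{c}$ already supply the templates, since a single substitution of a constant or a non\hyp{}monotone generator can raise the alternation number past the bound defining $K$ or move a function out of the prescribed $(f(\vect{0}),f(\vect{1}))$ pattern; the analogous witnesses on the left give the covers of $C_2^K$. The amount of work is cut by two symmetries: the negation symmetry of Lemma~\ref{lem:uncle}, which pairs each clonoid with its image under outer negation and conjugates $C_1^K$, $C_2^K$ accordingly, and the dual symmetry of Proposition~\ref{prop:McLambdac}, under which the whole $(\clMc,\clVc)$ picture transports to the $(\clMc,\clLambdac)$ picture with $\clVc$ replaced by $\clLambdac$ and $0 \leftrightarrow 1$. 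These let me treat the clonoids in a handful of families (the $\clM$\hyp{}type clonoids and their negation\hyp{}duals, the interval clonoids, the $\clII$\hyp{}augmented clonoids, and so on) rather than one at a time.

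The main obstacle is organizational rather than conceptual: there are $56$ clonoids and two sides, and for each side one must simultaneously certify a stability computation and locate failure witnesses for every covering clone in a nontrivial finite sublattice. Each individual check is an elementary verification of an $\Alt$\hyp{}value or of the pair $(f(\vect{0}),f(\vect{1}))$, so the real difficulty is to isolate the small set of primitive closure and non\hyp{}closure facts about the blocks $\clIntVal{\clAlt{\leq k}}{a}{b}$ and then read every entry of Table~\ref{table:McVc-stability} off them uniformly, keeping the argument both complete and of manageable length.
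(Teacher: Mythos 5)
Your proposal is correct and follows essentially the same strategy as the paper: existence and principality of $C_1^K$, $C_2^K$ via Lemma~\ref{lem:CAKCBK}, positive verification reduced to generators by Lemmata~\ref{lem:CL-Lc-3.2} and \ref{lem:CL-Lc-3.3} (with constants handled as in Lemma~\ref{lem:C1C2Vak}), and explicit $\lambda_{\vect{c}}$-type witnesses ruling out each minimal clone above the claimed maximum in the relevant interval of Post's lattice. The only organizational difference is that the paper imports 37 of the 56 cases directly from \cite[Theorem~5.1, Table~1]{Lehtonen-SM} and only runs the generator/witness analysis for the remaining 19 clonoids in the interval $[\clIntVal{\clAlt{\leq 2}}{1}{1}, \clEioo \cup \clVako]$.
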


\begin{table}
\newcommand{\sSM}{\cite{Lehtonen-SM}}
\newcommand{\sTH}{\ref{thm:McVc-stab-gen}}
\begingroup
\small
\makebox[0pt]{%
\begin{tabular}{llllllll}
\toprule
$K$ & $KC \subseteq K$ & $CK \subseteq K$ & source & $K$ & $KC \subseteq K$ & $CK \subseteq K$ & source \\
& iff $C \subseteq \ldots$ & iff $C \subseteq \ldots$ & & & iff $C \subseteq \ldots$ & iff $C \subseteq \ldots$ & \\
\midrule
$\clAll$ & $\clAll$ & $\clAll$ & \sSM &
      $\clMc \cup \clMcneg \cup \clII$ & $\clMc$ & $\clVi$ & \sTH \\
$\clEioo \cup \clVako$ & $\clOI$ & $\clV$ & \sTH &
      $\clMc \cup \clMcneg \cup \clIntVal{\clAlt{\leq 2}}{1}{1}$ & $\clMc$ & $\clVi$ & \sTH \\
$\clEioo$ & $\clOI$ & $\clWk{2}$ & \sSM &
      $\clM \cup \clIntVal{\clAlt{2}}{1}{1}$ & $\clMi$ & $\clV$ & \sTH \\
$\clEiio$ & $\clOI$ & $\clM$ & \sSM &
      $\clMo \cup \clIX$ & $\clMo$ & $\clV$ & \sTH \\
$\clEioi$ & $\clOI$ & $\clM$ & \sSM &
      $\clMo \cup \clII$ & $\clMc$ & $\clV$ & \sTH \\
$\clEq$ & $\clOI$ & $\clAll$ & \sSM &
      $\clMi \cup \clIntVal{\clAlt{2}}{1}{1}$ & $\clMi$ & $\clVi$ & \sTH \\
$\clOXCI$ & $\clOX$ & $\clM$ & \sSM &
      $\clMneg \cup \clIntVal{\clAlt{2}}{1}{1}$ & $\clMo$ & $\clVi$ & \sTH \\
$\clXOCI$ & $\clXI$ & $\clM$ & \sSM &
      $\clMineg \cup \clXI$ & $\clMi$ & $\clV$ & \sTH \\
$\clIXCO$ & $\clOX$ & $\clM$ & \sSM &
      $\clMineg \cup \clII$ & $\clMc$ & $\clV$ & \sTH \\
$\clXICO$ & $\clXI$ & $\clM$ & \sSM &
      $\clMoneg \cup \clIntVal{\clAlt{2}}{1}{1}$ & $\clMo$ & $\clVi$ & \sTH \\
$\clOIC$ & $\clOI$ & $\clM$ & \sSM &
      $\clMc \cup \clIX$ & $\clMc$ & $\clVi$ & \sTH \\
$\clOICO$ & $\clOI$ & $\clMo$ & \sSM &
      $\clMc \cup \clII$ & $\clMc$ & $\clVi$ & \sTH \\
$\clOICI$ & $\clOI$ & $\clMi$ & \sSM &
      $\clMcneg \cup \clXI$ & $\clMc$ & $\clVi$ & \sTH \\
$\clIOC$ & $\clOI$ & $\clM$ & \sSM &
      $\clMcneg \cup \clII$ & $\clMc$ & $\clVi$ & \sTH \\
$\clIOCO$ & $\clOI$ & $\clMo$ & \sSM &
      $\clM$ & $\clM$ & $\clM$ & \sSM \\
$\clIOCI$ & $\clOI$ & $\clMi$ & \sSM &
      $\clMo$ & $\clMo$ & $\clMo$ & \sSM \\
$\clOOCI$ & $\clOI$ & $\clM$ & \sSM &
      $\clMi$ & $\clMi$ & $\clMi$ & \sSM \\
$\clIICO$ & $\clOI$ & $\clM$ & \sSM &
      $\clMc$ & $\clMc$ & $\clMc$ & \sSM \\
$\clOX$ & $\clOX$ & $\clOX$ & \sSM &
      $\clMneg$ & $\clM$ & $\clM$ & \sSM \\
$\clXO$ & $\clXI$ & $\clOX$ & \sSM &
      $\clMineg$ & $\clMi$ & $\clMo$ & \sSM \\
$\clIX$ & $\clOX$ & $\clXI$ & \sSM &
      $\clMoneg$ & $\clMo$ & $\clMi$ & \sSM \\
$\clXI$ & $\clXI$ & $\clXI$ & \sSM &
      $\clMcneg$ & $\clMc$ & $\clMc$ & \sSM \\
$\clOI$ & $\clOI$ & $\clOI$ & \sSM &
      $\clIntVal{\clAlt{2}}{1}{1} \cup \clVak$ & $\clMc$ & $\clV$ & \sTH \\
$\clIO$ & $\clOI$ & $\clOI$ & \sSM &
      $\clIntVal{\clAlt{\leq 2}}{1}{1}$ & $\clMc$ & $\clVi$ & \sTH \\
$\clOO$ & $\clOI$ & $\clOX$ & \sSM &
      $\clVak$ & $\clAll$ & $\clAll$ & \sSM \\
$\clII$ & $\clOI$ & $\clXI$ & \sSM &
      $\clVako$ & $\clAll$ & $\clOX$ & \sSM \\
$\clM \cup \clMneg \cup \clII$ & $\clMc$ & $\clV$ & \sTH &
      $\clVaki$ & $\clAll$ & $\clXI$ & \sSM \\
$\clM \cup \clMneg \cup \clIntVal{\clAlt{2}}{1}{1}$ & $\clM$ & $\clV$ & \sTH &
      $\clEmpty$ & $\clAll$ & $\clAll$ & \sSM \\
\bottomrule
\end{tabular}
}
\endgroup
\caption{$(\protect\clMc,\protect\clVc)$\hyp{}clonoids and their stability under right and left composition with clones of Boolean functions.}
\label{table:McVc-stability}
\end{table}

For 37 out of the 56 $(\clMc,\clVc)$\hyp{}clonoids, the stability has already been determined in \cite[Theorem~5.1, Table~1]{Lehtonen-SM}, as indicated in Table~\ref{table:McVc-stability}.
We focus on the remaining 19 clonoids,
which are all contained in the interval $[\clIntVal{\clAlt{\leq 2}}{1}{1}, \clEioo \cup \clVako]$ in $\closys{(\clMc,\clVc)}$.

\begin{lemma}
\label{lem:stab-apu}
\leavevmode
\begin{enumerate}[label=\upshape{(\roman*)}]
\item\label{lem:stab-apu:Lneg}
$\{\neg\} \clIntVal{\clAlt{2}}{1}{1} \nsubseteq \clEioo \cup \clVako$.
\item\label{lem:stab-apu:LLambda}
$\{\mathord{\wedge}\} \clIntVal{\clAlt{2}}{1}{1} \nsubseteq \clEiii \cup \clIntVal{\clAlt{\leq 2}}{1}{1}$
and
$\{ \veewedge \} \clIntVal{\clAlt{2}}{1}{1} \nsubseteq \clEiii \cup \clIntVal{\clAlt{\leq 2}}{1}{1}$.
\item\label{lem:stab-apu:LMMneg}
$\{\mathord{\wedge}\} ( \clMc \cup \clMcneg ) \nsubseteq \clEioo \cup \clVako$.
\item\label{lem:stab-apu:LMA2}
$\{\mathord{\wedge}\} ( \clMc \cup \clIntVal{\clAlt{2}}{1}{1} ) \nsubseteq \clEioi \cup \clM$
and
$\{ \veewedge \} ( \clMc \cup \clIntVal{\clAlt{2}}{1}{1} ) \nsubseteq \clEioi \cup \clM$.
\item\label{lem:stab-apu:LMnegA2}
$\{\mathord{\wedge}\} \clMcneg \cup \clIntVal{\clAlt{2}}{1}{1} \nsubseteq \clEiio \cup \clMneg$
and
$\{ \veewedge \} \clMcneg \cup \clIntVal{\clAlt{2}}{1}{1} \nsubseteq \clEiio \cup \clMneg$.
\item\label{lem:stab-apu:L0}
If $0 \in K$ and $\{\mathord{\wedge}\} K \nsubseteq K$, then $\{ \veewedge \} K \nsubseteq K$.
\end{enumerate}
\end{lemma}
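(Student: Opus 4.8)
The plan is to prove each of the six parts by exhibiting an explicit witness: a function (or pair/triple of functions) lying in the prescribed source class whose image under $\neg$, $\mathord{\wedge}$, or $\veewedge$ provably falls outside the named target class. The weight functions $\lambda_{\vect{c}}$ introduced above are the natural raw material, since $\Alt(\lambda_{\vect{c}})$ is exactly the number of indices $i$ with $c_i \neq c_{i+1}$, the boundary values are $\lambda_{\vect{c}}(\vect{0}) = c_0$ and $\lambda_{\vect{c}}(\vect{1}) = c_m$, and meets act letterwise: $\lambda_{\vect{a}} \wedge \lambda_{\vect{b}} = \lambda_{\vect{a} \wedge \vect{b}}$. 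In each case, membership in the target class is decided by reading off the two boundary values, the alternation number, and, where relevant, monotonicity.

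Parts \ref{lem:stab-apu:Lneg} and \ref{lem:stab-apu:LMMneg} require no change of alternation number. For \ref{lem:stab-apu:Lneg}, every $f \in \clIntVal{\clAlt{2}}{1}{1}$ has $\neg f \in \clOO$ and is non-constant, so $\neg f \notin \clEioo \cup \clVako$. For \ref{lem:stab-apu:LMMneg}, the meet of a projection $x_i$ (which lies in $\clMc$) with a negated projection $\neg x_j$, $j \neq i$ (which lies in $\clMcneg$), lands in $\clOO$ while remaining non-constant, hence lies outside $\clEioo \cup \clVako$.

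The heart of the $\mathord{\wedge}$-statements is parts \ref{lem:stab-apu:LLambda}, \ref{lem:stab-apu:LMA2}, and \ref{lem:stab-apu:LMnegA2}. For \ref{lem:stab-apu:LLambda} the device is that the meet of two alternation-$2$ weight functions whose single ``dips'' occupy disjoint coordinate ranges produces two separated dips, doubling the alternation to $4$: concretely $\lambda_{10111} \wedge \lambda_{11101} = \lambda_{10101} \in \clIntVal{\clAlt{4}}{1}{1}$, whose boundary pair $(1,1)$ keeps it out of $\clEiii$ while its alternation number keeps it out of $\clIntVal{\clAlt{\leq 2}}{1}{1}$. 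For \ref{lem:stab-apu:LMA2} and \ref{lem:stab-apu:LMnegA2} I would instead place the dip of an alternation-$2$ function at an interior pair $\vect{a} < \vect{b}$ and meet it with a (possibly negated) projection equal to $1$ on that pair, so that the dip survives the meet; this yields a function in $\clOI$ that is non-monotone (for \ref{lem:stab-apu:LMA2}, escaping $\clEioi \cup \clM$) or a function in $\clIO$ that is non-antitone (for \ref{lem:stab-apu:LMnegA2}, escaping $\clEiio \cup \clMneg$). Each accompanying $\veewedge$-statement is then obtained from the identity $\veewedge(a_1,a_2,a_3) = a_1 \vee (a_2 \wedge a_3)$: keep $a_2, a_3$ equal to the meet-witnesses and take $a_1$ to be a function of the source class lying pointwise below $a_2 \wedge a_3$, so that the outer join is absorbed and reproduces the same escaping function. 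Exhibiting such an $a_1$ with the correct boundary values inside the source class is the one piece of bookkeeping that needs care.

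Part \ref{lem:stab-apu:L0} is the cleanest and uses the same identity in its degenerate form $\veewedge(0,y,z) = 0 \vee (y \wedge z) = y \wedge z$. If $\{\mathord{\wedge}\}K \nsubseteq K$, I would choose $f, g \in K^{(m)}$ with $f \wedge g \notin K$; since $0 \in K$ and $K$ is minor-closed, the constant-$0$ function of arity $m$ belongs to $K$, whence $f \wedge g = \veewedge(0,f,g) \in \{\veewedge\}K \setminus K$. The genuine obstacle is not any single step but the uniform verification that each chosen witness truly lies in its source class and that the computed composite truly violates membership in the target class; once the witnesses are fixed, these reduce to finite checks of boundary values, alternation numbers, and order violations.
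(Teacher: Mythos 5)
Your proposal is correct and takes essentially the same route as the paper's proof: explicit witnesses built from the weight functions $\lambda_{\vect{c}}$ (the paper uses $\mathord{\wedge}(\lambda_{11101},\lambda_{10111})=\lambda_{10101}$ for \ref{lem:stab-apu:LLambda} and $\mathord{\wedge}(\lambda_{0111},\lambda_{1101})=\lambda_{0101}$ for \ref{lem:stab-apu:LMA2}), the same absorption device $a_1 \vee (a_2 \wedge a_3) = a_2 \wedge a_3$ for the $\veewedge$ statements, and the identical one-line argument for \ref{lem:stab-apu:L0}. The absorbing functions you leave as ``bookkeeping'' do exist in the required source classes ($\lambda_{10001} \in \clIntVal{\clAlt{2}}{1}{1}$ and $\lambda_{0001} \in \clMc$, respectively), which is exactly what the paper writes down.
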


\begin{proof}
\ref{lem:stab-apu:Lneg}
We have $\mathord{\leftrightarrow} \in \clIntVal{\clAlt{2}}{1}{1}$, but $\mathord{\neg}(\mathord{\leftrightarrow}) = \mathord{+} \notin \clEioo \cup \clVako$.

\ref{lem:stab-apu:LLambda}
We have $\lambda_{11101}, \lambda_{10111}, \lambda_{10001} \in \clIntVal{\clAlt{2}}{1}{1}$, but
$\mathord{\wedge}(\lambda_{11101},\lambda_{10111}) = \lambda_{10101} \in \clIntVal{\clAlt{4}}{1}{1}$
and
$\lambda_{10001} \vee (\lambda_{11101} \wedge \lambda_{10111}) = \lambda_{10101} \in \clIntVal{\clAlt{4}}{1}{1}$,
and these are not in $\clEiii \cup \clIntVal{\clAlt{\leq 2}}{1}{1}$.

\ref{lem:stab-apu:LMMneg}
We have $\lambda_{011} \in \clMc$ and $\lambda_{110} \in \clMcneg$, but
$\mathord{\wedge}(\lambda_{011}, \lambda_{110}) = \lambda_{010} \in \clIntVal{\clAlt{2}}{0}{0}$, which is disjoint from $\clEioo \cup \clVako$.

\ref{lem:stab-apu:LMA2}
We have $\lambda_{0111}, \lambda_{0001} \in \clMc$ and $\lambda_{1101} \in \clIntVal{\clAlt{2}}{1}{1}$, but
$\mathord{\wedge}(\lambda_{0111}, \lambda_{1101}) = \lambda_{0101} \in \clIntVal{\clAlt{3}}{0}{1}$
and
$\lambda_{0001} \vee (\lambda_{0111} \wedge \lambda_{1101}) = \lambda_{0101} \in \clIntVal{\clAlt{3}}{0}{1}$,
and these are not in $\clEioi \cup \clM$.

\ref{lem:stab-apu:LMnegA2}
The proof is similar to \ref{lem:stab-apu:LMA2}.

\ref{lem:stab-apu:L0}
Because $\{\mathord{\wedge}\} K \nsubseteq K$, there exist $f, g \in K$ such that $f \wedge g \notin K$.
Consequently, $0 \vee (f \wedge g) = f \wedge g \notin K$, which shows $\{ \veewedge \} K \nsubseteq K$.
\end{proof}

\begin{lemma}
\label{lem:stab-apu-R}
\leavevmode
\begin{enumerate}[label=\upshape{(\roman*)}]
\item\label{lem:stab-apu:Rneg}
$\clIntVal{\clAlt{2}}{1}{1} \, \clonegen{\mathord{\neg}} \nsubseteq \clEioo \cup \clVako$.
\item\label{lem:stab-apu:Rplus1}
$\clIntVal{\clAlt{2}}{1}{1} \, \clonegen{\mathord{+_3}} \nsubseteq \clEiii \cup \clIntVal{\clAlt{\leq 2}}{1}{1}$.
\item\label{lem:stab-apu:Rplus2}
$\{\id\} \, \clonegen{\mathord{+_3}} \nsubseteq \clEioi \cup \clMc$.
\item\label{lem:stab-apu:Rplus3}
$\{\neg\} \, \clonegen{\mathord{+_3}} \nsubseteq \clEiio \cup \clMcneg$.
\item\label{lem:stab-apu:R0a}
$\{\id\} \, \clonegen{0} \nsubseteq \clEioo$.
\item\label{lem:stab-apu:R0b}
$\clIntVal{\clAlt{2}}{1}{1} \, \clonegen{0} \nsubseteq \clEiio$.
\item\label{lem:stab-apu:R0c}
$\clIntVal{\clAlt{4}}{1}{1} \, \clonegen{0} \nsubseteq \clEiio \cup \clMcneg$.
\item\label{lem:stab-apu:R0d}
$\clIntVal{\clAlt{3}}{0}{1} \, \clonegen{0} \nsubseteq \clEioo \cup \clVako$.
\item\label{lem:stab-apu:R1a}
$\{\neg\} \, \clonegen{1} \nsubseteq \clEiii$.
\item\label{lem:stab-apu:R1b}
$\clIntVal{\clAlt{2}}{1}{1} \, \clonegen{1} \nsubseteq \clEioi$.
\item\label{lem:stab-apu:R1c}
$\clIntVal{\clAlt{4}}{1}{1} \, \clonegen{1} \nsubseteq \clEioi \cup \clMc$.
\item\label{lem:stab-apu:R1d}
$\clIntVal{\clAlt{3}}{1}{0} \, \clonegen{1} \nsubseteq \clEioo \cup \clVako$.
\end{enumerate}
\end{lemma}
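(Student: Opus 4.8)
The plan is to establish each of the twelve non\hyp{}inclusions $F\,G \nsubseteq T$ by producing a single explicit witness. Since the class composition $F\,G$ consists of all composites $f(g_1,\dots,g_n)$ with $f \in F$ and $g_1,\dots,g_n$ in the clone $G$ (here $\clIstar = \clonegen{\mathord{\neg}}$, $\clLc = \clonegen{\mathord{+_3}}$, $\clonegen{0}$, or $\clonegen{1}$), it suffices in each case to name one $f$ in the source class together with one tuple of inner functions whose composite lies outside the target $T$. All verifications rest on two elementary facts. First, for a symmetric function $\lambda_\vect{c}$ with $\vect{c} = c_0 c_1 \dots c_m$ we have $\lambda_\vect{c}(\vect{0}) = c_0$, $\lambda_\vect{c}(\vect{1}) = c_m$, and $\Alt(\lambda_\vect{c})$ equals the number of sign changes in the word $\vect{c}$ (as recorded just after the definition of $\lambda_\vect{c}$); moreover, substituting the constant $0$ (resp.\ $1$) into one argument of $\lambda_{c_0 \dots c_m}$ deletes the final symbol $c_m$ (resp.\ the initial symbol $c_0$), producing $\lambda_{c_0 \dots c_{m-1}}$ (resp.\ $\lambda_{c_1 \dots c_m}$). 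Second, the alternation number of an affine function equals its number of essential variables, because along a maximal chain in the cube the value toggles exactly once per essential variable.

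For the statements with source clone $\clonegen{0}$ or $\clonegen{1}$, namely (v)--(xii), the witnesses are obtained by fixing a single argument of a suitable $\lambda_\vect{c}$ to a constant while leaving the others as projections (both the constants and the projections lie in the respective clone). For (vi) I take $\lambda_{101} \in \clIntVal{\clAlt{2}}{1}{1}$ and fix its last argument to $0$, obtaining $\lambda_{10} = \neg \in \clIntVal{\clAlt{1}}{1}{0}$, whose pair $(1,0)$ keeps it out of $\clEiio$; for (vii) and (viii) I fix the last argument of $\lambda_{10101}$ and of $\lambda_{0101}$ to $0$, obtaining $\lambda_{1010}$ (non\hyp{}antitone of pair $(1,0)$, hence outside $\clEiio \cup \clMcneg$) and $\lambda_{010}$ (pair $(0,0)$ and non\hyp{}constant, hence outside $\clEioo \cup \clVako$). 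Statements (x)--(xii) are the mirror images, fixing an argument to $1$ instead (which deletes the first symbol); e.g.\ $\lambda_{101}$ becomes $\id$ of pair $(0,1) \notin \clEioi$. Statement (v) is witnessed by the constant $0 \in \clonegen{0}$, of pair $(0,0) \notin \clEioo$. I would also flag a small point about (ix): the set $\{\neg\}\,\clonegen{1}$ consists only of negated projections and the constant $0$, all of pair $(1,0)$ or $(0,0)$ and thus inside $\clEiii$; the inclusion that actually fails is the dual of (v), witnessed by the constant $1 \in \clonegen{1}$ of pair $(1,1) \notin \clEiii$.

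The four statements with a negation or linear source clone, (i)--(iv), require composites that need not be symmetric. For (i) I negate one argument of the biconditional, $\leftrightarrow(\neg x_1, x_2) = x_1 + x_2 \in \clOO$, which is non\hyp{}constant of pair $(0,0)$, hence outside $\clEioo \cup \clVako$. For (iii) and (iv) the witnesses $x_1 + x_2 + x_3$ and its negation $1 + x_1 + x_2 + x_3$ already have pairs $(0,1)$ and $(1,0)$ and are neither monotone nor antitone, so they escape $\clEioi \cup \clMc$ and $\clEiio \cup \clMcneg$ respectively. For (ii) I substitute the idempotent linear functions $x_1 + x_2 + x_3$ and $x_3 + x_4 + x_5$ into the biconditional, whose two supports share exactly the coordinate $x_3$; the result is the affine function $1 + x_1 + x_2 + x_4 + x_5$ with four essential variables, so of pair $(1,1)$ and alternation number $4$, placing it outside both $\clEiii$ and $\clIntVal{\clAlt{\leq 2}}{1}{1}$.

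The step I expect to require the most care is the linear case (ii), together with the analogous control in (iii) and (iv): right composition with $\clLc$ forces an affine output, so I cannot manufacture alternation freely but must instead choose the supports of the inner idempotent linear functions so that their symmetric difference has precisely enough essential variables to exceed alternation $2$ while keeping both extreme values equal to $1$. The single shared coordinate $x_3$ is exactly what cancels to leave an even, hence parity\hyp{}admissible, count of four essential variables. The remaining eleven parts are then routine: each is settled by reading off the extreme pair and the alternation number (or the non\hyp{}monotonicity) of the named witness, which I would collect in a short verification table paralleling the list (i)--(xii).
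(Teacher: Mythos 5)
Your proposal is correct and, for eleven of the twelve parts, takes essentially the same route as the paper: each non\hyp{}inclusion is settled by one explicit witness obtained by substituting a generator of the clone in question into a symmetric function $\lambda_{\vect{c}}$ and reading off the extreme value pair and the alternation number. The only divergence is in part (ii), where the paper's witness is $\lambda_{101} \ast \mathord{+_3} = \lambda_{10101} = x_1 + x_2 + x_3 + x_4 + 1$ rather than your $x_1 + x_2 + x_4 + x_5 + 1$; both are affine with four essential arguments and value pair $(1,1)$, so both serve equally well.

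Your remark about part (ix) is a genuine catch rather than a small point. Since $\clEiii$ is the dual of $\clEioo = \clOI \cup \clIO \cup \clII$, i.e., $\clEiii = \clOI \cup \clIO \cup \clOO$ (the displayed definition in Section~\ref{sec:Bf} contains a typo but every other use of $\clEiii$ in the paper confirms this reading), and since $\clonegen{1} = \clIi = \clIc \cup \clVaki$, one has $\{\neg\}\,\clonegen{1} = \overline{\clIc} \cup \clVako \subseteq \clIO \cup \clOO \subseteq \clEiii$, so part (ix) is false as stated. The paper's own proof of (ix) only says it is ``similar to (v)'', and the analogous computation $\neg \ast 1 = 0 \in \clOO \subseteq \clEiii$ produces no witness, so that proof fails as well. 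The correct dual of (v) is exactly the statement you propose, $\{\id\}\,\clonegen{1} \nsubseteq \clEiii$, witnessed by $\id \ast 1 = 1 \in \clII$. Note that the matching case ``$\{\neg\} \subseteq K \subseteq \clEiii$'' in the proof of Theorem~\ref{thm:McVc-stab-gen} has to be repaired in tandem: as written it is vacuous, because every clonoid $K$ treated there contains $\clVaki \subseteq \clII$ and hence is not contained in $\clEiii$, so the case analysis for stability under right composition with $\clonegen{1}$ needs to be revisited along the lines you indicate.
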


\begin{proof}
We apply Lemma~\ref{lem:CL-Lc-3.2}.

\ref{lem:stab-apu:Rneg}
We have $\lambda_{101} = x_1 + x_2 + 1 \in \clIntVal{\clAlt{2}}{1}{1}$, but
$\lambda_{101} \ast \mathord{\neg} = \mathord{+} \in \clOO \setminus \clVako$,
and this is not in $\clEioo \cup \clVako$.

\ref{lem:stab-apu:Rplus1}
We have $\lambda_{101} \in \clIntVal{\clAlt{2}}{1}{1}$, but
$\lambda_{101} \ast \mathord{+_3} = \lambda_{10101} \in \clIntVal{\clAlt{4}}{1}{1}$,
and this is not in $\clEiii \cup \clIntVal{\clAlt{\leq 2}}{1}{1}$.

\ref{lem:stab-apu:Rplus2}
We have $\id \ast \mathord{+_3} = \mathord{+_3} = \lambda_{0101}$,
which is not in $\clEioi \cup \clMc$.

\ref{lem:stab-apu:Rplus3}
The proof is similar to \ref{lem:stab-apu:Rplus2}.

\ref{lem:stab-apu:R0a}
We have $\id \ast 0 = 0 \notin \clEioo$.

\ref{lem:stab-apu:R0b}
We have $\lambda_{101} \in \clIntVal{\clAlt{2}}{1}{1}$, but
$\lambda_{101} \ast 0 = \lambda_{10} \notin \clEiio$.

\ref{lem:stab-apu:R0c}
We have $\lambda_{10101} \in \clIntVal{\clAlt{4}}{1}{1}$, but
$\lambda_{10101} \ast 0 = \lambda_{1010} \notin \clEiio \cup \clMcneg$.

\ref{lem:stab-apu:R0d}
We have $\mathord{+_3} = \lambda_{0101} \in \clIntVal{\clAlt{3}}{0}{1}$, but
$\lambda_{0101} \ast 0 = \lambda_{010} \notin \clEioo \cup \clVako$.

\ref{lem:stab-apu:R1a}
The proof is similar to \ref{lem:stab-apu:R0a}.

\ref{lem:stab-apu:R1b}
The proof is similar to \ref{lem:stab-apu:R0b}.

\ref{lem:stab-apu:R1c}
The proof is similar to \ref{lem:stab-apu:R0c}.

\ref{lem:stab-apu:R1d}
The proof is similar to \ref{lem:stab-apu:R0d}.
\end{proof}

\begin{proof}[Proof of Theorem~\ref{thm:McVc-stab-gen}]
For 37 out of the 56 $(\clMc,\clVc)$\hyp{}clonoids $K$, the clones $C_1^K$ and $C_2^K$ have already been determined in \cite[Theorem~5.1, Table~1]{Lehtonen-SM}.
These are indicated in Table~\ref{table:McVc-stability} with ``\cite{Lehtonen-SM}'' in the source column and in Figure~\ref{fig:McVc} as the vertices drawn as hollow circles.
From now on, we assume that $K$ is one of the remaining $(\clMc,\clVc)$\hyp{}clonoids:
$\clEioo \cup \clVako$,
$\clM \cup \clMneg \cup \clII$,
$\clM \cup \clMneg \cup \clIntVal{\clAlt{2}}{1}{1}$,
$\clMc \cup \clMcneg \cup \clII$,
$\clMc \cup \clMcneg \cup \clIntVal{\clAlt{\leq 2}}{1}{1}$,
$\clM \cup \clIntVal{\clAlt{2}}{1}{1}$,
$\clMo \cup \clIX$,
$\clMo \cup \clII$,
$\clMi \cup \clIntVal{\clAlt{2}}{1}{1}$,
$\clMneg \cup \clIntVal{\clAlt{2}}{1}{1}$,
$\clMineg \cup \clXI$,
$\clMineg \cup \clII$,
$\clMoneg \cup \clIntVal{\clAlt{2}}{1}{1}$,
$\clMc \cup \clIX$,
$\clMc \cup \clII$,
$\clMcneg \cup \clXI$,
$\clMcneg \cup \clII$,
$\clIntVal{\clAlt{2}}{1}{1} \cup \clVak$,
$\clIntVal{\clAlt{\leq 2}}{1}{1}$.

We focus first on stability on left composition with a clone.
We obviously have $\clVc \, K \subseteq K$ because $K$ is an $(\clMc, \clVc)$\hyp{}clonoid.
Moreover, $\clVaki \subseteq K$, so $\{1\} K \subseteq \clVaki \subseteq K$
On the other hand, we have $\{0\} K \subseteq K$ if and only if $\clVako \subseteq K$.
This condition holds precisely when $K$ is one of
$\clEioo \cup \clVako$,
$\clM \cup \clMneg \cup \clII$,
$\clM \cup \clMneg \cup \clIntVal{\clAlt{2}}{1}{1}$,
$\clM \cup \clIntVal{\clAlt{2}}{1}{1}$,
$\clMo \cup \clIX$,
$\clMo \cup \clII$,
$\clMneg \cup \clIntVal{\clAlt{2}}{1}{1}$,
$\clMineg \cup \clXI$,
$\clMineg \cup \clII$,
$\clIntVal{\clAlt{2}}{1}{1} \cup \clVak$,
which we will refer to as the ``clonoids with zero''.

Because $\clIntVal{\clAlt{\leq 2}}{1}{1} \subseteq K \subseteq \clEioo \cup \clVako$,
it follows from Lemma~\ref{lem:stab-apu}\ref{lem:stab-apu:Lneg} that $\{\neg\} K \nsubseteq K$.

Because one of the conditions
\begin{align*}
\clIntVal{\clAlt{2}}{1}{1} \subseteq K & \subseteq \clEiii \cup \clIntVal{\clAlt{\leq 2}}{1}{1}, &
\clMc \cup \clMcneg \subseteq K & \subseteq \clEioo \cup \clVako, \\
\clMc \cup \clIntVal{\clAlt{2}}{1}{1} \subseteq K & \subseteq \clEioi \cup \clM, &
\clMcneg \cup \clIntVal{\clAlt{2}}{1}{1} \subseteq K & \subseteq \clEiio \cup \clMneg
\end{align*}
holds,
it follows from Lemma~\ref{lem:stab-apu}\ref{lem:stab-apu:LLambda}--\ref{lem:stab-apu:LMnegA2}
that $\{\mathord{\wedge}\} K \nsubseteq K$.

Moreover, because one of the conditions
\begin{align*}
\clIntVal{\clAlt{2}}{1}{1} \subseteq K & \subseteq \clEiii \cup \clIntVal{\clAlt{\leq 2}}{1}{1}, &
0 \in K &, \\
\clMc \cup \clIntVal{\clAlt{2}}{1}{1} \subseteq K & \subseteq \clEioi \cup \clM, &
\clMcneg \cup \clIntVal{\clAlt{2}}{1}{1} \subseteq K & \subseteq \clEiio \cup \clMneg
\end{align*}
holds,
it follows from Lemma~\ref{lem:stab-apu}\ref{lem:stab-apu:LLambda}, \ref{lem:stab-apu:LMA2}--\ref{lem:stab-apu:L0}
that $\{ \veewedge \} K \nsubseteq K$.

We conclude that $C K \subseteq K$ if and only if ($K$ is not a clonoid with zero and $C \subseteq \clVi$) or ($K$ is a clonoid with zero and $C \subseteq \clV$).

We now focus on stability under right composition with clones.
We obviously have $K \, \clMc \subseteq K$ because $K$ is a $(\clMc,\clVc)$\hyp{}clonoid.

Because $\clIntVal{\clAlt{\leq 2}}{1}{1} \subseteq K \subseteq \clEioo \cup \clVako$,
it follows from Lemma~\ref{lem:stab-apu-R}\ref{lem:stab-apu:Rneg} that $K \, \clonegen{\neg} \nsubseteq K$.

It is easy to verify that if $K = \clEioo \cup \clVako$, then $K \, \clonegen{\mathord{+_3}} \subseteq K$.
(For, note that $\clonegen{\mathord{+_3}} \subseteq \clOI$, and therefore, for any $a, b \in \{0,1\}$ and for $f \in \clIntVal{\clAll}{a}{b}$ and $g_1, \dots, g_n \in \clOI$, we have $f(g_1,\dots,g_n)(\vect{0}) = f(\vect{0}) = a$ and $f(g_1,\dots,g_n)(\vect{1}) = f(\vect{1}) = b$; thus $f(g_1, \dots, g_n) \in \clIntVal{\clAll}{a}{b}$. Moreover, if $f = 0$, then $f(g_1, \dots, g_n) = 0$.)

If $K \neq \clEioo \cup \clVako$, then one of the conditions
\[
\clIntVal{\clAlt{2}}{1}{1} \subseteq K \subseteq \clEiii \cup \clIntVal{\clAlt{\leq 2}}{1}{1},
\qquad
\{\id\} \subseteq K \subseteq \clEioi \cup \clMc,
\qquad
\{\neg\} \subseteq K \subseteq \clEiio \cup \clMcneg
\]
holds, and it follows from Lemma~\ref{lem:stab-apu-R}\ref{lem:stab-apu:Rplus1}--\ref{lem:stab-apu:Rplus3} that $K \, \clonegen{\mathord{+_3}} \nsubseteq K$.

Let us consider stability under right composition with $\clonegen{0}$.
Note that $\clonegen{0} = \clIo \subseteq \clMo$.
Therefore,
$\clMo \, \clonegen{0} \subseteq \clMo$,
$\clMoneg \, \clonegen{0} = \overline{\clMo} \, \clonegen{0} = \overline{\clMo \clonegen{0}} \subseteq \overline{\clMo} = \clMoneg$.
$\clVak \, \clonegen{0} = \clVak$,
and, moreover, by Lemma~\ref{lem:AkM},
$\clIntVal{\clAlt{2}}{1}{1} \, \clonegen{0} \subseteq \clIntVal{\clAlt{\leq 2}}{1}{} = \clIntVal{\clAlt{2}}{1}{1} \cup \clMoneg$.
Using Lemma~\ref{lem:CompUnion}, we get that
if $K$ is one of
$\clM \cup \clMneg \cup \clIntVal{\clAlt{2}}{1}{1}$,
$\clMo \cup \clIX$,
$\clMneg \cup \clIntVal{\clAlt{2}}{1}{1}$,
$\clMoneg \cup \clIntVal{\clAlt{2}}{1}{1}$,
then $K \, \clonegen{0} \subseteq K$.

If $K$ is not one of
$\clM \cup \clMneg \cup \clIntVal{\clAlt{2}}{1}{1}$,
$\clMo \cup \clIX$,
$\clMneg \cup \clIntVal{\clAlt{2}}{1}{1}$,
$\clMoneg \cup \clIntVal{\clAlt{2}}{1}{1}$,
then one of the conditions
\begin{align*}
\{\id\} \subseteq K & \subseteq \clEioo, &
\clIntVal{\clAlt{2}}{1}{1} \subseteq K & \subseteq \clEiio, \\
\clIntVal{\clAlt{4}}{1}{1} \subseteq K & \subseteq \clEiio \cup \clMcneg, &
\clIntVal{\clAlt{3}}{0}{1} \subseteq K & \subseteq \clEioo \cup \clVako
\end{align*}
holds, and it follows from Lemma~\ref{lem:stab-apu-R}\ref{lem:stab-apu:R0a}--\ref{lem:stab-apu:R0d} that $K \, \clonegen{0} \nsubseteq K$.

Let us consider stability under right composition with $\clonegen{1}$.
Note that $\clonegen{1} = \clIi \subseteq \clMi$.
Therefore,
$\clMi \, \clonegen{1} \subseteq \clMi$,
$\clMineg \, \clonegen{1} = \overline{\clMi} \, \clonegen{1} = \overline{\clMi \clonegen{1}} \subseteq \overline{\clMi} = \clMineg$.
$\clVak \, \clonegen{1} = \clVak$,
and, moreover, by Lemma~\ref{lem:AkM},
$\clIntVal{\clAlt{2}}{1}{1} \, \clonegen{1} \subseteq \clIntVal{\clAlt{\leq 2}}{}{1} = \clIntVal{\clAlt{2}}{1}{1} \cup \clMi$.
Using Lemma~\ref{lem:CompUnion}, we get that
if $K$ is one of
$\clM \cup \clMneg \cup \clIntVal{\clAlt{2}}{1}{1}$,
$\clMineg \cup \clXI$,
$\clM \cup \clIntVal{\clAlt{2}}{1}{1}$,
$\clMi \cup \clIntVal{\clAlt{2}}{1}{1}$,
then $K \, \clonegen{1} \subseteq K$.

If $K$ is not one of
$\clM \cup \clMneg \cup \clIntVal{\clAlt{2}}{1}{1}$,
$\clMineg \cup \clXI$,
$\clM \cup \clIntVal{\clAlt{2}}{1}{1}$,
$\clMi \cup \clIntVal{\clAlt{2}}{1}{1}$,
then one of the conditions
\begin{align*}
\{\neg\} \subseteq K & \subseteq \clEiii, &
\clIntVal{\clAlt{2}}{1}{1} \subseteq K & \subseteq \clEioi, \\
\clIntVal{\clAlt{4}}{1}{1} \subseteq K & \subseteq \clEioi \cup \clMc, &
\clIntVal{\clAlt{3}}{1}{0} \subseteq K & \subseteq \clEioo \cup \clVako
\end{align*}
holds, and it follows from Lemma~\ref{lem:stab-apu-R}\ref{lem:stab-apu:R1a}--\ref{lem:stab-apu:R1d} that $K \, \clonegen{1} \nsubseteq K$.

Putting these facts together, we obtain that
\begin{itemize}
\item if $K = \clEioo \cup \clVako$, then $K C \subseteq K$ if and only if $C \subseteq \clonegen{\clMc \cup \{\mathord{+_3}\}} = \clOI$,
\item if $K = \clM \cup \clMneg \cup \clIntVal{\clAlt{2}}{1}{1}$, then $K C \subseteq K$ if and only if $C \subseteq \clonegen{\clMc \cup \{0, 1\}} = \clM$,
\item if $K \in \{\clMo \cup \clIX, \clMneg \cup \clIntVal{\clAlt{2}}{1}{1}, \clMoneg \cup \clIntVal{\clAlt{2}}{1}{1}\}$, then $K C \subseteq K$ if and only if $C \subseteq \clonegen{\clMc \cup \{0\}} = \clMo$,
\item if $K \in \{\clMineg \cup \clXI, \clM \cup \clIntVal{\clAlt{2}}{1}{1}, \clMi \cup \clIntVal{\clAlt{2}}{1}{1}\}$, then $K C \subseteq K$ if and only if $C \subseteq \clonegen{\clMc \cup \{1\}} = \clMi$,
\item otherwise, $K C \subseteq K$ if and only if $C \subseteq \clMc$.
\end{itemize}
This is presented explicitly in Table~\ref{table:McVc-stability}.
\end{proof}


\section{Clonoids with a discriminator source clone}
\label{sec:discriminator}

It was shown in \cite[Corollary~4.2]{LehSze-discriminator} that if $C$ is a \emph{discriminator clone} on a finite set $A$ i.e., a clone containing the \emph{discriminator function}
\[
t(x,y,z) =
\begin{cases}
z, & \text{if $x = y$,} \\
x, & \text{if $x \neq y$,}
\end{cases}
\]
then there are only a finite number of $C$\hyp{}equivalence classes in $\mathcal{O}_A$.
In the case of Boolean functions, the discriminator clones are precisely the superclones of the clone $\clSc$ of idempotent self\hyp{}dual functions.
The $C$\hyp{}equivalence classes and the $C$\hyp{}minor poset were explicitly described for each discriminator clone $C$ on $\{0,1\}$ in \cite[Section~5]{LehSze-discriminator}.
We are going to rephrase this result below in Theorem~\ref{thm:disc-minors}.
For this, the following notation will be used.
For $f \colon \{0,1\}^n \to \{0,1\}$, let $\range^{[2]}(f) := \{ \, \{f(\vect{a}), f(\overline{\vect{a}}) \} \mid \vect{a} \in \{0,1\}^n \, \}$, and
for a nonempty subset $R$ of $\{\{0\}, \{1\}, \{0,1\}\}$, let
\[
F^R := \{ \, f \in \clAll \mid \range^{[2]}(f) = R \, \}.
\]
In order to simplify this notation, we are going to omit set brackets when we list elements of $R$; for example, we are going to write $F^{0,1,01}_{01}$ for $F^{\{\{0\},\{1\},\{0,1\}\}}_{01}$.

\begin{theorem}[{\cite[Section~5]{LehSze-discriminator}}]
\label{thm:disc-minors}
\leavevmode
\begin{enumerate}[label=\upshape{(\roman*)}]
\item
There are exactly 16 $\equiv_{\clSc}$\hyp{}equivalence classes; they are the sets of the form $F^R_{ab}$, where $\emptyset \neq R \subseteq \{\{0\}, \{1\}, \{0,1\}\}$ and $(a,b) \in \{0,1\}^2$ with $\{a,b\} \in R$.
The $\clSc$\hyp{}minor poset is shown in Figure~\ref{fig:Sc-minor}.

\item
There are exactly 7 $\equiv_{\clS}$\hyp{}equivalence classes; they are the sets of the form $F^R$, where $\emptyset \neq R \subseteq \{\{0\}, \{1\}, \{0,1\}\}$.
The $\clS$\hyp{}minor poset is shown in Figure~\ref{fig:S-minor}.

\item
There are exactly 6 $\equiv_{\clOI}$\hyp{}equivalence classes; they are $\clVako$, $\clVaki$, $\clOO \setminus \clVako$, $\clII \setminus \clVaki$, $\clOI$, $\clIO$.
The $\clOI$\hyp{}minor poset is shown in Figure~\ref{fig:Tc-minor}.

\item
There are exactly 4 $\equiv_{\clOX}$\hyp{}equivalence classes; they are $\clVako$, $\clVaki$, $\clOX \setminus \clVako$, $\clIX \setminus \clVaki$.
The $\clOX$\hyp{}minor poset is shown in Figure~\ref{fig:T0-minor}.

\item
There are exactly 4 $\equiv_{\clXI}$\hyp{}equivalence classes; they are $\clVako$, $\clVaki$, $\clXO \setminus \clVako$, $\clXI \setminus \clVaki$.
The $\clXI$\hyp{}minor poset is shown in Figure~\ref{fig:T1-minor}.

\item
There are exactly 3 $\equiv_{\clAll}$\hyp{}equivalence classes; they are $\clVako$, $\clVaki$, $\clAll \setminus \clVak$.
The $\clAll$\hyp{}minor poset is shown in Figure~\ref{fig:Omega-minor}.
\end{enumerate}
\end{theorem}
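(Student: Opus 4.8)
The six discriminator clones on $\{0,1\}$ are the superclones of $\clSc$, namely $\clSc$, $\clS$, $\clOI$, $\clOX$, $\clXI$, and $\clAll$, and the plan is to treat each by the same two-step scheme. Working from the definition $f \leq_C g \iff f \in \{g\} C$, I would first isolate an invariant of $\leq_C$ (the necessity direction) and then show that matching invariants force $f \leq_C g$ (the sufficiency direction). The equivalence classes are then the fibres of the invariant, and the covering relations give the six posets drawn in the figures accompanying the theorem.

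For necessity, write $f = g(h_1,\dots,h_m)$ with $h_1,\dots,h_m \in C$ and put $\vect{h}(\vect{a}) = (h_1(\vect{a}),\dots,h_m(\vect{a}))$, so that $f = g \circ \vect{h}$. If $C \subseteq \clS$ then each $h_i$ is self-dual, hence $\vect{h}(\overline{\vect{a}}) = \overline{\vect{h}(\vect{a})}$, and therefore $\{f(\vect{a}),f(\overline{\vect{a}})\} = \{g(\vect{b}),g(\overline{\vect{b}})\}$ for $\vect{b} = \vect{h}(\vect{a})$; this yields $\range^{[2]}(f) \subseteq \range^{[2]}(g)$. If $C \subseteq \clOX$ (resp.\ $\clXI$, $\clOI$) then $\vect{h}$ fixes $\vect{0}$ (resp.\ $\vect{1}$, both), so $f(\vect{0}) = g(\vect{0})$ (resp.\ $f(\vect{1}) = g(\vect{1})$, both) is an invariant. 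Finally, a constant function has only constant minors, so the value of a constant is preserved by every $\leq_C$. Combining these observations produces exactly the invariants separating the classes listed in each part: $(f(\vect{0}),f(\vect{1}))$ together with $\range^{[2]}(f)$ for $\clSc$; $\range^{[2]}(f)$ for $\clS$; the pair $(f(\vect{0}),f(\vect{1}))$ refined by constancy for $\clOI$; the value at $\vect{0}$ (resp.\ $\vect{1}$) refined by constancy for $\clOX$ (resp.\ $\clXI$); and constancy alone for $\clAll$.

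The substantive step is sufficiency: given $f$ and $g$ with compatible invariants, construct $\vect{h} = (h_1,\dots,h_m)$ with $h_i \in C$ and $g \circ \vect{h} = f$. Since the antipodal pairs $\{\vect{a},\overline{\vect{a}}\}$ partition $\{0,1\}^n$, I would define $\vect{h}$ one pair at a time: for each pair, use $\range^{[2]}(f) \subseteq \range^{[2]}(g)$ to pick a pair $\{\vect{b},\overline{\vect{b}}\}$ in $\{0,1\}^m$ realizing the required value-pair of $g$ with the correct orientation, and set $\vect{h}(\vect{a}) = \vect{b}$ and $\vect{h}(\overline{\vect{a}}) = \overline{\vect{b}}$. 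This makes $\vect{h}$ self-dual coordinatewise, so each $h_i \in \clS$; and whenever $C$ additionally constrains the values at $\vect{0}$ or $\vect{1}$, compatibility of the corresponding invariant lets me send $\vect{0} \mapsto \vect{0}$ and/or $\vect{1} \mapsto \vect{1}$, forcing each $h_i$ into the required subclone while keeping $g \circ \vect{h} = f$ pointwise. This handles $\clSc$ and $\clS$ directly: for $\clS$ the resulting order is inclusion $R \subseteq R'$ on the seven nonempty value-sets, and for $\clSc$ it is this same inclusion within each fixed value of $(f(\vect{0}),f(\vect{1}))$.

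The main obstacle I expect is the interplay with constants in the cases $\clOI$, $\clOX$, $\clXI$, and $\clAll$, where the source clone no longer enforces self-duality, so $\range^{[2]}$ ceases to be an invariant and most classes collapse. Constants must be separated off by hand, since a constant has only constant minors; conversely a non-constant function of the appropriate type can always be steered onto a single value of $g$, so each constant class sits strictly below the matching non-constant class. I would finish each of these four cases by checking explicitly that (i) the constant classes lie strictly below the non-constant class of their type, (ii) any two non-constant functions of the same type are $\leq_C$-equivalent through an explicit $\vect{h}$, and (iii) distinct types are incomparable. Assembling the necessity and sufficiency directions then pins down all six $C$-minor posets exactly as drawn.
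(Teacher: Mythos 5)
This theorem is not proved inside the paper at all: it is imported verbatim from \cite[Section~5]{LehSze-discriminator}, so there is no internal proof to compare your attempt against. Your blind argument is, however, a correct and self-contained derivation, and it follows the natural route: isolate the invariant preserved by $\leq_C$ (necessity), then realize any invariant-compatible target by an explicit inner substitution (sufficiency). Both directions check out. For $C \subseteq \clS$, coordinatewise self-duality of $\vect{h}$ does give $\range^{[2]}(f) \subseteq \range^{[2]}(g)$, and your pair-by-pair definition of $\vect{h}$ on antipodal pairs --- with the orientation chosen so that $g(\vect{b}) = f(\vect{a})$ and $g(\overline{\vect{b}}) = f(\overline{\vect{a}})$, and with the pair $\{\vect{0},\vect{1}\}$ sent to $\{\vect{0},\vect{1}\}$ in the idempotent case --- produces coordinate functions lying in the required clone, so the order on classes is exactly inclusion of $R$ within a fixed boundary value, matching Figures~\ref{fig:Sc-minor} and \ref{fig:S-minor}. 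Two small points should be made explicit in a full write-up, though neither is a gap in substance. First, the exact counts ($16$, $7$, and so on) require checking that each listed class is nonempty; for some classes $F^R_{ab}$ with $\lvert R\rvert = 3$ this needs arity at least $3$, but is routine. Second, in the cases $\clOI$, $\clOX$, $\clXI$, $\clAll$ the ``steering'' step tacitly uses that a non-constant $g$ is surjective onto $\{0,1\}$, so every required value has a preimage compatible with the constraints at $\vect{0}$ and $\vect{1}$; once that is stated, your final checks (i)--(iii) are immediate and reproduce Figures~\ref{fig:Tc-minor}--\ref{fig:Omega-minor}.
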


\begin{theorem}
\label{thm:disc-clonoids}
\leavevmode
\begin{enumerate}[label=\upshape{(\roman*)}]
\item\label{thm:disc-clonoids:ScIc}
There are exactly 1296 $(\clSc,\clIc)$\hyp{}clonoids; they are the sets of the form $A \cup B \cup C \cup D$, where
\begin{align*}
\displaybump
A & \in \{ \clEmpty, \clVako, \clReflOO, \clSminOO, \clReflOO \cup \clSminOO, \clOO \}, &
B & \in \{ \clEmpty, \clSc, \clSmajOI, \clSminOI, \clSmajOI \cup \clSminOI, \clOI \}, \\
C & \in \{ \clEmpty, \clIntVal{\clS}{1}{0}, \clSmajIO, \clSminIO, \clSmajIO \cup \clSminIO, \clIO \}, &
D & \in \{ \clEmpty, \clVaki, \clSmajII, \clReflII, \clSmajII \cup \clReflII, \clII \}.
\end{align*}
The lattice of $(\clSc,\clIc)$\hyp{}clonoids is isomorphic to the direct product of four 6\hyp{}element lattices as shown in Figure~\ref{fig:ScIc-clonoids}.

\item
There are exactly 19 $(\clS,\clIc)$\hyp{}clonoids; they are
\begin{align*}
\displaybump
& \clAll, &
& \clRefl \cup \clSmin \cup \clSmaj, &
& \clRefl \cup \clSmin, &
& \clRefl \cup \clSmaj, &
& \clSmin \cup \clSmaj, \\
& \clSmaj \cup \clVako, &
& \clSmin \cup \clVaki, &
& \clRefl \cup \clS, &
& \clSmin, &
& \clSmaj, &
& \clRefl, \\
& \clS \cup \clVak, &
& \clS \cup \clVako, &
& \clS \cup \clVaki, &
& \clS, &
& \clVak, &
& \clVako, &
& \clVaki, &
& \clEmpty.
\end{align*}
The lattice of $(\clS,\clIc)$\hyp{}clonoids is shown in Figure~\ref{fig:SIc-clonoids}.

\item
There are exactly 36 $(\clOI,\clIc)$\hyp{}clonoids; they are the sets of the form $A \cup B \cup C \cup D$, where
\begin{align*}
\displaybump
A & \in \{ \clEmpty, \clVako, \clOO \}, &
B & \in \{ \clEmpty, \clOI \}, &
C & \in \{ \clEmpty, \clIO \}, &
D & \in \{ \clEmpty, \clVaki, \clII \}.
\end{align*}
The lattice of $(\clOI,\clIc)$\hyp{}clonoids is shown in Figure~\ref{fig:TcIc-clonoids}.

\item
There are exactly 9 $(\clOX,\clIc)$\hyp{}clonoids; they are the sets of the form $A \cup B$, where
$A \in \{\clEmpty, \clVako, \clOX\}$ and $B \in \{\clEmpty, \clVaki, \clIX\}$.
The lattice of $(\clOX,\clIc)$\hyp{}clonoids is shown in Figure~\ref{fig:T0Ic-clonoids}.

\item
There are exactly 9 $(\clXI,\clIc)$\hyp{}clonoids; they are the sets of the form $A \cup B$, where
$A \in \{\clEmpty, \clVako, \clXO\}$ and $B \in \{\clEmpty, \clVaki, \clXI\}$.
The lattice of $(\clXI,\clIc)$\hyp{}clonoids is shown in Figure~\ref{fig:T1Ic-clonoids}.

\item
There are exactly 5 $(\clAll,\clIc)$\hyp{}clonoids; they are
$\clAll$, $\clVak$, $\clVako$, $\clVaki$, $\clEmpty$.
The lattice of $(\clAll,\clIc)$\hyp{}clonoids is shown in Figure~\ref{fig:OmegaIc-clonoids}.
\end{enumerate}
\end{theorem}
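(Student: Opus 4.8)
The plan is to apply Lemma~\ref{lem:clon-C-min}\ref{lem:clon-C-min:3}, which identifies the $(C,\clIc)$-clonoids with the downsets (order ideals) of the $C$-minor partial order $(\clAll / \mathord{\equiv_C}, \mathord{\leq_C})$, and to combine this with the explicit descriptions of these finite posets furnished by Theorem~\ref{thm:disc-minors} together with the accompanying Hasse diagrams. Thus each of the six statements reduces to enumerating the order ideals of a small, explicitly given poset and then translating each ideal back into a set of Boolean functions expressed through the classes $F^R_{ab}$, $\clVako$, $\clVaki$, $\clSmin$, $\clSmaj$, $\clRefl$, etc. Since the $(C,\clIc)$-clonoids are \emph{exactly} the downsets, the number of clonoids equals the number of downsets, and the explicit list is read off the diagram.

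For the source clones that are idempotent or constant-preserving, namely $\clSc$, $\clOI$, $\clOX$, and $\clXI$, the key structural observation is that suitable ``boundary data'' of a function is invariant under $C$-minor formation, forcing the $C$-minor poset to split as a disjoint union of order-disconnected blocks, so that its downset lattice factors as a direct product. Concretely, every $h \in \clSc$ is idempotent, so $h(\vect 0) = 0$ and $h(\vect 1) = 1$; hence for $f = g(h_1,\dots,h_m)$ one has $f(\vect 0) = g(\vect 0)$ and $f(\vect 1) = g(\vect 1)$, so the label $(f(\vect 0), f(\vect 1))$ is preserved by $\leq_{\clSc}$. The sixteen $\equiv_{\clSc}$-classes therefore partition into four blocks indexed by $(a,b) \in \{0,1\}^2$, and by Theorem~\ref{thm:disc-minors}(i) each block is a four-element diamond $\mathbf 2 \times \mathbf 2$, which has exactly six order ideals; these match the six listed options for each of $A$, $B$, $C$, $D$ (the empty ideal, the bottom class $\clVako$, each of the two middle classes adjoined to the bottom giving $\clReflOO$ and $\clSminOO$, their join, and the full block $\clOO$, and analogously for the other labels). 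This yields $6^4 = 1296$ clonoids and the promised fourfold product. The same mechanism handles $\clOI$ (both values of $(f(\vect 0),f(\vect 1))$ preserved; the four blocks are two $2$-chains and two singletons, giving $3\cdot 3\cdot 2\cdot 2 = 36$) and $\clOX$, $\clXI$ (only $f(\vect 0)$, respectively $f(\vect 1)$, is preserved, since for $h \in \clOX$ the constant $0$ but not the constant $1$ may be substituted, leaving two $2$-chain blocks and $3\cdot 3 = 9$ ideals).

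For the remaining source clones $\clS$ and $\clAll$ no full product decomposition is available, and I would simply enumerate the downsets of the given poset directly. For $\clS$, Theorem~\ref{thm:disc-minors}(ii) presents the seven classes $F^R$ as the poset of nonempty subsets of $\{\{0\},\{1\},\{0,1\}\}$ ordered by inclusion (the Boolean lattice $B_3$ with its bottom removed), since for self-dual $h$ one has $\{h(\vect a),h(\overline{\vect a})\} = \{0,1\}$ and hence $\range^{[2]}(f) \subseteq \range^{[2]}(g)$ whenever $f \leq_{\clS} g$; counting its order ideals gives $19$, each read off as one of the nineteen listed unions of $\clS$, $\clRefl$, $\clSmin$, $\clSmaj$, and constant classes. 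For $\clAll$, the three classes $\clVako$, $\clVaki$, $\clAll \setminus \clVak$ form the poset with two incomparable minimal elements below a common top, whose five order ideals are exactly $\clEmpty$, $\clVako$, $\clVaki$, $\clVak$, $\clAll$.

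The main obstacle is bookkeeping rather than conceptual: I must verify that the set-theoretic names appearing in the statement (such as $\clReflOO$, $\clSminOO$, $\clSmajOI$, and the various $F^R_{ab}$) genuinely coincide with the unions of $\equiv_C$-classes lying below each node of the Hasse diagrams of Theorem~\ref{thm:disc-minors}, i.e.\ that each listed generator equals the ideal it purports to name. This amounts to checking, class by class, which equivalence class each symbol denotes in terms of the triple $(\range^{[2]}(f), f(\vect 0), f(\vect 1))$ --- for instance that $\clReflOO = \clVako \cup F^{\{0\},\{1\}}_{00}$ is the two-element ideal and that $\clSminOO$ is the complementary middle class adjoined to the bottom --- and then confirming that the join of ideals corresponds to the union of the listed sets. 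Once this dictionary between symbols and classes is fixed, the counts $1296$, $19$, $36$, $9$, $9$, $5$ and the product and lattice descriptions follow at once.
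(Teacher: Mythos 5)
Your proposal is correct and follows exactly the paper's own route: the paper proves this theorem by citing Lemma~\ref{lem:clon-C-min} to identify $(C,\clIc)$\hyp{}clonoids with downsets of the $C$\hyp{}minor quasi\hyp{}order and then reading these off the Hasse diagrams supplied by Theorem~\ref{thm:disc-minors}. The additional details you supply (the preservation of the boundary values $(f(\vect{0}),f(\vect{1}))$ forcing the product decomposition, and the dictionary between the classes $F^R_{ab}$ and the named sets) are accurate and merely make explicit what the paper leaves to the reader.
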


\begin{figure}
\begin{center}
\scalebox{0.9}{%
\begin{tikzpicture}[baseline, scale=0.8]
   \node [label = below:$F^0_{00}$] (F0-00) at (0,0) {};
   \node [label = above left:$F^{0,1}_{00}$] (F01-00) at (-1,1) {};
   \node [label = above right:$F^{0,01}_{00}$] (F001-00) at (1,1) {};
   \node [label = {[yshift=-5pt]$F^{0,1,01}_{00}$}] (F0101-00) at (0,2) {};
   \draw (F0-00) -- (F01-00) -- (F0101-00);
   \draw (F0-00) -- (F001-00) -- (F0101-00);
   \node [label = below:$F^{01}_{01}$] (F01-01) at (4,0) {};
   \node [label = below left:$F^{1,01}_{01}$] (F101-01) at (3,1) {};
   \node [label = below right:$F^{0,01}_{01}$] (F001-01) at (5,1) {};
   \node [label = {[yshift=-5pt]$F^{0,1,01}_{01}$}] (F0101-01) at (4,2) {};
   \draw (F01-01) -- (F101-01) -- (F0101-01);
   \draw (F01-01) -- (F001-01) -- (F0101-01);
   \node [label = below:$F^{01}_{10}$] (F01-10) at (8,0) {};
   \node [label = above left:$F^{1,01}_{10}$] (F101-10) at (7,1) {};
   \node [label = above right:$F^{0,01}_{10}$] (F001-10) at (9,1) {};
   \node [label = {[yshift=-5pt]$F^{0,1,01}_{10}$}] (F0101-10) at (8,2) {};
   \draw (F01-10) -- (F101-10) -- (F0101-10);
   \draw (F01-10) -- (F001-10) -- (F0101-10);
   \node [label = below:$F^1_{11}$] (F1-11) at (12,0) {};
   \node [label = below left:$F^{1,01}_{11}$] (F101-11) at (11,1) {};
   \node [label = below right:$F^{0,1}_{11}$] (F01-11) at (13,1) {};
   \node [label = {[yshift=-5pt]$F^{0,1,01}_{11}$}] (F0101-11) at (12,2) {};
   \draw (F1-11) -- (F101-11) -- (F0101-11);
   \draw (F1-11) -- (F01-11) -- (F0101-11);
\end{tikzpicture}
}
\end{center}
\caption{$\protect\clSc$\hyp{}minor poset.}
\label{fig:Sc-minor}
\end{figure}

\begin{figure}
\begin{center}
\scalebox{0.83}{%
\begin{tikzpicture}[baseline, scale=0.8]
   \node [label = below:$\clEmpty$] (00-empty) at (0,-1) {};
   \node [label = below left:$\clVako$] (00-C0) at (0,0) {};
   \node [label = left:$\clReflOO$] (00-R00) at (-1,1) {};
   \node [label = right:$\clSminOO$] (00-Smin00) at (1,1) {};
   \node [label = {[xshift=-25pt,yshift=-15pt]$\clReflOO \cup \clSminOO$}] (00-RS) at (0,2) {};
   \node [label = above:$\clOO$] (00-00) at (0,3) {};
   \draw (00-empty) -- (00-C0) -- (00-R00) -- (00-RS) -- (00-00);
   \draw (00-C0) -- (00-Smin00) -- (00-RS);
   \node [label = below:$\clEmpty$] (01-empty) at (5,-1) {};
   \node [label = below left:$\clSc$] (01-S01) at (5,0) {};
   \node [label = left:$\clSmajOI$] (01-Smaj01) at (4,1) {};
   \node [label = right:$\clSminOI$] (01-Smin01) at (6,1) {};
   \node [label = {[xshift=-25pt,yshift=-15pt]$\clSmajOI \cup \clSminOI$}] (01-SmajSmin) at (5,2) {};
   \node [label = above:$\clOI$] (01-01) at (5,3) {};
   \draw (01-empty) -- (01-S01) -- (01-Smaj01) -- (01-SmajSmin) -- (01-01);
   \draw (01-S01) -- (01-Smin01) -- (01-SmajSmin);
   \node [label = below:$\clEmpty$] (10-empty) at (10,-1) {};
   \node [label = below left:$\clIntVal{\clS}{1}{0}$] (10-S10) at (10,0) {};
   \node [label = left:$\clSmajIO$] (10-Smaj10) at (9,1) {};
   \node [label = right:$\clSminIO$] (10-Smin10) at (11,1) {};
   \node [label = {[xshift=-25pt,yshift=-15pt]$\clSmajIO \cup \clSminIO$}] (10-SmajSmin) at (10,2) {};
   \node [label = above:$\clIO$] (10-10) at (10,3) {};
   \draw (10-empty) -- (10-S10) -- (10-Smaj10) -- (10-SmajSmin) -- (10-10);
   \draw (10-S10) -- (10-Smin10) -- (10-SmajSmin);
   \node [label = below:$\clEmpty$] (11-empty) at (15,-1) {};
   \node [label = below left:$\clVaki$] (11-C1) at (15,0) {};
   \node [label = left:$\clSmajII$] (11-Smaj11) at (14,1) {};
   \node [label = right:$\clReflII$] (11-R11) at (16,1) {};
   \node [label = {[xshift=-25pt,yshift=-15pt]$\clReflII \cup \clSmajII$}] (11-RS) at (15,2) {};
   \node [label = above:$\clII$] (11-11) at (15,3) {};
   \draw (11-empty) -- (11-C1) -- (11-R11) -- (11-RS) -- (11-11);
   \draw (11-C1) -- (11-Smaj11) -- (11-RS);
   \node [fill = none, draw = none] at (2.5,1) {$\times$};
   \node [fill = none, draw = none] at (7.5,1) {$\times$};
   \node [fill = none, draw = none] at (12.5,1) {$\times$};
\end{tikzpicture}
}
\end{center}
\caption{$(\protect\clSc,\protect\clIc)$\hyp{}clonoids. The lattice $\protect\closys{(\protect\clSc,\protect\clIc)}$ is isomorphic to the direct product of four 6-element lattices. For the element $(A,B,C,D)$ of the direct product, the corresponding clonoid is $A \cup B \cup C \cup D$.}
\label{fig:ScIc-clonoids}
\end{figure}

\begin{figure}
\noindent
\begin{minipage}[c]{0.5\textwidth}
\begin{center}
\scalebox{0.9}{%
\begin{tikzpicture}[baseline, scale=0.8]
   \node [label = below:$F^0$] (F0) at (0,0) {};
   \node [label = below:$F^{01}$] (F01) at (2,0) {};
   \node [label = below:$F^{1}$] (F1) at (4,0) {};
   \node [label = left:$F^{0,01}$] (F0-01) at (0,2) {};
   \node [label = left:$F^{0,1}$] (F0-1) at (2,2) {};
   \node [label = right:$F^{1,01}$] (F1-01) at (4,2) {};
   \node [label = {[yshift=-5pt]$F^{0,1,01}$}] (F0-1-01) at (2,4) {};
   \draw (F0) -- (F0-01) -- (F0-1-01);
   \draw (F0) -- (F0-1) -- (F0-1-01);
   \draw (F01) -- (F0-01);
   \draw (F01) -- (F1-01) -- (F0-1-01);
   \draw (F1) -- (F0-1);
   \draw (F1) -- (F1-01);
\end{tikzpicture}
}
\end{center}
\end{minipage}%
\begin{minipage}[c]{0.5\textwidth}
\begin{center}
\scalebox{0.9}{%
\tikzstyle{every node}=[circle, draw, fill=black, inner sep=0pt, minimum size=6pt, scale=1, font=\scriptsize]
\begin{tikzpicture}[baseline, scale=0.6]
   \node [label = below:$\clEmpty$] (empty) at (0,0) {};
   \node [label = left:$\clVako$] (C0) at (-2,2) {};
   \node [label = left:$\clS$] (S) at (0,2) {};
   \node [label = right:$\clVaki$] (C1) at (2,2) {};
   \node [label = left:$\clS \cup \clVako$] (SC0) at (-2,4) {};
   \node [label = left:$\clVak$] (C) at (0,4) {};
   \node [label = right:$\clS \cup \clVaki$] (SC1) at (2,4) {};
   \node [label = left:$\clSmin$] (Smin) at (-4,6) {};
   \node [label = left:$\clRefl$] (R) at (-2,6) {};
   \node [label = right:$\clS \cup \clVak$] (SC) at (0,6) {};
   \node [label = right:$\clSmaj$] (Smaj) at (4,6) {};
   \node [label = left:$\clSmin \cup \clVaki$] (SminC1) at (-2,8) {};
   \node [label = right:$\clRefl \cup \clS$] (RS) at (0,8) {};
   \node [label = right:$\clSmaj \cup \clVako$] (SmajC0) at (2,8) {};
   \node [label = left:$\clRefl \cup \clSmin$] (RSmin) at (-2,10) {};
   \node [label = {[xshift=14pt,yshift=-12pt]\begin{tabular}{@{}l@{}}$\clSmin$ \\ $\mathop{\cup} \clSmaj$ \end{tabular}}] (SminSmaj) at (0,10) {};
   \node [label = right:$\clRefl \cup \clSmaj$] (RSmaj) at (2,10) {};
   \node [label = left:$\clRefl \cup \clSmin \cup \clSmaj$] (RSminSmaj) at (0,12) {};
   \node [label = above:$\clAll$] (all) at (0,14) {};
   \draw (empty) -- (C0);
   \draw (empty) -- (S);
   \draw (empty) -- (C1);
   \draw (C0) -- (SC0);
   \draw (C0) -- (C);
   \draw (C1) -- (SC1);
   \draw (C1) -- (C);
   \draw (S) -- (SC0);
   \draw (S) -- (SC1);
   \draw (SC0) -- (Smin);
   \draw (SC0) -- (SC);
   \draw (SC1) -- (Smaj);
   \draw (SC1) -- (SC);
   \draw (C) -- (R);
   \draw (C) -- (SC);
   \draw (Smin) -- (SminC1);
   \draw (Smaj) -- (SmajC0);
   \draw (R) -- (RS);
   \draw (SC) -- (SminC1);
   \draw (SC) -- (SmajC0);
   \draw (SC) -- (RS);
   \draw (SminC1) -- (RSmin);
   \draw (SminC1) -- (SminSmaj);
   \draw (SmajC0) -- (RSmaj);
   \draw (SmajC0) -- (SminSmaj);
   \draw (RS) -- (RSmin);
   \draw (RS) -- (RSmaj);
   \draw (RSmin) -- (RSminSmaj);
   \draw (RSmaj) -- (RSminSmaj);
   \draw (SminSmaj) -- (RSminSmaj);
   \draw (RSminSmaj) -- (all);
\end{tikzpicture}
}
\end{center}
\end{minipage}
\begin{minipage}[t]{0.5\textwidth}
\captionsetup{width=.9\textwidth}
\captionof{figure}{$\protect\clS$\hyp{}minor poset.}
\label{fig:S-minor}
\end{minipage}%
\begin{minipage}[t]{0.5\textwidth}
\captionsetup{width=.9\textwidth}
\captionof{figure}{$(\protect\clS,\protect\clIc)$\hyp{}clonoids.}
\label{fig:SIc-clonoids}
\end{minipage}
\end{figure}

\begin{figure}
\begin{center}
\scalebox{0.9}{%
\begin{tikzpicture}[baseline, scale=0.8]
   \node [label = below:$\clVako$] (C0) at (0,0) {};
   \node [label = {[yshift=-10pt]$\clOO \setminus \clVako$}] (00) at (0,2) {};
   \node [label = above:$\clOI$] (01) at (2,2) {};
   \node [label = above:$\clIO$] (10) at (4,2) {};
   \node [label = {[yshift=-10pt]$\clII \setminus \clVaki$}] (11) at (6,2) {};
   \node [label = below:$\clVaki$] (C1) at (6,0) {};
   \draw (C0) -- (00);
   \draw (C1) -- (11);
\end{tikzpicture}
}
\end{center}
\caption{$\protect\clOI$\hyp{}minor poset.}
\label{fig:Tc-minor}

\bigskip\bigskip
\begin{center}
\scalebox{0.9}{%
\begin{tikzpicture}[baseline, scale=0.8]
   \tikzmath{\x = 3; \y = 2; \p = 0.75;} 
   \node [label = {[shift={(0,-0.7)}]$\clEmpty$}] (0000) at (0,0) {};
   \node [label = {[shift={(-0.1,0)}]$\clOI$}] (0100) at ($(0000)+(-\p,1)$) {};
   \node [label = {[shift={(0.1,0)}]$\clIO$}] (0010) at ($(0000)+(\p,1)$) {};
   \node (0110) at ($(0000)+(0,2)$) {};
   \node [label = below left:$\clVako$] (1000) at ($(0000)+(-\x,\y)$) {};
   \node (1100) at ($(1000)+(-\p,1)$) {};
   \node (1010) at ($(1000)+(\p,1)$) {};
   \node (1110) at ($(1000)+(0,2)$) {};
   \node [label = below left:$\clOO$] (2000) at ($(1000)+(-\x,\y)$) {};
   \node (2100) at ($(2000)+(-\p,1)$) {};
   \node (2010) at ($(2000)+(\p,1)$) {};
   \node (2110) at ($(2000)+(0,2)$) {};
   \node [label = below right:$\clVaki$] (0001) at ($(0000)+(\x,\y)$) {};
   \node (0101) at ($(0001)+(-\p,1)$) {};
   \node (0011) at ($(0001)+(\p,1)$) {};
   \node (0111) at ($(0001)+(0,2)$) {};
   \node (1001) at ($(0001)+(-\x,\y)$) {};
   \node (1101) at ($(1001)+(-\p,1)$) {};
   \node (1011) at ($(1001)+(\p,1)$) {};
   \node (1111) at ($(1001)+(0,2)$) {};
   \node (2001) at ($(1001)+(-\x,\y)$) {};
   \node (2101) at ($(2001)+(-\p,1)$) {};
   \node (2011) at ($(2001)+(\p,1)$) {};
   \node (2111) at ($(2001)+(0,2)$) {};
   \node [label = below right:$\clII$] (0002) at ($(0001)+(\x,\y)$) {};
   \node (0102) at ($(0002)+(-\p,1)$) {};
   \node (0012) at ($(0002)+(\p,1)$) {};
   \node (0112) at ($(0002)+(0,2)$) {};
   \node (1002) at ($(0002)+(-\x,\y)$) {};
   \node (1102) at ($(1002)+(-\p,1)$) {};
   \node (1012) at ($(1002)+(\p,1)$) {};
   \node (1112) at ($(1002)+(0,2)$) {};
   \node (2002) at ($(1002)+(-\x,\y)$) {};
   \node (2102) at ($(2002)+(-\p,1)$) {};
   \node (2012) at ($(2002)+(\p,1)$) {};
   \node (2112) at ($(2002)+(0,2)$) {};

   \draw (0000) -- (0100) -- (0110) -- (0010) -- (0000);
   \draw (1000) -- (1100) -- (1110) -- (1010) -- (1000);
   \draw (2000) -- (2100) -- (2110) -- (2010) -- (2000);
   \draw (0001) -- (0101) -- (0111) -- (0011) -- (0001);
   \draw (1001) -- (1101) -- (1111) -- (1011) -- (1001);
   \draw (2001) -- (2101) -- (2111) -- (2011) -- (2001);
   \draw (0002) -- (0102) -- (0112) -- (0012) -- (0002);
   \draw (1002) -- (1102) -- (1112) -- (1012) -- (1002);
   \draw (2002) -- (2102) -- (2112) -- (2012) -- (2002);

   \draw (0000) -- (1000) -- (2000);
   \draw (0100) -- (1100) -- (2100);
   \draw (0010) -- (1010) -- (2010);
   \draw (0110) -- (1110) -- (2110);
   \draw (0001) -- (1001) -- (2001);
   \draw (0101) -- (1101) -- (2101);
   \draw (0011) -- (1011) -- (2011);
   \draw (0111) -- (1111) -- (2111);
   \draw (0002) -- (1002) -- (2002);
   \draw (0102) -- (1102) -- (2102);
   \draw (0012) -- (1012) -- (2012);
   \draw (0112) -- (1112) -- (2112);

   \draw (0000) -- (0001) -- (0002);
   \draw (0100) -- (0101) -- (0102);
   \draw (0010) -- (0011) -- (0012);
   \draw (0110) -- (0111) -- (0112);
   \draw (1000) -- (1001) -- (1002);
   \draw (1100) -- (1101) -- (1102);
   \draw (1010) -- (1011) -- (1012);
   \draw (1110) -- (1111) -- (1112);
   \draw (2000) -- (2001) -- (2002);
   \draw (2100) -- (2101) -- (2102);
   \draw (2010) -- (2011) -- (2012);
   \draw (2110) -- (2111) -- (2112);
\end{tikzpicture}
}
\end{center}
\caption{$(\protect\clOI,\protect\clIc)$\hyp{}clonoids.
Only the join\hyp{}irreducible elements are named in the diagram. The remaining clonoids are unions of the join\hyp{}irreducible ones.}
\label{fig:TcIc-clonoids}
\end{figure}

\begin{figure}
\begin{minipage}[c]{0.5\textwidth}
\begin{center}
\scalebox{0.9}{%
\begin{tikzpicture}[baseline, scale=0.8]
   \node [label = below:$\clVako$] (C0) at (0,0) {};
   \node [label = {[yshift=-10pt]$\clOX \setminus \clVako$}] (0X) at (0,2) {};
   \node [label = {[yshift=-10pt]$\clIX \setminus \clVaki$}] (1X) at (2,2) {};
   \node [label = below:$\clVaki$] (C1) at (2,0) {};
   \draw (C0) -- (0X);
   \draw (C1) -- (1X);
\end{tikzpicture}
}
\end{center}
\end{minipage}%
\begin{minipage}[c]{0.5\textwidth}
\begin{center}
\scalebox{0.9}{%
\begin{tikzpicture}[baseline, scale=0.8]
   \node [label = below:$\clEmpty$] (00) at (0,0) {};
   \node [label = below left:$\clVako$] (10) at (-1,1) {};
   \node [label = below right:$\clVaki$] (01) at (1,1) {};
   \node [label = left:$\clOX$] (20) at (-2,2) {};
   \node [label = below:$\clVak$] (11) at (0,2) {};
   \node [label = right:$\clIX$] (02) at (2,2) {};
   \node [label = {[xshift=-22pt,yshift=-12pt]$\clOX \cup \clVaki$}] (21) at (-1,3) {};
   \node [label = {[xshift=22pt,yshift=-12pt]$\clIX \cup \clVako$}] (12) at (1,3) {};
   \node [label = above:$\clAll$] (22) at (0,4) {};
   \draw (00) -- (10) -- (20);
   \draw (01) -- (11) -- (21);
   \draw (02) -- (12) -- (22);
   \draw (00) -- (01) -- (02);
   \draw (10) -- (11) -- (12);
   \draw (20) -- (21) -- (22);
\end{tikzpicture}
}
\end{center}
\end{minipage}
\begin{minipage}[t]{0.5\textwidth}
\captionsetup{width=.9\textwidth}
\captionof{figure}{$\protect\clOX$\hyp{}minor poset.}
\label{fig:T0-minor}
\end{minipage}%
\begin{minipage}[t]{0.5\textwidth}
\captionsetup{width=.9\textwidth}
\captionof{figure}{$(\protect\clOX,\protect\clIc)$\hyp{}clonoids.}
\label{fig:T0Ic-clonoids}
\end{minipage}
\end{figure}

\begin{figure}
\begin{minipage}[c]{0.5\textwidth}
\begin{center}
\scalebox{0.9}{%
\begin{tikzpicture}[baseline, scale=0.8]
   \node [label = below:$\clVako$] (C0) at (0,0) {};
   \node [label = {[yshift=-10pt]$\clXO \setminus \clVako$}] (X0) at (0,2) {};
   \node [label = {[yshift=-10pt]$\clXI \setminus \clVaki$}] (X1) at (2,2) {};
   \node [label = below:$\clVaki$] (C1) at (2,0) {};
   \draw (C0) -- (X0);
   \draw (C1) -- (X1);
\end{tikzpicture}
}
\end{center}
\end{minipage}%
\begin{minipage}[c]{0.5\textwidth}
\begin{center}
\scalebox{0.9}{%
\begin{tikzpicture}[baseline, scale=0.8]
   \node [label = below:$\clEmpty$] (00) at (0,0) {};
   \node [label = below left:$\clVako$] (10) at (-1,1) {};
   \node [label = below right:$\clVaki$] (01) at (1,1) {};
   \node [label = left:$\clXO$] (20) at (-2,2) {};
   \node [label = below:$\clVak$] (11) at (0,2) {};
   \node [label = right:$\clXI$] (02) at (2,2) {};
   \node [label = {[xshift=-22pt,yshift=-12pt]$\clXO \cup \clVaki$}] (21) at (-1,3) {};
   \node [label = {[xshift=22pt,yshift=-12pt]$\clXI \cup \clVako$}] (12) at (1,3) {};
   \node [label = above:$\clAll$] (22) at (0,4) {};
   \draw (00) -- (10) -- (20);
   \draw (01) -- (11) -- (21);
   \draw (02) -- (12) -- (22);
   \draw (00) -- (01) -- (02);
   \draw (10) -- (11) -- (12);
   \draw (20) -- (21) -- (22);
\end{tikzpicture}
}
\end{center}
\end{minipage}
\begin{minipage}[t]{0.5\textwidth}
\captionsetup{width=.9\textwidth}
\captionof{figure}{$\protect\clXI$\hyp{}minor poset.}
\label{fig:T1-minor}
\end{minipage}%
\begin{minipage}[t]{0.5\textwidth}
\captionsetup{width=.9\textwidth}
\captionof{figure}{$(\protect\clXI,\protect\clIc)$\hyp{}clonoids.}
\label{fig:T1Ic-clonoids}
\end{minipage}
\end{figure}

\begin{figure}
\begin{minipage}[c]{0.5\textwidth}
\begin{center}
\scalebox{0.9}{%
\begin{tikzpicture}[baseline, scale=0.8]
   \node [label = below:$\clVako$] (C0) at (0,0) {};
   \node [label = {[yshift=-4pt]$\clAll \setminus \clVak$}] (All) at (1,2) {};
   \node [label = below:$\clVaki$] (C1) at (2,0) {};
   \draw (C0) -- (All);
   \draw (C1) -- (All);
\end{tikzpicture}
}
\end{center}
\end{minipage}%
\begin{minipage}[c]{0.5\textwidth}
\begin{center}
\scalebox{0.9}{%
\begin{tikzpicture}[baseline, scale=0.8]
   \node [label = below:$\clEmpty$] (E) at (0,0) {};
   \node [label = left:$\clVako$] (0) at (-1,1) {};
   \node [label = right:$\clVaki$] (1) at (1,1) {};
   \node [label = below:$\clVak$] (01) at (0,2) {};
   \node [label = above:$\clAll$] (All) at (0,3) {};
   \draw (E) -- (0) -- (01) -- (All);
   \draw (E) -- (1) -- (01);
\end{tikzpicture}
}
\end{center}
\end{minipage}
\begin{minipage}[t]{0.5\textwidth}
\captionsetup{width=.9\textwidth}
\captionof{figure}{$\protect\clAll$\hyp{}minor poset.}
\label{fig:Omega-minor}
\end{minipage}%
\begin{minipage}[t]{0.5\textwidth}
\captionsetup{width=.9\textwidth}
\captionof{figure}{$(\protect\clAll,\protect\clIc)$\hyp{}clonoids.}
\label{fig:OmegaIc-clonoids}
\end{minipage}
\end{figure}

\begin{proof}
This follows from Theorem~\ref{thm:disc-minors} by applying Lemma~\ref{lem:clon-C-min}.
The $(C,\clIc)$\hyp{}clonoids are precisely the downsets of the $C$\hyp{}minor quasi\hyp{}order, and the latter are easy to determine from the Hasse diagram of the $C$\hyp{}minor poset.
\end{proof}

\begin{remark}
Because the $\leq_{\clSc}$\hyp{}minor poset is disconnected with four connected components, its lattice of downsets is isomorphic to the direct product of the lattices of downsets of the four connected components (which are isomorphic to each other).
The lattice has $6^4 = 1296$ elements; it is too large for its Hasse diagram being presented explicitly here.
We simply give it as the direct product of the downset lattices of the four connected components; see Figure~\ref{fig:ScIc-clonoids}.
For the element $(A,B,C,D)$ of the direct product, the corresponding $(\clSc,\clIc)$\hyp{}clonoid is $A \cup B \cup C \cup D$.
\end{remark}

With the help of Theorem~\ref{thm:disc-clonoids}, it is now possible to determine the $(C_1,C_2)$\hyp{}clonoids for clones $C_1$ and $C_2$ such that $\clSc \subseteq C_1$ and $C_2$ is arbitrary.
By Lemma~\ref{lem:clonoid-inclusion}, such $(C_1,C_2)$\hyp{}clonoids are $(\clSc,\clIc)$\hyp{}clonoids; it is just a matter of identifying them among the $(\clSc,\clIc)$\hyp{}clonoids that were determined above.
We start with the cases where the target clone is essentially at most unary.

\begin{proposition}
\label{prop:disc-Istar}
\leavevmode
\begin{enumerate}[label=\upshape{(\roman*)}]
\item
\begin{enumerate}[label=\upshape{(\alph*)}]
\item
There are exactly 1081 $(\clSc,\clIo)$\hyp{}clonoids;
they are $\clEmpty$ and those $(\clSc,\clIc)$\hyp{}clonoids $K$ with $\clVako \subseteq K$.
\item
There are exactly 1081 $(\clSc,\clIi)$\hyp{}clonoids;
they are $\clEmpty$ and those $(\clSc,\clIc)$\hyp{}clonoids $K$ with $\clVaki \subseteq K$.
\item
There are exactly 901 $(\clSc,\clI)$\hyp{}clonoids;
they are $\clEmpty$ and those $(\clSc,\clIc)$\hyp{}clonoids $K$ with $\clVak \subseteq K$.
\item
There are exactly 36 $(\clSc,\clIstar)$\hyp{}clonoids;
they are the sets of the form $A \cup B$, where
\begin{align*}
\displaybump
& A \in \{ \clEmpty, \clVak, \clRefl, \clSminOO \cup \clSmajII, \clRefl \cup \clSminOO \cup \clSmajII, \clEq \},
\\
& B \in \{ \clEmpty, \clS, \clSmajOI \cup \clSminIO, \clSminOI \cup \clSmajIO, \clIntNeq{\clSmaj} \cup \clIntNeq{\clSmin}, \clNeq \}.
\end{align*}
\item
There are exactly 31 $(\clSc,\clOmegaOne)$\hyp{}clonoids;
they are $\clEmpty$ and those $(\clSc,\clIstar)$\hyp{}clonoids $K$ with $\clVak \subseteq K$.
\end{enumerate}

\item
\begin{enumerate}[label=\upshape{(\alph*)}]
\item
There are exactly 15 $(\clS,\clIo)$\hyp{}clonoids;
they are
$\clAll$, 
$\clRefl \cup \clSmin \cup \clSmaj$,
$\clRefl \cup \clSmin$,
$\clRefl \cup \clSmaj$,
$\clSmin \cup \clSmaj$,
$\clSmaj \cup \clVako$,
$\clSmin \cup \clVaki$,
$\clRefl \cup \clS$,
$\clSmin$,
$\clRefl$,
$\clS \cup \clVak$,
$\clS \cup \clVako$,
$\clVak$,
$\clVako$,
$\clEmpty$.

\item
There are exactly 15 $(\clS,\clIi)$\hyp{}clonoids;
they are
$\clAll$, 
$\clRefl \cup \clSmin \cup \clSmaj$,
$\clRefl \cup \clSmin$,
$\clRefl \cup \clSmaj$,
$\clSmin \cup \clSmaj$,
$\clSmaj \cup \clVako$,
$\clSmin \cup \clVaki$,
$\clRefl \cup \clS$,
$\clSmaj$,
$\clRefl$,
$\clS \cup \clVak$,
$\clS \cup \clVaki$,
$\clVak$,
$\clVaki$,
$\clEmpty$.

\item
There are exactly 12 $(\clS,\clI)$\hyp{}clonoids;
they are
$\clAll$, 
$\clRefl \cup \clSmin \cup \clSmaj$,
$\clRefl \cup \clSmin$,
$\clRefl \cup \clSmaj$,
$\clSmin \cup \clSmaj$,
$\clSmaj \cup \clVako$,
$\clSmin \cup \clVaki$,
$\clRefl \cup \clS$,
$\clRefl$,
$\clS \cup \clVak$,
$\clVak$,
$\clEmpty$.

\item
There are exactly 9 $(\clS,\clIstar)$\hyp{}clonoids;
they are
$\clAll$, 
$\clRefl \cup \clSmin \cup \clSmaj$,
$\clSmin \cup \clSmaj$,
$\clRefl \cup \clS$,
$\clRefl$,
$\clS \cup \clVak$,
$\clS$,
$\clVak$,
$\clEmpty$.

\item
There are exactly 8 $(\clS,\clOmegaOne)$\hyp{}clonoids;
they are
$\clAll$, 
$\clRefl \cup \clSmin \cup \clSmaj$,
$\clSmin \cup \clSmaj$,
$\clRefl \cup \clS$,
$\clRefl$,
$\clS \cup \clVak$,
$\clVak$,
$\clEmpty$.
\end{enumerate}

\item
\begin{enumerate}[label=\upshape{(\alph*)}]
\item
There are exactly 25 $(\clOI,\clIo)$\hyp{}clonoids;
they are $\clEmpty$ and the $(\clOI,\clIc)$\hyp{}clonoids $K$ with $\clVako \subseteq K$.

\item
There are exactly 25 $(\clOI,\clIi)$\hyp{}clonoids;
they are $\clEmpty$ and the $(\clOI,\clIc)$\hyp{}clonoids $K$ with $\clVaki \subseteq K$.

\item
There are exactly 17 $(\clOI,\clI)$\hyp{}clonoids;
they are $\clEmpty$ and the $(\clOI,\clIc)$\hyp{}clonoids $K$ with $\clVak \subseteq K$.

\item
There are exactly 6 $(\clOI,\clIstar)$\hyp{}clonoids;
they are
$\clAll$, $\clNeq \cup \clVak$, $\clNeq$, $\clEq$, $\clVak$, $\clEmpty$.

\item
There are exactly 5 $(\clOI,\clOmegaOne)$\hyp{}clonoids;
they are
$\clAll$, $\clNeq \cup \clVak$, $\clEq$, $\clVak$, $\clEmpty$.
\end{enumerate}

\item
\begin{enumerate}[label=\upshape{(\alph*)}]
\item
There are exactly 7 $(\clOX,\clIo)$\hyp{}clonoids;
they are
$\clAll$, $\clOXCI$, $\clIXCO$, $\clOX$, $\clVak$, $\clVako$, $\clEmpty$.

\item
There are exactly 7 $(\clOX,\clIi)$\hyp{}clonoids;
they are
$\clAll$, $\clOXCI$, $\clIXCO$, $\clIX$, $\clVak$, $\clVaki$, $\clEmpty$.

\item
There are exactly 5 $(\clOX,\clI)$\hyp{}clonoids;
they are
$\clAll$, $\clOXCI$, $\clIXCO$, $\clVak$, $\clEmpty$.

\item
There are exactly 3 $(\clOX,\clIstar)$\hyp{}clonoids;
they are
$\clAll$, $\clVak$, $\clEmpty$.

\item
There are exactly 3 $(\clOX,\clOmegaOne)$\hyp{}clonoids;
they are
$\clAll$, $\clVak$, $\clEmpty$.
\end{enumerate}

\item
\begin{enumerate}[label=\upshape{(\alph*)}]
\item
There are exactly 7 $(\clXI,\clIo)$\hyp{}clonoids;
they are
$\clAll$, $\clXOCI$, $\clXICO$, $\clXO$, $\clVak$, $\clVako$, $\clEmpty$.

\item
There are exactly 7 $(\clXI,\clIi)$\hyp{}clonoids;
they are
$\clAll$, $\clXOCI$, $\clXICO$, $\clXI$, $\clVak$, $\clVaki$, $\clEmpty$.

\item
There are exactly 5 $(\clXI,\clI)$\hyp{}clonoids;
they are
$\clAll$, $\clXOCI$, $\clXICO$, $\clVak$, $\clEmpty$.

\item
There are exactly 3 $(\clXI,\clIstar)$\hyp{}clonoids;
they are
$\clAll$, $\clVak$, $\clEmpty$.

\item
There are exactly 3 $(\clXI,\clOmegaOne)$\hyp{}clonoids;
they are
$\clAll$, $\clVak$, $\clEmpty$.
\end{enumerate}

\item
\begin{enumerate}[label=\upshape{(\alph*)}]
\item
There are exactly 4 $(\clAll,\clIo)$\hyp{}clonoids;
they are
$\clAll$, $\clVak$, $\clVako$, $\clEmpty$.

\item
There are exactly 4 $(\clAll,\clIi)$\hyp{}clonoids;
they are
$\clAll$, $\clVak$, $\clVaki$, $\clEmpty$.

\item
There are exactly 3 $(\clAll,\clI)$\hyp{}clonoids;
they are
$\clAll$, $\clVak$, $\clEmpty$.

\item
There are exactly 3 $(\clAll,\clIstar)$\hyp{}clonoids;
they are
$\clAll$, $\clVak$, $\clEmpty$.

\item
There are exactly 3 $(\clAll,\clOmegaOne)$\hyp{}clonoids;
they are
$\clAll$, $\clVak$, $\clEmpty$.
\end{enumerate}
\end{enumerate}
\end{proposition}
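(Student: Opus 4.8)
The plan is to reduce all thirty cases of the proposition to the already-determined lattices of $(C_1,\clIc)$-clonoids from Theorem~\ref{thm:disc-clonoids}, by means of the structural lemmas available for essentially at most unary target clones. Since each of $\clIo$, $\clIi$, $\clI$, $\clIstar$, $\clOmegaOne$ contains $\clIc$, Lemma~\ref{lem:clonoid-inclusion} guarantees that every clonoid under consideration is in particular a $(C_1,\clIc)$-clonoid, so in each case one only has to single out the relevant members of the list supplied by Theorem~\ref{thm:disc-clonoids}.

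First I would apply Lemma~\ref{lem:C1C2Vak}\ref{lem:C1C2Vak:ii} three times, with $C_2 = \clIc$ and $S = \{0\}$, $S = \{1\}$, $S = \{0,1\}$ (noting that $\clIc \cup \clVako = \clIo$, $\clIc \cup \clVaki = \clIi$, and $\clIc \cup \clVak = \clI$ are clones): the nonempty $(C_1,\clIo)$-, $(C_1,\clIi)$-, and $(C_1,\clI)$-clonoids are, respectively, exactly the $(C_1,\clIc)$-clonoids that contain $\clVako$, that contain $\clVaki$, and that contain $\clVak$, and to each family one adjoins the empty clonoid. For the target clone $\clIstar$, Lemma~\ref{lem:C1Istar} identifies the $(C_1,\clIstar)$-clonoids with the $(C_1,\clIc)$-clonoids satisfying $K = \overline{K}$. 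Finally, writing $\clOmegaOne = \clIstar \cup \clVak$ and applying Lemma~\ref{lem:C1C2Vak}\ref{lem:C1C2Vak:ii} once more, now with $C_2 = \clIstar$ and $S = \{0,1\}$, the nonempty $(C_1,\clOmegaOne)$-clonoids are precisely the $(C_1,\clIstar)$-clonoids containing $\clVak$, together with $\clEmpty$.

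The remaining work is bookkeeping, carried out separately for the six source clones, and it rests on three elementary observations: every constant function is reflexive and lies in the appropriate diagonal block, so that $\clVako \subseteq \clReflOO \cap \clSminOO$ and $\clVaki \subseteq \clReflII \cap \clSmajII$, whereas $\clVako \nsubseteq \clSmaj$ and $\clVaki \nsubseteq \clSmin$; and outer negation fixes $\clRefl$ and $\clS$, interchanges $\clSmin$ with $\clSmaj$, and swaps the blocks $\clOO \leftrightarrow \clII$ and $\clOI \leftrightarrow \clIO$. For the source clone $\clSc$ this is especially transparent thanks to the direct-product structure of Figure~\ref{fig:ScIc-clonoids}: requiring $\clVako \subseteq K$ (resp.\ $\clVaki \subseteq K$) forbids only the choice $A = \clEmpty$ (resp.\ $D = \clEmpty$) in $K = A \cup B \cup C \cup D$, giving $5 \cdot 6^3 = 1080$ nonempty clonoids in each of the cases $\clIo$, $\clIi$ and $5 \cdot 6 \cdot 6 \cdot 5 = 900$ for $\clI$; the condition $K = \overline{K}$ forces $D = \overline{A}$ and $C = \overline{B}$, collapsing the lattice to $6^2 = 36$; and among these the requirement $\clVak \subseteq K$ again rules out only the bottom choice of the $A/D$-factor, leaving $5 \cdot 6 = 30$ nonempty clonoids for $\clOmegaOne$. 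Adjoining $\clEmpty$ in each case yields the counts $1081$, $1081$, $901$, $36$, $31$. For the source clones $\clS$, $\clOI$, $\clOX$, $\clXI$, and $\clAll$ one simply scans the corresponding explicit lists in Theorem~\ref{thm:disc-clonoids} and retains those members satisfying the membership or self-duality condition identified above.

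I expect the main obstacle to be not any conceptual difficulty but the sheer combinatorial care required, most acutely for $\clSc$: one must correctly compute how outer negation permutes the four blocks and their six sublattice generators, and verify precisely which of the building blocks $\clRefl$, $\clSmin$, $\clSmaj$ contain the constants $\clVako$ and $\clVaki$. Once these facts are pinned down, all thirty counts and the accompanying explicit lists follow mechanically.
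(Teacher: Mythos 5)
Your proposal is correct and follows the paper's own route exactly: the paper proves this proposition by invoking Theorem~\ref{thm:disc-clonoids} together with Lemmata~\ref{lem:C1C2Vak} and \ref{lem:C1Istar}, which is precisely your reduction, and your bookkeeping for the $\clSc$ case (counts $5\cdot 6^3$, $5\cdot 6\cdot 6\cdot 5$, $6^2$, $5\cdot 6$, each plus $\clEmpty$) checks out.
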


\begin{proof}
This follows immediately from Theorem~\ref{thm:disc-clonoids} and Lemmata~\ref{lem:C1C2Vak} and \ref{lem:C1Istar}.
\end{proof}

The cases where the target clone $C_2$ includes $\clLc$ or $\clSM$ have been descibed earlier papers of the author's.
We quote here only the results for the cases where the target clone is $\clLc$ or $\clSM$.
The results for their superclones can be found in the earlier papers, and we simply refer the reader to those papers.

\begin{proposition}[{\cite[Theorem~7.1, Table~3]{CouLeh-Lcstability}}]
There are exactly 26
$(\clSc,\clLc)$\hyp{}clonoids,
and they are the following:
$\clAll$,
$\clOX$, $\clIX$,
$\clXO$, $\clXI$,
$\clEq$,
$\clNeq$,
$\clOO$, $\clOI$, $\clIO$, $\clII$,
$\clS \cup \clRefl$,
$\clSc \cup \clReflOO$,
$\clIntVal{\clS}{1}{0} \cup \clReflII$,
$\clSc \cup \clReflII$,
$\clIntVal{\clS}{1}{0} \cup \clReflOO$,
$\clRefl$,
$\clS$,
$\clReflOO$,
$\clReflII$,
$\clSc$,
$\clIntVal{\clS}{1}{0}$,
$\clVak$,
$\clVako$,
$\clVaki$,
$\clEmpty$.
\end{proposition}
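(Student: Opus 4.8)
The plan is to derive this from the description of the $(\clSc,\clIc)$-clonoids in Theorem~\ref{thm:disc-clonoids}\ref{thm:disc-clonoids:ScIc}. Since $\clIc \subseteq \clLc$, Lemma~\ref{lem:clonoid-inclusion} shows that every $(\clSc,\clLc)$-clonoid is a $(\clSc,\clIc)$-clonoid, and conversely a $(\clSc,\clIc)$-clonoid is a $(\clSc,\clLc)$-clonoid precisely when it is in addition stable under left composition with $\clLc$. As $\clLc = \clonegen{\mathord{+_3}}$, Lemma~\ref{lem:CL-Lc-3.3} reduces this extra condition to closure under the single ternary operation $(f,g,h) \mapsto f + g + h$ on functions of equal arity. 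So the task is to sift the $1296$ clonoids $A \cup B \cup C \cup D$ of Theorem~\ref{thm:disc-clonoids}\ref{thm:disc-clonoids:ScIc}, keeping exactly those closed under triple sum.

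First I would record two ways triple sum interacts with the relevant invariants. On the \emph{value block} $(f(\vect{0}),f(\vect{1})) \in \{0,1\}^2$ it acts coordinatewise modulo $2$, since $(f+g+h)(\vect{0}) = f(\vect{0})+g(\vect{0})+h(\vect{0})$ and likewise at $\vect{1}$. Hence the set of occupied blocks (those among $\clOO,\clOI,\clIO,\clII$ whose component is nonempty) must be closed under $(x,y,z)\mapsto x+y+z$, i.e.\ an affine subspace of the four-element group $\{0,1\}^2$ under coordinatewise addition; there are $12$ such subspaces (the empty set, four singletons, six pairs, and the whole group). For the finer duality structure I would use the $\mathrm{GF}(2)$-linear map $\sigma_f(\vect{a}) := f(\vect{a}) + f(\overline{\vect{a}})$: one has $f \in \clS \iff \sigma_f \equiv 1$, $f \in \clRefl \iff \sigma_f \equiv 0$, and $\sigma_{f+g+h} = \sigma_f + \sigma_g + \sigma_h$. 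Consequently $\clS \cup \clRefl = \{\, f \mid \sigma_f \text{ is constant} \,\}$ is closed under triple sum, and on it the self-dual/reflexive type adds like a $\mathrm{GF}(2)$-scalar.

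The heart of the argument, and the main obstacle, is the necessary direction: showing that almost all of the $1296$ combinations fail. The key point is that the components built from $\clSmin$ or $\clSmaj$ never survive except as the full block. For instance, one can choose three functions of $\clSminOO$ whose value pairs $(f(\vect{a}),f(\overline{\vect{a}}))$ are $(0,0),(0,1),(1,0)$ at one antipodal pair and $(0,0),(0,0),(0,1)$ at another; their triple sum lies again in block $00$ but acquires both a $(1,1)$-pair and a $(0,1)$-pair, so its $\range^{[2]}$ is $\{\{0\},\{1\},\{0,1\}\}$ and it escapes every proper downset of the $00$-component, forcing that component to be all of $\clOO$. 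Analogous explicit witnesses of small arity rule out the $\clSmaj$-, $\clSmin$-, and mixed pieces in each of the four blocks. After these exclusions, each admissible component is $\clEmpty$, a constant set ($\clVako$ or $\clVaki$, available only in the reflexive blocks $00,11$), a full self-dual/reflexive set ($\clReflOO,\clSc,\clScneg,\clReflII$), or a full block. It then remains to record the cross-block coupling forced by triple sum: two functions from a full block together with a third force the resulting block to be full; the sum of two self-dual functions is reflexive and generically non-constant, so whenever a self-dual block ($01$ or $10$) is occupied every occupied reflexive block must be full-reflexive rather than merely constant; and in the $\{00,11\}$ case the cross terms force the two reflexive components to sit at the same ``level'' (both constant, both full-reflexive, or both full).

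Finally I would traverse the $12$ affine subspaces of occupied blocks and, using the coupling rules together with $\sigma$-linearity and the trivial closure of full blocks to verify sufficiency, list the admissible component combinations. This yields $1$ clonoid for the empty subspace; $3$ each for the singletons $\{00\},\{11\}$ and $2$ each for $\{01\},\{10\}$ (ten in total); $3$ for $\{00,11\}$ and $2$ for each of the remaining five pairs (thirteen in total); and $2$ for the whole group, namely $\clAll$ and $\clS \cup \clRefl$. Summing gives $1+10+13+2 = 26$, and matching the surviving unions $A \cup B \cup C \cup D$ against their standard names reproduces exactly the $26$ classes in the statement.
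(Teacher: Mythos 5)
Your argument is correct, but it takes a genuinely different route from the paper: the paper does not prove this proposition at all, it simply imports it from \cite[Theorem~7.1, Table~3]{CouLeh-Lcstability}, where it is obtained as part of a direct classification of function classes stable under left composition with clones of linear functions. You instead derive it internally from Theorem~\ref{thm:disc-clonoids}\ref{thm:disc-clonoids:ScIc}: every $(\clSc,\clLc)$\hyp{}clonoid is a $(\clSc,\clIc)$\hyp{}clonoid by Lemma~\ref{lem:clonoid-inclusion}, and by Lemma~\ref{lem:CL-Lc-3.3} the extra condition is closure under $(f,g,h)\mapsto f+g+h$ on equal\hyp{}arity functions, so one sifts the $1296$ candidates. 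This is exactly the strategy the paper itself uses for the target clone $\clVc$ in Propositions~\ref{prop:McVc} and \ref{prop:ScVc-clonoids} (exclude candidates by explicit witnesses, then verify the survivors), so your proof fits the paper's machinery and makes the quoted result self\hyp{}contained. Your key structural observations are sound: the block pattern $(f(\vect{0}),f(\vect{1}))$ adds coordinatewise, so the occupied blocks form an affine subspace of $(\ZZ/2\ZZ)^2$ (twelve cases); the invariant $\sigma_f(\vect{a})=f(\vect{a})+f(\overline{\vect{a}})$ is additive, which both kills the $\clSmin$/$\clSmaj$ pieces (your explicit arity\hyp{}$3$ witness for $\clSminOO$ is valid, and the analogous witnesses for the other blocks do exist) and yields the cross\hyp{}block coupling rules; and the resulting count $1+10+13+2=26$ matches the list. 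The only caveat is presentational: the ``analogous witnesses'' for the remaining $\clSmaj$/$\clSmin$ components and the verification that the $26$ survivors are actually closed (e.g.\ that $\clS\cup\clRefl$ and $\clSc\cup\clReflOO$ are stable, which follows from the additivity of $\sigma$) are asserted rather than written out, so a complete writeup would need a short table of witnesses in the style of Lemma~\ref{lem:Scstabapu}; there is no mathematical gap.
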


\begin{proposition}[{\cite[Theorem 5.1, Table 1]{Lehtonen-SM}}]
\label{prop:ScSM}
There are precisely 57 $(\clSc,\clSM)$\hyp{}clonoids, and they are the following:
$\clAll$,
$\clEiio$, $\clEioi$, $\clEiii$, $\clEioo$,
$\clOXCI$, $\clXOCI$, $\clIXCO$, $\clXICO$,
$\clOX$, $\clXO$, $\clIX$, $\clXI$, $\clEq$, $\clNeq$,
$\clOIC$, $\clIOC$,
$\clOICO$, $\clIOCO$,
$\clOICI$, $\clIOCI$,
$\clOOCI$, $\clIICO$,
$\clOO$, $\clII$, $\clOI$, $\clIO$,
$\clSmin$, $\clSmaj$,
$\clIntNeq{\clSmin}$, $\clIntNeq{\clSmaj}$,
$\clSminOX$, $\clSmajIX$,
$\clSminXO$, $\clSmajXI$,
$\clSminOICO$, $\clSmajIOCI$,
$\clSminIOCO$, $\clSmajOICI$,
$\clSminOI$, $\clSmajIO$,
$\clSminIO$, $\clSmajOI$,
$\clSminOO$, $\clSmajII$,
$\clS$, $\clSc$, $\clIntVal{\clS}{1}{0}$,
$\clRefl$, $\clReflOOC$, $\clReflIIC$, $\clReflOO$, $\clReflII$,
$\clVak$, $\clVako$, $\clVaki$,
$\clEmpty$.
\end{proposition}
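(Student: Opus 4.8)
The plan is to obtain the $(\clSc,\clSM)$\hyp{}clonoids by filtering the $(\clSc,\clIc)$\hyp{}clonoids already determined in Theorem~\ref{thm:disc-clonoids}\ref{thm:disc-clonoids:ScIc}. Since $\clIc \subseteq \clSM$, Lemma~\ref{lem:clonoid-inclusion} shows that every $(\clSc,\clSM)$\hyp{}clonoid is a $(\clSc,\clIc)$\hyp{}clonoid, hence one of the $1296$ sets $A \cup B \cup C \cup D$ listed there. A $(\clSc,\clIc)$\hyp{}clonoid $K$ is a $(\clSc,\clSM)$\hyp{}clonoid precisely when it is in addition stable under left composition with $\clSM$; as $\clSM = \clonegen{\mu}$, Lemma~\ref{lem:CL-Lc-3.3} reduces this to the single requirement that $\mu(f_1,f_2,f_3) \in K$ for all $f_1,f_2,f_3 \in K$ of equal arity. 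So the whole task becomes: among the $6^4$ combinations $(A,B,C,D)$, decide which yield a set closed under the majority operation.

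First I would record how $\mu$ acts on the two invariants underlying Theorem~\ref{thm:disc-minors}, namely the boundary pair $(f(\vect 0), f(\vect 1))$ and the collection $\range^{[2]}(f)$. Writing $g := \mu(f_1,f_2,f_3)$, the boundary pair of $g$ is the coordinatewise majority of the three boundary pairs, and since the median of any three points of $\{0,1\}^2$ is again one of them, no constraint arises from this direction. The substantive input is a small range\hyp{}type table describing, for each antipodal pair $\{\vect a, \overline{\vect a}\}$ and each triple of types $T_i = \{f_i(\vect a), f_i(\overline{\vect a})\} \in \{\{0\},\{1\},\{0,1\}\}$, the possible values of $\{g(\vect a), g(\overline{\vect a})\}$. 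Two observations streamline this: each of $\clRefl$, $\clSmin$, $\clSmaj$, $\clS$ is itself $\mu$\hyp{}closed. For $\clRefl$ and $\clS$ this is immediate from the self\hyp{}duality of $\mu$, and for $\clSmin$ (dually $\clSmaj$) it follows from a pigeonhole argument: if $g(\vect a) = g(\overline{\vect a}) = 1$, then two of the $f_i$ are $1$ at $\vect a$ and two at $\overline{\vect a}$, so some $f_i$ has a $\{1\}$\hyp{}pair, contradicting $f_i \in \clSmin$.

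Next I would extract from the table the forcing relations that govern which products $A \cup B \cup C \cup D$ survive. These arise when $\mu$ mixes members of incompatible range\hyp{}types and produces one of strictly larger type; for instance, applying $\mu$ to a $\clSmin$\hyp{}type and a $\clSmaj$\hyp{}type member within one boundary block forces the appearance of $\{0,1\}$\hyp{}pairs, and hence of larger atoms of that block, whereas mixing a reflexive and a self\hyp{}dual member forces the self\hyp{}dual value pattern into neighbouring blocks. Organizing the check through the product structure of Figure~\ref{fig:ScIc-clonoids} — each factor being the $6$\hyp{}element downset lattice of one connected component of the $\clSc$\hyp{}minor poset — the $\mu$\hyp{}closedness of the four pure classes disposes of the ``aligned'' combinations, while the forcing relations eliminate the remaining ones, leaving exactly the $57$ sets claimed.

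Finally I would verify directly that each of the $57$ survivors is genuinely closed under $\mu$, once more via Lemma~\ref{lem:CL-Lc-3.3}; since the $(\clSc,\clSM)$\hyp{}clonoids form a closure system and are thus closed under intersection, it suffices to check the meet\hyp{}irreducible ones. I expect the main obstacle to be precisely the bookkeeping of these forcing relations: one must correctly compute the $\mu$\hyp{}image of every relevant triple drawn from the $16$ range\hyp{}boundary atoms and then confirm that the resulting elimination discards neither too many nor too few combinations. The pigeonhole closure of $\clSmin$ and $\clSmaj$ together with the self\hyp{}duality of $\mu$ keep this finite analysis manageable, but pinning down the boundary\hyp{}block interactions exactly is the delicate part.
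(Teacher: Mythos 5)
The paper does not actually prove Proposition~\ref{prop:ScSM}; it is imported verbatim from \cite[Theorem~5.1, Table~1]{Lehtonen-SM}, so there is no in\hyp{}paper argument to compare yours against. What you propose is, however, exactly the template this paper uses for the sibling results with target $\clVc$ (Propositions~\ref{prop:McVc} and~\ref{prop:ScVc-clonoids}): restrict to the $1296$ $(\clSc,\clIc)$\hyp{}clonoids of Theorem~\ref{thm:disc-clonoids}\ref{thm:disc-clonoids:ScIc} via Lemma~\ref{lem:clonoid-inclusion}, reduce left stability to the single generator $\mu$ of $\clSM$ via Lemma~\ref{lem:CL-Lc-3.3}, exclude combinations by forcing claims, and verify the meet\hyp{}irreducible survivors. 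Your structural ingredients are all correct: the boundary pair of $\mu(f_1,f_2,f_3)$ is the coordinatewise median of the three boundary pairs and hence equals one of them; $\clRefl$ and $\clS$ are $\mu$\hyp{}closed because $\mu$ is self\hyp{}dual; and $\clSmin$, $\clSmaj$ are $\mu$\hyp{}closed by the pigeonhole argument you give (the $\mu$\hyp{}analogue of Lemma~\ref{lem:AllSmaj}).

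The gap is that the decisive combinatorial content is asserted rather than carried out: ``the forcing relations eliminate the remaining ones, leaving exactly the $57$ sets claimed'' \emph{is} the theorem. To make this a proof you must (i) state and prove the explicit exclusion claims, in the style of Claims~\ref{clm:ScVcK:1} and~\ref{clm:ScVcK:2} --- for instance, that within one boundary block the presence of both the reflexive\hyp{}type and the $\clSmin$/$\clSmaj$\hyp{}type atoms forces the top atom of that block, and that mixing members of blocks $(0,1)$ and $(1,0)$, or a reflexive member with a self\hyp{}dual one, forces specified atoms in the block where the median boundary pair lands --- and (ii) list the meet\hyp{}irreducible survivors and check each is closed under $\mu$. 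Be careful also with your remark that ``no constraint arises'' from the boundary pairs: that observation only shows the output lands in a block that $K$ already meets; the substance lies in showing that the output's range type does not exceed what $K$ contains in that block, which is exactly where all $1296-57=1239$ exclusions come from. You do acknowledge this with the forcing relations, but as written the proposal is a plan for a proof rather than a proof.
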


\begin{proposition}
\label{prop:ScVc-clonoids}
There are precisely 123
$(\clSc,\clVc)$\hyp{}clonoids.
They are the classes listed in Table~\ref{table:ScVc}.
\end{proposition}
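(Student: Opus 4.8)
The plan is to obtain the $(\clSc,\clVc)$\hyp{}clonoids by sieving the $(\clSc,\clIc)$\hyp{}clonoids, exactly as the $(\clMc,\clVc)$\hyp{}clonoids were extracted from the $(\clMc,\clIc)$\hyp{}clonoids in Proposition~\ref{prop:McVc}. Since $\clIc \subseteq \clVc$, Lemma~\ref{lem:clonoid-inclusion} shows that every $(\clSc,\clVc)$\hyp{}clonoid is a $(\clSc,\clIc)$\hyp{}clonoid, hence of the form $A \cup B \cup C \cup D$ with the four coordinates ranging over the six\hyp{}element option sets of Theorem~\ref{thm:disc-clonoids}\ref{thm:disc-clonoids:ScIc}. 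A set $K$ of this form is a $(\clSc,\clVc)$\hyp{}clonoid precisely when it is in addition stable under left composition with $\clVc = \clonegen{\vee}$, and by Lemma~\ref{lem:CL-Lc-3.3} applied to the generating set $\{\vee\}$ this is equivalent to $f \vee g \in K$ for all equal\hyp{}arity $f, g \in K$. So the whole problem reduces to deciding, among the $6^4 = 1296$ candidate quadruples, which unions are closed under $\vee$.

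The observation driving the analysis is that $\vee$ acts on the value pair $(f(\vect{0}), f(\vect{1}))$ componentwise, $(a_0,a_1) \vee (b_0,b_1) = (a_0 \vee b_0,\, a_1 \vee b_1)$, so the four blocks $\clOO$, $\clOI$, $\clIO$, $\clII$ are linked by the relations $\clOI \vee \clIO \subseteq \clII$ and $\clOO \vee \clII \subseteq \clII$, with $\clII$ absorbing. I would record how $\vee$ interacts with the three defining properties that cut the six\hyp{}element coordinate lattices: the class $\clRefl$ of reflexive functions is closed under $\vee$; the class $\clSmaj$ of majorants of self\hyp{}dual functions is \emph{absorbing} under $\vee$, i.e.\ $f \vee g \in \clSmaj$ whenever $g \in \clSmaj$, which is precisely Lemma~\ref{lem:AllSmaj}; whereas the class $\clSmin$ of minorants of self\hyp{}dual functions is \emph{not} $\vee$\hyp{}closed, since disjoining a minorant of value pair $(0,1)$ with one of value pair $(1,0)$ can produce a pair of antipodal points both carrying value $1$ and thereby leave $\clSmin$. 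From these facts I would derive a short list of propagation claims in the spirit of Claim~\ref{clm:MV}: for instance, if $\clSmajOI$ and $\clSmajIO$ both appear then $\clSmajII \subseteq D$ by Lemma~\ref{lem:AllSmaj}, while if $\clSminOI$ and $\clSminIO$ both appear then $D$ is forced to be the full block $\clII$ (a rank\hyp{}three disjunction of suitable minorants already realizes a value\hyp{}$(1,1)$ function with $\range^{[2]} = \{\{0\},\{1\},\{0,1\}\}$), and a single majorant together with arbitrary functions forces majorant content in the higher blocks.

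With these propagation rules in hand, the argument splits into necessity and sufficiency. For necessity I would discard every quadruple violating one of the propagation constraints, thinning $1296$ down to the $123$ survivors listed in Table~\ref{table:ScVc}. For sufficiency it suffices, exactly as in the proof of Proposition~\ref{prop:McVc}, to verify $\vee$\hyp{}closure only for the meet\hyp{}irreducible survivors, because intersections of $(\clSc,\clVc)$\hyp{}clonoids are again $(\clSc,\clVc)$\hyp{}clonoids; each such verification is a direct application of Lemma~\ref{lem:CL-Lc-3.3}, invoking Lemma~\ref{lem:AllSmaj} for the blocks that contain majorants and the closure of $\clRefl$ under $\vee$ for the reflexive blocks.

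The hard part will be the combinatorial bookkeeping needed to make the count land exactly on $123$. The delicate interactions are the cross\hyp{}block relations $\clOI \vee \clIO \subseteq \clII$ and $\clOO \vee \clII \subseteq \clII$, which couple all four coordinates simultaneously, together with the fundamental asymmetry between $\clSmaj$ (absorbing, by Lemma~\ref{lem:AllSmaj}) and $\clSmin$ (not $\vee$\hyp{}closed); this asymmetry breaks the self\hyp{}duality of the six\hyp{}element coordinate lattices and explains why the surviving table is not symmetric under interchanging minorants and majorants. Getting both the exclusions and the verifications exactly right — so that no admissible quadruple is discarded and no inadmissible one is kept — is where the real work lies, and it is why the outcome must be presented as an explicit table rather than a clean product formula.
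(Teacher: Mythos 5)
Your overall architecture coincides with the paper's: reduce to $(\clSc,\clIc)$\hyp{}clonoids via Lemma~\ref{lem:clonoid-inclusion} and Theorem~\ref{thm:disc-clonoids}\ref{thm:disc-clonoids:ScIc}, test stability under left composition with $\clVc$ through the single generator $\vee$ (Lemma~\ref{lem:CL-Lc-3.3}), sieve the $6^4$ candidate quadruples with propagation claims, and then verify only the meet\hyp{}irreducible survivors, using Lemma~\ref{lem:AllSmaj}. However, the propagation rules you actually state are not the ones that do the work, and as written your sieve would not land on $123$.

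The decisive exclusion in the paper (Claim~\ref{clm:ScVcK:1}) is a \emph{within\hyp{}block} collapse for minorants: if $\clVc K \subseteq K$ and $\clIntVal{\clSmin}{a}{b} \subseteq K$ with $(a,b) \neq (1,1)$, then the \emph{entire} block $\clIntVal{\clAll}{a}{b}$ lies in $K$, because every $f \in \clIntVal{\clAll}{a}{b}$ splits as $g \vee h$ with $g, h \in \clIntVal{\clSmin}{a}{b}$ (on each antipodal pair where $f$ is constantly $1$, give $g$ the value $a_1$ and $h$ its complement); likewise $\clIntVal{\clS}{a}{\overline{a}} \subseteq K$ forces $\clIntVal{\clSmaj}{a}{\overline{a}} \subseteq K$. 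This is what cuts the coordinate option sets from $6$ to $4$ (for $A$) and from $6$ to $3$ (for $B$ and $C$). Your observation that $\clSmin$ is not $\vee$\hyp{}closed only exhibits a cross\hyp{}block failure ($\clSminOI \vee \clSminIO$ leaving $\clSmin$); it does not exclude, say, $K = \clSminOI$ or $K = \clSc$, both of which survive your stated rules yet are not $(\clSc,\clVc)$\hyp{}clonoids. You are also missing the cross\hyp{}block rules involving $\clReflII$ (Claim~\ref{clm:ScVcK:2}): $\clIntVal{\clAll}{a}{b} \cup \clReflII \subseteq K$ forces $\clII \subseteq K$, and $\clIntVal{\clSmaj}{a}{\overline{a}} \cup \clReflII \subseteq K$ forces $\clSmajII \subseteq K$; without these, candidates such as $\clOI \cup \clReflII$ survive. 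Your rules for $\clSmaj$ (absorption, Lemma~\ref{lem:AllSmaj}) and for $\clOI \vee \clIO \subseteq \clII$ are correct and do appear in the paper, but the list of exclusions must be exactly the one above for the count to come out to $123$.
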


The Hasse diagram of $\closys{(\clSc,\clVc)}$ is shown in Figure~\ref{fig:ScVc-clonoids}.

\begingroup
\newcommand{\sSM}{\cite{Lehtonen-SM}}
\newcommand{\sHE}{\ref{thm:ScVc-stab-gen}}
\begin{table}
\begingroup
\scriptsize
\begin{tabular}{llllllll}
\toprule
&&&&& $K C \subseteq K$ & $C K \subseteq K$ & \\
$A$ & $B$ & $C$ & $D$ & $K$ & iff $C \subseteq \ldots{}$ & iff $C \subseteq \ldots{}$ & source \\
\midrule
$\clEmpty$ & $\clEmpty$ & $\clEmpty$ & $\clEmpty$ & $\clEmpty$ & $\clAll$ & $\clAll$ & \sSM \\
$\clEmpty$ & $\clEmpty$ & $\clEmpty$ & $\clVaki$ & $\clVaki$ & $\clAll$ & $\clXI$ & \sSM \\
$\clEmpty$ & $\clEmpty$ & $\clEmpty$ & $\clSmajII$ & $\clSmajII$ & $\clSc$ & $\clWk{2}$ & \sSM \\
$\clEmpty$ & $\clEmpty$ & $\clEmpty$ & $\clReflII$ & $\clReflII$ & $\clSc$ & $\clXI$ & \sSM \\
$\clEmpty$ & $\clEmpty$ & $\clEmpty$ & $\clSmajII \cup \clReflII$ & $\clSmajII \cup \clReflII$ & $\clSc$ & $\clVi$ & \sHE \\
$\clEmpty$ & $\clEmpty$ & $\clEmpty$ & $\clII$ & $\clII$ & $\clOI$ & $\clXI$ & \sSM \\

$\clEmpty$ & $\clEmpty$ & $\clSmajIO$ & $\clEmpty$ & $\clSmajIO$ & $\clSc$ & $\clTcWk{2}$ & \sSM \\
$\clEmpty$ & $\clEmpty$ & $\clSmajIO$ & $\clVaki$ & $\clSmajIO \cup \clVaki$ & $\clSc$ & $\clMWk{2}$ & \sSM \\
$\clEmpty$ & $\clEmpty$ & $\clSmajIO$ & $\clSmajII$ & $\clSmajIX$ & $\clSc$ & $\clWk{2}$ & \sSM \\
$\clEmpty$ & $\clEmpty$ & $\clSmajIO$ & $\clSmajII \cup \clReflII$ & $\clSmajIX \cup \clReflII$ & $\clSc$ & $\clVi$ & \sHE \\
$\clEmpty$ & $\clEmpty$ & $\clSmajIO$ & $\clII$ & $\clSmajIO \cup \clII$ & $\clSc$ & $\clWk{3}$ & \sHE \\

$\clEmpty$ & $\clEmpty$ & $\clIO$ & $\clEmpty$ & $\clIO$ & $\clOI$ & $\clOI$ & \sSM \\
$\clEmpty$ & $\clEmpty$ & $\clIO$ & $\clVaki$ & $\clIOCI$ & $\clOI$ & $\clMi$ & \sSM \\
$\clEmpty$ & $\clEmpty$ & $\clIO$ & $\clSmajII$ & $\clIO \cup \clSmajII$ & $\clSc$ & $\clVi$ & \sHE \\
$\clEmpty$ & $\clEmpty$ & $\clIO$ & $\clII$ & $\clIX$ & $\clOX$ & $\clXI$ & \sSM \\

$\clEmpty$ & $\clSmajOI$ & $\clEmpty$ & $\clEmpty$ & $\clSmajOI$ & $\clSc$ & $\clTcWk{2}$ & \sSM \\
$\clEmpty$ & $\clSmajOI$ & $\clEmpty$ & $\clVaki$ & $\clSmajOI \cup \clVaki$ & $\clSc$ & $\clMWk{2}$ & \sSM \\
$\clEmpty$ & $\clSmajOI$ & $\clEmpty$ & $\clSmajII$ & $\clSmajXI$ & $\clSc$ & $\clWk{2}$ & \sSM \\
$\clEmpty$ & $\clSmajOI$ & $\clEmpty$ & $\clSmajII \cup \clReflII$ & $\clSmajXI \cup \clReflII$ & $\clSc$ & $\clVi$ & \sHE \\
$\clEmpty$ & $\clSmajOI$ & $\clEmpty$ & $\clII$ & $\clSmajOI \cup \clII$ & $\clSc$ & $\clWk{3}$ & \sHE \\

$\clEmpty$ & $\clSmajOI$ & $\clSmajIO$ & $\clSmajII$ & $\clSmaj$ & $\clS$ & $\clWk{2}$ & \sSM \\
$\clEmpty$ & $\clSmajOI$ & $\clSmajIO$ & $\clSmajII \cup \clReflII$ & $\clSmaj \cup \clReflII$ & $\clSc$ & $\clVi$ & \sHE \\
$\clEmpty$ & $\clSmajOI$ & $\clSmajIO$ & $\clII$ & $\clSmaj \cup \clII$ & $\clSc$ & $\clWk{3}$ & \sHE \\

$\clEmpty$ & $\clSmajOI$ & $\clIO$ & $\clSmajII$ & $\clIO \cup \clSmajXI$ & $\clSc$ & $\clVi$ & \sHE \\
$\clEmpty$ & $\clSmajOI$ & $\clIO$ & $\clII$ & $\clIX \cup \clSmajOI$ & $\clSc$ & $\clWk{3}$ & \sHE \\

$\clEmpty$ & $\clOI$ & $\clEmpty$ & $\clEmpty$ & $\clOI$ & $\clOI$ & $\clOI$ & \sSM \\
$\clEmpty$ & $\clOI$ & $\clEmpty$ & $\clVaki$ & $\clOICI$ & $\clOI$ & $\clMi$ & \sSM \\
$\clEmpty$ & $\clOI$ & $\clEmpty$ & $\clSmajII$ & $\clOI \cup \clSmajII$ & $\clSc$ & $\clVi$ & \sHE \\
$\clEmpty$ & $\clOI$ & $\clEmpty$ & $\clII$ & $\clXI$ & $\clXI$ & $\clXI$ & \sSM \\

$\clEmpty$ & $\clOI$ & $\clSmajIO$ & $\clSmajII$ & $\clOI \cup \clSmaj$ & $\clSc$ & $\clVi$ & \sHE \\
$\clEmpty$ & $\clOI$ & $\clSmajIO$ & $\clII$ & $\clSmaj \cup \clXI$ & $\clSc$ & $\clWk{3}$ & \sHE \\

$\clEmpty$ & $\clOI$ & $\clIO$ & $\clII$ & $\clEioo$ & $\clOI$ & $\clWk{2}$ & \sSM \\

$\clVako$ & $\clEmpty$ & $\clEmpty$ & $\clEmpty$ & $\clVako$ & $\clAll$ & $\clOX$ & \sSM \\
$\clVako$ & $\clEmpty$ & $\clEmpty$ & $\clVaki$ & $\clVak$ & $\clAll$ & $\clAll$ & \sSM \\
$\clVako$ & $\clEmpty$ & $\clEmpty$ & $\clSmajII$ & $\clSmajII \cup \clVako$ & $\clSc$ & $\clV$ & \sHE \\
$\clVako$ & $\clEmpty$ & $\clEmpty$ & $\clReflII$ & $\clReflII \cup \clVako$ & $\clSc$ & $\clM$ & \sSM \\
$\clVako$ & $\clEmpty$ & $\clEmpty$ & $\clSmajII \cup \clReflII$ & $\clSmajII \cup \clReflII \cup \clVako$ & $\clSc$ & $\clV$ & \sHE \\
$\clVako$ & $\clEmpty$ & $\clEmpty$ & $\clII$ & $\clIICO$ & $\clOI$ & $\clM$ & \sSM \\

$\clVako$ & $\clEmpty$ & $\clSmajIO$ & $\clEmpty$ & $\clSmajIO \cup \clVako$ & $\clSc$ & $\clVo$ & \sHE \\
$\clVako$ & $\clEmpty$ & $\clSmajIO$ & $\clVaki$ & $\clSmajIO \cup \clVak$ & $\clSc$ & $\clV$ & \sHE \\
$\clVako$ & $\clEmpty$ & $\clSmajIO$ & $\clSmajII$ & $\clSmajIX \cup \clVako$ & $\clSc$ & $\clV$ & \sHE \\
$\clVako$ & $\clEmpty$ & $\clSmajIO$ & $\clSmajII \cup \clReflII$ & $\clSmajIX \cup \clReflII \cup \clVako$ & $\clSc$ & $\clV$ & \sHE \\
$\clVako$ & $\clEmpty$ & $\clSmajIO$ & $\clII$ & $\clSmajIO \cup \clIICO$ & $\clSc$ & $\clV$ & \sHE \\

$\clVako$ & $\clEmpty$ & $\clIO$ & $\clEmpty$ & $\clIOCO$ & $\clOI$ & $\clMo$ & \sSM \\
$\clVako$ & $\clEmpty$ & $\clIO$ & $\clVaki$ & $\clIO \cup \clVak$ & $\clOI$ & $\clM$ & \sSM \\
$\clVako$ & $\clEmpty$ & $\clIO$ & $\clSmajII$ & $\clIO \cup \clSmajII \cup \clVako$ & $\clSc$ & $\clV$ & \sHE \\
$\clVako$ & $\clEmpty$ & $\clIO$ & $\clII$ & $\clIXCO$ & $\clOX$ & $\clM$ & \sSM \\

$\clVako$ & $\clSmajOI$ & $\clEmpty$ & $\clEmpty$ & $\clSmajOI \cup \clVako$ & $\clSc$ & $\clVo$ & \sHE \\
$\clVako$ & $\clSmajOI$ & $\clEmpty$ & $\clVaki$ & $\clSmajOI \cup \clVak$ & $\clSc$ & $\clV$ & \sHE \\
$\clVako$ & $\clSmajOI$ & $\clEmpty$ & $\clSmajII$ & $\clSmajXI \cup \clVako$ & $\clSc$ & $\clV$ & \sHE \\
$\clVako$ & $\clSmajOI$ & $\clEmpty$ & $\clSmajII \cup \clReflII$ & $\clSmajXI \cup \clReflII \cup \clVako$ & $\clSc$ & $\clV$ & \sHE \\
$\clVako$ & $\clSmajOI$ & $\clEmpty$ & $\clII$ & $\clSmajOI \cup \clIICO$ & $\clSc$ & $\clV$ & \sHE \\

$\clVako$ & $\clSmajOI$ & $\clSmajIO$ & $\clSmajII$ & $\clSmaj \cup \clVako$ & $\clS$ & $\clV$ & \sHE \\
$\clVako$ & $\clSmajOI$ & $\clSmajIO$ & $\clSmajII \cup \clReflII$ & $\clSmaj \cup \clReflII \cup \clVako$ & $\clSc$ & $\clV$ & \sHE \\
$\clVako$ & $\clSmajOI$ & $\clSmajIO$ & $\clII$ & $\clSmaj \cup \clIICO$ & $\clSc$ & $\clV$ & \sHE \\

$\clVako$ & $\clSmajOI$ & $\clIO$ & $\clSmajII$ & $\clIO \cup \clSmajXI \cup \clVako$ & $\clSc$ & $\clV$ & \sHE \\
$\clVako$ & $\clSmajOI$ & $\clIO$ & $\clII$ & $\clIX \cup \clSmajOI \cup \clVako$ & $\clSc$ & $\clV$ & \sHE \\

$\clVako$ & $\clOI$ & $\clEmpty$ & $\clEmpty$ & $\clOICO$ & $\clOI$ & $\clMo$ & \sSM \\
$\clVako$ & $\clOI$ & $\clEmpty$ & $\clVaki$ & $\clOIC$ & $\clOI$ & $\clM$ & \sSM \\
$\clVako$ & $\clOI$ & $\clEmpty$ & $\clSmajII$ & $\clOI \cup \clSmajII \cup \clVako$ & $\clSc$ & $\clV$ & \sHE \\
$\clVako$ & $\clOI$ & $\clEmpty$ & $\clII$ & $\clXICO$ & $\clXI$ & $\clM$ & \sSM \\

$\clVako$ & $\clOI$ & $\clSmajIO$ & $\clSmajII$ & $\clOI \cup \clSmaj \cup \clVako$ & $\clSc$ & $\clV$ & \sHE \\
\midrule
\multicolumn{8}{r}{\textsl{Continues to the next page.}} \\
\bottomrule
\end{tabular}
\endgroup
\caption{$(\protect\clSc,\protect\clVc)$\hyp{}clonoids.}
\label{table:ScVc}
\end{table}

\begin{table}
\ContinuedFloat
\begingroup
\scriptsize
\begin{tabular}{llllllll}
\toprule
\multicolumn{8}{l}{\textsl{Continued from the previous page.}} \\
\midrule
&&&&& $K C \subseteq K$ & $C K \subseteq K$ & \\
$A$ & $B$ & $C$ & $D$ & $K$ & iff $C \subseteq \ldots{}$ & iff $C \subseteq \ldots{}$ & source \\
\midrule
$\clVako$ & $\clOI$ & $\clSmajIO$ & $\clII$ & $\clXI \cup \clSmaj \cup \clVako$ & $\clSc$ & $\clV$ & \sHE \\

$\clVako$ & $\clOI$ & $\clIO$ & $\clII$ & $\clEioo \cup \clVako$ & $\clOI$ & $\clV$ & \sHE \\

$\clReflOO$ & $\clEmpty$ & $\clEmpty$ & $\clEmpty$ & $\clReflOO$ & $\clSc$ & $\clOX$ & \sSM \\
$\clReflOO$ & $\clEmpty$ & $\clEmpty$ & $\clVaki$ & $\clReflOO \cup \clVaki$ & $\clSc$ & $\clM$ & \sSM \\
$\clReflOO$ & $\clEmpty$ & $\clEmpty$ & $\clSmajII$ & $\clSmajII \cup \clReflOO$ & $\clSc$ & $\clV$ & \sHE \\
$\clReflOO$ & $\clEmpty$ & $\clEmpty$ & $\clReflII$ & $\clRefl$ & $\clS$ & $\clAll$ & \sSM \\
$\clReflOO$ & $\clEmpty$ & $\clEmpty$ & $\clSmajII \cup \clReflII$ & $\clSmajII \cup \clRefl$ & $\clSc$ & $\clV$ & \sHE \\
$\clReflOO$ & $\clEmpty$ & $\clEmpty$ & $\clII$ & $\clII \cup \clReflOO$ & $\clSc$ & $\clV$ & \sHE \\

$\clReflOO$ & $\clEmpty$ & $\clSmajIO$ & $\clEmpty$ & $\clSmajIO \cup \clReflOO$ & $\clSc$ & $\clVo$ & \sHE \\
$\clReflOO$ & $\clEmpty$ & $\clSmajIO$ & $\clVaki$ & $\clSmajIO \cup \clReflOO \cup \clVaki$ & $\clSc$ & $\clV$ & \sHE \\
$\clReflOO$ & $\clEmpty$ & $\clSmajIO$ & $\clSmajII$ & $\clSmajIX \cup \clReflOO$ & $\clSc$ & $\clV$ & \sHE \\
$\clReflOO$ & $\clEmpty$ & $\clSmajIO$ & $\clSmajII \cup \clReflII$ & $\clSmajIX \cup \clRefl$ & $\clSc$ & $\clV$ & \sHE \\
$\clReflOO$ & $\clEmpty$ & $\clSmajIO$ & $\clII$ & $\clSmajIO \cup \clII \cup \clReflOO$ & $\clSc$ & $\clV$ & \sHE \\

$\clReflOO$ & $\clEmpty$ & $\clIO$ & $\clEmpty$ & $\clIO \cup \clReflOO$ & $\clSc$ & $\clVo$ & \sHE \\
$\clReflOO$ & $\clEmpty$ & $\clIO$ & $\clVaki$ & $\clIO \cup \clReflOO \cup \clVaki$ & $\clSc$ & $\clV$ & \sHE \\
$\clReflOO$ & $\clEmpty$ & $\clIO$ & $\clSmajII$ & $\clIO \cup \clSmajII \cup \clReflOO$ & $\clSc$ & $\clV$ & \sHE \\
$\clReflOO$ & $\clEmpty$ & $\clIO$ & $\clII$ & $\clIX \cup \clReflOO$ & $\clSc$ & $\clV$ & \sHE \\

$\clReflOO$ & $\clSmajOI$ & $\clEmpty$ & $\clEmpty$ & $\clSmajOI \cup \clReflOO$ & $\clSc$ & $\clVo$ & \sHE \\
$\clReflOO$ & $\clSmajOI$ & $\clEmpty$ & $\clVaki$ & $\clSmajOI \cup \clReflOO \cup \clVaki$ & $\clSc$ & $\clV$ & \sHE \\
$\clReflOO$ & $\clSmajOI$ & $\clEmpty$ & $\clSmajII$ & $\clSmajXI \cup \clReflOO$ & $\clSc$ & $\clV$ & \sHE \\
$\clReflOO$ & $\clSmajOI$ & $\clEmpty$ & $\clSmajII \cup \clReflII$ & $\clSmajXI \cup \clRefl$ & $\clSc$ & $\clV$ & \sHE \\
$\clReflOO$ & $\clSmajOI$ & $\clEmpty$ & $\clII$ & $\clSmajOI \cup \clII \cup \clReflOO$ & $\clSc$ & $\clV$ & \sHE \\

$\clReflOO$ & $\clSmajOI$ & $\clSmajIO$ & $\clSmajII$ & $\clSmaj \cup \clReflOO$ & $\clSc$ & $\clV$ & \sHE \\
$\clReflOO$ & $\clSmajOI$ & $\clSmajIO$ & $\clSmajII \cup \clReflII$ & $\clSmaj \cup \clRefl$ & $\clS$ & $\clV$ & \sHE \\
$\clReflOO$ & $\clSmajOI$ & $\clSmajIO$ & $\clII$ & $\clSmaj \cup \clII \cup \clReflOO$ & $\clSc$ & $\clV$ & \sHE \\

$\clReflOO$ & $\clSmajOI$ & $\clIO$ & $\clSmajII$ & $\clIO \cup \clSmajXI \cup \clReflOO$ & $\clSc$ & $\clV$ & \sHE \\
$\clReflOO$ & $\clSmajOI$ & $\clIO$ & $\clII$ & $\clIX \cup \clSmajOI \cup \clReflOO$ & $\clSc$ & $\clV$ & \sHE \\

$\clReflOO$ & $\clOI$ & $\clEmpty$ & $\clEmpty$ & $\clOI \cup \clReflOO$ & $\clSc$ & $\clVo$ & \sHE \\
$\clReflOO$ & $\clOI$ & $\clEmpty$ & $\clVaki$ & $\clOI \cup \clReflOO \cup \clVaki$ & $\clSc$ & $\clV$ & \sHE \\
$\clReflOO$ & $\clOI$ & $\clEmpty$ & $\clSmajII$ & $\clOI \cup \clSmajII \cup \clReflOO$ & $\clSc$ & $\clV$ & \sHE \\
$\clReflOO$ & $\clOI$ & $\clEmpty$ & $\clII$ & $\clXI \cup \clReflOO$ & $\clSc$ & $\clV$ & \sHE \\

$\clReflOO$ & $\clOI$ & $\clSmajIO$ & $\clSmajII$ & $\clOI \cup \clSmaj \cup \clReflOO$ & $\clSc$ & $\clV$ & \sHE \\
$\clReflOO$ & $\clOI$ & $\clSmajIO$ & $\clII$ & $\clSmaj \cup \clXI \cup \clReflOO$ & $\clSc$ & $\clV$ & \sHE \\

$\clReflOO$ & $\clOI$ & $\clIO$ & $\clII$ & $\clEioo \cup \clReflOO$ & $\clSc$ & $\clV$ & \sHE \\

$\clOO$ & $\clEmpty$ & $\clEmpty$ & $\clEmpty$ & $\clOO$ & $\clOI$ & $\clOX$ & \sSM \\
$\clOO$ & $\clEmpty$ & $\clEmpty$ & $\clVaki$ & $\clOOCI$ & $\clOI$ & $\clM$ & \sSM \\
$\clOO$ & $\clEmpty$ & $\clEmpty$ & $\clSmajII$ & $\clOO \cup \clSmajII$ & $\clSc$ & $\clV$ & \sHE \\
$\clOO$ & $\clEmpty$ & $\clEmpty$ & $\clII$ & $\clEq$ & $\clOI$ & $\clAll$ & \sSM \\

$\clOO$ & $\clEmpty$ & $\clSmajIO$ & $\clEmpty$ & $\clOO \cup \clSmajIO$ & $\clSc$ & $\clVo$ & \sHE \\
$\clOO$ & $\clEmpty$ & $\clSmajIO$ & $\clVaki$ & $\clOO \cup \clSmajIO \cup \clVaki$ & $\clSc$ & $\clV$ & \sHE \\
$\clOO$ & $\clEmpty$ & $\clSmajIO$ & $\clSmajII$ & $\clOO \cup \clSmajIX$ & $\clSc$ & $\clV$ & \sHE \\
$\clOO$ & $\clEmpty$ & $\clSmajIO$ & $\clII$ & $\clEq \cup \clSmajIO$ & $\clSc$ & $\clV$ & \sHE \\

$\clOO$ & $\clEmpty$ & $\clIO$ & $\clEmpty$ & $\clXO$ & $\clXI$ & $\clOX$ & \sSM \\
$\clOO$ & $\clEmpty$ & $\clIO$ & $\clVaki$ & $\clXO \cup \clVaki$ & $\clXI$ & $\clM$ & \sSM \\
$\clOO$ & $\clEmpty$ & $\clIO$ & $\clSmajII$ & $\clXO \cup \clSmajII$ & $\clSc$ & $\clV$ & \sHE \\
$\clOO$ & $\clEmpty$ & $\clIO$ & $\clII$ & $\clEioi$ & $\clOI$ & $\clM$ & \sSM \\

$\clOO$ & $\clSmajOI$ & $\clEmpty$ & $\clEmpty$ & $\clOO \cup \clSmajOI$ & $\clSc$ & $\clVo$ & \sHE \\
$\clOO$ & $\clSmajOI$ & $\clEmpty$ & $\clVaki$ & $\clOO \cup \clSmajOI \cup \clVaki$ & $\clSc$ & $\clV$ & \sHE \\
$\clOO$ & $\clSmajOI$ & $\clEmpty$ & $\clSmajII$ & $\clOO \cup \clSmajXI$ & $\clSc$ & $\clV$ & \sHE \\
$\clOO$ & $\clSmajOI$ & $\clEmpty$ & $\clII$ & $\clEq \cup \clSmajOI$ & $\clSc$ & $\clV$ & \sHE \\

$\clOO$ & $\clSmajOI$ & $\clSmajIO$ & $\clSmajII$ & $\clOO \cup \clSmaj$ & $\clSc$ & $\clV$ & \sHE \\
$\clOO$ & $\clSmajOI$ & $\clSmajIO$ & $\clII$ & $\clEq \cup \clSmaj$ & $\clSc$ & $\clV$ & \sHE \\

$\clOO$ & $\clSmajOI$ & $\clIO$ & $\clSmajII$ & $\clXO \cup \clSmaj$ & $\clSc$ & $\clV$ & \sHE \\
$\clOO$ & $\clSmajOI$ & $\clIO$ & $\clII$ & $\clEioi \cup \clSmaj$ & $\clSc$ & $\clV$ & \sHE \\

$\clOO$ & $\clOI$ & $\clEmpty$ & $\clEmpty$ & $\clOX$ & $\clOX$ & $\clOX$ & \sSM \\
$\clOO$ & $\clOI$ & $\clEmpty$ & $\clVaki$ & $\clOXCI$ & $\clOX$ & $\clM$ & \sSM \\
$\clOO$ & $\clOI$ & $\clEmpty$ & $\clSmajII$ & $\clOX \cup \clSmajII$ & $\clSc$ & $\clV$ & \sHE \\
$\clOO$ & $\clOI$ & $\clEmpty$ & $\clII$ & $\clEiio$ & $\clOI$ & $\clM$ & \sSM \\

$\clOO$ & $\clOI$ & $\clSmajIO$ & $\clSmajII$ & $\clOX \cup \clSmaj$ & $\clSc$ & $\clV$ & \sHE \\
$\clOO$ & $\clOI$ & $\clSmajIO$ & $\clII$ & $\clEiio \cup \clSmaj$ & $\clSc$ & $\clV$ & \sHE \\

$\clOO$ & $\clOI$ & $\clIO$ & $\clII$ & $\clAll$ & $\clAll$ & $\clAll$ & \sSM \\
\bottomrule
\end{tabular}
\endgroup
\caption{$(\protect\clSc,\protect\clVc)$\hyp{}clonoids.}
\end{table}
\endgroup

\begin{figure}
\begin{center}
\scalebox{0.675}{%
\begin{tikzpicture}[baseline, scale=0.6]
   \tikzmath{\Ax = -4; \Ay = 12; \Bx = -4; \By = 2; \Cx = 4; \Cy = 2; \Dx = 0; \Dy = 1; \Rx = 1; \Ry = 1;}
   \node [vanha, label = {[shift={(0,-0.65)}]$\clEmpty$}] (0000) at (0,0) {};
   \node [vanha, label = {[shift={(0.35,-0.45)}]$\clVaki$}] (0001) at ($(0000)+(\Dx,\Dy)$) {};
   \node [vanha, label = {[shift={(-0.65,-0.4)}]$\clSmajII$}] (0002) at ($(0001)+(\Dx,\Dy)$) {};
   \node [vanha, label = {[shift={(0.4,-0.25)}]$\clReflII$}] (000R) at ($(0001)+(\Rx,\Ry)$) {};
   \node [uusi] (0003) at ($(0002)+(\Dx,\Dy)$) {};
   \node [vanha, label = {[shift={(0.55,-0.45)}]$\clII$}] (0004) at ($(0003)+(\Dx,\Dy)$) {};

   \node [vanha, label = below:$\clSmajIO$] (0010) at ($(0000)+(\Cx,\Cy)$) {};
   \node [vanha] (0011) at ($(0010)+(\Dx,\Dy)$) {};
   \node [vanha] (0012) at ($(0011)+(\Dx,\Dy)$) {};
   \node [uusi] (0013) at ($(0012)+(\Dx,\Dy)$) {};
   \node [uusi] (0014) at ($(0013)+(\Dx,\Dy)$) {};

   \node [vanha, label = below:$\clIO$] (0020) at ($(0010)+(\Cx,\Cy)$) {};
   \node [vanha] (0021) at ($(0020)+(\Dx,\Dy)$) {};
   \node [uusi] (0022) at ($(0021)+(\Dx,\Dy)$) {};
   \node [vanha] (0024) at ($(0022)+(2*\Dx,2*\Dy)$) {};

   \node [vanha, label = below:$\clSmajOI$] (0100) at ($(0000)+(\Bx,\By)$) {};
   \node [vanha] (0101) at ($(0100)+(\Dx,\Dy)$) {};
   \node [vanha] (0102) at ($(0101)+(\Dx,\Dy)$) {};
   \node [uusi] (0103) at ($(0102)+(\Dx,\Dy)$) {};
   \node [uusi] (0104) at ($(0103)+(\Dx,\Dy)$) {};

   \node [vanha, label = below:$\clOI$] (0200) at ($(0100)+(\Bx,\By)$) {};
   \node [vanha] (0201) at ($(0200)+(\Dx,\Dy)$) {};
   \node [uusi] (0202) at ($(0201)+(\Dx,\Dy)$) {};
   \node [vanha] (0204) at ($(0202)+(2*\Dx,2*\Dy)$) {};

   \node [vanha] (0112) at ($(0012)+(\Bx,\By)$) {};
   \node [uusi] (0113) at ($(0112)+(\Dx,\Dy)$) {};
   \node [uusi] (0114) at ($(0113)+(\Dx,\Dy)$) {};

   \node [uusi] (0122) at ($(0022)+(\Bx,\By)$) {};
   \node [uusi] (0124) at ($(0122)+(2*\Dx,2*\Dy)$) {};

   \node [uusi] (0212) at ($(0112)+(\Bx,\By)$) {};
   \node [uusi] (0214) at ($(0212)+(2*\Dx,2*\Dy)$) {};

   \node [vanha, label = below:$\clEioo$] (0224) at ($(0124)+(\Bx,\By)$) {};
   
   \draw (0000) -- (0001) -- (0002) -- (0003) --(0004); \draw (0001) -- (000R) -- (0003);
   \draw (0010) -- (0011) -- (0012) -- (0013) --(0014);
   \draw (0020) -- (0021) -- (0022) -- (0024);
   \draw (0100) -- (0101) -- (0102) -- (0103) --(0104);
   \draw (0200) -- (0201) -- (0202) -- (0204);
   \draw (0112) -- (0113) -- (0114);
   \draw (0122) -- (0124);
   \draw (0212) -- (0214);
   \draw (0000) -- (0010) -- (0020);
   \draw (0001) -- (0011) -- (0021);
   \draw (0002) -- (0012) -- (0022);
   \draw (0003) -- (0013);
   \draw (0004) -- (0014) -- (0024);
   \draw (0102) -- (0112) -- (0122);
   \draw (0103) -- (0113);
   \draw (0104) -- (0114) -- (0124);
   \draw (0202) -- (0212);
   \draw (0204) -- (0214) -- (0224);
   \draw (0000) -- (0100) -- (0200);
   \draw (0001) -- (0101) -- (0201);
   \draw (0002) -- (0102) -- (0202);
   \draw (0003) -- (0103);
   \draw (0004) -- (0104) -- (0204);
   \draw (0012) -- (0112) -- (0212);
   \draw (0013) -- (0113);
   \draw (0014) -- (0114) -- (0214);
   \draw (0022) -- (0122);
   \draw (0024) -- (0124) -- (0224);

   \node [vanha, label = {[shift={(-0.07,-0.7)}]$\clVako$}] (1000) at ($(0000)+(\Ax,\Ay)$) {};
   \node [vanha] (1001) at ($(1000)+(\Dx,\Dy)$) {};
   \node [uusi] (1002) at ($(1001)+(\Dx,\Dy)$) {};
   \node [vanha] (100R) at ($(1001)+(\Rx,\Ry)$) {};
   \node [uusi] (1003) at ($(1002)+(\Dx,\Dy)$) {};
   \node [vanha] (1004) at ($(1003)+(\Dx,\Dy)$) {};

   \node [uusi] (1010) at ($(1000)+(\Cx,\Cy)$) {};
   \node [uusi] (1011) at ($(1010)+(\Dx,\Dy)$) {};
   \node [uusi] (1012) at ($(1011)+(\Dx,\Dy)$) {};
   \node [uusi] (1013) at ($(1012)+(\Dx,\Dy)$) {};
   \node [uusi] (1014) at ($(1013)+(\Dx,\Dy)$) {};

   \node [vanha] (1020) at ($(1010)+(\Cx,\Cy)$) {};
   \node [vanha] (1021) at ($(1020)+(\Dx,\Dy)$) {};
   \node [uusi] (1022) at ($(1021)+(\Dx,\Dy)$) {};
   \node [vanha] (1024) at ($(1022)+(2*\Dx,2*\Dy)$) {};

   \node [uusi] (1100) at ($(1000)+(\Bx,\By)$) {};
   \node [uusi] (1101) at ($(1100)+(\Dx,\Dy)$) {};
   \node [uusi] (1102) at ($(1101)+(\Dx,\Dy)$) {};
   \node [uusi] (1103) at ($(1102)+(\Dx,\Dy)$) {};
   \node [uusi] (1104) at ($(1103)+(\Dx,\Dy)$) {};

   \node [vanha] (1200) at ($(1100)+(\Bx,\By)$) {};
   \node [vanha] (1201) at ($(1200)+(\Dx,\Dy)$) {};
   \node [uusi] (1202) at ($(1201)+(\Dx,\Dy)$) {};
   \node [vanha] (1204) at ($(1202)+(2*\Dx,2*\Dy)$) {};

   \node [uusi] (1112) at ($(1012)+(\Bx,\By)$) {};
   \node [uusi] (1113) at ($(1112)+(\Dx,\Dy)$) {};
   \node [uusi] (1114) at ($(1113)+(\Dx,\Dy)$) {};

   \node [uusi] (1122) at ($(1022)+(\Bx,\By)$) {};
   \node [uusi] (1124) at ($(1122)+(2*\Dx,2*\Dy)$) {};

   \node [uusi] (1212) at ($(1112)+(\Bx,\By)$) {};
   \node [uusi] (1214) at ($(1212)+(2*\Dx,2*\Dy)$) {};

   \node [uusi, label = {[shift={(-0.75,-1.35)},rotate=-71.57]$\clEioo \cup \clVako$}] (1224) at ($(1124)+(\Bx,\By)$) {};

   \draw (1000) -- (1001) -- (1002) -- (1003) --(1004); \draw (1001) -- (100R) -- (1003);
   \draw (1010) -- (1011) -- (1012) -- (1013) --(1014);
   \draw (1020) -- (1021) -- (1022) -- (1024);
   \draw (1100) -- (1101) -- (1102) -- (1103) --(1104);
   \draw (1200) -- (1201) -- (1202) -- (1204);
   \draw (1112) -- (1113) -- (1114);
   \draw (1122) -- (1124);
   \draw (1212) -- (1214);
   \draw (1000) -- (1010) -- (1020);
   \draw (1001) -- (1011) -- (1021);
   \draw (1002) -- (1012) -- (1022);
   \draw (1003) -- (1013);
   \draw (1004) -- (1014) -- (1024);
   \draw (1102) -- (1112) -- (1122);
   \draw (1103) -- (1113);
   \draw (1104) -- (1114) -- (1124);
   \draw (1202) -- (1212);
   \draw (1204) -- (1214) -- (1224);
   \draw (1000) -- (1100) -- (1200);
   \draw (1001) -- (1101) -- (1201);
   \draw (1002) -- (1102) -- (1202);
   \draw (1003) -- (1103);
   \draw (1004) -- (1104) -- (1204);
   \draw (1012) -- (1112) -- (1212);
   \draw (1013) -- (1113);
   \draw (1014) -- (1114) -- (1214);
   \draw (1022) -- (1122);
   \draw (1024) -- (1124) -- (1224);

   \node [vanha, label = {[shift={(-0.35,-0.65)},rotate=-71.57]$\clReflOO$}] (2000) at ($(1000)+(\Ax,\Ay)$) {};
   \node [vanha] (2001) at ($(2000)+(\Dx,\Dy)$) {};
   \node [uusi] (2002) at ($(2001)+(\Dx,\Dy)$) {};
   \node [vanha, label = {[shift={(0.7,0.05)}]$\clRefl$}] (200R) at ($(2001)+(\Rx,\Ry)$) {};
   \node [uusi] (2003) at ($(2002)+(\Dx,\Dy)$) {};
   \node [uusi] (2004) at ($(2003)+(\Dx,\Dy)$) {};

   \node [uusi] (2010) at ($(2000)+(\Cx,\Cy)$) {};
   \node [uusi] (2011) at ($(2010)+(\Dx,\Dy)$) {};
   \node [uusi] (2012) at ($(2011)+(\Dx,\Dy)$) {};
   \node [uusi] (2013) at ($(2012)+(\Dx,\Dy)$) {};
   \node [uusi] (2014) at ($(2013)+(\Dx,\Dy)$) {};

   \node [uusi] (2020) at ($(2010)+(\Cx,\Cy)$) {};
   \node [uusi] (2021) at ($(2020)+(\Dx,\Dy)$) {};
   \node [uusi] (2022) at ($(2021)+(\Dx,\Dy)$) {};
   \node [uusi] (2024) at ($(2022)+(2*\Dx,2*\Dy)$) {};

   \node [uusi] (2100) at ($(2000)+(\Bx,\By)$) {};
   \node [uusi] (2101) at ($(2100)+(\Dx,\Dy)$) {};
   \node [uusi] (2102) at ($(2101)+(\Dx,\Dy)$) {};
   \node [uusi] (2103) at ($(2102)+(\Dx,\Dy)$) {};
   \node [uusi] (2104) at ($(2103)+(\Dx,\Dy)$) {};

   \node [uusi] (2200) at ($(2100)+(\Bx,\By)$) {};
   \node [uusi] (2201) at ($(2200)+(\Dx,\Dy)$) {};
   \node [uusi] (2202) at ($(2201)+(\Dx,\Dy)$) {};
   \node [uusi] (2204) at ($(2202)+(2*\Dx,2*\Dy)$) {};

   \node [uusi] (2112) at ($(2012)+(\Bx,\By)$) {};
   \node [uusi, label = {[shift={(0.5,-0.65)},rotate=-26.57]$\clSmaj \cup \clRefl$}] (2113) at ($(2112)+(\Dx,\Dy)$) {};
   \node [uusi] (2114) at ($(2113)+(\Dx,\Dy)$) {};

   \node [uusi] (2122) at ($(2022)+(\Bx,\By)$) {};
   \node [uusi] (2124) at ($(2122)+(2*\Dx,2*\Dy)$) {};

   \node [uusi] (2212) at ($(2112)+(\Bx,\By)$) {};
   \node [uusi] (2214) at ($(2212)+(2*\Dx,2*\Dy)$) {};

   \node [uusi, label = {[shift={(-0.8,-1.45)},rotate=-71.57]$\clEioo \cup \clReflOO$}] (2224) at ($(2124)+(\Bx,\By)$) {};

   \draw (2000) -- (2001) -- (2002) -- (2003) --(2004); \draw (2001) -- (200R) -- (2003);
   \draw (2010) -- (2011) -- (2012) -- (2013) --(2014);
   \draw (2020) -- (2021) -- (2022) -- (2024);
   \draw (2100) -- (2101) -- (2102) -- (2103) --(2104);
   \draw (2200) -- (2201) -- (2202) -- (2204);
   \draw (2112) -- (2113) -- (2114);
   \draw (2122) -- (2124);
   \draw (2212) -- (2214);
   \draw (2000) -- (2010) -- (2020);
   \draw (2001) -- (2011) -- (2021);
   \draw (2002) -- (2012) -- (2022);
   \draw (2003) -- (2013);
   \draw (2004) -- (2014) -- (2024);
   \draw (2102) -- (2112) -- (2122);
   \draw (2103) -- (2113);
   \draw (2104) -- (2114) -- (2124);
   \draw (2202) -- (2212);
   \draw (2204) -- (2214) -- (2224);
   \draw (2000) -- (2100) -- (2200);
   \draw (2001) -- (2101) -- (2201);
   \draw (2002) -- (2102) -- (2202);
   \draw (2003) -- (2103);
   \draw (2004) -- (2104) -- (2204);
   \draw (2012) -- (2112) -- (2212);
   \draw (2013) -- (2113);
   \draw (2014) -- (2114) -- (2214);
   \draw (2022) -- (2122);
   \draw (2024) -- (2124) -- (2224);

   \node [vanha, label = {[shift={(-0.35,-0.65)},rotate=-71.57]$\clOO$}] (3000) at ($(2000)+(\Ax,\Ay)$) {};
   \node [vanha] (3001) at ($(3000)+(\Dx,\Dy)$) {};
   \node [uusi] (3002) at ($(3001)+(\Dx,\Dy)$) {};
   \node [vanha] (3004) at ($(3002)+(2*\Dx,2*\Dy)$) {};

   \node [uusi] (3010) at ($(3000)+(\Cx,\Cy)$) {};
   \node [uusi] (3011) at ($(3010)+(\Dx,\Dy)$) {};
   \node [uusi] (3012) at ($(3011)+(\Dx,\Dy)$) {};
   \node [uusi] (3014) at ($(3012)+(2*\Dx,2*\Dy)$) {};

   \node [vanha, label = {[shift={(-0.35,-0.4)},rotate=26.57]$\clXO$}] (3020) at ($(3010)+(\Cx,\Cy)$) {};
   \node [vanha, label = {[shift={(-0.35,-0.73)},rotate=26.57]$\clXOCI$}] (3021) at ($(3020)+(\Dx,\Dy)$) {};
   \node [uusi] (3022) at ($(3021)+(\Dx,\Dy)$) {};
   \node [vanha, label = {[shift={(0.4,-0.2)}]$\clEioi$}] (3024) at ($(3022)+(2*\Dx,2*\Dy)$) {};

   \node [uusi] (3100) at ($(3000)+(\Bx,\By)$) {};
   \node [uusi] (3101) at ($(3100)+(\Dx,\Dy)$) {};
   \node [uusi] (3102) at ($(3101)+(\Dx,\Dy)$) {};
   \node [uusi] (3104) at ($(3102)+(2*\Dx,2*\Dy)$) {};

   \node [vanha, label = left:$\clOX$] (3200) at ($(3100)+(\Bx,\By)$) {};
   \node [vanha, label = left:$\clOXCI$] (3201) at ($(3200)+(\Dx,\Dy)$) {};
   \node [uusi] (3202) at ($(3201)+(\Dx,\Dy)$) {};
   \node [vanha, label = {[shift={(-0.4,-0.2)}]$\clEiio$}] (3204) at ($(3202)+(2*\Dx,2*\Dy)$) {};

   \node [uusi] (3112) at ($(3012)+(\Bx,\By)$) {};
   \node [uusi] (3114) at ($(3112)+(2*\Dx,2*\Dy)$) {};

   \node [uusi, label = {[shift={(-0.7,-0.65)}]$\clXO \cup \clSmaj$}] (3122) at ($(3022)+(\Bx,\By)$) {};
   \node [uusi, label = {[shift={(0.8,-0.55)}]$\clEioi \cup \clSmaj$}] (3124) at ($(3122)+(2*\Dx,2*\Dy)$) {};

   \node [uusi, label = {[shift={(-0.7,-0.65)}]$\clOX \cup \clSmaj$}] (3212) at ($(3112)+(\Bx,\By)$) {};
   \node [uusi, label = {[shift={(-0.7,-0.65)}]$\clEiio \cup \clSmaj$}] (3214) at ($(3212)+(2*\Dx,2*\Dy)$) {};

   \node [vanha, label = {[shift={(0,0.1)}]$\clAll$}] (3224) at ($(3124)+(\Bx,\By)$) {};

   \draw (3000) -- (3001) -- (3002) -- (3004); 
   \draw (3010) -- (3011) -- (3012) -- (3014);
   \draw (3020) -- (3021) -- (3022) -- (3024);
   \draw (3100) -- (3101) -- (3102) -- (3104);
   \draw (3200) -- (3201) -- (3202) -- (3204);
   \draw (3112) -- (3114);
   \draw (3122) -- (3124);
   \draw (3212) -- (3214);
   \draw (3000) -- (3010) -- (3020);
   \draw (3001) -- (3011) -- (3021);
   \draw (3002) -- (3012) -- (3022);
   \draw (3004) -- (3014) -- (3024);
   \draw (3102) -- (3112) -- (3122);
   \draw (3104) -- (3114) -- (3124);
   \draw (3202) -- (3212);
   \draw (3204) -- (3214) -- (3224);
   \draw (3000) -- (3100) -- (3200);
   \draw (3001) -- (3101) -- (3201);
   \draw (3002) -- (3102) -- (3202);
   \draw (3004) -- (3104) -- (3204);
   \draw (3012) -- (3112) -- (3212);
   \draw (3014) -- (3114) -- (3214);
   \draw (3022) -- (3122);
   \draw (3024) -- (3124) -- (3224);

   \draw (0000) -- (1000) -- (2000) -- (3000);
   \draw (0001) -- (1001) -- (2001) -- (3001);
   \draw (0002) -- (1002) -- (2002) -- (3002);
   \draw (000R) -- (100R) -- (200R);
   \draw (0003) -- (1003) -- (2003);
   \draw (0004) -- (1004) -- (2004) -- (3004);

   \draw (0010) -- (1010) -- (2010) -- (3010);
   \draw (0011) -- (1011) -- (2011) -- (3011);
   \draw (0012) -- (1012) -- (2012) -- (3012);
   \draw (0013) -- (1013) -- (2013);
   \draw (0014) -- (1014) -- (2014) -- (3014);

   \draw (0020) -- (1020) -- (2020) -- (3020);
   \draw (0021) -- (1021) -- (2021) -- (3021);
   \draw (0022) -- (1022) -- (2022) -- (3022);
   \draw (0024) -- (1024) -- (2024) -- (3024);

   \draw (0100) -- (1100) -- (2100) -- (3100);
   \draw (0101) -- (1101) -- (2101) -- (3101);
   \draw (0102) -- (1102) -- (2102) -- (3102);
   \draw (0103) -- (1103) -- (2103);
   \draw (0104) -- (1104) -- (2104) -- (3104);

   \draw (0200) -- (1200) -- (2200) -- (3200);
   \draw (0201) -- (1201) -- (2201) -- (3201);
   \draw (0202) -- (1202) -- (2202) -- (3202);
   \draw (0204) -- (1204) -- (2204) -- (3204);

   \draw (0112) -- (1112) -- (2112) -- (3112);
   \draw (0113) -- (1113) -- (2113);
   \draw (0114) -- (1114) -- (2114) -- (3114);

   \draw (0212) -- (1212) -- (2212) -- (3212);
   \draw (0214) -- (1214) -- (2214) -- (3214);

   \draw (0122) -- (1122) -- (2122) -- (3122);
   \draw (0124) -- (1124) -- (2124) -- (3124);

   \draw (0224) -- (1224) -- (2224) -- (3224);

\end{tikzpicture}
}
\end{center}
\caption{$(\protect\clSc,\protect\clVc)$-clonoids.}
\label{fig:ScVc-clonoids}
\end{figure}

\begin{proof}
By Lemma~\ref{lem:clonoid-inclusion}, every $(\clSc,\clVc)$\hyp{}clonoid is a $(\clSc,\clIc)$\hyp{}clonoid, so
$(\clSc,\clVc)$\hyp{}clonoids are sets of the form $A \cup B \cup C \cup D$, where $A$, $B$, $C$, and $D$ are as
described in Theorem~\ref{thm:disc-clonoids}\ref{thm:disc-clonoids:ScIc}.
However, not every set of this form is a $(\clSc,\clVc)$\hyp{}clonoid, and certain values for $A$, $B$, $C$, and $D$ can be immediately excluded with the help of the following claims.

\begin{claim}
\label{clm:ScVcK:1}
Let $K \subseteq \clAll$, and assume that $\clVc K \subseteq K$.
Let $a, b \in \{0,1\}$.
\begin{enumerate}[label=\upshape{(\roman*)}]
\item\label{clm:ScVcK:1:1}
If $\clIntVal{\clSmin}{a}{b} \subseteq K$ and $(a,b) \neq (1,1)$, then $\clIntVal{\clAll}{a}{b} \subseteq K$.
\item\label{clm:ScVcK:1:2}
If $\clIntVal{\clS}{a}{\overline{a}} \subseteq K$, then $\clIntVal{\clSmaj}{a}{\overline{a}} \subseteq K$.
\end{enumerate}
\end{claim}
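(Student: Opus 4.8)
The plan is to realize each target function as a binary join of two functions already known to lie in $K$ and then invoke the hypothesis $\clVc K \subseteq K$ together with $\mathord{\vee} \in \clVc$. I will use the pointwise descriptions of the classes involved: $f \in \clSmin$ exactly when $f(\vect{a}) \wedge f(\overline{\vect{a}}) = 0$ for all $\vect{a}$ (no antipodal pair carries two ones); dually $f \in \clSmaj$ exactly when $f(\vect{a}) \vee f(\overline{\vect{a}}) = 1$ for all $\vect{a}$ (Lemma~\ref{lem:AllSmaj}); and $f \in \clS$ exactly when $f(\vect{a}) \neq f(\overline{\vect{a}})$ for all $\vect{a}$, so that a self\hyp{}dual function satisfies $f(\vect{1}) = \overline{f(\vect{0})}$ and $\clIntVal{\clS}{a}{\overline{a}}$ is just the self\hyp{}dual functions with $f(\vect{0}) = a$. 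I fix, for each arity, a self\hyp{}dual reference function $s$ (say the first projection); then $\overline{s}$ is again self\hyp{}dual, while $s \vee \overline{s}$ and $s \wedge \overline{s}$ are the constant functions $1$ and $0$.

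For part~\ref{clm:ScVcK:1:1}, let $f \in \clIntVal{\clAll}{a}{b}$ with $(a,b) \neq (1,1)$ and put $g_1 := f \wedge s$ and $g_2 := f \wedge \overline{s}$. On any antipodal pair the two $s$\hyp{}factors occurring in $g_i(\vect{a}) \wedge g_i(\overline{\vect{a}})$ are $s(\vect{a})$ and $\overline{s(\vect{a})}$, whose meet is $0$; hence $g_1, g_2 \in \clSmin$, and plainly $g_1 \vee g_2 = f$. The only place this may spoil the prescribed boundary is the pair $\{\vect{0}, \vect{1}\}$, where meeting with $s$ or $\overline{s}$ can delete a required value $1$. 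I repair this by replacing each $g_i$ with $g_i \vee (a \wedge e_{\vect{0}}) \vee (b \wedge e_{\vect{1}})$, where $e_{\vect{0}}$ and $e_{\vect{1}}$ are the indicator functions of $\vect{0}$ and $\vect{1}$. Since $(a,b) \neq (1,1)$, at most one indicator is switched on, so no new antipodal pair of ones is created and the repaired functions stay in $\clSmin$; a direct evaluation gives $g_i(\vect{0}) = a$ and $g_i(\vect{1}) = b$, and the added indicator lies below $f$, so still $g_1 \vee g_2 = f$. Thus $g_1, g_2 \in \clIntVal{\clSmin}{a}{b} \subseteq K$ and $f = \mathord{\vee}(g_1, g_2) \in \clVc K \subseteq K$.

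For part~\ref{clm:ScVcK:1:2}, let $f \in \clIntVal{\clSmaj}{a}{\overline{a}}$ and split it into two self\hyp{}dual minorants as follows. On each antipodal pair on which $f$ carries a single one, let $g$ and $g'$ agree with $f$; on each pair on which $f$ carries two ones, set $g := s$ and $g' := \overline{s}$ there (no pair carries two zeros, as $f \in \clSmaj$). Both $g$ and $g'$ are then self\hyp{}dual---forced on the single\hyp{}one pairs, inherited from $s$ and $\overline{s}$ on the double\hyp{}one pairs---and $g \vee g' = f$, since $s \vee \overline{s} = 1$ restores the value on the double\hyp{}one pairs while they copy $f$ elsewhere. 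Because $a \neq \overline{a}$, the pair $\{\vect{0}, \vect{1}\}$ carries a single one, so $g(\vect{0}) = g'(\vect{0}) = a$; hence $g, g' \in \clIntVal{\clS}{a}{\overline{a}} \subseteq K$ and $f = \mathord{\vee}(g, g') \in \clVc K \subseteq K$.

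The entire content is the antipodal\hyp{}pair bookkeeping, and the two hypotheses are precisely what makes the boundary pair $\{\vect{0}, \vect{1}\}$ cooperate: in \ref{clm:ScVcK:1:1} the exclusion of $(a,b) = (1,1)$ ensures the two boundary repairs never collide (mirroring $\clIntVal{\clSmin}{1}{1} = \emptyset$), and in \ref{clm:ScVcK:1:2} the relation $b = \overline{a}$ forces $\{\vect{0}, \vect{1}\}$ to be a single\hyp{}one pair, so that the self\hyp{}dual pieces inherit the correct boundary automatically. I anticipate no obstacle beyond verifying these pointwise membership conditions carefully; the reference function $s$ is the device that turns the pairwise choices into genuine Boolean functions of the correct arity.
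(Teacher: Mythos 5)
Your proof is correct and takes essentially the same approach as the paper's: both parts realize the target function $f$ as a binary join of two functions in the smaller class, using the (self\hyp{}dual) first projection to split the antipodal pairs on which $f$ takes value $1$ twice, with the hypotheses $(a,b)\neq(1,1)$ and $b=\overline{a}$ handling the boundary pair $\{\vect{0},\vect{1}\}$. The only cosmetic difference is in part (i), where the paper defines the two summands piecewise (agreeing with $f$ off the double\hyp{}one pairs) rather than via $f\wedge s$, $f\wedge\overline{s}$ plus an indicator repair, but the underlying decomposition is the same.
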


\begin{pfclaim}[Proof of Claim~\ref{clm:ScVcK:1}]
\ref{clm:ScVcK:1:1}
Let $f \in \clIntVal{\clAll}{a}{b}$.
Define functions $g$ and $h$ (of the same arity as $f$) as follows.
For $\vect{a} \in \{0,1\}^n$, let
\begin{align*}
g(\vect{a}) &:=
\begin{cases}
f(\vect{a}), & \text{if $(f(\vect{a}), f(\overline{\vect{a}})) \neq (1,1)$,} \\
0, & \text{if $(f(\vect{a}), f(\overline{\vect{a}})) = (1,1)$ and $a_1 = 0$,} \\
1, & \text{if $(f(\vect{a}), f(\overline{\vect{a}})) = (1,1)$ and $a_1 = 1$,}
\end{cases}
\\ 
h(\vect{a}) &:=
\begin{cases}
f(\vect{a}), & \text{if $(f(\vect{a}), f(\overline{\vect{a}})) \neq (1,1)$,} \\
1, & \text{if $(f(\vect{a}), f(\overline{\vect{a}})) = (1,1)$ and $a_1 = 0$,} \\
0, & \text{if $(f(\vect{a}), f(\overline{\vect{a}})) = (1,1)$ and $a_1 = 1$.}
\end{cases}
\end{align*}
Clearly, $g, h \in \clIntVal{\clSmin}{a}{b}$ and $f = g \vee h$.
Therefore, $f \in \clVc \clIntVal{\clSmin}{a}{b} \subseteq \clVc K \subseteq K$.
We have thus shown that $\clIntVal{\clAll}{a}{b} \subseteq K$.

\ref{clm:ScVcK:1:2}
If $f \in \clIntVal{\clSmaj}{a}{\overline{a}}$, then the functions $g$ and $h$ defined above belong to $\clIntVal{\clS}{a}{\overline{a}}$ and $f = g \vee h$.
We can then conclude that
$f \in \clVc \clIntVal{\clS}{a}{\overline{a}} \subseteq \clVc K \subseteq K$.
This shows that $\clIntVal{\clSmaj}{a}{\overline{a}} \subseteq K$.
\end{pfclaim}

It follows immediately from Claim~\ref{clm:ScVcK:1} that every $(\clSc,\clVc)$\hyp{}clonoid is of the form $A \cup B \cup C \cup D$, where
\begin{gather*}
A \in \{ \clEmpty, \clVako, \clReflOO, \clOO \}, \quad
B \in \{ \clEmpty, \clSmajOI, \clOI \}, \quad
C \in \{ \clEmpty, \clSmajIO, \clIO \}, \\
D \in \{ \clEmpty, \clVaki, \clSmajII, \clReflII, \clSmajII \cup \clReflII, \clII \}.
\end{gather*}

We can exclude further quadruples $(A,B,C,D)$ with the help of the following claim.

\begin{claim}
\label{clm:ScVcK:2}
Let $K \subseteq \clAll$, and assume that $\clVc K \subseteq K$.
Let $a, b \in \{0,1\}$.
\begin{enumerate}[label=\upshape{(\roman*)}]
\item\label{clm:ScVcK:2:1}
If $\clOI \cup \clIO \subseteq K$, then $\clII \subseteq K$.
\item\label{clm:ScVcK:2:2}
If $\clIntVal{\clAll}{a}{b} \cup \clReflII \subseteq K$, then $\clII \subseteq K$.
\item\label{clm:ScVcK:2:3}
If $\clSmajOI \cup \clSmajIO \subseteq K$, then $\clSmajII \subseteq K$.
\item\label{clm:ScVcK:2:4}
If $\clIntVal{\clSmaj}{a}{\overline{a}} \cup \clReflII \subseteq K$, then $\clSmajII \subseteq K$.
\end{enumerate}
\end{claim}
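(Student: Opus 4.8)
The plan is to prove all four implications by a single device. Since $\clVc = \clonegen{\vee}$ and $K$ is assumed stable under left composition with $\clVc$, it suffices in each case to take an arbitrary $f$ in the target class and exhibit a decomposition $f = g \vee h$ in which $g$ and $h$ already lie in the classes assumed to be contained in $K$; then $f \in \clVc K \subseteq K$. Throughout I will use the characterization $\clSmaj = \{\, f \mid f(\vect{a}) \vee f(\overline{\vect{a}}) = 1 \text{ for all } \vect{a} \,\}$ (a function majorizes a self-dual function exactly when no antipodal pair is sent to $(0,0)$), together with the fact that every self-dual function majorizes itself, so that $\clSc \subseteq \clSmajOI$ and $\clScneg \subseteq \clSmajIO$.

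For \ref{clm:ScVcK:2:1} I would, given $f \in \clII$, set $g(\vect{a}) := f(\vect{a})$ for $\vect{a} \neq \vect{0}$ with $g(\vect{0}) := 0$, and $h(\vect{0}) := 1$, $h(\vect{1}) := 0$, $h(\vect{a}) := 0$ otherwise; then $g \in \clOI$, $h \in \clIO$, and a one-line check at $\vect{0}$, $\vect{1}$, and the remaining inputs gives $g \vee h = f$. For \ref{clm:ScVcK:2:2}, again with $f \in \clII$, I would take the reflexive function $h(\vect{a}) := f(\vect{a}) \wedge f(\overline{\vect{a}})$, which lies in $\clReflII$ since $f(\vect{0}) = f(\vect{1}) = 1$, together with $g$ agreeing with $f$ off $\{\vect{0},\vect{1}\}$ and satisfying $g(\vect{0}) := a$, $g(\vect{1}) := b$; absorption yields $g \vee h = f$, and $g \in \clIntVal{\clAll}{a}{b}$.

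For \ref{clm:ScVcK:2:3} I would reuse verbatim the antipodal split already employed in Claim~\ref{clm:ScVcK:1}: for $f \in \clSmajII$ define $g$ and $h$ by the first-coordinate rule (on each antipodal pair with $(f(\vect{a}),f(\overline{\vect{a}})) = (1,1)$ assign the value by whether $a_1 = 0$ or $a_1 = 1$, oppositely for $g$ and $h$, and set $g = h = f$ elsewhere). Exactly as in Claim~\ref{clm:ScVcK:1} these are self-dual; since $(f(\vect{0}),f(\vect{1})) = (1,1)$ the boundary pair forces $g(\vect{0}) = 0,\ g(\vect{1}) = 1$ and $h(\vect{0}) = 1,\ h(\vect{1}) = 0$, whence $g \in \clSc \subseteq \clSmajOI$ and $h \in \clScneg \subseteq \clSmajIO$, while $g \vee h = f$. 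For \ref{clm:ScVcK:2:4} I would again put $h(\vect{a}) := f(\vect{a}) \wedge f(\overline{\vect{a}}) \in \clReflII$, and let $g$ agree with $f$ wherever $h = 0$, take the prescribed boundary values $g(\vect{0}) := a,\ g(\vect{1}) := \overline{a}$ (permissible because the boundary pair has $h = 1$), and equal $1$ on all other antipodal pairs with $h = 1$; then $g(\vect{a}) \vee g(\overline{\vect{a}}) = 1$ on every pair, so $g \in \clIntVal{\clSmaj}{a}{\overline{a}}$, and $g \vee h = f$.

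The routine content is the verification that $g \vee h = f$ in each case. The one genuinely delicate point, and the only place where the hypotheses are really used, is keeping the pieces inside $\clSmaj$ in \ref{clm:ScVcK:2:3} and \ref{clm:ScVcK:2:4}: I must check that the assigned values never send an antipodal pair to $(0,0)$, which is exactly where the assumption $f \in \clSmaj$ (ensuring that off the free $(1,1)$-pairs the values of $f$ already satisfy the $\clSmaj$ condition) and the careful, separate treatment of the boundary pair $\{\vect{0},\vect{1}\}$ both come into play.
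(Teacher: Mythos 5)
Your proposal is correct and follows essentially the same strategy as the paper's proof: in each case write an arbitrary $f$ in the target class as $g \vee h$ with $g$ and $h$ in the hypothesized subclasses, then invoke $\clVc K \subseteq K$. The only differences are cosmetic choices of the pieces (e.g.\ taking $h(\vect{a}) = f(\vect{a}) \wedge f(\overline{\vect{a}})$ in \ref{clm:ScVcK:2:2} and \ref{clm:ScVcK:2:4} where the paper uses the indicator function of $\{\vect{0},\vect{1}\}$), and all the required membership verifications go through.
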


\begin{pfclaim}[Proof of Claim~\ref{clm:ScVcK:2}]
\ref{clm:ScVcK:2:1}
Let $f \in \clII$.
Define functions $g$ and $h$ (of the same arity as $f$) as follows.
For $\vect{a} \in \{0,1\}^n$, let
\begin{align*}
g(\vect{a}) &:=
\begin{cases}
f(\vect{a}), & \text{if $\vect{a} \neq \vect{0}$,} \\
0, & \text{if $\vect{a} = \vect{0}$,}
\end{cases}
&
h(\vect{a}) &:=
\begin{cases}
f(\vect{a}), & \text{if $\vect{a} \neq \vect{1}$,} \\
0, & \text{if $\vect{a} = \vect{1}$,}
\end{cases}
\end{align*}
Clearly, $g \in \clOI$, $h \in \clIO$, and $f = g \vee h$.
Therefore, $f \in \clVc (\clOI \cup \clIO) \subseteq \clVc K \subseteq K$.
We conclude that $\clII \subseteq K$.

\ref{clm:ScVcK:2:2}
Let $f \in \clII$.
Define functions $g$ and $h$ (of the same arity as $f$) as follows.
For $\vect{a} \in \{0,1\}^n$, let
\begin{align*}
g(\vect{a}) &:=
\begin{cases}
f(\vect{a}), & \text{if $\vect{a} \notin \{ \vect{0}, \vect{1} \}$,} \\
a, & \text{if $\vect{a} = \vect{0}$,} \\
b, & \text{if $\vect{a} = \vect{1}$,}
\end{cases}
&
h(\vect{a}) &:=
\begin{cases}
1, & \text{if $\vect{a} \in \{ \vect{0}, \vect{1} \}$,} \\
0, & \text{otherwise.}
\end{cases}
\end{align*}
Clearly, $g \in \clIntVal{\clAll}{a}{b}$, $h \in \clReflII$, and $f = g \vee h$.
Therefore, $f \in \clVc (\clIntVal{\clAll}{a}{b} \cup \clReflII) \subseteq \clVc K \subseteq K$.
We conclude that $\clII \subseteq K$.

\ref{clm:ScVcK:2:3}
Let $f \in \clSmajII$.
Define functions $g$ and $h$ as in the proof of Claim~\ref{clm:ScVcK:1}\ref{clm:ScVcK:1:1}.
We now have $g \in \clSmajOI$, $h \in \clSmajIO$, and $f = g \vee h$.
Therefore, $f \in \clVc (\clSmajOI \cup \clSmajIO) \subseteq \clVc K \subseteq K$.
We conclude that $\clSmajII \subseteq K$.

\ref{clm:ScVcK:2:4}
Let $f \in \clSmajII$.
Define functions $g$ and $h$ as in \ref{clm:ScVcK:2:2}, with $b = \overline{a}$.
We now have $g \in \clIntVal{\clSmaj}{a}{\overline{a}}$, $h \in \clReflII$, and $f = g \vee h$.
Therefore, $f \in \clVc (\clIntVal{\clSmaj}{a}{\overline{a}} \cup \clReflII) \subseteq \clVc K \subseteq K$.
We conclude that $\clSmajII \subseteq K$.
\end{pfclaim}

Claim~\ref{clm:ScVcK:2} leaves us with the 123 quadruples $(A,B,C,D)$, or classes $K = A \cup B \cup C \cup D$, that are presented in Table~\ref{table:ScVc} and in Figure~\ref{fig:ScVc-clonoids}.
It remains to verify that these classes are indeed $(\clSc,\clVc)$\hyp{}clonoids.
For this, we only need to prove stability under left composition with $\clVc$.
Because intersections of $(\clSc,\clVc)$\hyp{}clonoids are again $(\clSc,\clVc)$\hyp{}clonoids, it suffices to verify this only for the 16 meet\hyp{}irreducible classes:
$\clAll$,
$\clEiio \cup \clSmaj$,
$\clEioi \cup \clSmaj$,
$\clEiio$,
$\clEioi$,
$\clOX \cup \clSmaj$,
$\clXO \cup \clSmaj$,
$\clOXCI$,
$\clXOCI$,
$\clOX$,
$\clXO$,
$\clEioo \cup \clReflOO$,
$\clEioo \cup \clVako$
$\clEioo$,
$\clSmaj \cup \clRefl$,
$\clRefl$.
This is straightforward verification, especially with the help of Lemma~\ref{lem:AllSmaj}, and we leave the details to the reader.
\end{proof}

\begin{theorem}
\label{thm:ScVc-stab-gen}
For each $(\clSc,\clVc)$\hyp{}clonoid $K$, as determined in 
Proposition~\ref{prop:ScVc-clonoids} and Table~\ref{table:ScVc},
the clones $C_1^K$ and $C_2^K$ prescribed in Table~\ref{table:ScVc} have the property that for every clone $C$, it holds that $KC \subseteq K$ if and only if $C \subseteq C_1^K$, and $CK \subseteq K$ if and only if $C \subseteq C_2^K$.
\end{theorem}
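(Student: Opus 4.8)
The plan is to reduce the whole statement to an \emph{identification} problem. By Lemma~\ref{lem:CAKCBK}, for every $(\clSc,\clVc)$-clonoid $K$ there already exist unique largest clones $C_1^K$ and $C_2^K$ such that $KC\subseteq K$ if and only if $C\subseteq C_1^K$, and $CK\subseteq K$ if and only if $C\subseteq C_2^K$; hence the set of clones stabilizing $K$ on each side is a principal ideal of Post's lattice, and all that remains is to check that its top is the clone listed in Table~\ref{table:ScVc}. For the $37$ clonoids whose source column reads \cite{Lehtonen-SM} this has already been done there and I would simply quote it, so the real work concerns the remaining clonoids. Throughout, the argument runs exactly parallel to the proof of Theorem~\ref{thm:McVc-stab-gen}: for each $K$ and each side, a positive verification that the tabulated clone stabilizes $K$, together with a negative verification that none of its (finitely many) upper covers in Post's lattice does. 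Both reduce to generating sets by Lemmas~\ref{lem:CL-Lc-3.2} and \ref{lem:CL-Lc-3.3}, using that a clonoid is minor-closed.

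For right composition one always has $K\clSc\subseteq K$, and for the new clonoids the candidate tops $C_1^K$ sit low in Post's lattice—essentially $\clSc$ and the clones just above it ($\clS$, $\clOI$, and occasionally $\clOX$ or $\clXI$)—so beyond $\clSc$ the only generators to test are negation, the Mal'cev operation $\mathord{+_3}$, and the constants $0$, $1$ (for instance $\clOI=\clonegen{\mathord{\wedge},\mathord{\vee},\mathord{+_3}}$, handled as in the $\clMc$ proof). The positive verifications rely on the structural description of $K$ from Theorem~\ref{thm:disc-clonoids}\ref{thm:disc-clonoids:ScIc} in terms of the invariants $\range^{[2]}(f)$, the pair $(f(\vect{0}),f(\vect{1}))$, and membership in $\clRefl$, $\clSmin$, $\clSmaj$: a self-dual inner tuple carries antipodal argument pairs to antipodal pairs, which is precisely why right composition with $\clSc$ (and with $\clS$ via $\neg$) preserves these invariants. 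The negative verifications I would package into one auxiliary lemma, the analog of Lemma~\ref{lem:stab-apu-R}, giving explicit escape witnesses $\lambda_{\vect{c}}$ and values $f\ast g$, each phrased as an interval condition $L\subseteq K\subseteq U$ in $\closys{(\clSc,\clVc)}$ so that a single computation disqualifies a whole band of clonoids at once.

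For left composition one always has $\clVc K\subseteq K$, and here the new tops $C_2^K$ range over $\clVo$, $\clVi$, $\clV$, and $\clWk{3}$ (with $\clVc$ the common floor). The constants sort out the first three, since $\{0\}K\subseteq K \iff \clVako\subseteq K$ and $\{1\}K\subseteq K \iff \clVaki\subseteq K$. The $\clWk{3}$ cases (the clonoids built on $\clSmaj$ together with $\clII$, such as $\clSmaj\cup\clII$ and $\clSmajOI\cup\clII$) are the delicate ones: I would verify $\clWk{3}K\subseteq K$ directly by testing the generators $\mathord{\rightarrow}$ and $\threshold{4}{2}$ of $\clWk{3}$, leaning on Lemma~\ref{lem:AllSmaj} and on the decomposition technique $f=g\vee h$ already used in the proof of Proposition~\ref{prop:ScVc-clonoids}, and then rule out the upper cover $\clWk{2}$ with a single escape witness. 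Ruling out the clones above $\clVo$, $\clVi$, $\clV$ is done with an auxiliary lemma paralleling Lemma~\ref{lem:stab-apu}, exhibiting pairs $f,g$ with $f\wedge g\notin K$ and $\veewedge(\,\cdot\,)\notin K$.

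I expect the principal obstacle to be bookkeeping rather than any single hard step: choosing the interval conditions and witnesses so that the new clonoids are disposed of economically instead of one at a time, and reading off the upper covers in Post's lattice correctly so that each maximality claim is genuinely complete—in particular distinguishing $\clWk{3}$ from $\clWk{2}$ on the left and $\clS$ from its covers on the right. Once the positive stabilities and the matching failures are in place, they force $C_1^K$ and $C_2^K$ to be exactly the entries of Table~\ref{table:ScVc}, which is the assertion of the theorem.
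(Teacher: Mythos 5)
Your proposal is correct and follows essentially the same strategy as the paper: quote the clonoids already settled in \cite{Lehtonen-SM}, then for each remaining $K$ combine positive stability checks on generators with escape witnesses ruling out the covers in Post's lattice, using the constants to separate $\clVc$, $\clVo$, $\clVi$, $\clV$, the functions $\mathord{\wedge}$, $\mu$, $\veewedge$ to exclude everything larger, and a direct verification of $\mathord{\rightarrow}$ and $\threshold{4}{2}$ for the five $\clWk{3}$ cases (the paper excludes $\clWk{2}$ via $\mu \in \clWk{2}$ rather than a separate witness, which amounts to the same thing). The only substantive difference is on the right-composition side, where the paper does not test generators at all but reads $C_1^K$ off directly from the $C$-minor poset descriptions of Theorem~\ref{thm:disc-minors}, since every clone above $\clSc$ is a discriminator clone whose minor order is already known; also, the number of clonoids inherited from \cite{Lehtonen-SM} is $42$ out of $123$ here, not $37$ (that figure belongs to the $(\clMc,\clVc)$ case).
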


For 42 out of the 123 $(\clSc,\clVc)$\hyp{}clonoids $K$, the clones $C_1^K$ and $C_2^K$ have already been determined in \cite[Theorem 5.1, Table 1]{Lehtonen-SM}.
These are indicated in Table~\ref{table:ScVc} with ``\cite{Lehtonen-SM}'' in the source column and in Figure~\ref{fig:ScVc-clonoids} as the vertices drawn as hollow circles (\begin{tikzpicture}[scale=0.6] \node [vanha] (0) at (0,0) {}; \end{tikzpicture}).
We focus on the remaining $(\clSc,\clVc)$\hyp{}clonoids, indicated in Table~\ref{table:ScVc} with ``\ref{thm:ScVc-stab-gen}'' in the source column and in Figure~\ref{fig:ScVc-clonoids} as the vertices drawn as filled circles (\begin{tikzpicture}[scale=0.6] \node [uusi] (0) at (0,0) {}; \end{tikzpicture}).

\begin{lemma}
\label{lem:Scstabapu}
\leavevmode
\begin{enumerate}[label=\upshape{(\roman*)}]
\item\label{lem:Scstabapu:Lambda1}
For $a, b \in \{0,1\}$ with $(a,b) \neq (0,0)$,
$\{ \mathord{\wedge} \} \clIntVal{\clSmaj}{a}{b} \nsubseteq \clSmaj \cup \clSmin \cup \clRefl \cup (\clAll \setminus \clIntVal{\clAll}{a}{b})$.
\item\label{lem:Scstabapu:Lambda2}
For $a \in \{0,1\}$, $\{ \mathord{\wedge} \} ( \clIntVal{\clAll}{a}{\overline{a}} \cup \clReflOO ) \nsubseteq \clEioo \cup \clReflOO$.
\item\label{lem:Scstabapu:SM1}
If $\{ \mathord{\wedge} \} K \nsubseteq K$ and $\clVako \subseteq K$, then $\{ \mu \} K \nsubseteq K$ and $\{ \veewedge \} K \nsubseteq K$.
\item\label{lem:Scstabapu:SM2}
$\{ \mu \} (\clSmajII \cup \clReflII) \nsubseteq \clSmaj \cup \clSmin \cup \clRefl \cup \clEiii$.
\item\label{lem:Scstabapu:SM3}
For $a, b, c, d \in \{0,1\}$ with $(a,b) \neq (0,0)$,
$\{ \mu \} ( \clIntVal{\clSmaj}{a}{b} \cup \clIntVal{\clAll}{c}{d} ) \nsubseteq \clSmaj \cup \clSmin \cup \clRefl \cup (\clAll \setminus \clIntVal{\clAll}{a}{b})$.
\item\label{lem:Scstabapu:McWinf1}
For $a, b \in \{0,1\}$ with $(a,b) \neq (0,0)$,
$\{ \veewedge \} ( \clIntVal{\clSmaj}{a}{b} \cup \clReflII ) \nsubseteq \clSmaj \cup \clSmin \cup \clRefl \cup \clEiii$.
\item\label{lem:Scstabapu:McWinf2}
For $a \in \{0,1\}$,
$\{ \veewedge \} ( \clIntVal{\clAll}{a}{\overline{a}} \cup \clSmajII ) \nsubseteq \clSmaj \cup \clSmin \cup \clRefl \cup \clEiii$.
\end{enumerate}
\end{lemma}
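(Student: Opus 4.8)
The plan is to prove each item by exhibiting explicit witnesses, since every statement is a non-containment $\{\star\}S \nsubseteq T$ and is settled as soon as one produces members of $S$ whose $\star$-composite lies outside $T$. Throughout I would work with the weight functions $\lambda_{\vect c}$, using that composing weight functions of a common arity by any Boolean operation $F$ again yields a weight function, namely $F(\lambda_{\vect c},\lambda_{\vect d},\lambda_{\vect e}) = \lambda_{\vect f}$ with $f_w = F(c_w,d_w,e_w)$. Under this dictionary the three classes $\clSmaj$, $\clSmin$, $\clRefl$ become conditions on the dual pairs $(c_w,c_{m-w})$: a function lies outside $\clSmaj\cup\clSmin\cup\clRefl$ exactly when its defining word has a dual pair equal to $(0,0)$, a dual pair equal to $(1,1)$, and an asymmetric dual pair. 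Engineering this ``three-witness'' configuration in the composite is the target of each item.

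Two simplifications cut down the work before any construction. First, in every item the inner functions share fixed values at $\vect0$ and $\vect1$, so the composite has determined end values; a short computation (for instance $\mu$ with two inner functions from $\clIntVal{\clSmaj}{a}{b}$, or $\veewedge$ with its last two arguments in $\clSmajII$) shows the composite lands in the prescribed column $\clIntVal{\clAll}{a}{b}$, respectively $\clIntVal{\clAll}{1}{1}$. This automatically places it outside the $\clEiii$ summand, respectively the wrong-column summand $\clAll\setminus\clIntVal{\clAll}{a}{b}$, of the target union, so every item reduces to forcing the composite out of $\clSmaj\cup\clSmin\cup\clRefl$. Second, statement \ref{lem:Scstabapu:SM1} needs no construction at all: from the identities $\mu(0,y,z)=y\wedge z$ and $\veewedge(0,y,z)=y\wedge z$ together with $\clVako\subseteq K$, any $f,g\in K$ with $f\wedge g\notin K$ give $\mu(0,f,g)=\veewedge(0,f,g)=f\wedge g\notin K$, whence $\{\mu\}K\nsubseteq K$ and $\{\veewedge\}K\nsubseteq K$. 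This is precisely what lets the proof of Theorem~\ref{thm:ScVc-stab-gen} dispatch the clonoids containing $\clVako$, leaving the remaining, genuinely constructive items for the zero-free clonoids.

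For the binary items \ref{lem:Scstabapu:Lambda1} and \ref{lem:Scstabapu:Lambda2} I would give direct meets. For \ref{lem:Scstabapu:Lambda2}, taking $f\in\clIntVal{\clAll}{a}{\overline{a}}$ and a reflexive $g\in\clReflOO$ that is $1$ on some interior dual pair, the meet $f\wedge g$ has end values $(0,0)$ (hence is outside $\clEioo$) and is non-reflexive on that pair (hence outside $\clReflOO$). For \ref{lem:Scstabapu:Lambda1} I would pick two majorants $f,g\in\clIntVal{\clSmaj}{a}{b}$ arranged so that $f\wedge g$ carries a $(0,0)$ pair (set $f(\vect p)=1,g(\vect p)=0$ and $f(\overline{\vect p})=0,g(\overline{\vect p})=1$, legal since each is a majorant), a $(1,1)$ pair, and an asymmetric pair; here $(a,b)\neq(0,0)$ supplies a free witness from the end pair $\{\vect0,\vect1\}$ (both-$1$ when $(a,b)=(1,1)$, asymmetric when $(a,b)\in\{(0,1),(1,0)\}$), so only the remaining witnesses must be built in the interior. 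The same end-pair case split governs the ternary items \ref{lem:Scstabapu:SM2}, \ref{lem:Scstabapu:SM3}, \ref{lem:Scstabapu:McWinf1}, \ref{lem:Scstabapu:McWinf2}, where the operation is steered as above into the correct column and then forced to exhibit the three dual-pair witnesses by a concrete choice of weight words; for $\veewedge = x\vee(y\wedge z)$ one chooses the inner meet $y\wedge z$ to carry a $(0,0)$ dual pair and the first argument to vanish on that pair, so that the join retains a $(0,0)$ pair and the composite escapes $\clSmaj$.

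The main obstacle is the ternary items combining two source families, \ref{lem:Scstabapu:SM3} and \ref{lem:Scstabapu:McWinf2} (and, to a lesser degree, \ref{lem:Scstabapu:SM2} and \ref{lem:Scstabapu:McWinf1}): there one must simultaneously keep the $\mu$- or $\veewedge$-composite inside the prescribed end-value column for every admissible parameter choice, realize all three dual-pair witnesses, and respect the interaction of the operation across the three inner words. The asymmetry of $\veewedge$ in its arguments and the two-out-of-three behaviour of $\mu$ make the bookkeeping delicate, and the construction must be uniform in $a,b,c,d$. I expect to handle this by fixing explicit short weight words and exploiting the $0\leftrightarrow1$ and $a\leftrightarrow b$ symmetries, together with the outer-negation duality exchanging $\clSmaj$ with $\clSmin$, to reduce the number of parameter cases; each remaining case is then a finite verification.
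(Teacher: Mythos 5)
Your proposal is correct and follows essentially the same route as the paper: item \ref{lem:Scstabapu:SM1} via the identity $\mu(0,f,g)=\veewedge(0,f,g)=f\wedge g$, and every other item by exhibiting explicit inner functions whose composite lies in the forced column $\clIntVal{\clAll}{a}{b}$ (resp.\ $\clII$) and simultaneously carries a $(0,0)$ dual pair, a $(1,1)$ dual pair, and an asymmetric dual pair, hence escapes $\clSmaj\cup\clSmin\cup\clRefl$. The only cosmetic difference is that the paper writes down arity\hyp{}$3$ truth tables directly (which makes the ``delicate bookkeeping'' you anticipate a one\hyp{}table check, uniform in $a,b,c,d$), whereas you parametrize the witnesses as weight functions $\lambda_{\vect{c}}$ --- workable, but note that weight functions need odd arity at least $5$ to provide two independent non\hyp{}degenerate interior dual pairs.
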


\begin{proof}
\ref{lem:Scstabapu:Lambda1}
For the functions $f$ and $g$ specified by the operation table below,
we have $f, g \in \clIntVal{\clSmaj}{a}{b}$, but $f \wedge g \notin \clSmaj \cup \clSmin \cup \clRefl \cup (\clAll \setminus \clIntVal{\clAll}{a}{b})$.
\[
\begin{array}{ccc|ccc}
x_1 & x_2 & x_3 & f & g & f \wedge g \\
\hline
0 & 0 & 0 & a & a & a \\
1 & 1 & 1 & b & b & b \\
0 & 0 & 1 & 1 & 1 & 1 \\
1 & 1 & 0 & 1 & 1 & 1 \\
0 & 1 & 0 & 1 & 0 & 0 \\
1 & 0 & 1 & 0 & 1 & 0 \\
1 & 0 & 0 & 0 & 0 & 0 \\
0 & 1 & 1 & 1 & 1 & 1 \\
\end{array}
\]

\ref{lem:Scstabapu:Lambda2}
For the functions $f$ and $g$ specified by the operation table below,
we have $f \in \clIntVal{\clAll}{a}{\overline{a}}$, $g \in \clReflOO$ but $f \wedge g \notin \clEioo \cup \clReflOO$.
\[
\begin{array}{cc|ccc}
x_1 & x_2 & f & g & f \wedge g \\
\hline
0 & 0 & a & 0 & 0 \\
0 & 1 & 0 & 1 & 0 \\
1 & 0 & 1 & 1 & 1 \\
1 & 1 & \overline{a} & 0 & 0
\end{array}
\]

\ref{lem:Scstabapu:SM1}
Because $\{\mathord{\wedge}\} K \nsubseteq K$, there exist $f, g \in K$ such that $f \wedge g \notin K$.
Then
\begin{align*}
& \mu(0,f,g) = (0 \wedge f) \vee (0 \wedge g) \vee (f \wedge g) = f \wedge g \notin K, \\
& \veewedge(0, f, g) = 0 \vee (f \wedge g) = f \wedge g \notin K,
\end{align*}
so $\{\mu\} K \nsubseteq K$ and $\{ \veewedge \} K \nsubseteq K$.

\ref{lem:Scstabapu:SM2},
\ref{lem:Scstabapu:SM3}
For the functions $f$, $g$, and $h$ specified by the operation table below,
we have $f, g \in \clIntVal{\clSmaj}{a}{b}$ and $h \in \clIntVal{\clAll}{c}{d}$ but $\mu(f,g,h) \notin \clSmaj \cup \clSmin \cup \clRefl \cup (\clAll \setminus \clIntVal{\clAll}{a}{b})$.
Moreover, if $c = d$, then $h \in \clIntVal{\clRefl}{c}{c}$.
\[
\begin{array}{ccc|cccc}
x_1 & x_2 & x_3 & f & g & h & \mu(f,g,h) \\
\hline
0 & 0 & 0 & a & a & c & a \\
1 & 1 & 1 & b & b & d & b \\
0 & 0 & 1 & 1 & 1 & 1 & 1 \\
1 & 1 & 0 & 1 & 1 & 1 & 1 \\
0 & 1 & 0 & 1 & 0 & 0 & 0 \\
1 & 0 & 1 & 0 & 1 & 0 & 0 \\
1 & 0 & 0 & 0 & 0 & 0 & 0 \\
0 & 1 & 1 & 1 & 1 & 0 & 1 \\
\end{array}
\]

\ref{lem:Scstabapu:McWinf1}
For the functions $f$, $g$, and $h$ specified by the operation table below,
we have $f \in \clReflII$ and $g, h \in \clIntVal{\clSmaj}{a}{b}$ but $\veewedge(f,g,h) \notin \clSmaj \cup \clSmin \cup \clRefl \cup \clEiii$.
\[
\begin{array}{ccc|cccc}
x_1 & x_2 & x_3 & f & g & h & \veewedge(f,g,h) \\
\hline
0 & 0 & 0 & 1 & a & a & 1 \\
1 & 1 & 1 & 1 & b & b & 1 \\
0 & 0 & 1 & 1 & 0 & 0 & 1 \\
1 & 1 & 0 & 1 & 1 & 1 & 1 \\
0 & 1 & 0 & 0 & 0 & 1 & 0 \\
1 & 0 & 1 & 0 & 1 & 0 & 0 \\
1 & 0 & 0 & 0 & 0 & 0 & 0 \\
0 & 1 & 1 & 0 & 1 & 1 & 1
\end{array}
\]

\ref{lem:Scstabapu:McWinf2}
For the functions $f$, $g$, and $h$ specified by the operation table below,
we have $f \in \clIntVal{\clAll}{a}{\overline{a}}$ and $g, h \in \clSmajII$ but $\veewedge(f,g,h) \notin \clSmaj \cup \clSmin \cup \clRefl \cup \clEiii$.
\[
\begin{array}{ccc|cccc}
x_1 & x_2 & x_3 & f & g & h & \veewedge(f,g,h) \\
\hline
0 & 0 & 0 & a & 1 & 1 & 1 \\
1 & 1 & 1 & \overline{a} & 1 & 1 & 1 \\
0 & 0 & 1 & 0 & 1 & 1 & 1 \\
1 & 1 & 0 & 0 & 1 & 1 & 1 \\
0 & 1 & 0 & 0 & 0 & 1 & 0 \\
1 & 0 & 1 & 0 & 1 & 0 & 0 \\
1 & 0 & 0 & 0 & 0 & 0 & 0 \\
0 & 1 & 1 & 0 & 1 & 1 & 1
\end{array}
\]
\end{proof}

\begin{proof}[Proof of Theorem~\ref{thm:ScVc-stab-gen}]
We determine, for each $(\clSc,\clVc)$\hyp{}clonoid $K$, the clones $C_1^K$ and $C_2^K$ such that $K C_1 \subseteq C_1$ if and only if $C_1 \subseteq C_1^K$ and $C_2 K \subseteq K$ if and only if $C_2 \subseteq C_2^K$.
We get the clones $C_1^K$ immediately from Theorem~\ref{thm:disc-clonoids:ScIc}.

For 42 out of the 123 $(\clSc,\clVc)$\hyp{}clonoids $K$, as indicated in Table~\ref{table:ScVc} (the ones with ``\cite{Lehtonen-SM}'' in the source column) and in Figure~\ref{fig:ScVc-clonoids} (the vertices drawn as hollow circles), the clones $C_2^K$ have been determined in \cite[Theorem~5.1, Table~1]{Lehtonen-SM}.
From now on, we assume that $K$ is one of the remaining $(\clSc,\clVc)$\hyp{}clonoids.

By Lemma~\ref{lem:CL-Lc-3.3}, left stability under composition with a clone $C_2$ can be tested with a generating set of $C_2$.
We are going to determine $C_2^K$ by testing whether $K$ is stable under left composition with certain functions.
Using this information together with Post's lattice, we can identify the right clone.
We clearly have $\clVc \subseteq C_2^K$ because $K$ is stable under left composition with $\clVc$.

\begin{claim}
\label{clm:ScVc-1}
For $a \in \{0,1\}$, $a \in C_2^K$ if and only if $\clVaka{a} \subseteq K$.
\end{claim}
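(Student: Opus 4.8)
The plan is to translate the membership condition $a \in C_2^K$ into a concrete stability statement and then evaluate the relevant function class composition explicitly. Recall that $C_2^K$ is the clone furnished by Lemma~\ref{lem:CAKCBK}, characterized by the property that $C K \subseteq K$ holds if and only if $C \subseteq C_2^K$. The clone generated by the constant $a$ is $\clonegen{a} = \clIc \cup \clVaka{a}$, and it is the smallest clone containing $a$. So I would first argue that $a \in C_2^K$ iff $\clonegen{a} \subseteq C_2^K$ iff $\clonegen{a} K \subseteq K$: the first equivalence is immediate since $\clonegen{a}$ is the least clone containing $a$, and the second is exactly the defining property of $C_2^K$ from Lemma~\ref{lem:CAKCBK}.

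Next I would compute $\clonegen{a} K$. Using $\clonegen{a} = \clIc \cup \clVaka{a}$ together with the left-distributivity of composition over unions (Lemma~\ref{lem:CompUnion}\ref{lem:CompUnion:L}), one gets $\clonegen{a} K = \clIc K \cup \clVaka{a} K$. Since $K$ is a minion, $\clIc K = K$. For the second term, each member of $\clVaka{a}$ is a constant-$a$ function, and composing it on the left with functions from $K$ of common arity $m$ yields the $m$-ary constant-$a$ function; hence $\clVaka{a} K$ consists of exactly those constant-$a$ functions whose arity occurs among the arities of members of $K$. Because $K$ is a nonempty minion, it contains functions of every arity $m \geq 1$ (from any $f \in K$ one reaches all arities by identifying arguments and adjoining fictitious ones), so $\clVaka{a} K = \clVaka{a}$. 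Thus $\clonegen{a} K = K \cup \clVaka{a}$.

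Combining the two steps, $a \in C_2^K$ iff $K \cup \clVaka{a} \subseteq K$ iff $\clVaka{a} \subseteq K$, which is the asserted equivalence. The one point requiring care is the identity $\clVaka{a} K = \clVaka{a}$, which relies on $K$ being a \emph{nonempty} minion so that functions of all arities are present; this is the main (and only) obstacle. It is harmless in the present setting, since the empty clonoid $\clEmpty$ lies among the clonoids whose stability was already settled in \cite{Lehtonen-SM} (where $C_2^{\clEmpty} = \clAll$), and every remaining $(\clSc,\clVc)$\hyp{}clonoid still under consideration is nonempty.
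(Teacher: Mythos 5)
Your argument is correct and is essentially the paper's: the paper disposes of this claim by citing Lemma~\ref{lem:C1C2Vak}\ref{lem:C1C2Vak:ii}, whose proof is precisely the computation $\clonegen{a}K = K \cup \clVaka{a}$ for nonempty $K$ that you carry out by hand. Your handling of the nonemptiness caveat (noting that $\clEmpty$ is among the clonoids already settled in \cite{Lehtonen-SM}) matches the paper's setup as well.
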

\begin{pfclaim}[Proof of Claim~\ref{clm:ScVc-1}]
This follows immediately from Lemma~\ref{lem:C1C2Vak}\ref{lem:C1C2Vak:ii}.
\end{pfclaim}

\begin{claim}
\label{clm:ScVc-2}
$\mathord{\wedge} \notin C_2^K$.
\end{claim}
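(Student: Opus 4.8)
The plan is to prove the claim in its equivalent ``witness'' form: since $C_2^K$ is a clone and, by Lemma~\ref{lem:CL-Lc-3.3} applied to the generating set $\{\mathord\wedge\}$ of $\clonegen{\mathord\wedge}$, the condition $\mathord\wedge \in C_2^K$ is equivalent to $f \wedge g \in K$ for all equal\hyp{}arity $f, g \in K$, it suffices to exhibit, for each remaining $(\clSc,\clVc)$\hyp{}clonoid $K$, a single pair $f, g \in K$ with $f \wedge g \notin K$. This immediately gives $\mathord\wedge \notin C_2^K$.

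The two workhorses are Lemma~\ref{lem:Scstabapu}\ref{lem:Scstabapu:Lambda1} and \ref{lem:Scstabapu:Lambda2}, whose hand\hyp{}built operation tables supply exactly such witnesses. Part~\ref{lem:Scstabapu:Lambda1} asserts that a majorant block $\clIntVal{\clSmaj}{a}{b}$ (with $(a,b) \neq (0,0)$) contains functions whose conjunction stays in $\clIntVal{\clAll}{a}{b}$ but escapes $\clSmaj \cup \clSmin \cup \clRefl \cup (\clAll \setminus \clIntVal{\clAll}{a}{b})$; part~\ref{lem:Scstabapu:Lambda2} does the analogous thing for $\clIntVal{\clAll}{a}{\overline{a}} \cup \clReflOO$ escaping $\clEioo \cup \clReflOO$. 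So the task reduces to matching each remaining clonoid, written as $K = A \cup B \cup C \cup D$ via Theorem~\ref{thm:disc-clonoids}, to one of these two applications (or a direct argument).

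Concretely, I would split into three cases. \textbf{(i)} If some off\hyp{}diagonal or top block of $K$ equals a majorant block $\clIntVal{\clSmaj}{a}{b}$ and $K$ meets $\clIntVal{\clAll}{a}{b}$ only inside $\clSmaj \cup \clSmin \cup \clRefl$, then part~\ref{lem:Scstabapu:Lambda1} applies with those $(a,b)$: the two witnesses lie in $\clIntVal{\clSmaj}{a}{b} \subseteq K$, while their conjunction avoids the forbidden region, which by the block constraint contains $K$; this handles the large majority of the remaining clonoids (e.g.\ $\clSmajIO \cup \clII$, $\clOI \cup \clSmajII$, $\clOO \cup \clSmajII$, $\clEioi \cup \clSmaj$, etc.). \textbf{(ii)} If $K$ contains $\clReflOO$ together with a full block $\clIntVal{\clAll}{a}{\overline{a}}$ and $K \subseteq \clEioo \cup \clReflOO$, then part~\ref{lem:Scstabapu:Lambda2} applies; this covers $\clEioo \cup \clReflOO$, where both off\hyp{}blocks are full and part~\ref{lem:Scstabapu:Lambda1} is unavailable. \textbf{(iii)} The single clonoid $\clEioo \cup \clVako$, where both off\hyp{}blocks are full but the diagonal part is only $\clVako$, is treated directly: with $f = \pr_1^{(2)} \in \clOI$ and the function $g \colon (a_1,a_2) \mapsto \neg a_2$ in $\clIO$, the conjunction $f \wedge g \colon (a_1,a_2)\mapsto a_1 \wedge \neg a_2$ lies in $\clOO \setminus \clVako$, which is disjoint from $K$.

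The conceptual content is already packaged in Lemma~\ref{lem:Scstabapu}, so the main obstacle is purely the combinatorial bookkeeping: verifying for each remaining clonoid that the targeted block is genuinely contained in $K$ and that $K$ lies inside the corresponding forbidden region, so that the escaping conjunction really falls outside $K$. The coexistence constraints from Claims~\ref{clm:ScVcK:1} and \ref{clm:ScVcK:2} are what guarantee the three cases are exhaustive; in particular, Claim~\ref{clm:ScVcK:2}\ref{clm:ScVcK:2:1} forces $\clII \subseteq K$ whenever both $\clOI$ and $\clIO$ are fully present, which pins down $\clEioo \cup \clVako$ as the unique clonoid requiring the direct witness of case~(iii).
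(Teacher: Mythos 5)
Your reduction (via Lemma~\ref{lem:CL-Lc-3.3}, exhibit equal\hyp{}arity $f,g\in K$ with $f\wedge g\notin K$) and your choice of workhorses (Lemma~\ref{lem:Scstabapu}\ref{lem:Scstabapu:Lambda1} and \ref{lem:Scstabapu:Lambda2}) are exactly the paper's. Your case~(iii) is moreover a genuine and correct addition: the paper disposes of the remaining clonoids by asserting that one of five sandwich conditions holds, but $K=\clEioo\cup\clVako$ satisfies none of them ($K\cap\clOI=\clOI\nsubseteq\clSmaj$, $K\cap\clIO=\clIO\nsubseteq\clSmaj$, $K\cap\clII=\clII\nsubseteq\clSmaj\cup\clRefl$, and $\clReflOO\nsubseteq K$), so a direct witness such as your $x_1\wedge\neg x_2\in\clOO\setminus\clVako$ really is required, and yours checks out.

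There is, however, a genuine gap in your exhaustiveness claim. The clonoid $K=\clII\cup\clReflOO$ (the row $A=\clReflOO$, $B=C=\clEmpty$, $D=\clII$ of Table~\ref{table:ScVc}, for which $C_2^K=\clV$) is among the remaining clonoids and fits none of your three cases: its top block is the full $\clII$ rather than a majorant block and $K\cap\clII=\clII\nsubseteq\clSmaj\cup\clSmin\cup\clRefl$ (take a ternary $f\in\clII$ with $f(0,0,1)=f(1,1,0)=0$ and $f(0,1,0)=1$, $f(1,0,1)=0$), so case~(i) fails; $K$ contains no full off\hyp{}diagonal block, so case~(ii) fails; and it is not $\clEioo\cup\clVako$. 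Neither part of Lemma~\ref{lem:Scstabapu} rescues it: the witnesses of part~\ref{lem:Scstabapu:Lambda1} with $(a,b)=(1,1)$ have their conjunction inside $\clII\subseteq K$, and the witnesses of part~\ref{lem:Scstabapu:Lambda2} require an off\hyp{}diagonal function that $K$ does not contain. A fresh witness pair is needed, e.g.\ $\mathord{\rightarrow}\in\clII$ and $\mathord{+}\in\clReflOO$ give $\mathord{\rightarrow}\wedge\mathord{+}\in\clOO\setminus\clReflOO$, which lies outside $K$ since $K\cap\clOO=\clReflOO$. (This clonoid in fact also slips through the paper's five conditions --- condition~3 would need $\clII\subseteq\clEiii\cup\clSmaj\cup\clRefl$, which is false --- so the omission is inherited rather than invented; but since your argument rests on the assertion that Claims~\ref{clm:ScVcK:1} and \ref{clm:ScVcK:2} make your three cases exhaustive, that assertion is where your proof breaks, and it must be repaired by adding this fourth witness.)
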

\begin{pfclaim}[Proof of Claim~\ref{clm:ScVc-2}]
Because one of the inclusions
\begin{gather*}
\clSmajOI \subseteq K \subseteq \clEioi \cup \clSmaj, \quad
\clSmajIO \subseteq K \subseteq \clEiio \cup \clSmaj, \quad
\clSmajII \subseteq K \subseteq \clEiii \cup \clSmaj \cup \clRefl, \\
\clOI \cup \clReflOO \subseteq K \subseteq \clXI \cup \clReflOO, \quad
\clIO \cup \clReflOO \subseteq K \subseteq \clIX \cup \clReflOO
\end{gather*}
holds,
it follows from Lemma~\ref{lem:Scstabapu}\ref{lem:Scstabapu:Lambda1} and \ref{lem:Scstabapu:Lambda2}
that $\{\mathord{\wedge}\} K \nsubseteq K$.
\end{pfclaim}

\begin{claim}
\label{clm:ScVc-3}
$\mu \notin C_2^K$.
\end{claim}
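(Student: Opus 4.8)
The goal is to show $\mu \notin C_2^K$ for each of the remaining $(\clSc,\clVc)$\hyp{}clonoids $K$. As in the proof of Claim~\ref{clm:ScVc-2}, I would establish the stronger statement $\{\mu\} K \nsubseteq K$; this suffices, since $\mu \in C_2^K$ would force $\{\mu\}K \subseteq C_2^K K \subseteq K$. The plan is to split on whether the constant function $0$ lies in $K$. Writing $K = A \cup B \cup C \cup D$ as in Theorem~\ref{thm:disc-clonoids}\ref{thm:disc-clonoids:ScIc}, refined by Claim~\ref{clm:ScVcK:1} so that $A \in \{\clEmpty, \clVako, \clReflOO, \clOO\}$, the constant $0$ belongs to $K$ precisely when $A \neq \clEmpty$: each of $\clVako$, $\clReflOO$, $\clOO$ contains the constant-$0$ function, whereas $A = \clEmpty$ contributes nothing of signature $(0,0)$.

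In the first case, where $\clVako \subseteq K$, I would simply combine the previous claim with Lemma~\ref{lem:Scstabapu}\ref{lem:Scstabapu:SM1}: Claim~\ref{clm:ScVc-2} already gives $\{\mathord{\wedge}\}K \nsubseteq K$, and together with $\clVako \subseteq K$ this immediately yields $\{\mu\}K \nsubseteq K$. This disposes of every remaining clonoid with $A \in \{\clVako, \clReflOO, \clOO\}$.

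The second case is $A = \clEmpty$, i.e.\ $0 \notin K$; here $K$ ranges over the thirteen remaining clonoids with empty $(0,0)$\hyp{}block, such as $\clSmajII \cup \clReflII$, $\clSmajOI \cup \clII$, $\clIO \cup \clSmajII$, and $\clSmaj \cup \clXI$. I would verify that each satisfies the hypothesis of Lemma~\ref{lem:Scstabapu}\ref{lem:Scstabapu:SM2} or of \ref{lem:Scstabapu:SM3}. Part \ref{lem:Scstabapu:SM2} applies to the four clonoids carrying a $\clReflII$ summand, namely $\clSmajII \cup \clReflII$, $\clSmajIX \cup \clReflII$, $\clSmajXI \cup \clReflII$, and $\clSmaj \cup \clReflII$; for each one checks $\clSmajII \cup \clReflII \subseteq K \subseteq \clSmaj \cup \clSmin \cup \clRefl \cup \clEiii$, the upper inclusion holding because $\clEiii = \clAll \setminus \clII$, so it only requires $K \cap \clII \subseteq \clSmajII \cup \clReflII$. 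Part \ref{lem:Scstabapu:SM3} handles the rest: each such $K$ contains a block $\clIntVal{\clSmaj}{a}{b}$ with $(a,b) \neq (0,0)$ together with a full block $\clIntVal{\clAll}{c}{d}$, while $K \cap \clIntVal{\clAll}{a}{b} = \clIntVal{\clSmaj}{a}{b}$, giving exactly the chain $\clIntVal{\clSmaj}{a}{b} \cup \clIntVal{\clAll}{c}{d} \subseteq K \subseteq \clSmaj \cup \clSmin \cup \clRefl \cup (\clAll \setminus \clIntVal{\clAll}{a}{b})$. For instance $\clSmajOI \cup \clII$ uses $(a,b)=(0,1)$ and $(c,d)=(1,1)$, and $\clIO \cup \clSmajII$ uses $(a,b)=(1,1)$ and $(c,d)=(1,0)$.

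I expect the only real work to be the routine but tedious bookkeeping of the second case: matching each of the thirteen $A = \clEmpty$ clonoids to the correct choice of $(a,b)$, and for \ref{lem:Scstabapu:SM3} of $(c,d)$, and confirming the upper inclusion $K \cap \clIntVal{\clAll}{a}{b} \subseteq \clSmaj \cup \clSmin \cup \clRefl$. There is no conceptual obstacle, since the operation\hyp{}table witnesses are already furnished by Lemma~\ref{lem:Scstabapu}; the only care needed is to check that the chosen signature block of $K$ contains no function outside $\clSmaj$, which is immediate from the explicit form of each clonoid.
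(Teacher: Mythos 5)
Your proposal is correct and follows essentially the same route as the paper: the case $\clVako \subseteq K$ is dispatched by Lemma~\ref{lem:Scstabapu}\ref{lem:Scstabapu:SM1} together with Claim~\ref{clm:ScVc-2}, and the thirteen remaining clonoids with empty $(0,0)$\hyp{}block are matched to the hypotheses of Lemma~\ref{lem:Scstabapu}\ref{lem:Scstabapu:SM2} (the four with a $\clReflII$ summand) or \ref{lem:Scstabapu:SM3} (the rest), exactly as the paper's list of six inclusion chains does. Your bookkeeping of the cases and of the choices of $(a,b)$ and $(c,d)$ checks out.
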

\begin{pfclaim}[Proof of Claim~\ref{clm:ScVc-3}]
Because one of the inclusions
\begin{gather*}
\clVako \subseteq K,
\quad
\clSmajII \cup \clReflII \subseteq K \subseteq \clSmaj \cup \clReflII,
\quad
\clSmajOI \cup \clII \subseteq K \subseteq \clSmajOI \cup \clIX,
\\
\clSmajIO \cup \clII \subseteq K \subseteq \clSmajIO \cup \clXI,
\quad
\clSmajII \cup \clOI \subseteq K \subseteq \clSmajIX \cup \clOI,
\\
\clSmajII \cup \clIO \subseteq K \subseteq \clSmajXI \cup \clIO
\end{gather*}
holds,
it follows 
from Lemma~\ref{lem:Scstabapu}\ref{lem:Scstabapu:SM1}, \ref{lem:Scstabapu:SM2}, and \ref{lem:Scstabapu:SM3} 
that $\{ \mu \} K \nsubseteq K$.
\end{pfclaim}

\begin{claim}
\label{clm:ScVc-4}
If $K$ is not any of the classes
$\clII \cup \clSmajOI$,
$\clII \cup \clSmajIO$,
$\clII \cup \clSmaj$,
$\clXI \cup \clSmajIO$,
$\clIX \cup \clSmajOI$,
then $\veewedge \notin C_2^K$.
\end{claim}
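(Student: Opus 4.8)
The plan is to translate the statement into a composition condition and then run a short case analysis. By Lemma~\ref{lem:CL-Lc-3.3} (testing left stability on the generator $\veewedge$ of $\clonegen{\veewedge}$), the assertion $\veewedge \notin C_2^K$ is equivalent to $\{\veewedge\} K \nsubseteq K$, which is what I would establish. Writing each remaining $(\clSc,\clVc)$\hyp{}clonoid as $K = A \cup B \cup C \cup D$ in the form of Theorem~\ref{thm:disc-clonoids}\ref{thm:disc-clonoids:ScIc}, I would split according to whether $\clVako \subseteq K$; since $\clVako \subseteq \clReflOO$ and $\clVako \subseteq \clOO$, this amounts to whether $A \neq \clEmpty$.

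For the clonoids with $\clVako \subseteq K$, the argument is immediate: Claim~\ref{clm:ScVc-2} already gives $\{\mathord{\wedge}\} K \nsubseteq K$ for every remaining clonoid, and combining this with $\clVako \subseteq K$, Lemma~\ref{lem:Scstabapu}\ref{lem:Scstabapu:SM1} yields $\{\veewedge\} K \nsubseteq K$ at once. So all such $K$ satisfy $\veewedge \notin C_2^K$ with no further work. The real content is the clonoids with $A = \clEmpty$ that are not among the five exceptions; there are eight of them, and for each I would exhibit a sandwich $L \subseteq K \subseteq U$ so that the witness produced by one of the two relevant lemmas lands in $\{\veewedge\}L \subseteq \{\veewedge\}K$ but outside $U \supseteq K$. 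The four clonoids lying in the interval $[\clSmajII \cup \clReflII,\ \clSmaj \cup \clRefl]$ — namely $\clSmajII \cup \clReflII$, $\clSmajIX \cup \clReflII$, $\clSmajXI \cup \clReflII$, and $\clSmaj \cup \clReflII$ — are handled by Lemma~\ref{lem:Scstabapu}\ref{lem:Scstabapu:McWinf1} with $(a,b)=(1,1)$, using $\clSmajII \cup \clReflII \subseteq K \subseteq \clSmaj \cup \clRefl \subseteq \clSmaj \cup \clSmin \cup \clRefl \cup \clEiii$. The four clonoids containing a full slice $\clIntVal{\clAll}{a}{\overline{a}}$ together with $\clSmajII$ — namely $\clIO \cup \clSmajII$ and $\clIO \cup \clSmajXI$ (taking $a=1$) and $\clOI \cup \clSmajII$ and $\clOI \cup \clSmaj$ (taking $a=0$) — are handled by Lemma~\ref{lem:Scstabapu}\ref{lem:Scstabapu:McWinf2}, using $\clIntVal{\clAll}{a}{\overline{a}} \cup \clSmajII \subseteq K \subseteq \clEiii \cup \clSmaj$, where the upper inclusion rests on $\clIO, \clOI \subseteq \clEiii = \clAll \setminus \clII$. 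Together the two families exhaust every $A = \clEmpty$ clonoid other than the five exceptions.

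I expect the delicate step to be the upper inclusion $K \subseteq \clSmaj \cup \clSmin \cup \clRefl \cup \clEiii$, i.e.\ verifying that the critical part of $K$ avoids $\clII \setminus (\clSmaj \cup \clSmin \cup \clRefl)$. This is exactly where the five exceptional clonoids $\clII \cup \clSmajOI$, $\clII \cup \clSmajIO$, $\clII \cup \clSmaj$, $\clXI \cup \clSmajIO$, $\clIX \cup \clSmajOI$ behave differently: each contains all of $\clII$, and the witness functions constructed in Lemma~\ref{lem:Scstabapu}\ref{lem:Scstabapu:McWinf1}--\ref{lem:Scstabapu:McWinf2} lie in $\clII \setminus (\clSmaj \cup \clSmin \cup \clRefl)$, so for those $K$ the witness is reabsorbed and the sandwich argument genuinely fails — consistent with the fact that there $\veewedge \in \clMcWk{\infty} \subseteq \clWk{3} = C_2^K$. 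Confirming that the eight non-exceptional clonoids meet $\clII$ only inside $\clSmaj$ (so that the upper bound holds) is thus the one point requiring care, and it is precisely what separates them from the five excluded classes.
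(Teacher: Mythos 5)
Your proposal is correct and follows essentially the same route as the paper: split on whether $\clVako \subseteq K$, dispose of that case via Claim~\ref{clm:ScVc-2} together with Lemma~\ref{lem:Scstabapu}\ref{lem:Scstabapu:SM1}, and handle the remaining eight clonoids by sandwiching each between a lower bound feeding Lemma~\ref{lem:Scstabapu}\ref{lem:Scstabapu:McWinf1} or \ref{lem:Scstabapu:McWinf2} and an upper bound contained in $\clSmaj \cup \clSmin \cup \clRefl \cup \clEiii$. The only difference is bookkeeping (you use one upper bound $\clEiii \cup \clSmaj$ for all four clonoids handled by \ref{lem:Scstabapu:McWinf2}, where the paper states two tighter sandwiches, one for each of $\clOI$ and $\clIO$), which changes nothing of substance.
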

\begin{pfclaim}[Proof of Claim~\ref{clm:ScVc-4}]
If $K$ is not one of the five classes mentioned,
then one of the inclusions
\begin{gather*}
\clVako \subseteq K,
\quad
\clSmajII \cup \clReflII \subseteq K \subseteq \clSmaj \cup \clReflII,
\\
\clOI \cup \clSmajII \subseteq K \subseteq \clOI \cup \clSmajIX,
\quad
\clIO \cup \clSmajII \subseteq K \subseteq \clIO \cup \clSmajXI
\end{gather*}
holds,
and
it follows 
from Lemma~\ref{lem:Scstabapu}\ref{lem:Scstabapu:SM1}, \ref{lem:Scstabapu:McWinf1}, and \ref{lem:Scstabapu:McWinf2} 
that $\{ x_1 \vee (x_2 \wedge x_3) \} K \nsubseteq K$.
\end{pfclaim}

\begin{claim}
\label{clm:ScVc-5}
If $K$ is one of
$\clII \cup \clSmajOI$,
$\clII \cup \clSmajIO$,
$\clII \cup \clSmaj$,
$\clXI \cup \clSmajIO$,
$\clIX \cup \clSmajOI$,
then $\{ \mathord{\rightarrow}, \threshold{4}{2} \} \subseteq C_2^K$.
\end{claim}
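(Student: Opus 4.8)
The plan is to reduce the statement to a finite closure check and then carry it out by tracking the values at $\vect{0}$ and $\vect{1}$ together with the self-dual-majorant property. By Lemma~\ref{lem:CAKCBK}, the inclusion $\{\mathord{\rightarrow}, \threshold{4}{2}\} \subseteq C_2^K$ is equivalent to $\clonegen{\mathord{\rightarrow}, \threshold{4}{2}} = \clWk{3}$ being contained in $C_2^K$, i.e.\ to $\clWk{3} K \subseteq K$. By Lemma~\ref{lem:CL-Lc-3.3}, applied to the generating set $\{\mathord{\rightarrow}, \threshold{4}{2}\}$ of $\clWk{3}$, this holds precisely when $\mathord{\rightarrow}(f_1, f_2) = \neg f_1 \vee f_2 \in K$ for all $f_1, f_2 \in K^{(m)}$ and $\threshold{4}{2}(f_1, f_2, f_3, f_4) \in K$ for all $f_1, \dots, f_4 \in K^{(m)}$, $m \in \IN_{+}$. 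So it suffices to verify these two closure properties for each of the five clonoids.

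The key tool is the behaviour of $\clSmaj$. For implication, Lemma~\ref{lem:AllSmaj} handles the $\clSmaj$-part for free: if $f_2 \in \clSmaj$, then $\neg f_1 \vee f_2$ is a majorant of $f_2$, hence in $\clSmaj$. For the threshold I will use the following counting observation: if at least three of $f_1, \dots, f_4$ lie in $\clSmaj$, then $\threshold{4}{2}(f_1, \dots, f_4) \in \clSmaj$. Indeed, at any point $\vect{a}$ each $f_i \in \clSmaj$ satisfies $f_i(\vect{a}) + f_i(\overline{\vect{a}}) \geq 1$, so the combined weight $\sum_i f_i(\vect{a}) + \sum_i f_i(\overline{\vect{a}})$ is at least $3$; hence at least one of $\sum_i f_i(\vect{a})$, $\sum_i f_i(\overline{\vect{a}})$ is at least $2$, which forces $\threshold{4}{2}(f_1, \dots, f_4)(\vect{a}) \vee \threshold{4}{2}(f_1, \dots, f_4)(\overline{\vect{a}}) = 1$.

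Each of the five clonoids is a disjoint union $K = P \cup Q$, where $P$ prescribes the values at $\vect{0}, \vect{1}$ (one of $\clII$, $\clXI$, $\clIX$) and $Q$ is a self-dual-majorant piece (one of $\clSmaj$, $\clSmajOI$, $\clSmajIO$); I will also use $\clSmajII \subseteq \clII$ and, where relevant, $\clII \subseteq \clXI$ and $\clII \subseteq \clIX$. I split the arguments of the operation according to which piece they lie in. For $\threshold{4}{2}$: if at least two arguments lie in $P$, the output values at $\vect{0}$ and $\vect{1}$ satisfy the defining condition of $P$, so the output is in $P$; otherwise at least three arguments lie in $Q \subseteq \clSmaj$, the counting observation puts the output in $\clSmaj$, and evaluating at $\vect{0}, \vect{1}$ pins it down to $Q$. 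For $\mathord{\rightarrow}(f_1, f_2) = \neg f_1 \vee f_2$ the output piece is governed by $f_2$: if $f_2 \in P$ the value condition of $P$ is preserved and the output is in $P$; if $f_2 \in Q$, then $\neg f_1 \vee f_2 \in \clSmaj$ by Lemma~\ref{lem:AllSmaj}, and a short case split on $f_1$ places the output either in $Q$ or in $P$.

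The main difficulty is the bookkeeping in the mixed and boundary cases rather than any deep step. Two points need care: (i) in the threshold case with exactly one argument outside $\clSmaj$, I must simultaneously confirm that the weight bound $\geq 3$ still yields the self-dual-majorant property and that the forced values at $\vect{0}, \vect{1}$ match the subpiece $Q$; and (ii) for implication, I must track the ``piece-crossing'' in which an output computed from $Q$ acquires the value $1$ at the coordinate where $f_2$ vanishes and thereby lands in $P$ --- using the inclusion $\clSmajII \subseteq \clII$ when $P = \clII$, and the value at $\vect{1}$ (resp.\ $\vect{0}$) directly when $P$ is $\clXI$ (resp.\ $\clIX$). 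Handling these uniformly across the five clonoids establishes both closure properties and hence $\{\mathord{\rightarrow}, \threshold{4}{2}\} \subseteq C_2^K$.
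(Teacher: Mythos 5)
Your proposal is correct and follows essentially the same route as the paper's proof: reduce via Lemma~\ref{lem:CL-Lc-3.3} to checking closure under the two generators, split $f_1 \rightarrow f_2$ on where $f_2$ lies (using the majorant property of $\clSmaj$, which the paper verifies by the same computation that underlies Lemma~\ref{lem:AllSmaj}), and handle $\threshold{4}{2}$ by the case distinction ``at least two arguments in the value-prescribed piece'' versus ``at least three arguments in the $\clSmaj$-piece,'' where your weight-sum count is just a restatement of the paper's pigeonhole argument. The bookkeeping at $\vect{0}$ and $\vect{1}$ that you flag as the delicate part does go through in all five cases exactly as you describe.
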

\begin{pfclaim}[Proof of Claim~\ref{clm:ScVc-5}]
We prove first that $\{\mathord{\rightarrow}\} K \subseteq K$.
Let $f, g \in K$, and consider $f \rightarrow g$.
If $g$ is in $\clII$ ($\clXI$, $\clIX$, resp.), then $f \rightarrow g$ is in $\clII$ ($\clXI$, $\clIX$, resp.) and we are done.
If $g \in \clSmajOI$, then $f \rightarrow g$ is in $\clXI$. If $f \rightarrow g \in \clII$, then we are done, so assume that $f \rightarrow g \in \clOI$.
Because $g(\vect{a}) \vee g(\overline{\vect{a}}) = 1$ for all $\vect{a}$, we have
\begin{align*}
& (f \rightarrow g)(\vect{a}) \vee (f \rightarrow g)(\overline{\vect{a}})
= (f(\vect{a}) \rightarrow g(\vect{a})) \vee (f(\overline{\vect{a}}) \rightarrow g(\overline{\vect{a}}))
\\ & = (\overline{f(\vect{a})} \vee g(\vect{a})) \vee (\overline{f(\overline{\vect{a}})} \vee g(\overline{\vect{a}}))
= (\overline{f(\vect{a})} \vee \overline{f(\overline{\vect{a}})}) \vee (g(\vect{a}) \vee g(\overline{\vect{a}}))
= 1,
\end{align*}
so, in fact, $f \rightarrow g \in \clSmajOI$.
In a similar way, we can show that if $g \in \clSmajIO$, then $f \rightarrow g \in \clSmajIO$.
From these facts it follows that $\{ \mathord{\rightarrow} \} K \subseteq K$.

We now prove that $\{ \threshold{4}{2} \} K \subseteq K$.
Let $f_1, f_2, f_3, f_4 \in K$, and consider $\varphi := \threshold{4}{2}(f_1, f_2, f_3, f_4) \in \{\threshold{4}{2}\} K$.
If two of the functions $f_1$, $f_2$, $f_3$, $f_4$ are in $\clII$ ($\clXI$, $\clIX$, resp.), then $\varphi$ is in $\clII$ ($\clXI$, $\clIX$, resp.).
Assume thus that at most one of $f_1$, $f_2$, $f_3$, $f_4$ is in $\clII$ ($\clXI$, $\clIX$, resp.); without loss of generality, assume that this function, if any, is $f_4$.
Then $f_1, f_2, f_3$ are in $\clSmajOI$ ($\clSmajIO$, $\clSmaj$, resp.).
But then, for any $\vect{a} \in \{0,1\}^n$, there must exist $i, j \in \nset{3}$ such that $f_i(\vect{a}) = f_j(\vect{a}) = 1$ or $f_i(\overline{\vect{a}}) = f_j(\overline{\vect{a}})$; consequently, $\varphi(\vect{a}) = 1$ or $\varphi(\overline{\vect{a}}) = 1$.
Thus, $\varphi \in \clSmaj$.
Moreover, if $f_1, f_2, f_3 \in \clSmajOI$, then $\varphi \in \clSmajOI$; and if $f_1, f_2, f_3 \in \clSmajIO$, then $\varphi \in \clSmajIO$.
From these facts it follows that $\{ \threshold{4}{2} \} K \subseteq K$.
\end{pfclaim}

From the above claims we conclude that
$C_2^K = \clonegen{\mathord{\rightarrow}, \threshold{4}{2}} = \clWk{3}$ if and only if $K$ is one of the classes
$\clII \cup \clSmajOI$,
$\clII \cup \clSmajIO$,
$\clII \cup \clSmaj$,
$\clXI \cup \clSmajIO$,
$\clIX \cup \clSmajOI$.
In the case when $K$ is not one of these classes, we have that
$C_2^K = \clV$ if and only if $\clVak \subseteq K$;
$C_2^K = \clVo$ if and only if $\clVako \subseteq K$ but $\clVaki \cap K = \emptyset$;
$C_2^K = \clVi$ if and only if $\clVaki \subseteq K$ but $\clVako \cap K = \emptyset$;
and 
$C_2^K = \clVc$ if and only if $\clVak \cap K = \emptyset$.
This is presented explicitly in Table~\ref{table:ScVc}.
\end{proof}


\section{Final remarks}
\label{sec:remarks}

It remains out of the scope of this paper to consider $(C_1,C_2)$\hyp{}clonoids in the cases where the source clone $C_1$ in not a discriminator clone nor a monotone clone.
In such cases the $(C_1,\clIc)$\hyp{}clonoid lattice is uncountable, and it may not be possible to explicitly describe it.
Sparks's theorem (Theorem~\ref{thm:Sparks}) could nevertheless be extended by determining the cardinality of the lattice of $(C_1,C_2)$\hyp{}clonoids for arbitrary pairs $(C_1,C_2)$ of clones (in Theorem~\ref{thm:Sparks}, the source clone is fixed to be the clone of projections).
This remains a topic for further investigation.
Another direction for further further research lies in generalizing our current findings to arbitrary finite base sets.




\end{document}